\newcommandx{\todoin}[2][1=]{\todo[inline, caption={todo}, #1]{%
    \begin{minipage}{\textwidth-20pt}#2\end{minipage}}}
\newcommandx{\remove}[2][1=]{\todo[linecolor=Plum,backgroundcolor=Plum!25,bordercolor=Plum,#1]{#2}}
\newcommandx{\removein}[2][1=]{\remove[inline, caption={remove}, #1]{%
    \begin{minipage}{\textwidth-20pt}#2\end{minipage}}}
\newcommandx{\remark}[2][1=]{\todo[linecolor=red,backgroundcolor=red!25,bordercolor=red,#1]{#2}}
\newcommandx{\remarkin}[2][1=]{\remark[inline, caption={remark}, #1]{%
    \begin{minipage}{\textwidth-20pt}#2\end{minipage}}}
\newcommandx{\suggestion}[2][1=]{\todo[linecolor=yellow,backgroundcolor=yellow!25,bordercolor=yellow,#1]{#2}}
\newcommandx{\suggestionin}[2][1=]{\suggestion[inline, caption={suggestion}, #1]{%
    \begin{minipage}{\textwidth-20pt}#2\end{minipage}}}
\newcounter{proof}
\newenvironment{myproof}%
{\stepcounter{proof}\begin{proof}}%
{\end{proof}}%
\newcounter{proofstep}[proof]
\newcounter{proofstep-noskip}[proof]
\newenvironment{proofstep-noskip}[1][]%
{\refstepcounter{proofstep-noskip}
  % The following if statement reduces the large vertical space before proof environments
  % if the proof begins with a proofcase, e.g. "Proof. Proof of (i)."
  \ifnum\the\value{proofstep-noskip}>1
    \bigskip\par\noindent
  \fi
  \ifthenelse{\isempty{#1}}%if
    {\textit{Step \theproofstep-noskip. }}%then
    {\textit{#1.}}%else
  \noindent}%
{\par}%
\newcounter{proofcase}[proof]
{\refstepcounter{proofcase}\bigskip\par\noindent%
  \ifthenelse{\isempty{#1}}%if
    {\textit{Case \theproofcase. }}%then
    {\textit{#1.}}%else
  \noindent}%
{\par}%
\newcounter{proofcase-noskip}[proof]
\newenvironment{proofcase-noskip}[1][]%
{\refstepcounter{proofcase-noskip}
  % The following if statement reduces the large vertical space before proof environments
  % if the proof begins with a proofcase, e.g. "Proof. Proof of (i)."
  \ifnum\the\value{proofcase-noskip}>1
    \bigskip\par\noindent
  \fi
  \ifthenelse{\isempty{#1}}%if
    {\textit{Case \theproofcase-noskip. }}%then
    {\textit{#1.}}%else
  \noindent}%
{\par}%
\theoremstyle{plain}
\newtheorem{thm}{Theorem}[section]
\newtheorem*{thm*}{Theorem}
\newtheorem{pro}[thm]{Proposition}
\newtheorem{cor}[thm]{Corollary}
\newtheorem{lem}[thm]{Lemma}
\theoremstyle{definition}
\newtheorem{dfn}[thm]{Definition}
\theoremstyle{remark}
\newtheorem{ntn}[thm]{Notation}
\newtheorem{rem}[thm]{Remark}
\numberwithin{equation}{section}
\newcommandx{\textref}[2][1=]{\hyperref[#2]{#1\ref*{#2}}}
\newcommandx{\textrefp}[2][1=]{(\hyperref[#2]{#1\ref*{#2}})}
\newcommand{\dif}{\ensuremath{\, \mathrm d}}
\DeclareMathOperator{\spn}{span}
\DeclareMathOperator*{\ave}{ave}
\DeclareMathOperator{\cond}{\mathbb{E}}
\newcommand{\vvvert}{{\vert\kern-0.25ex\vert\kern-0.25ex\vert}}
\begin{document}

\title[Factorization in Haar system Hardy spaces]%
{Factorization in Haar system Hardy spaces}%

\author[R.~Lechner]{Richard Lechner}%
\address{Richard Lechner, Institute of Analysis, Johannes Kepler University Linz, Altenberger
  Strasse 69, A-4040 Linz, Austria}%
\email{Richard.Lechner@jku.at}%

\author[T.~Speckhofer]{Thomas Speckhofer}%
\address{Thomas Speckhofer, Institute of Analysis, Johannes Kepler University Linz, Altenberger
  Strasse 69, A-4040 Linz, Austria}%
\email{Thomas.Speckhofer@jku.at}%

\date{\today}%

\subjclass[2020]{%
  46B25, % classical Banach spaces in the general theory
  46B09, %  Probabilistic methods in Banach space theory
  47A68, % factorization theory
  47L20, % $Operator ideals [See also 47B10]
  46E30, % Spaces of measurable functions ($L^p$-spaces, Orlicz spaces, Köthe function spaces, Lorentz spaces, rearrangement invariant spaces, ideal spaces, etc.)
  30H10.% Hardy spaces [See also 42B30, 46E30]
}

\keywords{Factorization of operators, maximal operator ideals, primariness, rearrangement-invariant spaces, Hardy spaces.}%

\thanks{The first author was supported by the Austrian Science Foundation~(FWF) project no.~P32728.\\
\indent The second author was supported by the Austrian Science Foundation~(FWF) project no.~I5231.}

\begin{abstract}
  A Haar system Hardy space is the completion of the linear span of the Haar system~$(h_I)_I$,
  either under a rearrangement-invariant norm~$\|\cdot \|$ or under the associated square function norm
  \begin{equation*}
    \Bigl\| \sum_Ia_Ih_I \Bigr\|_{*} = \Bigl\| \Bigl( \sum_I a_I^2 h_I^2 \Bigr)^{1/2} \Bigr\|.
  \end{equation*}
  Apart from $L^p$, $1\le p<\infty$, the class of these spaces includes all separable
  rearrangement-invariant function spaces on~$[0,1]$ and also the dyadic Hardy space~$H^1$.  Using a
  unified and systematic approach, we prove that a Haar system Hardy space~$Y$ with $Y\ne C(\Delta)$
  ($C(\Delta)$ denotes the continuous functions on the Cantor set) has the following properties, which
  are closely related to the primariness of~$Y$: For every bounded linear operator~$T$ on~$Y$, the
  identity~$I_Y$ factors either through~$T$ or through $I_Y - T$, and if $T$ has large diagonal with
  respect to the Haar system, then the identity factors through~$T$. In particular, we obtain that
  \begin{equation*}
    \mathcal{M}_Y = \{ T\in \mathcal{B}(Y) : I_Y \ne ATB\text{ for all } A, B\in \mathcal{B}(Y) \}
  \end{equation*}
  is the unique maximal ideal of the algebra~$\mathcal{B}(Y)$ of bounded linear operators on~$Y$.  Moreover,
  we prove similar factorization results for the spaces $\ell^p(Y)$, $1\le p \leq \infty$, and use them to show
  that they are primary.
\end{abstract}

\maketitle

\tableofcontents

%auto-ignore

\section{Introduction}
\label{sec:introduction}

In~1970, J.~Lindenstrauss gave a talk on decompositions of Banach spaces (see~\cite{MR0405074} for
the abstract), which spurred a research program consisting of the following two main research
directions:
\begin{itemize}
  \item Constructing infinite-dimensional Banach spaces $E$ which are indecomposable, i.e., for any
        decomposition of $E$ into two complemented subspaces, one of them is finite-dimensional.
  \item Identifying the Banach spaces $E$ which are primary, i.e., whenever, $E = F\oplus G$, then
        either $F$ or $G$ is isomorphic to $E$.
\end{itemize}
The foundations for creating indecomposable Banach spaces were laid by the works of
B.~S.~Tsirelson~\cite{MR0350378} (see also~\cite{MR0355537,MR0981801}) and
Th.~Schlumprecht~\cite{MR1177333}.  But it was not until~1993 that the first indecomposable Banach
space $X_{\mathrm{GM}}$ was constructed in the seminal work~\cite{MR1201238} by W.~T.~Gowers and
B.~Maurey---in fact, they even showed that $X_{\mathrm{GM}}$ is hereditarily indecomposable.  This
fruitful line of research led to solutions of long-standing problems, including the
unconditional basic sequence problem and Banach's hyperplane problem~\cite{MR1201238} (see
also~\cite{MR1315601,MR1999196}) as well as the scalar-plus-compact problem solved by S.~A.~Argyros and
R.~G.~Haydon~\cite{MR2784662}.

The study of primary Banach spaces, on the other hand, goes back to A.~Pe{\l}czy\'{n}ski who proved
in his~1960 work~\cite{MR126145} that the classical sequence spaces $c_0$ and $\ell^p$,
$1\le p<\infty$, are prime, i.e., every infinite-dimensional complemented subspace is isomorphic to
the whole space (clearly, prime spaces are primary).

We will now give a terse historical overview of developments most relevant to the present work.  The
Lebesgue spaces $L^p$, $1\le p<\infty$, were shown to be primary in~1975 by P.~Enflo and
B.~Maurey~\cite{MR0397386} (the proof for~$L^p$, $1<p<\infty$, is due to P.~Enflo, and it was
extended to~$L^1$ by B.~Maurey, see~\cite{MR0500076}). Subsequently, D.~Alspach, P.~Enflo and
E.~Odell~\cite{MR0500076} gave an alternative proof for the primariness of $L^p$, $1 < p < \infty$,
and extended this result to separable rearrangement-invariant~(r.i.) function spaces on $[0,1]$ with
non-trivial Boyd indices (see~\cite{MR540367} or~\cite{MR0527010} for full proofs).  In particular,
this constitutes an alternative proof for $L^p$, $1 < p < \infty$.  Shortly thereafter, P.~Enflo and
T.~W.~Starbird~\cite{MR0557491} obtained the primariness of $L^1$ via $E$-operators. Later, the
dyadic Hardy space~$H^1$ and its dual $\mathrm{BMO}$ were proved to be primary by P.~F.~X.~Müller
(see~\cite{MR0931037} and~\cite{MR0955660}, respectively).  M.~Capon proved the primariness of the
two-parameter spaces $L^p(L^q)$, $1 < p,q < \infty$ and $L^p(E)$, where $E$ denotes a Banach space
with a symmetric Schauder basis (see~\cite{MR0687937} and~\cite{MR0688955}).  Capon's and
Alspach-Enflo-Odell's methods were then successfully adapted by P.~F.~X.~Müller in~\cite{MR1283008}
to show that the two-parameter dyadic Hardy space $H^1(\delta^2)$ is primary.  The primariness
of its dual $\mathrm{BMO}(\delta^2)$ was later obtained in~\cite{MR3436171} by utilizing Bourgain's
localization method.

The method of P.~Enflo and B.~Maurey~\cite{MR0397386} was to construct a pointwise multiplier
$M_g\colon L^p\to L^p$, $f\mapsto g\cdot f$, which approximates a given operator $T\colon L^p\to L^p$ on a large
subspace of $L^p$:
\begin{equation*}
  \cond^{\mathcal{B}}(\chi_B\cdot (T f))
  \approx M_g f,
  \qquad f\in L^p(B,\mathcal{B}),
\end{equation*}
where $\mathcal{B}$ is a carefully constructed $\sigma$-algebra and $|B|\geq 1/2$.  In a second step, they then
stabilize the function $g$ on another large subspace:
\begin{equation*}
  \cond^{\mathcal{C}}(\chi_C\cdot (T f))
  \approx c\cdot f,
  \qquad f\in L^p(C,\mathcal{C}),
\end{equation*}
where $\mathcal{C}\subset \mathcal{B}$ and $C\subset B$, $|C| > 0$.

A variant of the approach taken by D.~Alspach, P.~Enflo and E.~Odell~\cite{MR0500076} for $L^p$,
$1<p<\infty$, is to construct a block basis $(\tilde{h}_I)_I$ of the Haar system $(h_I)_I$ which
almost diagonalizes a given operator $T\colon L^p\to L^p$. More precisely, $(\tilde{h}_I)_I$ is
equivalent to $(h_I)_I$ and spans a subspace of $L^p$ which is complemented by a projection~$P$, and
we have
\begin{equation*}
  PT \tilde{h}_I \approx d_I \tilde{h}_I,
  \qquad  I\in \mathcal{D},
\end{equation*}
where $\mathcal{D}$ denotes the set of dyadic intervals and $(d_I)_I$ is a suitable family of
scalars.  Subsequently, either the \emph{Haar multiplier} $D$ given by $D h_I = d_I h_I$ or the
operator $I_{L^p}-D$ can, by virtue of the unconditionality of the Haar system in $L^p$,
$1 < p < \infty$, be inverted on a large subspace of $L^p$.  Finally, primariness follows from
Pe{\l}czy\'{n}ski's decomposition method \cite{MR126145} (see also \cite[II.B.24]{MR1144277}).

The approach developed by D.~Alspach, P.~Enflo and E.~Odell~\cite{MR0500076} turned out to be the
more flexible one, and it was further refined by
P.~F.~X.~Müller~\cite{MR0879418,MR0920082,MR0955660,MR0931037} (see also \cite{MR2157745,MR4439252})
to be suitable for the dyadic Hardy space $H^1$ and its dual $\mathrm{BMO}$.  However, since the
Haar system fails to be unconditional in $L^1$, the method described above cannot be applied to
$L^1$ in the same manner. Only recently, the first named author, P.~Motakis, P.~F.~X.~Müller and
Th.~Schlumprecht~\cite{MR4145794} successfully extended this technique to the space~$L^1$: First,
they introduced \emph{strategically reproducible bases} as part of a general framework which allows
one to reduce factorization problems for general bounded linear operators to the case of
\emph{diagonal operators} (i.e., in $L^1$, to the case of Haar multipliers).  Then they proved that
the identity on $L^1$ factors through every bounded Haar multiplier whose entries are bounded away
from~$0$, utilizing a result by E.~M.~Semenov and S.~N.~Uksusov~\cite{MR2975943} (see also
\cite{MR3397275}), which characterizes the bounded Haar multipliers on $L^1$ (for extensions to the
vector-valued case, see~\cite{MR2270086,MR4430957,MR3639095,MR4274301}).

In this paper, we will prove factorization results for \emph{Haar system Hardy spaces}, a large
class of Banach spaces in which the Haar system is a Schauder basis.  They are constructed as the
completion of the linear span of the Haar system either under a rearrangement-invariant norm
$\|\cdot\|$ or under the associated square function norm $\|\cdot\|_{*}$ given by
\begin{equation*}
  \Bigl\| \sum_{I} a_I h_I \Bigr\|_{*}
  = \Bigl\| \Bigl(\sum_{I} a_I^2 h_I^2\Bigr)^{1/2} \Bigr\|.
\end{equation*}
This class encompasses all separable r.i.~spaces on $[0,1)$ and their square function versions,
including the classical Lebesgue spaces $L^p$ as well as the dyadic Hardy spaces $H^p$, $1\le p<\infty$.
We develop a unified approach, which can deal with all these spaces simultaneously; in particular,
we obtain a unified proof of the classical results that $L^p$, $1\le p<\infty$ and~$H^1$ are primary.  We
then extend our factorization results to infinite direct sums of Haar system Hardy spaces.

We will now give a general outline of our approach, in which we systematically reduce a given
operator~$T$ on a Haar system Hardy space $Y$ to a constant multiple of the identity operator:
First, we reduce~$T$ to a bounded Haar multiplier via the framework of strategically reproducible
bases and strategically supporting systems.  If the Haar system $(h_I)_I$ is unconditional, then
every bounded sequence of scalars $(d_I)_I$ determines a bounded Haar multiplier by
$D h_I = d_I h_I$, $I\in \mathcal{D}$. On the contrary, the result of E.~M.~Semenov and
S.~N.~Uksusov~\cite{MR2975943} shows that there are far fewer bounded Haar multipliers on $L^1$.
Thus, a unified approach must necessarily be able to deal with any Haar multiplier without
exploiting that there are only few bounded Haar multipliers.  Indeed, our approach reduces any
bounded Haar multiplier to one with very small variation, i.e., to a stable Haar multiplier.  This
reduction is achieved by using randomized block bases of the Haar system. By a perturbation
argument, we then arrive at a scalar multiple $cI_Y$ of the identity operator, completing our
reduction procedure. One notable aspect of our method is that we have a priori knowledge about the
constant~$c$ as soon as the Haar multiplier~$D$ is determined.

Before proceeding, we will introduce some terminology. Let $E$ be a Banach space and let
$S,T\colon E\to E$ be bounded linear operators. We say that \emph{$S$ factors through $T$ (with
  constant $C\ge 0$)} if there exist bounded linear operators $A,B\colon E\to E$ with $S = ATB$ (and
$\|A\|\|B\|\le C$). The first important property is the \emph{primary factorization property}.
\begin{dfn}
  We say that a Banach space $E$ has the \emph{primary factorization property} if for every bounded
  linear operator $T\colon E\to E$, the identity $I_E$ either factors through $T$ or through
  $I_E - T$.
\end{dfn}
If a Banach space $E$ has the primary factorization property and satisfies Pe{\l}czy\'{n}ski's
\emph{accordion property}, i.e., $E\sim \ell^p(E)$ for some $1\le p\le \infty$, then $E$ is
primary. This follows from Pe{\l}czy\'{n}ski's decomposition method and the following observation:
If $P$ is a bounded linear projection on $E$, then the primary factorization property of $E$ implies
that either $P(E)$ or $(I_E - P)(E)$ contains a complemented subspace that is isomorphic to $E$.

We will now describe how the primary factorization property is related to the theory of operator
ideals.  Let $E$ be a Banach space and let $\mathcal{B}(E)$ denote the algebra of bounded linear
operators on $E$. Then we define
\begin{equation*}
  \mathcal{M}_E = \{ T\in \mathcal{B}(E) : I_E \ne ATB\text{ for all } A, B\in \mathcal{B}(E) \}.
\end{equation*}
The set $\mathcal{M}_E$ is an ideal of $\mathcal{B}(E)$ if and only if it is closed under
addition. It was observed by D.~Dosev and W.~B.~Johnson \cite{MR2586976} that if $\mathcal{M}_E$ is
an ideal of $\mathcal{B}(E)$, then it is automatically the \emph{unique maximal ideal} of
$\mathcal{B}(E)$. On the other hand, the assertion that $\mathcal{M}_E$ is closed under addition is
equivalent to the primary factorization property of $E$ (see \cite[Proposition~5.1]{MR2586976}).
In~\cite{MR4477014}, T.~Kania and the first named author introduced \emph{strategically supporting}
systems in dual pairs of Banach spaces to describe sufficient conditions for $\mathcal{M}_E$ to be
the unique maximal ideal of $\mathcal{B}(E)$. Typically, this concept is used to find a large
complemented subspace where a given operator has additional properties, e.g., large diagonal: A
bounded linear operator $T$ on a Banach space $E$ with a Schauder basis $(e_j)_{j=1}^{\infty}$ and
biorthogonal functionals $(e_j^{*})_{j=1}^{\infty}$ has \emph{large diagonal} if
$\inf_{j\in \mathbb{N}} |\langle e_j^{*}, Te_j \rangle| > 0$.  The study of operators with large
diagonal goes back to A.~D.~Andrew~\cite{MR0567081}, and they were later explicitly investigated
in~\cite{MR0890420,MR3861733,MR3819715,MR3794334,MR3990955,MR3910428,MR4460217}.  The first named
author, P.~Motakis, P.~F.~X.~Müller and Th.~Schlumprecht then developed these approaches into a
systematic framework in~\cite{MR4145794,MR4299595,MR4430957}\todo{add 2D paper}.  A part of this
framework is the \emph{factorization property}, which was analyzed using strategically reproducible
Schauder bases.
\begin{dfn}
  Let $E$ be a Banach space with a Schauder basis $(e_j)_{j=1}^{\infty}$ and biorthogonal
  functionals $(e_j^{*})_{j=1}^{\infty}$.  We say that $(e_j)_{j=1}^{\infty}$ has the
  \emph{factorization property} if the identity $I_E$ factors through every bounded linear operator
  $T\colon E\to E$ which has large diagonal with respect to~$(e_j)_{j=1}^{\infty}$.
\end{dfn}

For further development of this framework and some new applications to stopping time Banach spaces,
we refer to~\cite{MR4477014}.  More recently, Kh.~V.~Navoyan~\cite{MR4635015} showed that under mild
assumptions, the Haar basis of a \emph{Haar system space}~$X$ has the factorization property,
provided that it is unconditional in $X$.  A \emph{Haar system space} is the completion of the
linear span of the Haar system under a rearrangement-invariant norm; these spaces were introduced
in~\cite{MR4430957}.  Before presenting our main results in \Cref{sec:main-results}, we will
establish necessary notation and terminology.

%%% Local Variables:
%%% mode: latex
%%% TeX-master: "main"
%%% End:
 %auto-ignore

\section{Notation and basic definitions}
\label{sec:notation}

We denote by $\mathcal{D}$ the collection of all dyadic intervals in~$[0,1)$, namely
\begin{equation*}
  \mathcal{D}
  = \Big\{\Big[\frac{i-1}{2^j},\frac{i}{2^j}\Big) :
  j\in\mathbb{N}_0,\ 1\leq i\leq 2^j\Big\}.
\end{equation*}
In addition, for each $n\in\mathbb{N}_0$, we define
\begin{equation*}
  \mathcal{D}_n
  = \{I\in\mathcal{D} : |I| = 2^{-n}\}
  \qquad
  \text{and}
  \qquad
  \mathcal{D}_{\le n}
  = \bigcup_{k=0}^n \mathcal{D}_k,
  \quad
  \mathcal{D}_{<n} = \bigcup_{k=0}^{n-1}\mathcal{D}_k.
\end{equation*}
For each dyadic interval $I\in \mathcal{D}$, let~$I^+$ denote the left half of~$I$ and~$I^-$ its
right half, i.e., $I^+$ is the largest dyadic interval $J\in\mathcal{D}$ with $J\subsetneq I$ and
$\inf J = \inf I$, and we have $I^- = I\setminus I^+$.  If we use the symbol~$\pm$ multiple times in
an equation, we mean either always~$+$ or always~$-$.  Sometimes, instead of~$I^+$ or~$I^-$, we will
write~$I^{\varepsilon}$, where $\varepsilon \in \{ \pm 1 \}$. Conversely, for
$I\in \mathcal{D}\setminus \{ [0,1) \}$, we denote by~$\pi(I)$ the dyadic predecessor of~$I$, i.e.,
the unique dyadic interval~$J\in \mathcal{D}$ with $I = J^+$ or $I = J^-$.  Finally, for any
subcollection $\mathcal{B}\subset\mathcal{D}$, we put $\mathcal{B}^* = \bigcup_{I\in \mathcal{B}}I$.

Next, we define the bijective function $\iota\colon\mathcal{D}\to\mathbb{N}$ by
\begin{equation*}
  \Big[\frac{i}{2^j},\frac{i+1}{2^j}\Big)
  \overset{\iota}{\mapsto} 2^j + i.
\end{equation*}
The function $\iota $ defines a linear order on $\mathcal{D}$, and we will frequently consider
sequences indexed by dyadic intervals, identifying $\mathcal{D}$ with $\mathbb{N}$.  The Haar system
$(h_I)_{I\in\mathcal{D}}$ is defined by
\begin{equation*}
  h_I
  = \chi_{I^+} - \chi_{I^-},
  \qquad I\in\mathcal{D},
\end{equation*}
where $\chi_A$ denotes the characteristic function of a subset $A\subset [0,1)$.  We additionally
define $h_\varnothing = \chi_{[0,1)}$ and put $\mathcal{D}^+ = \mathcal{D}\cup\{\varnothing\}$.  We
also put $\iota(\varnothing) = 0$.  We will usually write $I\leq J$ if $\iota(I)\leq\iota(J)$,
$I,J\in\mathcal{D}^+$.  Henceforth, whenever we write $\sum_{I\in\mathcal{D}^+}$, we will always
mean that the sum is taken with this linear order $\iota$.  Recall that the Haar system
$(h_I)_{I\in \mathcal{D}^+}$, in the linear order defined by $\iota$, is a monotone Schauder basis
of $L^p$, $1\le p<\infty$ (and unconditional if $1<p<\infty$). For
$x = \sum_{I\in \mathcal{D}^+} a_Ih_I\in L^1$, we define the \emph{Haar support} of $x$ to be the
set of all $I\in \mathcal{D}^+$ with $a_I\ne 0$. More generally, if $(e_j)_{j=1}^{\infty}$ is a
Schauder basis of a Banach space $E$, then for $x = \sum_{j=1}^{\infty}a_je_j\in E$, we define
$\operatorname{supp}x = \{ j\in \mathbb{N} : a_j \ne 0 \}$.

We only consider real Banach spaces. If $(x_n)_{n=1}^{\infty}$ is a sequence in a Banach space~$E$,
then we denote its closed linear span in~$E$ by $[x_n]_{n\in \mathbb{N}}$. If~$E$ and~$F$ are Banach
spaces and $C\ge 1$, we say that~$E$ and~$F$ are $C$-isomorphic, i.e., $E \overset{C}{\sim} F$, if
there exists an isomorphism~$T$ from~$E$ onto~$F$ with $\|T\|\|T^{-1}\|\le C$. Moreover, if
$C\ge 0$, we say that a closed subspace~$F$ of~$E$ is $C$-complemented in~$E$ if there exists a
linear projection $P\colon E\to E$ with $P(E) = F$ and $\|P\|\le C$. Finally, we say that two
measurable functions $x, y\colon [0,1)\to \mathbb{R}$ are \emph{equimeasurable} if their absolute
values~$|x|$ and~$|y|$ have the same distribution.

\subsection{Haar system Hardy spaces}
\label{sec:haar-system-hardy-spaces}

The class of \emph{Haar system Hardy spaces} will be defined as an extension of the class of
\emph{Haar system spaces}, which was introduced in~\cite{MR4430957} as follows.
\begin{dfn}\label{dfn:HS-1d}
  A \emph{Haar system space $X$} is the completion of
  $H := \spn\{ h_I : I\in\mathcal{D}^+\} = \spn\{\chi_I : I\in\mathcal{D}\}$ under a norm
  $\|\cdot\|_X$ that satisfies the following properties:
  \begin{enumerate}[(i)]
    \item\label{dfn:HS-1d:1} If $x$, $y$ are in $H$ and $|x|$, $|y|$ have the same distribution,
          then $\|x\|_X = \|y\|_X$.
    \item\label{dfn:HS-1d:2} $\|\chi_{[0,1)}\|_X = 1$.
  \end{enumerate}
  We denote the class of Haar system spaces by $\mathcal{H}(\delta)$.  Moreover, given
  $X\in\mathcal{H}(\delta)$, we define the closed subspace $X_0$ of $X$ as the closure of
  $H_0 := \spn\{ h_I : I\in \mathcal{D} \}$ in $X$.  We denote the class of these subspaces by
  $\mathcal{H}_0(\delta)$.
\end{dfn}
Note that if a norm on $H$ satisfies Property~\eqref{dfn:HS-1d:1}, then we can always scale it so
that it satisfies Property~\eqref{dfn:HS-1d:2}. One can show that the Haar system
$(h_I)_{I\in \mathcal{D}^+}$, in the linear order defined by $\iota$, is a monotone Schauder basis
of any Haar system space $X$ (see \Cref{pro:HS-1d}).

Besides the spaces $L^p$, $1\le p<\infty$, and the closure of $H$ in $L^{\infty}$, the class
$\mathcal{H}(\delta)$ includes all rearrangement-invariant function spaces on $[0,1)$ (e.g.,
Orlicz function spaces) in which the span of the Haar system $(h_I)_{I\in \mathcal{D}^+}$ is
dense. According to \cite[Proposition~2.c.1]{MR540367}, this is true for all separable
rearrangement-invariant function spaces.

By $(r_n)_{n=0}^{\infty}$, we denote the sequence of standard Rademacher functions, i.e.,
\begin{equation*}
  r_n = \sum_{I\in \mathcal{D}_n} h_I,\qquad n\in \mathbb{N}_0.
\end{equation*}
We will now introduce the class of \emph{Haar system Hardy spaces}. To this end, we first define the set
$\mathcal{R}$ as
\begin{equation*}
  \mathcal{R} = \{ (r_{\iota(I)})_{I\in \mathcal{D}^+}, (r_0)_{I\in \mathcal{D}^+} \}.
\end{equation*}
Hence, if $\mathbf{r} = (r_I)_{I\in \mathcal{D}^+}\in \mathcal{R}$, then $\mathbf{r}$ is either an
independent sequence of $\pm 1$-valued random variables (indexed by dyadic intervals) or a constant
sequence. Starting with a Haar system space $X$ and a sequence
$\mathbf{r} = (r_I)_{I\in \mathcal{D}^+}\in \mathcal{R}$, we obtain a Haar system Hardy space
by taking the completion of $H$ under a new norm:

\begin{dfn}\label{dfn:haar-system-hardy-space}
  \todo{cite 2d-haar-system-hardy spaces paper!} Given $X\in\mathcal{H}(\delta)$ and
  $\mathbf{r} = (r_I)_{I\in\mathcal{D}^+}\in\mathcal{R}$, we define the \emph{(one-parameter) Haar
    system Hardy space $X(\mathbf{r})$} as the completion of
  $H = \spn\{ h_I : I\in \mathcal{D}^+ \}$ under the norm $\|\cdot\|_{X(\mathbf{r})}$ given by
  \begin{equation*}
    \Bigl\| \sum_{I\in\mathcal{D}^+} a_I h_I\Bigr\|_{X(\mathbf{r})}
    = \Bigl\|
    s\mapsto \int_0^1 \Bigl|
    \sum_{I\in\mathcal{D}^+} r_I(u) a_I h_I(s)
    \Bigr| \dif u
    \Bigr\|_X.
  \end{equation*}
  We will denote the class of one-parameter Haar system Hardy spaces by $\mathcal{HH}(\delta)$.
  Moreover, given $X(\mathbf{r})\in\mathcal{HH}(\delta)$, we define the closed subspace
  $X_0(\mathbf{r}) = [h_I]_{I\in \mathcal{D}}\subset X(\mathbf{r})$.  For notational convenience, we
  will also refer to the subspaces $X_0(\mathbf{r})$ as Haar system Hardy spaces.  We denote the
  class of these subspaces by $\mathcal{H}\mathcal{H}_0(\delta)$.
\end{dfn}

Clearly, if $\mathbf{r} = (r_I)_{I\in\mathcal{D}^+}$ is an independent sequence, then
$(h_I)_{I\in\mathcal{D}^+}$ is a $1$-unconditional Schauder basis of $X(\mathbf{r})$. If, on the
other hand, $r_I = r_0$ for all $I\in\mathcal{D}^+$, then we have
$\|\cdot \|_{X(\mathbf{r})} = \|\cdot \|_X$ and thus $X(\mathbf{r}) = X$.  We already know that in
this case, $(h_I)_{I\in\mathcal{D}^+}$ is a monotone Schauder basis of $X(\mathbf{r})$ (but it need
not be unconditional).  Finally, note that if $x$ is a finite linear combination of disjointly
supported Haar functions, then we always have $\|x\|_{X(\mathbf{r})} = \|x\|_X$.

\begin{rem}\label{rem:classical-examples}
  Let $X\in \mathcal{H}(\delta)$, suppose that $\mathbf{r}\in \mathcal{R}$ is independent, and let
  $(a_I)_{I\in \mathcal{D}}$ be a scalar sequence with $a_I\ne 0$ for at most finitely many
  $I$. Then, using Khintchine's inequality and the fact that $|x|\le |y|$ pointwise implies
  $\|x\|_X\le \|y\|_X$ for all $x,y\in H$ (see \Cref{pro:HS-1d}~\eqref{pro:HS-1d:v}), we obtain
  \begin{equation}\label{eq:khintchine-norm-square-function}
    \Bigl\| \sum_{I\in \mathcal{D}^+} a_Ih_I \Bigr\|_{X(\mathbf{r})}
    \sim \Bigl\| \Bigl( \sum_{I\in \mathcal{D}^+} |a_I|^2h_I(s)^2 \Bigr)^{1/2} \Bigr\|_X.
  \end{equation}
  Thus, for $X = L^1$ and $\mathbf{r}\in \mathcal{R}$ independent, we see that $X(\mathbf{r})$ is
  isomorphic to the dyadic Hardy space $H^1$, and for $X = L^p$, $1 < p < \infty$, we have
  $X(\mathbf{r})\sim H^p\sim L^p$, where $H^p$ denotes the dyadic Hardy space with parameter $p$. In
  fact, $\|\cdot\|_X\sim \|\cdot\|_{X(\mathbf{r})}$ holds whenever $X$ is a separable r.i.~function
  space with non-trivial Boyd indices (according to the remark
  following~\cite[Proposition~2.d.8]{MR540367}).  In all these cases, the identity operator provides
  an isomorphism. Finally, if $X$ is the closure of $H$ in $L^{\infty}$ and
  $\mathbf{r}\in \mathcal{R}$ is independent, then by~\eqref{eq:khintchine-norm-square-function},
  the space $X_0(\mathbf{r})$ is isomorphic to the closure of~$H_0$ in the non-separable space
  $SL^{\infty}$ (see~\cite{MR2060198,MR3819715}).
\end{rem}

\subsection{Additional definitions}
\label{sec:additional-definitions}

We begin this subsection by summarizing and extending the definitions related to factorization of
operators that were given in \Cref{sec:introduction}. First, we introduce the following additional
factorization modes.

\begin{dfn}\label{dfn:factorization-modes}
  Let $E$ denote a Banach space. Let $S, T\colon E\to E$ denote bounded linear operators, and let
  $C, \eta \ge 0$.
  \begin{enumerate}[(i)]
    \item\label{enu:dfn:factorization-modes:a} We say that \emph{$S$ factors through $T$ with
          constant $C$ and error $\eta$} if there exist linear operators $A,B\colon E\to E$ with
          $\|A\|\|B\|\leq C$ such that $\|S - ATB\|\leq \eta$.
    \item\label{enu:dfn:factorization-modes:a2} If~\eqref{enu:dfn:factorization-modes:a} holds and
          we additionally have $AB = I_E$, then we say that $S$ \emph{projectionally} factors
          through $T$ with constant $C$ and error $\eta$.
    \item\label{enu:dfn:factorization-modes:b} We say that $S$ (projectionally) factors through $T$
          \emph{with constant $C^+$} and error $\eta$ if for every $\gamma > 0$, the operator $S$
          (projectionally) factors through $T$ with constant $C + \gamma$ and error $\eta$.
  \end{enumerate}
  If we omit the phrase ``with error $\eta$''
  in~\eqref{enu:dfn:factorization-modes:a},~\eqref{enu:dfn:factorization-modes:a2}
  or~\eqref{enu:dfn:factorization-modes:b}, then we take that to mean that the error is~$0$.
\end{dfn}

Next, we make some elementary observations that will be useful later.

\begin{rem}\label{rem:factors}
  Let $R,S,T\colon E\to E$ denote bounded linear operators and suppose that $R$ (projectionally)
  factors through $S$ with constant $C_1$ and error $\eta_1$, and that $S$ (projectionally) factors
  through $T$ with constant $C_2$ and error $\eta_2$.  In \cite[Proposition~2.3]{MR4430957}, it was
  observed that $R$ (projectionally) factors through $T$ with constant $C_1 C_2$ and error
  $\eta_1 + C_1\eta_2$.
\end{rem}

\begin{rem}\label{rem:factors:iso}
  Let $S, T\colon E\to E$ denote bounded linear operators and suppose that $S$ is an isomorphism.
  If $S$ factors through $T$ with constant $C\geq 0$ and error $\eta\geq 0$ and
  $\eta \|S^{-1}\| < 1$, then $S$ factors through $T$ with constant $\frac{C}{1-\eta \|S^{-1}\|}$
  (and error $0$). Indeed, let $A,B\colon E\to E$ be bounded linear operators with
  $\|S - ATB\|\le \eta$ and $\|A\|\|B\|\le C$. Then we have
  \begin{equation*}
    \|I_E - S^{-1}ATB\|\le \eta\|S^{-1}\| < 1,
  \end{equation*}
  so $Q := (S^{-1}ATB)^{-1}$ exists and satisfies $\|Q\|\le 1/(1 - \eta\|S^{-1}\|)$. The statement
  follows since we have $S = ATBQ$.
\end{rem}

\begin{rem}\label{rem:factors:proj}
  Let $S,T\colon E\to E$ denote bounded linear operators and suppose that $S$ projectionally factors
  through $T$ with constant $C\geq 1$ and error $\eta > 0$.  Then $I_E - S$ projectionally factors
  through $I_E - T$ with constant $C\geq 1$ and error $\eta > 0$.
\end{rem}

The next definition includes quantitative and uniform versions of the (primary) factorization
property as well as some more variations of these concepts.

\begin{dfn}\label{dfn:factorization-properties}
  Let $E$ denote a Banach space with a Schauder basis $(e_j)_{j=1}^{\infty}$ and biorthogonal
  functionals $(e_j^{*})_{j=1}^{\infty}$, and let $C \ge 0$.
  \begin{enumerate}[(i)]
    \item Let $\delta > 0$, and let $T\colon E\to E$ be a bounded linear operator. We say that
          \begin{itemize}
            \item $T$ is \emph{diagonal (with respect to $(e_j)_{j=1}^{\infty}$)} if
                  $\langle e_k^{*}, Te_j \rangle = 0$ for all $k\ne j$. Diagonal operators with
                  respect to the Haar system are called \emph{Haar multipliers.}
            \item $T$ has \emph{$\delta$-large diagonal (with respect to $(e_j)_{j=1}^{\infty}$)} if
                  $|\langle e_j^{*}, Te_j \rangle| \ge \delta$ for all $j\in \mathbb{N}$.
            \item $T$ has $\delta$-large \emph{positive} diagonal if
                  $\langle e_j^{*}, Te_j \rangle \ge \delta$ for all $j\in \mathbb{N}$.
            \item $T$ has $\delta$-large \emph{negative} diagonal if
                  $\langle e_j^{*}, Te_j \rangle \le -\delta$ for all $j\in \mathbb{N}$.
          \end{itemize}
    \item\label{enu:dfn:factorization-modes:d} We say that $E$ has the \emph{$C$-primary (diagonal)
          factorization property (with respect to $(e_j)_{j=1}^{\infty}$)} if for any bounded linear
          operator $T\colon E\to E$ (which is diagonal with respect to $(e_j)_{j=1}^{\infty}$), the
          identity $I_E$ either factors through $T$ or through $I_E - T$ with constant~$C^+$.
    \item\label{enu:dfn:factorization-modes:e} Let $K\colon (0,\infty)\to (0,\infty)$.  We say that
          $(e_j)_{j=1}^{\infty}$ has the \emph{$K(\delta)$-(diagonal) factorization property} if for
          every bounded linear (diagonal) operator $T\colon E\to E$ with $\delta$-large diagonal
          with respect to $(e_j)_{j=1}^{\infty}$, the identity $I_E$ factors through $T$ with
          constant $K(\delta)^+$.
    \item If~\eqref{enu:dfn:factorization-modes:e} holds with ``$\delta$-large
          diagonal'' replaced by ``$\delta$-large \emph{positive} diagonal'', then we say that
          $(e_j)_{j=1}^{\infty}$ has the $K(\delta)$-\emph{positive} (diagonal) factorization
          property.
  \end{enumerate}
\end{dfn}

\begin{rem}\label{rem:enumeration-of-basis}
  Let $\lambda \ge 1$ and suppose that for each $k\in \mathbb{N}$, $E_k$ is a Banach space with a Schauder basis
  $(e_{k,j})_{j=1}^{\infty}$ whose basis constant is bounded by $\lambda$.  Let $1\le p\le \infty$. We identify each
  space $E_k$ with the subspace of $\ell^p((E_k)_{k=1}^{\infty})$ consisting of those sequences for which
  all coordinates, except the $k$th one, are equal to zero.

  Now let~$(\tilde{e}_m)_{m=1}^{\infty}$ be an enumeration of~$(e_{k,j})_{k,j=1}^{\infty}$ with the property
  that whenever we have $e_{k,i} = \tilde{e}_l$ and $e_{k,j} = \tilde{e}_m$ for some
  $k,i,j,l,m\in\mathbb{N}$, then the inequality $i < j$ implies $l < m$. Then for every
  $1\le p<\infty$, $(\tilde{e}_m)_{m=1}^{\infty}$ is a Schauder basis of
  $\ell^p((E_k)_{k=1}^{\infty})$ with basis constant at most $\lambda$. The associated biorthogonal functionals
  are given by $\tilde{e}_m^{*} = e_{k,j}^{*}$ (for $\tilde{e}_m = e_{k,j}$), where $e_{k,j}^{*}$
  acts on the $k$th component of $\ell^p((E_k)_{k=1}^{\infty})$.

  Clearly, for $p = \infty$, $(\tilde{e}_m)_{m=1}^{\infty}$ is not a Schauder basis of
  $\ell^p((E_k)_{k=1}^{\infty})$, but we can still define the notion of a large diagonal: Let
  $\delta > 0$. A bounded linear operator
  $T\colon \ell^{\infty}((E_k)_{k=1}^{\infty})\to \ell^{\infty}((E_k)_{k=1}^{\infty})$ has
  $\delta$-large diagonal with respect to $(\tilde{e}_m)_{m=1}^{\infty}$ if
  $|\langle \tilde{e}_m^*, T\tilde{e}_m \rangle| \ge \delta$ holds for all $m \in \mathbb{N}$. The
  $K(\delta)$-factorization property of $(\tilde{e}_m)_{m=1}^{\infty}$ in
  $\ell^{\infty}((E_k)_{k=1}^{\infty})$ is then defined like above.
\end{rem}

In order to prove the (primary) factorization property for $\ell^{\infty}$-sums of Haar system Hardy spaces,
we need to assume our spaces form a sequence that is \emph{uniformly asymptotically curved} with
respect to the array consisting of their Haar bases. This property was introduced
in~\cite{MR4299595}.

\begin{dfn}\label{dfn:asympt-curved}
  Let $E$ be a Banach space with a Schauder basis $(e_j)_{j=1}^{\infty}$.  Moreover, let
  $(E_k)_{k=1}^{\infty}$ be a sequence of Banach spaces, and for each $k\in \mathbb{N}$, let
  $(e_{k,j})_{j=1}^{\infty}$ denote a Schauder basis of $E_k$.  By an \emph{array}, we mean an
  indexed family $(x_{k,j})_{k,j=1}^{\infty}$ with $x_{k,j}\in E_k$ for all $k,j\in \mathbb{N}$.
  \begin{itemize}
    \item We say that $E$ is \emph{asymptotically curved with respect to $(e_j)_{j=1}^{\infty}$} if
          for every bounded block basis $(x_j)_{j=1}^{\infty}$ of $(e_j)_{j=1}^{\infty}$, we have
          \begin{equation*}
            \lim_{n\to \infty} \frac{1}{n} \Big\|\sum_{j=1}^n x_j\Big\|_E
            = 0.
          \end{equation*}
    \item We say that the sequence $(E_k)_{k=1}^{\infty}$ \emph{uniformly asymptotically curved with
          respect to the array $(e_{k,j})_{k,j=1}^{\infty}$} if the following holds: For every bounded array
          $(x_{k,j})_{k,j=1}^{\infty}$ with the property that $(x_{k,j})_{j=1}^{\infty}$ is a block basis of
          $(e_{k,j})_{j=1}^{\infty}$ for all $k\in \mathbb{N}$, we have
          \begin{equation*}
            \lim_{n\to \infty} \sup_{k\in \mathbb{N}}\frac{1}{n}\Big\|\sum_{j=1}^n x_{k,j} \Big\|_{E_k}
            = 0.
          \end{equation*}
  \end{itemize}
\end{dfn}
We refer to Section \Cref{sec:curved} for a discussion of these concepts.

%%% Local Variables:
%%% mode: latex
%%% TeX-master: "main"
%%% End:
 %auto-ignore

\section{Main results}
\label{sec:main-results}

In order to avoid having to deal with the constant function $h_{\varnothing}$ separately in the
proofs, we will state our results for the subspaces $X_0(\mathbf{r})$ instead of $X(\mathbf{r})$. We
will explain in \Cref{rem:constant-function} how to obtain the corresponding versions of
\Cref{thm:main-result:A} and \Cref{thm:main-result:B} for the spaces~$X(\mathbf{r})$.  In the
following, we always assume that $Y, Y_k\in \mathcal{HH}_0(\delta)$, $k\in \mathbb{N}$. The Haar
basis of~$Y$ is denoted by~$(h_I)_{I\in \mathcal{D}}$, and for every $k\in \mathbb{N}$, the Haar
basis of~$Y_k$ is denoted by $(h_{k,I})_{I\in \mathcal{D}}$.  Given $1\le p\le \infty$, we identify
each space $Y_k$ with the subspace of $\ell^p((Y_k)_{k=1}^{\infty})$ consisting of sequences
supported in the $k$th component, and we enumerate $(h_{k,I})_{k\in \mathbb{N}, I\in \mathcal{D}}$
according to \Cref{rem:enumeration-of-basis}, thus obtaining a monotone Schauder basis of
$\ell^p((Y_k)_{k=1}^{\infty})$ for $1\le p<\infty$.

\begin{thm}\label{thm:main-result:A}
  Suppose that the sequence of Rademacher functions $(r_n)_{n=0}^{\infty}$ is weakly null in~$Y$,
  and let~$E$ denote one of the following Banach spaces:
  \begin{enumerate}[(i)]
    \item\label{thm:main-result:A:i} $E = Y$
    \item\label{thm:main-result:A:ii} $E = \ell^p(Y)$ for some $1\le p<\infty$
    \item\label{thm:main-result:A:iii} $E = \ell^{\infty}(Y)$ if $Y$ is asymptotically curved with
          respect to $(h_I)_{I\in \mathcal{D}}$.
  \end{enumerate}
  Then~$E$ has the $4$-primary factorization property, and hence, $\mathcal{M}_E$ is the unique
  maximal ideal of $\mathcal{B}(E)$. In particular, the spaces in~\eqref{thm:main-result:A:ii}
  and~\eqref{thm:main-result:A:iii} are primary.
\end{thm}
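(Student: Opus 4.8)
The plan is to decompose Theorem~\ref{thm:main-result:A} into two logically separate achievements and to attack them in sequence. First, the \emph{primary factorization property} itself follows once I establish two ingredients: (a) a \emph{reduction principle} showing that any bounded linear operator $T$ on $E$ can be reduced (after passing to a large complemented subspace on which $E$ is reproduced) to a bounded Haar multiplier $D$, and (b) a \emph{diagonal factorization result} showing that the identity on $E$ factors through every bounded Haar multiplier whose diagonal entries are bounded away from $0$ on a subsequence, with a quantitative constant converging to the advertised value $4$. Given $T$, one applies (a) to obtain a Haar multiplier $D$ with entries $(d_I)$; since $(d_I)$ is bounded, for every $\varepsilon>0$ either $\{I : |d_I| \ge \varepsilon\}$ or $\{I : |1-d_I| \ge \varepsilon\}$ contains a ``large'' subset (in the sense needed to restrict to a further reproduced subspace), so after restriction either $D$ or $I_E - D$ has $\varepsilon$-large diagonal; apply (b) to that operator, then use \Cref{rem:factors} to compose the two factorizations and \Cref{rem:factors:proj} to pass between $T$ and $I_E - T$. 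The role of the hypothesis that $(r_n)$ is weakly null in $Y$ is precisely to make the randomized block-basis constructions in the reduction step produce subspaces still isometric (or $1+\varepsilon$-isomorphic) to copies of $Y$, respectively $\ell^p(Y)$ or $\ell^\infty(Y)$; in the $\ell^\infty$ case the asymptotic-curvedness hypothesis is what prevents the $\sup_k$ in \Cref{dfn:asympt-curved} from spoiling the uniform control.

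The second achievement is the consequence ``$\mathcal{M}_E$ is the unique maximal ideal'' and ``$E$ is primary in cases \eqref{thm:main-result:A:ii} and~\eqref{thm:main-result:A:iii}''. For the ideal statement I would simply invoke the Dosev--Johnson observation recalled in the introduction: the primary factorization property of $E$ is equivalent to $\mathcal{M}_E$ being closed under addition, and once $\mathcal{M}_E$ is an ideal it is automatically the unique maximal ideal of $\mathcal{B}(E)$ (see~\cite[Proposition~5.1]{MR2586976}). For primariness I would check Pe{\l}czy\'nski's accordion property $E \sim \ell^p(E)$: for $E = \ell^p(Y)$, $1 \le p < \infty$, this is the standard isomorphism $\ell^p(\ell^p(Y)) \cong \ell^p(Y)$, and for $E = \ell^\infty(Y)$ one has $\ell^\infty(\ell^\infty(Y)) \cong \ell^\infty(Y)$; then, as noted in the introduction, the primary factorization property together with the accordion property yields primariness via Pe{\l}czy\'nski's decomposition method (given a projection $P$ on $E$, one of $P(E)$, $(I_E-P)(E)$ contains a complemented copy of $E$, and $E \sim \ell^p(E)$ lets one run the Pe{\l}czy\'nski argument). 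Note $E = Y$ itself is not claimed to be primary here (it need not satisfy the accordion property), only to have the factorization property and the maximal-ideal conclusion.

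The main obstacle is ingredient (b) combined with the sharpness of the constant $4$. For unconditional Haar systems (i.e.\ $\mathbf r$ independent, covering $L^p$ with $1<p<\infty$, $H^p$, $SL^\infty$, etc.) a Haar multiplier with large diagonal entries $|d_I|\ge\varepsilon$ on all of $\mathcal D$ can be inverted directly, but in the general Haar system Hardy space one only controls the entries on a subcollection and must first pass, via a \emph{faithful Haar system} / reproduced-subspace argument, to a subspace spanned by a rearranged copy of the Haar system on which $D$ looks like $\varepsilon\,I$ up to a small perturbation; then \Cref{rem:factors:iso} converts the approximate factorization into an exact one, and tracking how $\varepsilon \to 0$ feeds into the constant is what forces the ``$C^+$'' (i.e.\ ``for every $\gamma>0$, with constant $C+\gamma$'') formulation and pins the limiting value at $4$. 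The truly delicate case is \eqref{thm:main-result:A:iii}, $E=\ell^\infty(Y)$: here there is no Schauder basis, the reduction to a diagonal operator must be done simultaneously and uniformly across the coordinate spaces $Y_k$ (all equal to $Y$), and it is exactly the asymptotic-curvedness hypothesis, in the uniform-array form of \Cref{dfn:asympt-curved}, that supplies the compactness needed to carry the block-basis/averaging arguments through with constants independent of $k$. I would therefore organize the proof so that \eqref{thm:main-result:A:i} is the base case, \eqref{thm:main-result:A:ii} follows by running the same machinery with $\ell^p$-direct-sum bookkeeping, and \eqref{thm:main-result:A:iii} is handled last, reusing the $\ell^p$ argument but replacing norm estimates by their curved-space uniform analogues.
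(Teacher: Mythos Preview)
Your overall architecture---diagonalize, then handle the diagonal operator, then deduce the consequences---matches the paper, and your treatment of the maximal-ideal and primariness conclusions is correct. The diagonalization step~(a) is also right in spirit: the paper shows that the normalized Haar basis is $2$-strategically reproducible in $Y$ \emph{with projectional factors} (\Cref{thm:stratrep}), and the projectional condition $AB=I_E$ is precisely what makes \Cref{rem:factors:proj} applicable, so that ``$D$ projectionally factors through $T$'' transfers to ``$I_E-D$ projectionally factors through $I_E-T$''.

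The gap is in your step~(b), and it is exactly where the constant~$4$ is lost. You propose to restrict to $\{I:|d_I|\ge\varepsilon\}$ or $\{I:|1-d_I|\ge\varepsilon\}$ and then invert; but even at $\varepsilon=1/2$ this restriction already costs a factor~$2$ (via \Cref{thm:projection-1d} with $\mu\approx 1/2$), and on the restricted set the entries $d_I$ may still have mixed signs, so you cannot invert without a further dichotomy---stacked with the diagonalization, you do not reach~$4$. (Your remark that ``tracking how $\varepsilon\to 0$ \ldots\ pins the limiting value at~$4$'' points the wrong way: as $\varepsilon\to 0$ the constant diverges.) The paper avoids any subset restriction by introducing the \emph{characteristic set} $\Lambda(D)$ of cluster points of $(\langle r_n,Dr_n\rangle)_n$ (\Cref{dfn:characteristic}) and proving, via a randomized faithful Haar system construction (\Cref{pro:stabilization}, \Cref{thm:main-result:3}), that for every $c\in\Lambda(D)$ the operator $cI_Y$ projectionally factors through $D$ with constant~$1$. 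Since $\max(|c|,|1-c|)\ge 1/2$ always, this directly yields the $2$-primary \emph{diagonal} factorization property for $Y$, with no preliminary restriction and no sign split. A general transfer result (\Cref{thm:stratrep-primary-fact-prop}) then assembles this with $2$-strategic reproducibility with projectional factors to give the constant $2\cdot 2=4$ uniformly for $E=Y$, $\ell^p(Y)$, and---via \Cref{lem:curved-implies-uniformly-curved} and \Cref{pro:stratrep-diag-ell-infty}---$\ell^\infty(Y)$.
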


We will prove \Cref{thm:main-result:A} in \Cref{sec:proofs}. Next, we state the main
results which involve the factorization property.

\begin{thm}\label{thm:main-result:B}
  Suppose that the sequence of Rademacher functions $(r_n)_{n=0}^{\infty}$ is weakly null in~$Y$ and
  in each~$Y_k$, $k\in \mathbb{N}$, and let $E$ and $(e_m)_{m=1}^{\infty}$ denote one of the following pairs of Banach spaces and sequences:
  \begin{enumerate}[(i)]
    \item\label{thm:main-result:B:i} $E = Y$ and $(e_m)_{m=1}^{\infty} = (h_I)_{I\in \mathcal{D}}$
    \item\label{thm:main-result:B:ii} $E = \ell^p((Y_k)_{k=1}^{\infty})$ for some $1\le p<\infty$
          and $(e_m)_{m=1}^{\infty} = (h_{k,I})_{k\in \mathbb{N},I\in \mathcal{D}}$
    \item\label{thm:main-result:B:iii} $E = \ell^{\infty}((Y_k)_{k=1}^{\infty})$ and
          $(e_m)_{m=1}^{\infty} = (h_{k,I})_{k\in \mathbb{N},I\in \mathcal{D}}$ if
          $(Y_k)_{k=1}^{\infty}$ is uniformly asymptotically curved with respect to
          $(h_{k,I})_{k\in \mathbb{N},I\in \mathcal{D}}$.
  \end{enumerate}
  Then~$(e_m)_{m=1}^{\infty}$ has the $4/\delta$-factorization property in~$E$.
\end{thm}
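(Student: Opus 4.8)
The plan is to run the two-stage reduction sketched in the introduction: a general operator with $\delta$-large diagonal is first replaced by a Haar multiplier, and then the identity is factored through that Haar multiplier. Throughout, $\delta$-large diagonal and the factorization property are meant as in \Cref{dfn:factorization-properties}, and factorizations are composed and perturbed via \Cref{rem:factors} and \Cref{rem:factors:iso}. For the first stage I would show that, with an absolute constant, the relevant basis is \emph{strategically reproducible} in the sense of the framework of \cite{MR4145794,MR4299595,MR4430957}: this is established for the Haar system $(h_I)_{I\in\mathcal D}$ in $Y$ and, uniformly in $k$, for the enumerated array $(h_{k,I})_{k,I}$ in $\ell^p((Y_k)_{k=1}^{\infty})$ with $1\le p<\infty$; in the case $E=\ell^{\infty}((Y_k))$, where this array is not even a Schauder basis, its role is taken over by the hypothesis that $(Y_k)_{k=1}^{\infty}$ is uniformly asymptotically curved, which via a Bourgain-type localization yields the analogous reproduction statement in the weak-$*$ setting. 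Granting this, a bounded operator $T$ on $E$ with $\delta$-large diagonal is, up to a bounded factor in the factorization constant, reduced to the Haar multiplier $D$ on $E$ with the same diagonal entries $d_m=\langle e_m^{*},Te_m\rangle$: the identity $I_E$ factors through $T$ provided it factors through $D$, and $D$ again has $\delta$-large diagonal. So it remains to prove the \emph{diagonal} factorization property --- that $I_E$ factors through every bounded Haar multiplier with $\delta$-large diagonal --- with the constant a multiple of $1/\delta$ which, together with the first stage, totals $(4/\delta)^{+}$.

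For the second stage, write $De_m=d_me_m$; in the sum cases $D=\bigoplus_k D_k$, each $D_k$ a Haar multiplier on $Y_k$ with $\delta$-large diagonal. Since the $d_m$ lie in a bounded interval, an infinitary pigeonhole/Ramsey argument on the tree of dyadic intervals --- selecting, simultaneously in every component, a self-similar sub-tree --- produces a scalar $c$ with $|c|\ge\delta$ and, for a prescribed $\gamma>0$, a sub-tree on which all the relevant entries satisfy $|d_m-c|<\gamma$. Over the nodes of the $I$-th generation of this sub-tree I build the block $\tilde e_I$ as a normalized $\pm1$-average of the associated Haar functions, chosen deep enough and, when $\mathbf r$ is independent, with random signs; weak-nullness of the Rademacher functions in $Y$ (and in each $Y_k$) lets these blocks glide far enough out that the attendant perturbation terms are negligible. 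Two estimates then have to be checked: first, that $(\tilde e_I)_I$ is $(1+\gamma)$-equivalent to the Haar basis and its closed span is $(1+\gamma)$-complemented in $E$ by the natural averaging projection --- this uses rearrangement invariance together with the square-function comparison of \Cref{rem:classical-examples} (or, in the conditional case $\mathbf r=r_0$, a direct distributional computation), and, in the sum cases, that these bounds are uniform in $k$; second, that $\|D\tilde e_I-c\,\tilde e_I\|<\gamma$ for every $I$, with the errors summable in the $\ell^p$, $p<\infty$, case, so that the compression of $D$ to $[\tilde e_I]_I$ differs from $c\,I_{[\tilde e_I]}$ by less than $|c|/2$. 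Inverting this perturbation (\Cref{rem:factors:iso}) and unwinding the isomorphism $[\tilde e_I]\cong E$ gives a factorization of $I_E$ through $D$; since $|c|^{-1}\le 1/\delta$ and all remaining contributions (the equivalence, the complementing projection, the perturbation-inversion, and the first-stage reproduction) are near-optimal and can be pushed arbitrarily close to their bounds, the constants multiply out to $(4/\delta)^{+}$.

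The hardest part is the direct-sum cases, above all $E=\ell^{\infty}((Y_k))$: there is no Schauder basis with which to run the first stage, the off-diagonal part of $T$ cannot be absorbed by summation, and one must combine the uniformly-asymptotically-curved hypothesis with a Bourgain-style localization to carry out both the reproduction and the block-basis construction simultaneously in all components while keeping every constant independent of $k$. A secondary difficulty is the conditional case $\mathbf r=r_0$ (e.g.\ $Y\sim H^1$), where $D$ need not be invertible on $E$, so the passage to the subspace $[\tilde e_I]$ is indispensable rather than cosmetic and the equivalence and perturbation estimates must be carried out with the bare rearrangement-invariant norm --- precisely where weak-nullness of the Rademacher sequence enters. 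Finally, the tree-combinatorial selection of a sub-tree on which $D$ is nearly a scalar, and the bookkeeping needed to make the accumulated constants equal $4/\delta$ and not a larger function of $\delta$, require some care.
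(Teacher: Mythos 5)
Your stage-1 reduction (diagonalize $T$ via strategic reproducibility of the Haar basis, at the cost of a factor $2$) matches the paper, as does the overall architecture ``general operator $\to$ Haar multiplier $\to$ multiple of the identity.'' The gap is in stage 2, the diagonal factorization. You propose to find, by an infinitary pigeonhole on the dyadic tree, a sub-tree on which the entries $d_I$ lie within $\gamma$ of a single scalar $c$ with $|c|\ge\delta$, and then to build the blocks $\tilde e_I$ over the generations of that sub-tree. Two things go wrong. First, a sub-tree obtained by pigeonholing the values $d_I$ into $N\approx\|D\|/\gamma$ classes carries no measure control: the best one can guarantee is that one class $\mathcal{A}_{c_j}$ satisfies $|\limsup(\mathcal{A}_{c_j})|\ge 1/N$, and the norm of the associated projection-type operator $A$ scales like $1/\mu$, where $\mu$ is the measure of the support of the first block (\Cref{thm:projection-1d}). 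Hence the factorization constant you obtain depends on $\|D\|$ (equivalently on $\|T\|$) and on $\gamma$, not only on $\delta$; this fails to give the $K(\delta)$-factorization property at all, let alone the constant $4/\delta$. Worse, in a general Haar system Hardy space (e.g.\ $Y=L^1$, or the conditional case $\mathbf{r}=(r_0)_I$) a combinatorially full but measure-sparse Gamlen--Gaudet sub-tree need not span a well-complemented copy of $Y$ with uniform constants: the faithfulness conditions of \Cref{dfn:almost-faithful} with $\sum_I(1-\varkappa_I)$ small are exactly what is required, and a pure pigeonhole selection gives no access to them.

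The paper avoids this by splitting stage 2 into two steps driven by different mechanisms. The sign problem alone is handled first, via the strategically supporting property of the Haar system (\Cref{thm:strat-supp}, \Cref{pro:positive-reduction}): partitioning $\mathcal{D}$ into only \emph{two} classes according to the sign of $d_I$ guarantees that one class has $\limsup$ of measure at least $1/2$, which yields a Haar multiplier with $\delta$-large \emph{positive} diagonal factoring through $D$ with constant $2^+$. The stabilization is then carried out not by selecting a sub-tree on which the individual $d_I$ are nearly constant, but by a \emph{randomized, genuinely faithful} (full-measure, $\mu=1$) Haar system (\Cref{lem:probabilistic}, \Cref{pro:stabilization}): the new diagonal entries are \emph{averages} of the old ones over entire levels inside the sets $\Gamma_I(\theta)$, and a Chebyshev argument forces them to cluster at some $c\in\Lambda(D)$, i.e.\ at a cluster point of $\langle r_n,Dr_n\rangle$, with factorization constant $1$ and with $c\ge\delta$ thanks to the prior positivity reduction. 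This is where the budget $2\cdot 1\cdot(1/\delta)\cdot 2=4/\delta$ comes from; your single pigeonhole step cannot reproduce it. (Your remarks on the $\ell^\infty$-sum case are directionally consistent with the paper --- uniform asymptotic curvedness substitutes for the missing Schauder basis in the diagonalization step --- but the summability of errors across components plays no role in the paper's treatment of the $\ell^p$-sums, which instead pigeonholes the components themselves.)
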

The proof of \Cref{thm:main-result:B} can be found in \Cref{sec:proofs}.  We will now discuss the
hypotheses of \Cref{thm:main-result:A} and \Cref{thm:main-result:B}. In \Cref{sec:curved}, we will
provide sufficient conditions for (uniform) asymptotic curvedness in Haar system Hardy spaces (see
\Cref{pro:hshs-curved-examples}, \Cref{rem:hshs-curved-general} and
\Cref{rem:hshs-uniformly-curved-general}).

The requirement that the (standard) sequence of Rademacher functions $(r_n)_{n=0}^{\infty}$ is
weakly null in a Haar system Hardy space $X(\mathbf{r})\in \mathcal{HH}(\delta)$ or, equivalently,
in $Y = X_0(\mathbf{r})$ is not a strong limitation: If $\mathbf{r}$ is independent, then this
condition is always satisfied because using~\eqref{eq:khintchine-norm-square-function}, we see that
the sequence $(r_n)_{n=0}^{\infty}$ in $X(\mathbf{r})$ is equivalent to the unit vector basis of
$\ell^2$.  On the other hand, if $\mathbf{r}$ is constant, recall that $X(\mathbf{r}) = X$.  Then
the proof of \cite[Proposition~2.c.10]{MR540367}, which is a result by V.~A.~Rodin and E.~M.~Semenov
\cite{MR388068}, shows that the following conditions are equivalent:
\begin{enumerate}[(i)]
  \item\label{rem:rademacher-weakly-null:i} The sequence of standard Rademacher functions
        $(r_n)_{n=0}^{\infty}$ is weakly null in $X$.
  \item\label{rem:rademacher-weakly-null:ii} The sequence $(r_n)_{n=0}^{\infty}$ in $X$ is not
        equivalent to the unit vector basis of $\ell^1$.
  \item\label{rem:rademacher-weakly-null:iii} The norm $\|\cdot \|_X$ is not equivalent to the
        $L^{\infty}$-norm on $H$ (i.e., $X\ne C(\Delta)$).
  \item\label{rem:rademacher-weakly-null:iv} We have
        $\lim_{n\to \infty}\|\chi_{[0,2^{-n})}\|_X = 0$.
\end{enumerate}
The implication
\eqref{rem:rademacher-weakly-null:ii}$\Longrightarrow$\eqref{rem:rademacher-weakly-null:i} also
follows from Rosenthal's $\ell^1$~Theorem (see \cite[Remark~2.15]{MR4430957}).

\begin{rem}\label{rem:constant-function}
  Later on, we will show that for every $X(\mathbf{r})\in \mathcal{HH}(\delta)$, the closed subspace
  $Y = X_0(\mathbf{r})$ is isomorphic to $X(\mathbf{r})$ and that there exists an isomorphism
  $S\colon X(\mathbf{r})\to X_0(\mathbf{r})$ with $\|S\|\le 9$ and $\|S^{-1}\|\le 18$ which maps
  $(h_I)_{I\in \mathcal{D}^+}$ bijectively onto a permutation of $(h_I)_{I\in \mathcal{D}}$ (see
  \Cref{pro:hsp:hyperplane} and~\Cref{rem:hsp:hyperplane}).  Since the isomorphism $S$ is a
  rearrangement of the Haar system, it preserves large diagonals of bounded linear operators, i.e.,
  if $\delta > 0$ and $T\colon X(\mathbf{r})\to X(\mathbf{r})$ has $\delta$-large diagonal with
  respect to $(h_I)_{I\in \mathcal{D}^+}$, then $STS^{-1}\in \mathcal{B}(Y)$ has $\delta$-large
  diagonal with respect to $(h_I)_{I\in \mathcal{D}}$.  We thus see that the results of
  \Cref{thm:main-result:A} and \Cref{thm:main-result:B} concerning the primary factorization
  property and the factorization property respectively, carry over from~$Y$ and
  $(h_I)_{I\in \mathcal{D}}$ to~$X(\mathbf{r})$ and $(h_I)_{I\in \mathcal{D}^+}$, albeit at the
  price of increasing the factorization constant.  The same is true for the results in
  \Cref{thm:main-result:B}~\eqref{thm:main-result:B:ii} and~\eqref{thm:main-result:B:iii}.
\end{rem}

We will now introduce the \emph{characteristic set} of a bounded Haar multiplier, which contains a
priori information on the factorization appearing in \Cref{thm:main-result:3}.

\begin{dfn}\label{dfn:characteristic}
  Given a bounded Haar multiplier $D\colon Y\to Y$, we define the \emph{characteristic set
    $\Lambda(D)$} by
  \begin{equation*}
    \Lambda(D)
    := \{ c\in \mathbb{R} : \text{$c$ is a cluster point of $(\langle r_n, D r_n \rangle)_{n=0}^{\infty}$}\}.
  \end{equation*}
\end{dfn}

\begin{rem}\label{rem:characteristic}
  Since for each $n\in \mathbb{N}$, the Rademacher function~$r_n$ has norm~$1$ both in~$Y$ and
  in~$Y^{*}$ (see \Cref{cor:norm-in-dual-space}), the boundedness of~$D$ implies that~$\Lambda(D)$
  is non-empty and bounded: We have $|c|\le \|D\|$ for all $c\in \Lambda(D)$.  Moreover, note that
  $\Lambda(I_Y - D) = \{1 - c : c\in \Lambda(D)\}$.  Finally, if $D$ has $\delta$-large positive
  diagonal for some $\delta > 0$, i.e., $\langle h_I, Dh_I \rangle \ge \delta |I|$ for all
  $I\in \mathcal{D}$, then we have $\inf \Lambda(D)\geq \delta$.
\end{rem}

\begin{thm}\label{thm:main-result:3}
  Let $D\colon Y\to Y$ be a bounded Haar multiplier, and let $c\in \Lambda(D)$.  Then the following
  assertions are true:
  \begin{enumerate}[(i)]
    \item\label{thm:main-result:3:i} For every $\eta > 0$, $cI_Y$ projectionally factors through $D$
          with constant~$1$ and error~$\eta$.
    \item\label{thm:main-result:3:ii} If $c\neq 0$, then the identity $I_Y$ factors through $D$ with
          constant $(1/|c|)^+$.
    \item\label{thm:main-result:3:iii} The identity $I_Y$ either factors through $D$ or through
          $I_Y - D$ with constant $2^+$; more precisely, with constant $\min(1/|c|,1/|1-c|)^+$,
          where we define $1/0 = \infty$.
  \end{enumerate}
\end{thm}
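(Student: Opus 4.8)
The plan is to prove (i) first and then derive (ii) and (iii) as essentially formal consequences, using the remarks on composition of factorizations (\Cref{rem:factors}, \Cref{rem:factors:iso}, \Cref{rem:factors:proj}). For (i), fix $c\in\Lambda(D)$ and $\eta>0$. Since $c$ is a cluster point of $(\langle r_n,Dr_n\rangle)_{n=0}^\infty$, I can choose a subsequence $(r_{n_k})_k$ with $\langle r_{n_k},Dr_{n_k}\rangle\to c$. The idea is to build a randomized block basis $(\tilde h_I)_{I\in\mathcal D}$ of the Haar system out of (sums of) Rademacher-like blocks living on disjoint levels, so that $(\tilde h_I)$ is isometrically equivalent to $(h_I)$, spans a $1$-complemented copy of $Y$ inside $Y$ (via the natural averaging/conditional-expectation projection $P$ onto $[\tilde h_I]$), and almost-diagonalizes $D$: $P D\tilde h_I\approx c\,\tilde h_I$. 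Then defining $B\colon Y\to Y$ by $h_I\mapsto\tilde h_I$ and $A = P$ composed with the inverse equivalence, we get $AB=I_Y$ and $\|cI_Y - ADB\|\le\eta$ with $\|A\|\|B\|\le 1^+$. Because the paper emphasizes that the constant $c$ is known a priori from $D$ and that the reduction passes through "stable Haar multipliers," I expect the actual construction of $(\tilde h_I)$ to invoke earlier machinery (strategically reproducible bases / randomized block bases) that lets one select blocks on which $D$ has small variation, so that the diagonal value is forced to be close to the cluster point $c$; the Rademacher functions being weakly null is what guarantees that the off-diagonal part of $D$ on these blocks, after applying $P$, is small, and that the blocks can be chosen successively without obstruction. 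This is the main obstacle: making precise that one can simultaneously (a) keep the block basis $1$-equivalent to $(h_I)$ and $1$-complemented, (b) drive $\langle\tilde h_I,D\tilde h_I\rangle$ to $c$ uniformly in $I$, and (c) control $\|P D - (\text{diagonal part})\|$ on the span — I would handle (c) by a standard gliding-hump argument exploiting weak nullity of $(r_n)$ in $Y$ and the boundedness of $D$, choosing the levels $n_k$ sparse enough.

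For (ii), assume $c\neq0$. Apply (i) with $\eta$ small enough that $\eta\|(cI_Y)^{-1}\| = \eta/|c| < 1$: then $cI_Y$ factors through $D$ with constant $1$ and error $\eta$, and since $cI_Y$ is an isomorphism, \Cref{rem:factors:iso} upgrades this to an exact factorization of $cI_Y$ through $D$ with constant $\frac{1}{1-\eta/|c|}$. Dividing the operator $A$ by $c$ (absorbing the scalar) shows $I_Y$ factors through $D$ with constant $\frac{1}{|c|(1-\eta/|c|)} = \frac{1}{|c|-\eta}$, and letting $\eta\to0$ gives the constant $(1/|c|)^+$, which is exactly assertion (ii).

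For (iii), first note $\Lambda(I_Y - D) = \{1-c : c\in\Lambda(D)\}$ by \Cref{rem:characteristic}, so $1-c\in\Lambda(I_Y-D)$. If $c\neq0$, part (ii) gives a factorization of $I_Y$ through $D$ with constant $(1/|c|)^+$. If $c=0$, then $1-c=1\neq0$, and part (ii) applied to $I_Y - D$ (which is a bounded Haar multiplier with $1\in\Lambda(I_Y-D)$) gives a factorization of $I_Y$ through $I_Y - D$ with constant $(1/|1-c|)^+ = 1^+$. In general, whichever of $c$, $1-c$ is nonzero — and at least one always is, since they cannot both vanish — yields the claimed factorization, with constant $\min(1/|c|,1/|1-c|)^+$ under the convention $1/0=\infty$; and since $\max(|c|,|1-c|)\ge 1/2$, this constant is at most $2^+$. (Alternatively, one can route this through \Cref{rem:factors:proj}: the projectional factorization of $cI_Y$ through $D$ from (i) yields a projectional factorization of $(1-c)I_Y = I_Y - cI_Y$ through $I_Y - D$, and then argue as in (ii).) This completes the proof.
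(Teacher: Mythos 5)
Parts (ii) and (iii) of your proposal are correct and coincide with the paper's route: (ii) is exactly the upgrade via \Cref{rem:factors:iso} of the approximate projectional factorization from (i), and (iii) follows by applying (ii) either to $D$ with $c\in\Lambda(D)$ or to $I_Y-D$ with $1-c\in\Lambda(I_Y-D)$ (using \Cref{rem:characteristic}), together with $\max(|c|,|1-c|)\ge 1/2$.

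Your sketch of (i), however, has a gap and two misdirections. First, the misdirections: you worry about controlling an ``off-diagonal part'' of $D$ on the blocks and suggest a gliding-hump argument exploiting weak nullity of $(r_n)$. But $D$ is \emph{already} a Haar multiplier, and the blocks $\tilde h_I$ of a faithful Haar system have pairwise disjoint Haar supports, so $\langle \tilde h_J, D\tilde h_I\rangle=0$ automatically for $J\ne I$; there is nothing off-diagonal to kill. Relatedly, weak nullity of the Rademachers is neither a hypothesis of \Cref{thm:main-result:3} nor used in its proof (it enters later, in \Cref{thm:stratrep}, to diagonalize general operators).

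The actual obstacle — which you flag but do not resolve — is the following: after compressing to the block system one gets $D^{\mathrm{stab}}=\hat A D\hat B$, a new Haar multiplier with entries $d_I^{\mathrm{stab}}=\langle\hat h_I,D\hat h_I\rangle/|I|$; to conclude $\|cI_Y-D^{\mathrm{stab}}\|\le\eta$ it is \emph{not} enough that each $d_I^{\mathrm{stab}}$ is individually close to $c$. You need the entries to have \emph{summably small variation}, $\sum_{I}|d_{I^\pm}^{\mathrm{stab}}-d_I^{\mathrm{stab}}|$ small, so that \Cref{lem:haar-multiplier} applies (recall the Haar system need not be unconditional in $Y$, e.g.\ $Y=L^1$, so a bound on $\sup_I|d_I^{\mathrm{stab}}-c|$ alone gives no operator-norm bound). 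Producing such a block system is the real content of the proof, and it is done via \Cref{pro:stabilization}: one first uses Cantor diagonalization to extract $\mathcal N\subset\mathbb N$ along which $\langle r_n^\Gamma,Dr_n^\Gamma\rangle$ converges simultaneously for every finite union $\Gamma$ of dyadic intervals (with limit $c$ for $\Gamma=[0,1)$), then builds a \emph{randomized} faithful Haar system $\hat h_I(\theta)$ at those frequencies, and uses the second-moment bound of \Cref{lem:probabilistic} plus Chebyshev's inequality to pick signs $\theta=\varepsilon$ forcing $|d_{I^\pm}^{\mathrm{stab}}-\tfrac12\cdot 2\,d_I^{\mathrm{stab}}|<\eta_I$ along every branch. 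Without this probabilistic selection of signs, the variation of the block-diagonal entries is not controlled, and the argument does not close.
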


For the proof of \Cref{thm:main-result:3}, see \Cref{sec:stabilization}. Note that
\Cref{thm:main-result:3}~\eqref{thm:main-result:3:i} does not simply state that \emph{some} multiple
of the identity $cI_Y$ projectionally factors through $D$ (with some constant and error), but it
also provides a priori knowledge about the constant $c$: We may choose $c$ to be any element of the
characteristic set $\Lambda(D)$, and we also have some knowledge about what this set can look like
(see \Cref{rem:characteristic}).

%%% Local Variables:
%%% mode: latex
%%% TeX-master: "main"
%%% End:
 %auto-ignore

\section{Properties of Haar system Hardy spaces}
\label{sec:properties-of-hshs}

Before discussing the properties of Haar system Hardy spaces, we recall the following basic
results on Haar system spaces.

\begin{pro}\label{pro:HS-1d}
  Let $X\in\mathcal{H}(\delta)$.  Then following assertions are true.
  \begin{enumerate}[(i)]
    \item\label{pro:HS-1d:i} For every $f\in H = \spn\{\chi_I : I\in\mathcal{D}\}$, we have
          $\|f\|_{L^1}\leq \|f\|_X\leq \|f\|_{L^\infty}$.  Therefore, $X$ can be naturally
          identified with a space of measurable scalar valued functions on $[0,1)$ and
          ${\overline H^{\|\cdot\|_{L^\infty}}} \subset X \subset{L^1}$.
    \item\label{pro:HS-1d:ii} The Haar system $(h_I)_{I\in \mathcal{D}^+}$, in the usual linear
          order, is a monotone Schauder basis of~$X$.
    \item \label{pro:HS-1d:iii} $H$ naturally coincides with a subspace of $X^{*}$, and its closure
          $\overline{H}$ in $X^{*}$ is also a Haar system space.
    \item\label{pro:HS-1d:v} For all $f,g\in H$ with $|f|\leq |g|$, we have $\|f\|_X \leq \|g\|_X$.
  \end{enumerate}
\end{pro}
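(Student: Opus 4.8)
The plan is to derive all four assertions from the two defining axioms of a Haar system space — rearrangement invariance and $\|\chi_{[0,1)}\|_X=1$ — together with a single structural lemma about averaging. Call a bijection $\phi\colon[0,1)\to[0,1)$ a \emph{dyadic rearrangement of level $m$} if it maps each atom of $\mathcal{D}_m$ by a translation onto an atom of $\mathcal{D}_m$; then $f$ and $f\circ\phi$ are equimeasurable for every $f\in H$ that is constant on $\mathcal{D}_m$, so $\|f\circ\phi\|_X=\|f\|_X$ by rearrangement invariance, and likewise $\|\varepsilon f\|_X=\|f\|_X$ for every $\mathcal{D}_m$-measurable sign pattern $\varepsilon$. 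The lemma I would isolate first is: \emph{for every finite partition $\mathcal{P}$ of $[0,1)$ into dyadic intervals and every $f\in H$, one has $\|\mathbb{E}[f\mid\sigma(\mathcal{P})]\|_X\le\|f\|_X$}. Indeed, choosing $m$ with $f$ constant on $\mathcal{D}_m$ and $2^{-m}\le\min_{P\in\mathcal{P}}|P|$, the conditional expectation $\mathbb{E}[f\mid\sigma(\mathcal{P})]$ is exactly the average of $f\circ\phi$ over the finite group $G$ of dyadic rearrangements of level $m$ that act on each $P\in\mathcal{P}$ by permuting its $\mathcal{D}_m$-subatoms; the bound then follows from the triangle inequality.

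With this in hand I would handle the four parts in dependency order, starting with~\eqref{pro:HS-1d:v}. Given $f,g\in H$ with $|f|\le|g|$, pick $m$ with both constant on $\mathcal{D}_m$ and let $g_\varepsilon$ be the function obtained from $|g|$ by the sign pattern $\varepsilon\in\{\pm1\}^{\mathcal{D}_m}$, so $\|g_\varepsilon\|_X=\|g\|_X$. Since $f$ lies atomwise in $[-|g|,|g|]$, one can write $f=\sum_\varepsilon\lambda_\varepsilon g_\varepsilon$ with $(\lambda_\varepsilon)$ the product distribution whose marginal on each atom has the prescribed mean; the triangle inequality then yields $\|f\|_X\le\sum_\varepsilon\lambda_\varepsilon\|g_\varepsilon\|_X=\|g\|_X$. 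The upper bound in~\eqref{pro:HS-1d:i} is now immediate from $|f|\le\|f\|_{L^\infty}\chi_{[0,1)}$ and $\|\chi_{[0,1)}\|_X=1$, while the lower bound follows by applying the lemma to the trivial partition $\{[0,1)\}$: the constant function $\|f\|_{L^1}\,\chi_{[0,1)}=\mathbb{E}[|f|\mid\sigma(\{[0,1)\})]$ has $X$-norm $\|f\|_{L^1}$ and $X$-norm at most $\||f|\|_X=\|f\|_X$.

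For~\eqref{pro:HS-1d:ii}, density of $\spn\{h_I:I\in\mathcal{D}^+\}$ in $X$ holds by definition, so only the partial-sum projections in the order $\iota$ need to be controlled. One checks that, in the order $\iota$, the partial-sum projection $P_N$ agrees on $H$ with the conditional expectation onto $\sigma(\mathcal{P}_N)$ for a suitable partition $\mathcal{P}_N$ of $[0,1)$ into dyadic intervals of at most two consecutive generations (using the $L^2$-orthogonality of the Haar system and that $\spn\{h_I:I\le N\}$ is exactly the space of $\sigma(\mathcal{P}_N)$-measurable functions), so the lemma gives $\|P_N\|\le1$ and $(h_I)_{I\in\mathcal{D}^+}$ is a monotone Schauder basis of $X$. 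This then lets me finish~\eqref{pro:HS-1d:i}: the Haar-coefficient functionals $f\mapsto\langle f,h_I\rangle$ are continuous for $\|\cdot\|_{L^1}$ and — by the lower bound just established — for $\|\cdot\|_X$, so the formal identity on $H$ extends to a bounded map $X\to L^1$ whose kernel consists of vectors all of whose Haar coefficients vanish; by~\eqref{pro:HS-1d:ii} this kernel is $\{0\}$, so $X$ embeds into $L^1$, and since $\|\cdot\|_X\le\|\cdot\|_{L^\infty}$ on $H$ the sup-norm completion of $H$ embeds into $X$, giving $\overline{H}^{\|\cdot\|_{L^\infty}}\subset X\subset L^1$.

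Finally, for~\eqref{pro:HS-1d:iii}, each $g\in H$ acts on $X$ by $f\mapsto\int_0^1 fg$, a functional of norm at most $\|g\|_{L^\infty}$ (by the lower bound in~\eqref{pro:HS-1d:i}) that vanishes only at $g=0$; equipping $H$ with the induced norm $\|g\|_{X^*}$, its closure $\overline{H}$ in $X^*$ is again a Haar system space. Indeed $\|\chi_{[0,1)}\|_{X^*}=\sup\{\int_0^1 f:f\in H,\ \|f\|_X\le1\}=1$, since $\int_0^1 f\le\|f\|_{L^1}\le\|f\|_X$ with equality at $f=\chi_{[0,1)}$; and rearrangement invariance of $\|\cdot\|_{X^*}$ follows because whenever $|g|$ and $|g'|$ are equimeasurable one can write $g'=\varepsilon\,(g\circ\phi)$ for a dyadic rearrangement $\phi$ and a sign pattern $\varepsilon$, so $f\mapsto h:=(\varepsilon f)\circ\phi^{-1}$ is a $\|\cdot\|_X$-preserving bijection of the unit ball of $(H,\|\cdot\|_X)$ carrying $\int_0^1 fg'$ to $\int_0^1 hg$, whence $\|g'\|_{X^*}=\|g\|_{X^*}$. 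The one nonformal ingredient — and the only real obstacle in what are otherwise bookkeeping arguments — is the structural lemma that dyadic conditional expectations contract $\|\cdot\|_X$, realized as averages over finite groups of dyadic rearrangements; one must also take care to prove~\eqref{pro:HS-1d:ii} before completing~\eqref{pro:HS-1d:i}, since identifying $X$ with a space of functions relies on the Haar coefficients separating its points.
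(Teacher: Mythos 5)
Your proposal is correct. The paper itself gives no inline proof of \eqref{pro:HS-1d:i}--\eqref{pro:HS-1d:iii} --- it defers them to a citation --- and disposes of \eqref{pro:HS-1d:v} with the one-line remark that $(\chi_I)_{I\in\mathcal{D}_n}$ is $1$-unconditional in $X$; your convex-combination-over-sign-patterns argument is exactly that remark spelled out. The structural lemma you isolate (dyadic conditional expectations contract $\|\cdot\|_X$, realized as averages of $f\circ\phi$ over a finite group of measure-preserving dyadic rearrangements) is precisely \Cref{lem:conditional-expectation} of the paper, proved there by the same averaging technique, so your route is entirely in the spirit of the source. The remaining steps all check out: the identification of the partial-sum projections $P_N$ with conditional expectations onto partitions of two consecutive dyadic generations (dimensions match and the Haar system is $L^2$-orthogonal) gives monotonicity in \eqref{pro:HS-1d:ii}; the injectivity of $X\to L^1$ via the Haar coefficient functionals correctly requires \eqref{pro:HS-1d:ii} first, as you note; and for \eqref{pro:HS-1d:iii} the observation that equimeasurable $|g|,|g'|\in H$ differ by a level-$m$ rearrangement and a sign pattern, so that $f\mapsto(\varepsilon f)\circ\phi^{-1}$ is a norm-preserving bijection of the unit ball of $(H,\|\cdot\|_X)$, correctly yields rearrangement invariance of the dual norm. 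No gaps.
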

\begin{proof}
  We refer to~\cite[Proposition~2.13]{MR4430957} for a proof
  of~\eqref{pro:HS-1d:i}--\eqref{pro:HS-1d:iii}; assertion~\eqref{pro:HS-1d:v} follows from the
  observation that for each $n\in \mathbb{N}$, the family $(\chi_I : I\in \mathcal{D}_n)$ is
  $1$-unconditional in $X$.
\end{proof}

\begin{rem}
  In \Cref{pro:HS-1d}~\eqref{pro:HS-1d:iii}, we identify each $g\in H$ with the bounded linear
  functional $x_g^{*}\in X^{*}$ defined as the continuous extension of $f\mapsto \int_0^1fg$,
  $f\in H$.
\end{rem}

Next, we show that if $X$ is a Haar system space, then conditional expectations with respect to
certain finite $\sigma$-algebras are contractions on $X$.

\begin{lem}\label{lem:conditional-expectation}
  Let $X\in\mathcal{H}(\delta)$, and let $\mathcal{F}$ denote a $\sigma$-algebra generated by a partition
  $(A_i : 1\le i\le m)$ of $[0,1)$, where each set $A_i$ is a finite union of dyadic intervals.  Then
  \begin{equation*}
    \|\mathbb{E}^{\mathcal{F}} x\|_X
    \leq \|x\|_X,
    \qquad x\in H.
  \end{equation*}
\end{lem}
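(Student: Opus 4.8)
The plan is to realize $\mathbb{E}^{\mathcal{F}}$, after restricting it to a suitable finite-dimensional subspace of $H$, as an \emph{average of operators that merely permute dyadic intervals of a fixed generation}, and which therefore preserve $\|\cdot\|_X$ by the rearrangement-invariance axiom; the desired inequality then drops out of the triangle inequality.

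First I would fix $n\in\mathbb{N}_0$ large enough that $x$ is constant on every $J\in\mathcal{D}_n$ (possible since $x\in H$ has finite Haar support) and, simultaneously, that each atom satisfies $A_i=\bigcup\{J\in\mathcal{D}_n:J\subset A_i\}$ (possible since each $A_i$ is a finite union of dyadic intervals). Put $G_i=\{J\in\mathcal{D}_n:J\subset A_i\}$ and $k_i=\#G_i$, so that $|A_i|=k_i2^{-n}$ and $\mathcal{F}\subset\sigma(\mathcal{D}_n)$. Writing $x_J$ for the value of $x$ on $J\in\mathcal{D}_n$, the conditional expectation is
\[
  \mathbb{E}^{\mathcal{F}}x=\sum_{i=1}^{m}\Bigl(\tfrac{1}{k_i}\sum_{J\in G_i}x_J\Bigr)\chi_{A_i},
\]
which in particular lies in $H$, so that $\|\mathbb{E}^{\mathcal{F}}x\|_X$ is meaningful. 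Next, for every tuple $\sigma=(\sigma_i)_{i=1}^m$ with each $\sigma_i$ a permutation of the set $G_i$, let $\tau_\sigma\colon[0,1)\to[0,1)$ be the measure-preserving bijection that sends each $J\in G_i$ onto $\sigma_i(J)$ by translation. Then $x\circ\tau_\sigma$ is again a linear combination of the $\chi_J$, $J\in\mathcal{D}_n$, hence lies in $H$, and $|x\circ\tau_\sigma|$ and $|x|$ have the same distribution, since $\tau_\sigma$ only reshuffles the equal-measure intervals of $\mathcal{D}_n$; hence $\|x\circ\tau_\sigma\|_X=\|x\|_X$ by the rearrangement-invariance axiom of \Cref{dfn:HS-1d}. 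Now the elementary fact that averaging over all $k!$ permutations of $k$ equal-measure coordinates reproduces their arithmetic mean, applied on each block $G_i$ simultaneously, yields the identity
\[
  \mathbb{E}^{\mathcal{F}}x=\frac{1}{\prod_{i=1}^m k_i!}\sum_{\sigma}x\circ\tau_\sigma,
\]
the sum running over all admissible tuples $\sigma$. Since $\|\cdot\|_X$ is a norm on $H$, the triangle inequality and homogeneity then give $\|\mathbb{E}^{\mathcal{F}}x\|_X\le\bigl(\prod_i k_i!\bigr)^{-1}\sum_\sigma\|x\circ\tau_\sigma\|_X=\|x\|_X$.

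The only genuinely load-bearing point — and the step I expect to require the most care — is the passage to equal-measure atoms: it is precisely the hypothesis that each $A_i$ is a finite union of dyadic intervals that allows us, after refining to a sufficiently fine generation $\mathcal{D}_n$, to express $\mathbb{E}^{\mathcal{F}}$ as a symmetrized sum of dyadic rearrangements \emph{without ever leaving $H$}, where the norm is defined and invariant under equimeasurable changes. (Note that the more standard argument via arbitrary measure-preserving transformations and Ryff's theorem is unavailable here, since $\|\cdot\|_X$ is a priori only a norm on $H$ and its completion, not on all of $L^1$.) Verifying the displayed averaging identity coordinatewise and checking the equimeasurability statement are then routine bookkeeping.
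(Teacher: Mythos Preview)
Your proof is correct and follows essentially the same approach as the paper's: both arguments refine to a dyadic level $\mathcal{D}_n$ compatible with $x$ and with the atoms $A_i$, then express $\mathbb{E}^{\mathcal{F}}x$ as an average of $x$ composed with permutations of $\mathcal{D}_n$ that fix each $A_i$ setwise, and conclude via rearrangement invariance and the triangle inequality. The only cosmetic difference is that the paper keeps two separate levels $M$ (for $x$) and $N>M$ (for the partition) rather than a single common $n$, and writes the permuted function in coordinates rather than as $x\circ\tau_\sigma$.
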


\begin{proof}
  Let $M\in\mathbb{N}$ and suppose that $x = \sum_{I\in\mathcal{D}_M} a_I \chi_I$.  Pick $N > M$ and
  sets of pairwise disjoint dyadic intervals
  $\mathcal{A}_i = \{ K_{i,k} : 1\leq k\leq n_i\}\subset\mathcal{D}_N$ such that
  $\bigcup_{k=1}^{n_i} K_{i,k} = A_i$, $1\leq i\leq m$.  Put
  \begin{equation*}
    R_N
    = \{
    \rho\colon\mathcal{D}_N\to\mathcal{D}_N :
    \text{$\rho$ is bijective and $\rho(\mathcal{A}_i) = \mathcal{A}_i$ for all $1\leq i\leq m$}
    \}
  \end{equation*}
  and let $\ave_{\rho\in R_N}$ denote the average over all $\rho$ in $R_N$.  Observe that since the
  intervals $K_{i,k}$, $1\leq i\leq m$, $1\leq k\leq n_i$ are pairwise disjoint, using
  \Cref{dfn:HS-1d}~\eqref{dfn:HS-1d:1}, we obtain
  \begin{align*}
    \|x\|_X
    &= \Bigl\|
      \sum_{i=1}^m\sum_{k=1}^{n_i}
      \sum_{\substack{I\in\mathcal{D}_M\\I\supset K_{i,k}}} a_I
    \chi_{K_{i,k}}
    \Bigr\|_X
    = \ave_{\rho\in R_N} \Bigl\|
    \sum_{i=1}^m\sum_{k=1}^{n_i}
    \sum_{\substack{I\in\mathcal{D}_M\\I\supset K_{i,k}}} a_I
    \chi_{\rho(K_{i,k})}
    \Bigr\|_X\\
    &\geq \Bigl\|
      \sum_{i=1}^m\sum_{k=1}^{n_i}
      \sum_{\substack{I\in\mathcal{D}_M\\I\supset K_{i,k}}} a_I
    \ave_{\rho\in R_N} \chi_{\rho(K_{i,k})}
    \Bigr\|_X.
  \end{align*}
  Note that for fixed $1\leq i\leq m$, $1\leq k\leq n_i$ we have
  \begin{equation*}
    \ave_{\rho\in R_N} \chi_{\rho(K_{i,k})}
    = \frac{1}{|R_N|} \sum_{\rho\in R_N} \chi_{\rho(K_{i,k})}
    = \sum_{K\in\mathcal{A}_i}
    \frac{1}{|R_N|}\sum_{\substack{\rho\in R_N\\\rho(K_{i,k}) = K}} \chi_K
    = \sum_{K\in\mathcal{A}_i} \frac{1}{|\mathcal{A}_i|} \chi_K
    = \frac{|K_{i,k}|}{|A_i|} \chi_{A_i}.
  \end{equation*}
  Inserting this identity in the above inequality yields
  \begin{align*}
    \|x\|_X
    &\geq \Bigl\|
      \sum_{i=1}^m \sum_{I\in\mathcal{D}_M} a_I
      \sum_{\substack{1\leq k\leq n_i\\K_{i,k}\subset I}} \frac{|K_{i,k}|}{|A_i|} \chi_{A_i}
    \Bigr\|_X
    = \Bigl\|
    \sum_{i=1}^m \sum_{I\in\mathcal{D}_M} a_I
    \frac{|A_i\cap I|}{|A_i|} \chi_{A_i}
    \Bigr\|_X\\
    &= \Bigl\|
      \sum_{I\in\mathcal{D}_M} a_I \mathbb{E}^{\mathcal{F}} \chi_I
      \Bigr\|_X
      = \| \mathbb{E}^{\mathcal{F}} x\|_X,
  \end{align*}
  as claimed.
\end{proof}

If $X(\mathbf{r})\in \mathcal{HH}(\delta)$ is a Haar system Hardy space, then just like in the case
of Haar system spaces, we can identify $H = \operatorname{span}\{ h_I : I\in \mathcal{D}^+ \}$ with
a subspace of the dual space of $X(\mathbf{r})$.  In the next lemma, we compute the norm of a Haar
function~$h_I$, viewed as an element of~$X(\mathbf{r})^{*}$.

\begin{lem}\label{lem:product-of-norms}
  Let $X(\mathbf{r})\in \mathcal{HH}(\delta)$.  Then for every $I\in \mathcal{D}$, we have
  $\|h_I\|_{X(\mathbf{r})}\|h_I\|_{X(\mathbf{r})^{*}} = |I|$ and
  $\|h_I\|_{X_0(\mathbf{r})}\|h_I\|_{X_0(\mathbf{r})^{*}} = |I|$.
\end{lem}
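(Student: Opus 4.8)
The plan is to compute $\|h_I\|_{X(\mathbf r)}$ and $\|h_I\|_{X(\mathbf r)^*}$ separately and observe that their product telescopes to $|I|$. First I would fix $I\in\mathcal D$ and deal with the single Haar function. Since $h_I$ is supported on $I$ and takes values in $\{\pm 1\}$ there, the function $\sum_I r_I(u)a_Ih_I$ appearing in Definition~\ref{dfn:haar-system-hardy-space} reduces, for the one-term linear combination $a_Ih_I$, to $r_I(u)a_Ih_I(s)$, whose absolute value is $|a_I|\,|h_I(s)| = |a_I|\chi_I(s)$ independently of $u$. Hence $\|a_Ih_I\|_{X(\mathbf r)} = |a_I|\,\|\chi_I\|_X$. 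By the rearrangement-invariance axiom \Cref{dfn:HS-1d}~\eqref{dfn:HS-1d:1}, $\|\chi_I\|_X$ depends only on $|I|$, and since $\chi_{[0,1)}$ has norm~$1$ one computes $\|\chi_I\|_X = |I|\,\|\chi_I\|_{X}/|I|$; more usefully, I would introduce the function $\varphi_X(t) := \|\chi_A\|_X$ for any (dyadic, hence any measurable up to density) set $A$ of measure $t$, the fundamental function of $X$, so that $\|h_I\|_{X(\mathbf r)} = \varphi_X(|I|)$.

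Next I would identify $\|h_I\|_{X(\mathbf r)^*}$. By the discussion preceding the lemma, $h_I$ acts on $X(\mathbf r)$ as the functional $x\mapsto\int_0^1 xh_I$, and its dual norm is $\sup\{\int_0^1 xh_I : \|x\|_{X(\mathbf r)}\le 1\}$. Restricting the supremum to multiples of $h_I$ gives at least $|I|/\varphi_X(|I|)$ (taking $x = h_I/\varphi_X(|I|)$, for which $\int h_I^2 = |I|$). For the reverse inequality I would use a conditional-expectation/averaging argument: writing $\mathcal F_I$ for the $\sigma$-algebra generated by $I^+,I^-$ and $[0,1)\setminus I$, we have $\int_0^1 xh_I = \int_0^1 (\mathbb E^{\mathcal F_I}x)h_I$ since $h_I$ is $\mathcal F_I$-measurable, and $\mathbb E^{\mathcal F_I}x$ is a linear combination of $\chi_{I^+},\chi_{I^-},\chi_{[0,1)\setminus I}$; one then checks (using \Cref{dfn:HS-1d}~\eqref{dfn:HS-1d:1} to symmetrize $I^+\leftrightarrow I^-$, reducing to the worst case where the two coefficients are opposite, i.e. a multiple of $h_I$) that $\int(\mathbb E^{\mathcal F_I}x)h_I\le |I|\,\|x\|_{X(\mathbf r)}/\varphi_X(|I|)$. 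Here I must be careful that $\mathbb E^{\mathcal F_I}$ does not increase the $X(\mathbf r)$-norm; \Cref{lem:conditional-expectation} gives this for $\|\cdot\|_X$, and for a single Haar function the $X(\mathbf r)$- and $X$-norms of the relevant (disjointly supported, or $\mathcal F_I$-measurable) functions coincide by the remark that $\|x\|_{X(\mathbf r)} = \|x\|_X$ for disjointly supported combinations of Haar functions --- the Rademacher average over $\mathbf r$ of $|\mathbb E^{\mathcal F_I}x|$ is again $\mathcal F_I$-measurable. Combining the two bounds yields $\|h_I\|_{X(\mathbf r)^*} = |I|/\varphi_X(|I|)$, so the product is $|I|$.

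For the statement about $X_0(\mathbf r)$: the key point is that $h_I\in X_0(\mathbf r)$ and that $X_0(\mathbf r)$ is a hyperplane-type (in fact codimension-one-per-level) subspace of $X(\mathbf r)$, so its dual is a quotient of $X(\mathbf r)^*$; restricting the functional $h_I$ to $X_0(\mathbf r)$ cannot increase the norm, giving $\|h_I\|_{X_0(\mathbf r)}\|h_I\|_{X_0(\mathbf r)^*}\le|I|$, while the lower bound $\ge|I|$ comes again from testing against $h_I/\varphi_X(|I|)\in X_0(\mathbf r)$ together with $\|h_I\|_{X_0(\mathbf r)} = \|h_I\|_{X(\mathbf r)} = \varphi_X(|I|)$ (the $X_0$-norm is the restriction of the $X$-norm). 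Actually the cleanest route is to note $\|h_I\|_{X_0(\mathbf r)} = \|h_I\|_{X(\mathbf r)}$ trivially and $\|h_I\|_{X_0(\mathbf r)^*}\ge\|h_I\|_{X(\mathbf r)^*}$ is false in general, so I would instead observe directly that the supremum defining $\|h_I\|_{X_0(\mathbf r)^*}$ already equals $|I|/\varphi_X(|I|)$ because the extremal competitor $h_I/\varphi_X(|I|)$ lies in $X_0(\mathbf r)$ and the upper bound argument via $\mathbb E^{\mathcal F_I}$ only ever produced $\mathcal F_I$-measurable functions that are themselves (multiples of $h_I$, hence) in $X_0(\mathbf r)$ --- so the same two inequalities go through verbatim. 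The main obstacle is the reverse inequality for the dual norm: making the conditional-expectation reduction rigorous inside $X(\mathbf r)$ rather than $X$, i.e. checking that replacing $x$ by $\mathbb E^{\mathcal F_I}x$ and then by its $h_I$-component does not increase the $X(\mathbf r)$-norm, using \Cref{lem:conditional-expectation} and the coincidence of the two norms on $\mathcal F_I$-measurable functions.
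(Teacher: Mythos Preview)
Your lower bound and the computation $\|h_I\|_{X(\mathbf r)}=\|\chi_I\|_X$ are fine, and your handling of the $X_0(\mathbf r)$ versus $X(\mathbf r)$ distinction is also correct. The gap is in the upper bound for the dual norm when $\mathbf r$ is the independent Rademacher sequence. Your argument runs: $\langle h_I,x\rangle=\langle h_I,\mathbb E^{\mathcal F_I}x\rangle$, then symmetrize $I^+\leftrightarrow I^-$ to extract the $h_I$-component, and finally invoke \Cref{lem:conditional-expectation}. But \Cref{lem:conditional-expectation} only gives $\|\mathbb E^{\mathcal F_I}x\|_X\le\|x\|_X$, and you need a bound against $\|x\|_{X(\mathbf r)}$. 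You try to bridge this by saying that $X$- and $X(\mathbf r)$-norms agree on $\mathcal F_I$-measurable functions; that is true for $\mathbb E^{\mathcal F_I}x$ on the left, but it says nothing about $\|x\|_X$ versus $\|x\|_{X(\mathbf r)}$ on the right. In general $\|x\|_X\le\|x\|_{X(\mathbf r)}$ is not available, and $\mathbb E^{\mathcal F_I}$ does not act diagonally on the Haar basis, so its $X(\mathbf r)\to X(\mathbf r)$ norm is not covered by the lemma you cite. Your approach is salvageable by a case split: for constant $\mathbf r$ the argument works as written since $X(\mathbf r)=X$, and for independent $\mathbf r$ the Haar system is $1$-unconditional in $X(\mathbf r)$, so the one-dimensional projection $x\mapsto a_Ih_I$ has norm at most $1$ trivially.

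The paper avoids this detour entirely. For $x=\sum_J a_Jh_J$ it writes
\[
\|a_Ih_I\|_{X(\mathbf r)}\le\tfrac12\Bigl\|a_Ih_I+\sum_{J<I}a_Jh_J\Bigr\|_{X(\mathbf r)}+\tfrac12\Bigl\|a_Ih_I-\sum_{J<I}a_Jh_J\Bigr\|_{X(\mathbf r)},
\]
observes that the two summands coincide (by equimeasurability when $\mathbf r$ is constant, by symmetry of the Rademacher average when $\mathbf r$ is independent), and then uses monotonicity of the Haar basis to bound the common value by $\|x\|_{X(\mathbf r)}$. This gives $|a_I|\,\|h_I\|_{X(\mathbf r)}\le\|x\|_{X(\mathbf r)}$ directly in $X(\mathbf r)$, with no conditional expectation and no case distinction in the write-up.
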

\begin{proof}
  We only prove the second equality since the proof of the first one is analogous.  Fix
  $I\in \mathcal{D}$ and let $x = \sum_{J\in \mathcal{D}} a_Jh_J\in H_0$.  Now observe that
  \begin{equation*}
    \|a_I h_I\|_{X(\mathbf{r})}
    \le \frac{1}{2} \Bigl\| a_Ih_I + \sum_{\substack{J\in \mathcal{D}\\ J < I}} a_Jh_J \Bigr\|_{X(\mathbf{r})}
    + \frac{1}{2}\Bigl\| a_Ih_I - \sum_{\substack{J\in \mathcal{D}\\ J<I}} a_Jh_J \Bigr\|_{X(\mathbf{r})}.
  \end{equation*}
  The two summands on the right-hand side are equal because the two functions inside the norms are
  equimeasurable. Thus, we have
  \begin{equation*}
    \|a_Ih_I\|_{X(\mathbf{r})}
    \le \Bigl\| \sum_{\substack{J\in \mathcal{D}\\ J\le I}} a_Jh_J \Bigr\|_{X(\mathbf{r})}
    \le \|x\|_{X(\mathbf{r})}.
  \end{equation*}
  Consequently, we obtain that $\|h_I\|_{X_0(\mathbf{r})^{*}}\le |I|/\|h_I\|_{X(\mathbf{r})}$.  For
  the other inequality, note that
  \begin{equation*}
    \|h_I\|_{X_0(\mathbf{r})^{*}}
    \ge \Bigl\langle h_I, \frac{h_I}{\|h_I\|_{X(\mathbf{r})}} \Bigr\rangle
    = \frac{|I|}{\|h_I\|_{X(\mathbf{r})}}.\qedhere
  \end{equation*}
\end{proof}

The following lemma provides an upper bound on the norm of a Haar multiplier, in the spirit of the
theorem on Haar multipliers on $L^1$ by E.~M.~Semenov and S.~N.~Uksusov \cite{MR2975943} (see also
\cite{MR3397275} and \cite{MR4430957}).

\begin{lem}\label{lem:haar-multiplier}
  Let $X_0(\mathbf{r})\in \mathcal{HH}_0(\delta)$ and suppose that $(d_I)_{I\in \mathcal{D}}$ is a
  scalar sequence that satisfies
  \begin{equation*}
    \vvvert (d_I)_{I\in \mathcal{D}}\vvvert
    := |d_{[0,1)}| + 2 \sum_{I\in \mathcal{D}\setminus\{[0,1)\}} | d_I - d_{\pi(I)}|
    < \infty.
  \end{equation*}
  Then the Haar multiplier $D\colon X_0(\mathbf{r})\to X_0(\mathbf{r})$, defined as the linear
  extension of $D h_I = d_I h_I$, $I\in \mathcal{D}$, is bounded:
  \begin{equation}\label{eq:4}
    \|D\|
    \leq \vvvert (d_I)_{I\in \mathcal{D}}\vvvert.
  \end{equation}
  Moreover, if the Rademacher sequence $\mathbf{r}$ is independent, then we have
  \begin{equation}\label{eq:5}
    \|D\|
    \leq \sup_{I\in \mathcal{D}} |d_I|.
  \end{equation}
\end{lem}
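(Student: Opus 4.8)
The plan is to prove the estimate~\eqref{eq:4} on the dense subspace $H_0 = \spn\{h_I : I\in\mathcal{D}\}$ and then extend by density, using a telescoping decomposition of $D$ into ``subtree projections''. For $x = \sum_{I\in\mathcal{D}}a_Ih_I\in H_0$ (a finite sum) and $J\in\mathcal{D}$, set $P_Jx := \sum_{I\in\mathcal{D},\,I\subseteq J}a_Ih_I$ and $Q_Jx := x - P_Jx = \sum_{I\in\mathcal{D},\,I\not\subseteq J}a_Ih_I$. Writing each coefficient along the chain of dyadic ancestors of $I$ as the telescoping sum $d_I = d_{[0,1)} + \sum_{J\in\mathcal{D},\,J\supseteq I,\,J\ne[0,1)}(d_J - d_{\pi(J)})$ and interchanging the order of summation yields
\begin{equation*}
  Dx = d_{[0,1)}\,x + \sum_{\substack{J\in\mathcal{D}\\ J\ne[0,1)}}(d_J - d_{\pi(J)})\,P_Jx,
\end{equation*}
so that $\|Dx\| \le |d_{[0,1)}|\,\|x\| + \sum_{J\ne[0,1)}|d_J - d_{\pi(J)}|\,\|P_Jx\|$. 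Comparing with the definition of $\vvvert\cdot\vvvert$, the bound~\eqref{eq:4} reduces to the claim $\|P_J\|\le 2$ on $X_0(\mathbf{r})$, which I would deduce from $\|Q_J\|\le 1$ on $X_0(\mathbf{r})$.

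The heart of the matter is therefore $\|Q_Jx\|_{X_0(\mathbf{r})}\le\|x\|_{X_0(\mathbf{r})}$. Since $Q_Jx\in H_0$, its norms in $X_0(\mathbf{r})$ and in $X(\mathbf{r})$ agree, and by definition of the norm $\|Q_Jx\|_{X(\mathbf{r})} = \|\hat g\|_X$ and $\|x\|_{X(\mathbf{r})} = \|g\|_X$, where $g,\hat g\in H$ (step functions on a fine enough dyadic grid, as $x$ has finite Haar support) are given by
\begin{equation*}
  \hat g(s) = \int_0^1\Bigl|\sum_{I\not\subseteq J}r_I(u)a_Ih_I(s)\Bigr|\dif u,
  \qquad
  g(s) = \int_0^1\Bigl|\sum_{I\in\mathcal{D}}r_I(u)a_Ih_I(s)\Bigr|\dif u.
\end{equation*}
For $s\notin J$ one has $h_I(s) = 0$ whenever $I\subseteq J$, so $\hat g(s) = g(s)$ there; for $s\in J$ the only surviving terms are those with $I\supsetneq J$, on each of which $h_I$ is constant on $J$, so $\hat g$ is constant on $J$ with value $\|\phi\|_{L^1(\dif u)}$, where $\phi(u) = \sum_{I\supsetneq J}r_I(u)a_I\varepsilon_I$ and $\varepsilon_I := h_I|_J\in\{\pm1\}$. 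The crucial observation is that this value is at most $\tfrac{1}{|J|}\int_J g$: writing $\psi(u,s) = \sum_{I\subseteq J}r_I(u)a_Ih_I(s)$, so that $g(s) = \int_0^1|\phi(u)+\psi(u,s)|\dif u$ for $s\in J$, Fubini's theorem and Jensen's inequality for the average over $s\in J$ give
\begin{equation*}
  \frac{1}{|J|}\int_J g
  \ge \int_0^1\Bigl|\phi(u) + \frac{1}{|J|}\int_J\psi(u,s)\dif s\Bigr|\dif u
  = \int_0^1|\phi(u)|\dif u,
\end{equation*}
since every $h_I$ with $I\subseteq J$ has mean zero over $J$. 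Hence $\hat g\le g\,\chi_{J^c} + \bigl(\tfrac{1}{|J|}\int_J g\bigr)\chi_J = \mathbb{E}^{\mathcal{F}_J}g$ pointwise, where $\mathcal{F}_J$ is the $\sigma$-algebra generated by the partition consisting of $J$ together with the dyadic intervals of a sufficiently fine level that lie outside $J$. By monotonicity of the norm (\Cref{pro:HS-1d}~\eqref{pro:HS-1d:v}) and \Cref{lem:conditional-expectation}, $\|\hat g\|_X\le\|\mathbb{E}^{\mathcal{F}_J}g\|_X\le\|g\|_X$, which is the desired inequality.

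For the sharper bound~\eqref{eq:5} when $\mathbf{r}$ is independent, I would simply use that $(h_I)_{I\in\mathcal{D}}$ is then a $1$-unconditional basis of $X(\mathbf{r})$, and hence of $X_0(\mathbf{r})$: for finitely supported $x$, the diagonal operator with entries $(d_I/M)\in[-1,1]$, where $M = \sup_I|d_I|\le\vvvert(d_I)_{I\in\mathcal{D}}\vvvert<\infty$, is a convex combination of sign-change operators, each an isometry, so $\|Dx\|\le M\|x\|$; density then finishes the proof.

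I expect the only genuinely delicate point to be the pointwise domination $\hat g\le\mathbb{E}^{\mathcal{F}_J}g$, i.e.\ controlling the effect of collapsing $x$ to its mean over $J$ \emph{after} the $\mathbf{r}$-randomization; this is exactly where Jensen's inequality and the vanishing means of the Haar functions supported in $J$ enter, after which \Cref{lem:conditional-expectation} does the rest. The remaining steps — the telescoping identity, the triangle inequality, the density argument, and the unconditional case — are routine.
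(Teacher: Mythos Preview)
Your argument is correct and follows the same overall structure as the paper: both use the telescoping identity $Dx = d_{[0,1)}x + \sum_{J\ne[0,1)}(d_J - d_{\pi(J)})P_Jx$ and reduce~\eqref{eq:4} to the bound $\|P_J\|\le 2$. The difference lies in how that bound is established. The paper first disposes of the independent case via~\eqref{eq:5} (noting $\sup_I|d_I|\le\vvvert(d_I)\vvvert$), so that only the case $X_0(\mathbf{r})=X_0$ remains; there one writes $P_Jx = \chi_J\cdot(I-\mathbb{E}_n)x$ for $J\in\mathcal{D}_n$ and bounds each factor separately by \Cref{pro:HS-1d}\eqref{pro:HS-1d:v} and \Cref{lem:conditional-expectation}. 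You instead prove $\|Q_J\|\le 1$ directly in the general space $X_0(\mathbf{r})$, by showing the pointwise domination $\hat g\le\mathbb{E}^{\mathcal{F}_J}g$ of the randomized integrands via Jensen's inequality and the vanishing means of $(h_I)_{I\subseteq J}$ over $J$, and then invoking the same two lemmas. Your route handles both choices of $\mathbf{r}$ uniformly for~\eqref{eq:4} and makes transparent that $Q_J$ acts, at the level of the function $s\mapsto\int|\cdot|\,\dif u$, like a conditional expectation; the paper's route is marginally shorter once the case split is made. Both proofs rest on exactly the same two ingredients from \Cref{sec:properties-of-hshs}.
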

\begin{rem}
  In contrast to~\cite{MR2975943}, our upper bound~\eqref{eq:4} does not only involve the largest
  variation of $(d_I)_{I\in \mathcal{D}}$ along branches of the dyadic tree, but the sum of
  \emph{all} differences between entries $d_I$ and their predecessors $d_{\pi(I)}$. This larger upper bound
  is sufficient for our purposes.
\end{rem}

\begin{proof}[Proof of \Cref{lem:haar-multiplier}]
  Firstly, under the condition that the sequence $\mathbf{r}$ is independent, estimate~\eqref{eq:5}
  follows immediately from the $1$-unconditionality of the Haar system in~$X_0(\mathbf{r})$.
  Secondly, considering that
  \begin{equation*}
    |d_I|
    \leq |d_{[0,1)}| + |d_I - d_{[0,1)}|
    \leq |d_{[0,1)}| + \sum_{J : I\subset J\subsetneq [0,1)} |d_J - d_{\pi(J)}|,
    \qquad I\in \mathcal{D},
  \end{equation*}
  we only have to show~\eqref{eq:4} in the case where $\mathbf{r}$ is a constant sequence, i.e.,
  $X_0(\mathbf{r}) = X_0$.  To this end, let $x = \sum_{I\in \mathcal{D}} a_I h_I\in H_0$ and
  observe that
  \begin{align*}
    \|Dx - d_{[0,1)}x\|_X
    &= \Bigl\| \sum_{I\in \mathcal{D}} (d_I - d_{[0,1)}) a_I h_I \Bigr\|_X
      = \Bigl\| \sum_{I\in \mathcal{D}} \sum_{J : I\subset J\subsetneq [0,1)} (d_J - d_{\pi(J)}) a_I h_I \Bigr\|_X\\
    &\leq \sum_{J\subsetneq [0,1)} |d_J - d_{\pi(J)}|\cdot \Bigl\| \sum_{I : I\subset J}  a_I h_I \Bigr\|_X.
  \end{align*}
  Now, for $n\in \mathbb{N}_0$, let $\mathbb{E}_n$ denote the conditional expectation with respect
  to the $\sigma$-algebra generated by $\mathcal{D}_n$.  Then, using
  \Cref{lem:conditional-expectation} and \Cref{pro:HS-1d}~\eqref{pro:HS-1d:v}, we obtain
  \begin{align*}
    \|Dx - d_{[0,1)}x\|_X
    &= \sum_{n=1}^{\infty} \sum_{J\in \mathcal{D}_n}
      |d_J - d_{\pi(J)}|\cdot \| \chi_J\cdot (I_X - \mathbb{E}_n) x\|_X\\
    &\leq 2 \sum_{J\subsetneq [0,1)} |d_J - d_{\pi(J)}|\cdot \|x\|_X,
  \end{align*}
  which shows~\eqref{eq:4}.
\end{proof}

\begin{lem}\label{lem:haar-multiplier-cond}
  Let $X_0(\mathbf{r})\in \mathcal{HH}_0(\delta)$, and let
  $(d_I)_{I\in \mathcal{D}}\in\{0,1\}^{\mathcal{D}}$ be a sequence with the property that for all
  $I\in \mathcal{D}$ with $d_I = 0$, we have $d_{I^+} = d_{I^-} = 0$.  Then the operator
  $D\colon X_0(\mathbf{r})\to X_0(\mathbf{r})$, defined as the continuous linear extension of
  $D h_I = d_I h_I$, $I\in \mathcal{D}$, is a contraction, i.e.,
  \begin{equation*}
    \|D x\|_{X(\mathbf{r})}
    \leq \|x\|_{X(\mathbf{r})},
    \qquad x\in X_0(\mathbf{r}).
  \end{equation*}
\end{lem}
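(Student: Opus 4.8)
The plan is to show that on the dense subspace $H_0 = \spn\{ h_I : I\in\mathcal D \}$ the operator $D$ coincides with a conditional expectation of the type handled by \Cref{lem:conditional-expectation}, and then to transfer the resulting estimate from the norm of $X$ to that of $X(\mathbf r)$ by freezing the Rademacher parameter. (The crude bound of \Cref{lem:haar-multiplier} is useless here, since $\vvvert (d_I)_{I\in\mathcal D}\vvvert$ typically grows with the number of ``leaves'' of the set $\{ I : d_I = 1 \}$.) The hypothesis on $(d_I)$ says precisely that $\mathcal B := \{ I\in\mathcal D : d_I = 1 \}$ is closed under taking dyadic ancestors, i.e.\ $I\in\mathcal B$ and $I\subseteq J\in\mathcal D$ imply $J\in\mathcal B$ (a short induction on $|J|/|I|$ using the contrapositive of the hypothesis). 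Since for $x\in H_0$ with Haar support contained in $\mathcal D_{\le N}$ we have $Dx = D'x$, where $D'$ is the Haar multiplier with symbol $\mathbbm{1}_{\mathcal B\cap\mathcal D_{\le N}}$, it suffices to prove the contraction estimate on $H_0$ when the symbol is the indicator of a \emph{finite} ancestor-closed collection $\mathcal B'\subseteq\mathcal D$; if $\mathcal B' = \varnothing$ there is nothing to prove, so assume $\mathcal B'\ne\varnothing$ (hence $[0,1)\in\mathcal B'$).

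Next I would identify $D'$ with a conditional expectation. Because $\mathcal B'$ is finite, ancestor-closed and contains $[0,1)$, the family $\mathcal L$ of \emph{maximal} dyadic intervals not lying in $\mathcal B'$ --- equivalently, those $J\in\mathcal D$ with $J\notin\mathcal B'$ but $\pi(J)\in\mathcal B'$ --- is a finite partition of $[0,1)$ into dyadic intervals. Let $\mathcal F$ denote the $\sigma$-algebra generated by $\mathcal L$. A direct check shows $\mathbb E^{\mathcal F}h_I = h_I$ for $I\in\mathcal B'$ (each atom of $\mathcal L$ is disjoint from $I$ or contained in $I^+$ or in $I^-$, so $h_I$ is $\mathcal F$-measurable), while $\mathbb E^{\mathcal F}h_I = 0$ for $I\in\mathcal D\setminus\mathcal B'$ (such an $I$ lies inside a single atom $J\in\mathcal L$ and $\int_J h_I = \int_I h_I = 0$). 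Hence $D'g = \mathbb E^{\mathcal F}g$ for all $g\in H_0$.

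The final step is to pass to the $X(\mathbf r)$-norm. Fix $x = \sum_{I\in\mathcal D} a_I h_I\in H_0$; for $u\in[0,1)$ put $g_u := \sum_{I\in\mathcal D} r_I(u)a_Ih_I\in H_0$ and $\phi(s) := \int_0^1 |g_u(s)|\dif u$. Then $\phi$ is a nonnegative element of $H$, and by \Cref{dfn:haar-system-hardy-space} we have $\|\phi\|_X = \|x\|_{X(\mathbf r)}$, whereas --- using that $\sum_{I\in\mathcal D} r_I(u)\,(\mathbbm{1}_{\mathcal B'}(I)\,a_I)\,h_I = D'g_u$ --- the map $s\mapsto \int_0^1 |(D'g_u)(s)|\dif u = \int_0^1 |(\mathbb E^{\mathcal F}g_u)(s)|\dif u$ is a nonnegative element of $H$ with $X$-norm equal to $\|Dx\|_{X(\mathbf r)}$. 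Jensen's inequality gives $|\mathbb E^{\mathcal F}g_u|\le \mathbb E^{\mathcal F}|g_u|$ pointwise; integrating in $u$ and interchanging the integral with the atomwise averaging $\mathbb E^{\mathcal F}$ (Fubini, nonnegative integrand) yields
\begin{equation*}
  0 \le \int_0^1 \bigl|(\mathbb E^{\mathcal F}g_u)(s)\bigr|\dif u \le (\mathbb E^{\mathcal F}\phi)(s)
  \qquad\text{for a.e.\ } s\in[0,1).
\end{equation*}
Now \Cref{pro:HS-1d}~\eqref{pro:HS-1d:v} followed by \Cref{lem:conditional-expectation} (applied to the partition $\mathcal L$) gives $\|Dx\|_{X(\mathbf r)} \le \|\mathbb E^{\mathcal F}\phi\|_X \le \|\phi\|_X = \|x\|_{X(\mathbf r)}$. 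Since $H_0$ is dense in $X_0(\mathbf r)$, this shows that the linear extension $D$ is well defined and is a contraction.

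The only real obstacle is conceptual rather than computational: $X(\mathbf r)$ is not rearrangement invariant, so \Cref{lem:conditional-expectation} cannot be invoked for it directly, and the argument has to be routed back through the r.i.\ space $X$. That is exactly what the fibrewise identity $D'g_u = \mathbb E^{\mathcal F}g_u$ together with the pointwise domination by $\mathbb E^{\mathcal F}\phi$ achieves. The remaining ingredients --- ancestor-closedness of $\mathcal B$, the fact that $\mathcal L$ partitions $[0,1)$, and the two formulas for $\mathbb E^{\mathcal F}h_I$ --- are elementary dyadic bookkeeping.
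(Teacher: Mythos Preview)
Your proof is correct, and its core identification of $D'$ with a conditional expectation $\mathbb{E}^{\mathcal F}$ is the same idea the paper uses. The difference lies in how the $X(\mathbf r)$-norm is handled. The paper splits into cases: if $\mathbf r$ is independent, the result is immediate from $1$-unconditionality of the Haar system in $X(\mathbf r)$; if $\mathbf r$ is constant, then $X(\mathbf r)=X$ and one applies \Cref{lem:conditional-expectation} directly to $x$. You instead give a single argument valid for both cases: freeze the Rademacher parameter $u$, observe $D'g_u=\mathbb{E}^{\mathcal F}g_u$, and then use Jensen's inequality and Fubini to dominate $\int_0^1|\mathbb{E}^{\mathcal F}g_u|\,\mathrm{d}u$ by $\mathbb{E}^{\mathcal F}\phi$, pushing the conditional-expectation bound through the $u$-integral. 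Your route is more uniform and avoids the case distinction; the paper's is shorter in each individual case. Both ultimately rest on the same conditional-expectation identity, and your description of the atoms of $\mathcal F$ (maximal dyadic intervals outside $\mathcal B'$) is slightly cleaner than the paper's.
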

\begin{proof}
  If the sequence $\mathbf{r}$ is independent, the result follows immediately from the
  $1$-uncondi\-tionality of the Haar system in $X_0(\mathbf{r})$.  So we only have to prove the lemma
  in the case where $\mathbf{r}$ is a constant sequence, i.e., $X_0(\mathbf{r}) = X_0$. Moreover, we
  may assume that $d_{[0,1)} = 1$.

  First, given $I\in \mathcal{D}$, we define
  $M(I) = \bigcap \{ J\in \mathcal{D} : J\supset I,\ d_J = 1 \}\in \mathcal{D}$.  Let $x\in H_0$ be
  given by $x = \sum_{I\in \mathcal{D}_{\le n}} a_I h_I$ for some natural number $n$.  Put
  $\mathcal{M} = \{ M(I) : I\in \mathcal{D}_n \}$ and observe that $\mathcal{M}$ is a partition of
  $[0,1)$ and that for every $J\in \mathcal{M}$, the function $Dx$ is constant on $J^+$ and on $J^-$
  since $d_I = 0$ for all $I\in \mathcal{D}_{\le n}$ with $I\subsetneq J$.  Now let $\mathcal{F}$ be
  the $\sigma$-algebra on~$[0,1)$ generated by the intervals $J^{\pm }$, $J\in \mathcal{M}$, and
  note that since
  \begin{equation*}
    x - Dx
    = \sum_{J\in \mathcal{M}} \sum_{\substack{I\in \mathcal{D}_{\le n}\\ I\subsetneq J}} a_Ih_I,
  \end{equation*}
  it follows that $Dx = \mathbb{E}^{\mathcal{F}}x$.  Thus, by \Cref{lem:conditional-expectation}, we
  have $\|Dx\|_X = \|\mathbb{E}^{\mathcal{F}}x\|_X\le \|x\|_X$.
\end{proof}
\begin{rem}
  Note that in the preceding lemma, $Dx$ may be interpreted as a stopped martingale with respect to
  the dyadic filtration, where the stopping time at $s\in [0,1)$ is determined by the index $I$ at
  which the sequence $(d_I : I\in \mathcal{D},\ s\in I)$ switches from~$1$ to~$0$.
\end{rem}

Next, we prove that for every Haar system Hardy space $X(\mathbf{r})\in \mathcal{HH}(\delta)$,
the closed subspace $X_0(\mathbf{r})$ is isomorphic to $X(\mathbf{r})$.

\begin{lem}\label{lem:shift-operator}
  Let $X_0(\mathbf{r})\in \mathcal{HH}_0(\delta)$ and put
  \begin{align*}
    E
    &= [h_I : I\in \mathcal{D},\ \inf I = 0],\\
    E_0
    &= [h_I : I\in \mathcal{D},\ \inf I = 0,\ \sup I \le \tfrac{1}{2}].
  \end{align*}
  Then the operator
  $W\colon E\to E_0$ defined by
  \begin{equation*}
    \sum_{\substack{I\in \mathcal{D}\\ \inf I = 0}} a_Ih_I \mapsto \sum_{\substack{I\in \mathcal{D}\\ \inf I = 0}} a_Ih_{I^+}
  \end{equation*}
  satisfies the estimates
  \begin{equation*}
    \frac{1}{2}\|x\|_{X(\mathbf{r})}\le \|W x\|_{X(\mathbf{r})}\le \|x\|_{X(\mathbf{r})},
    \qquad x\in X_0(\mathbf{r}).
  \end{equation*}
\end{lem}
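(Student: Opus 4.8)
The plan is to prove both estimates for $x=\sum_{n=0}^{N}a_nh_{I_n}$, where $I_n:=[0,2^{-n})$; since $\{I\in\mathcal{D}:\inf I=0\}=\{I_n:n\ge 0\}$, such $x$ form a dense subspace of $E$, so the general case follows by continuity (and this shows at the same time that $W$ extends to a bounded operator $E\to E_0$). Fix $\mathbf{r}=(r_I)_{I\in\mathcal{D}^+}\in\mathcal{R}$ and, for $f=\sum_{I}b_Ih_I\in H$, set $\Phi(f)(s)=\int_0^1\bigl|\sum_{I}r_I(u)b_Ih_I(s)\bigr|\dif u$, so that $\|f\|_{X(\mathbf{r})}=\|\Phi(f)\|_X$ by \Cref{dfn:haar-system-hardy-space}; note that $\Phi(f)$ is a dyadic step function, hence lies in~$H$. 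Since $I_{n+1}=I_n^{+}$ we have $Wx=\sum_{n=0}^{N}a_nh_{I_{n+1}}$, and the heart of the proof is the pointwise identity
\[
  \Phi(Wx)(s)=\chi_{[0,1/2)}(s)\,\Phi(x)(2s),\qquad s\in[0,1).
\]

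To establish it I would first record the elementary relation $h_{I_m}(s)=\chi_{[0,1/2)}(s)\,h_{I_{m-1}}(2s)$, valid for all $m\ge 1$ and $s\in[0,1)$: it follows from $h_{I_m}=\chi_{I_m^{+}}-\chi_{I_m^{-}}$ together with the facts that the dilation $s\mapsto 2s$ carries $I_m^{+}$ onto $I_{m-1}^{+}$ and $I_m^{-}$ onto $I_{m-1}^{-}$, and that $h_{I_m}$ vanishes on $[1/2,1)$. Taking $m=n+1$ and multiplying by $r_{I_{n+1}}(u)a_n$ gives, for each fixed $u$, the equality $\sum_{n=0}^{N}r_{I_{n+1}}(u)a_nh_{I_{n+1}}(s)=\chi_{[0,1/2)}(s)\sum_{n=0}^{N}r_{I_{n+1}}(u)a_nh_{I_n}(2s)$, and integrating the absolute value in $u$ yields $\Phi(Wx)(s)=\chi_{[0,1/2)}(s)\int_0^1\bigl|\sum_{n}r_{I_{n+1}}(u)a_nh_{I_n}(2s)\bigr|\dif u$. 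It then remains to see that this last integral is unchanged if $r_{I_{n+1}}$ is replaced by $r_{I_n}$: if $\mathbf{r}$ is the constant sequence, both integrands equal $\bigl|\sum_{n}a_nh_{I_n}(2s)\bigr|$, while if $\mathbf{r}$ is independent then $r_{I_n}=r_{\iota(I_n)}=r_{2^n}$, so that $(r_{I_n})_{n\ge 0}$ and $(r_{I_{n+1}})_{n\ge 0}$ are both sequences of independent symmetric $\{\pm 1\}$-valued random variables; hence for every finite scalar sequence $(c_n)$ one has $\int_0^1\bigl|\sum_n r_{I_n}(u)c_n\bigr|\dif u=\int_0^1\bigl|\sum_n r_{I_{n+1}}(u)c_n\bigr|\dif u$, both sides being the expectation of $\bigl|\sum_n\varepsilon_nc_n\bigr|$ over i.i.d.\ signs $(\varepsilon_n)$. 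Applying this with $c_n=a_nh_{I_n}(2s)$ finishes the proof of the identity; writing $\psi:=\Phi(x)\in H$, which satisfies $\|\psi\|_X=\|x\|_{X(\mathbf{r})}$, we conclude $\|Wx\|_{X(\mathbf{r})}=\|\psi_1\|_X$ with $\psi_1(s):=\chi_{[0,1/2)}(s)\psi(2s)$.

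To conclude, introduce $\psi_2(s):=\chi_{[1/2,1)}(s)\psi(2s-1)$, so that $\psi_1,\psi_2\in H$ are disjointly supported. Since the dilations $s\mapsto 2s$ on $[0,1/2)$ and $s\mapsto 2s-1$ on $[1/2,1)$ are measure preserving up to the factor~$2$, the functions $|\psi_1|$ and $|\psi_2|$ have the same distribution, and $|\psi_1+\psi_2|=|\psi_1|+|\psi_2|$ has the same distribution as $|\psi|$; hence \Cref{dfn:HS-1d}~\eqref{dfn:HS-1d:1} gives $\|\psi_1\|_X=\|\psi_2\|_X$ and $\|\psi_1+\psi_2\|_X=\|\psi\|_X$. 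Because $|\psi_1|\le|\psi_1+\psi_2|$ pointwise, \Cref{pro:HS-1d}~\eqref{pro:HS-1d:v} yields $\|Wx\|_{X(\mathbf{r})}=\|\psi_1\|_X\le\|\psi_1+\psi_2\|_X=\|\psi\|_X=\|x\|_{X(\mathbf{r})}$, while the triangle inequality gives $\|x\|_{X(\mathbf{r})}=\|\psi_1+\psi_2\|_X\le\|\psi_1\|_X+\|\psi_2\|_X=2\|\psi_1\|_X=2\|Wx\|_{X(\mathbf{r})}$, which is exactly what is claimed. I expect the main obstacle to be the pointwise identity $\Phi(Wx)(s)=\chi_{[0,1/2)}(s)\,\Phi(x)(2s)$ — in particular the observation that shifting the Rademacher indices from $(I_n)$ to $(I_{n+1})$ leaves the randomized average unchanged, which must be verified uniformly for the independent and the constant choices of $\mathbf{r}$; the concluding equimeasurability argument that produces the constant~$2$ is routine.
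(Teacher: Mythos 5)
Your proof is correct and follows essentially the same route as the paper's: both arguments rest on observing that a ``doubled, half-scale'' copy of the randomized integrand is equimeasurable with the original, whose two halves are disjointly supported and equidistributed, after which the upper bound comes from \Cref{pro:HS-1d}~\eqref{pro:HS-1d:v} and the lower bound from the triangle inequality. The one step you make more explicit than the paper --- the invariance of the Rademacher average under the index shift $I_n\mapsto I_{n+1}$, needed to identify the restricted piece with $\Phi(Wx)$ --- is indeed implicitly used in the paper as well, and your pointwise identity $\Phi(Wx)(s)=\chi_{[0,1/2)}(s)\,\Phi(x)(2s)$ is just a tidier packaging of the same computation.
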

\begin{proof}
  Let $x = \sum_{I\in \mathcal{D}} a_Ih_I\in E$ be a finite linear combination of the Haar system
  and observe that the two functions
  \begin{equation}\label{eq:177}
    \begin{aligned}
      s&\mapsto \int_0^1 \Bigl|\sum_{\substack{I \in \mathcal{D}\\ \inf I = 0}} r_I(u) a_I h_I(s)\Bigr| \dif u,\\
      s&\mapsto \int_0^1 \Bigl|\sum_{\substack{I \in \mathcal{D}\\ \inf I = 0}} r_I(u) a_I h_{I^+}(s)
         + \sum_{\substack{I \in \mathcal{D}\\ \inf I = 0}} r_I(u) a_I h_{I^+}(s - 1/2)\Bigr| \dif u
    \end{aligned}
  \end{equation}
  are equimeasurable.  Moreover, we note that the two terms of the second function in~\eqref{eq:177}
  are disjointly supported.  Hence, by \Cref{dfn:HS-1d} and \Cref{pro:HS-1d}, we obtain
  \begin{equation*}
    \|x\|_{X(\mathbf{r})}
    = \Bigl\| s\mapsto \int_0^1 \Bigl|\sum_{\substack{I \in \mathcal{D}\\ \inf I = 0}} r_I(u) a_I h_{I^+}(s)
    + \sum_{\substack{I \in \mathcal{D}\\ \inf I = 0}} r_I(u) a_I h_{I^+}(s - 1/2)\Bigr| \dif u \Bigr\|_{X}
    \geq \|W x\|_{X(\mathbf{r})}.
  \end{equation*}
  For the other inequality, note that the functions
  \begin{equation*}
    s\mapsto \int_0^1 \Bigl|\sum_{\substack{I \in \mathcal{D}\\ \inf I = 0}} r_I(u) a_I h_{I^+}(s)\Bigr| \dif u
    \qquad\text{and}\qquad
    s\mapsto \int_0^1 \Bigl|\sum_{\substack{I \in \mathcal{D}\\ \inf I = 0}} r_I(u) a_I h_{I^+}(s - 1/2)\Bigr| \dif u
  \end{equation*}
  are equimeasurable, and hence, by \Cref{dfn:HS-1d}, we obtain
  \begin{align*}
    \|W x\|_{X(\mathbf{r})}
    &\geq \frac{1}{2} \Bigl\|
      s\mapsto \int_0^1 \Bigl|\sum_{\substack{I \in \mathcal{D}\\ \inf I = 0}} r_I(u) a_I h_{I^+}(s)
      + \sum_{\substack{I \in \mathcal{D}\\ \inf I = 0}} r_I(u) a_I h_{I^+}(s - 1/2)\Bigr| \dif u
      \Bigr\|_X\\
    &= \frac{1}{2}\|x\|_{X(\mathbf{r})}.\qedhere
  \end{align*}
\end{proof}

\begin{pro}\label{pro:hsp:hyperplane}
  Let $X(\mathbf{r})\in\mathcal{HH}(\delta)$. Then the spaces $X_0(\mathbf{r})$ and $X(\mathbf{r})$
  are $162$-isomorphic to each other.
\end{pro}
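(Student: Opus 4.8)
The plan is to show that $X_0(\mathbf r)$ and $X(\mathbf r)$ are isomorphic by exhibiting explicit bounded operators between them whose composition (in one order) differs from the identity by a nice, invertible perturbation, and then tracking constants. The natural isomorphism should "make room" for the constant Haar function $h_\varnothing = \chi_{[0,1)}$ inside the span of the $h_I$, $I \in \mathcal D$; this is a classical hyperplane-type argument, and the technical engine is precisely \Cref{lem:shift-operator}, which lets us shift a Haar function $h_I$ supported in $[0,1)$ with $\inf I = 0$ to $h_{I^+}$, supported in $[0,\tfrac12)$, at the cost of a factor $2$.

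\textbf{Main construction.} First I would decompose $X(\mathbf r) = \mathbb R h_\varnothing \oplus X_0(\mathbf r)$ and note that the coordinate projection onto $h_\varnothing$ along $X_0(\mathbf r)$ is bounded (the Haar basis is monotone, so this projection has norm at most $2$). Next, on the "left branch" subspace $E = [h_I : I\in\mathcal D,\ \inf I = 0]$, use the shift operator $W\colon E \to E_0$ of \Cref{lem:shift-operator}, which is an isomorphism onto $E_0 = [h_I : \inf I = 0,\ \sup I \le \tfrac12]$ with $\|W\|\le 1$ and $\|W^{-1}\|\le 2$. The point is that $E_0$ is a proper subspace of $E$: the Haar function $h_{[0,1)}$ itself lies in $E$ but its image under $W$, namely $h_{[0,1/2)}$, together with all further left-branch functions, still leaves "$h_{[0,1)}$'s slot" free. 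Concretely I would build an operator $S\colon X(\mathbf r)\to X_0(\mathbf r)$ that is the identity on the complementary block $[h_I : \inf I \ne 0 \text{ or } I = [0,1)]$ — wait, more carefully: $S$ should act as $W$ on $E$ (sending $E$ into $E_0\subset X_0(\mathbf r)$), send $h_\varnothing$ to $h_{[0,1)}$, and act as the identity on the remaining Haar functions $h_I$ with $\inf I \ne 0$. Since these three pieces are supported on disjointly-structured parts and the Haar basis is $1$-unconditional on disjoint supports (the final remark after \Cref{dfn:haar-system-hardy-space}), $S$ is bounded with a controlled norm, and one checks it is injective with bounded inverse by running $W^{-1}$ on the $E_0$ part and sending $h_{[0,1)}$ back to $h_\varnothing$. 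Assembling the norm bounds — monotone basis constant $\le 2$, shift constants $1$ and $2$, plus the cost of re-splitting sums into left-branch / non-left-branch pieces (each such splitting costs another factor bounded by the basis constant) — yields $\|S\|\|S^{-1}\| \le 162$ after multiplying the relevant factors; the constant $162 = 2\cdot 81 = 2\cdot 3^4$ suggests four applications of a factor-$3$ triangle-type estimate on top of a factor-$2$ step.

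\textbf{Main obstacle.} The delicate point is not any single estimate but the bookkeeping: one must verify that the map $S$ genuinely lands in $X_0(\mathbf r)$ (i.e. $h_\varnothing$ is removed), that it is a bijection onto $X_0(\mathbf r)$, and — crucially — that all the norm estimates are uniform over both cases $\mathbf r$ independent and $\mathbf r$ constant. For the constant case one leans on \Cref{lem:conditional-expectation} and \Cref{pro:HS-1d}, and for the independent case on $1$-unconditionality; \Cref{lem:shift-operator} has already been stated in the form that covers both. A secondary subtlety is that the subspace on which one applies $W$ must be chosen so that its image is a \emph{proper} subspace of itself in a way that frees up exactly one dimension, and that the remaining Haar functions (those with $\inf I \neq 0$) are genuinely untouched and disjointly situated from $E_0$; verifying this disjointness-of-supports claim carefully — so that one may invoke $\|x\|_{X(\mathbf r)} = \|x\|_X$ for disjointly supported pieces — is where the argument must be written with care. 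I would expect the proof to proceed by: (1) set up the decomposition and name the three blocks; (2) define $S$ blockwise and define a candidate inverse blockwise; (3) bound $\|S\|$ and $\|S^{-1}\|$ using the stated lemmas plus the basis constant; (4) multiply to get $162$. The hardest step is (3), specifically controlling how the norm interacts when a single vector's coordinates are split across the three blocks.
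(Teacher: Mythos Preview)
Your approach is essentially correct and constructs exactly the isomorphism the paper records in \Cref{rem:hsp:hyperplane}: $S h_\varnothing = h_{[0,1)}$, $S h_I = h_{I^+}$ for $\inf I = 0$, and $S h_I = h_I$ otherwise. The paper, however, does not bound $S$ directly; instead it runs a clean abstract chain of isomorphisms
\[
X(\mathbf r)\ \overset{3}{\sim}\ X_0(\mathbf r)\oplus\mathbb R\ \overset{3}{\sim}\ E\oplus F\oplus\mathbb R\ \overset{2}{\sim}\ E_0\oplus F\oplus\mathbb R\ \overset{3}{\sim}\ E\oplus F\ \overset{3}{\sim}\ X_0(\mathbf r),
\]
where each factor $3$ comes from a norm-$1$ projection (image plus complement costs at most $1+2=3$) and the factor $2$ is \Cref{lem:shift-operator}. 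This explains your observation $162 = 3^4\cdot 2$ exactly and sidesteps the blockwise norm bookkeeping you flag as the main obstacle; the paper then extracts the explicit $S$ with $\|S\|\le 9$, $\|S^{-1}\|\le 18$ only as a remark after the fact.

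One genuine correction to your sketch: the splitting into ``left-branch'' ($\inf I=0$) and ``non-left-branch'' pieces is \emph{not} governed by the basis constant, since $\{I:\inf I=0\}$ is not an initial segment of the linear order on $\mathcal D$. The right tool is \Cref{lem:haar-multiplier-cond}: the indicator sequence $d_I = 1$ iff $\inf I = 0$ satisfies the hypothesis there (if $\inf I\ne 0$ then $\inf I^\pm\ne 0$), so the projection $P$ onto $E$ is a contraction. The paper invokes this explicitly, and it is what makes $E$ genuinely $1$-complemented rather than merely $2$-complemented.
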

\begin{proof}
  We are going to prove that $X_0(\mathbf{r})$ contains a complemented subspace that is isomorphic
  to its hyperplanes.  By \Cref{lem:haar-multiplier-cond}, the projection
  $P\colon X_0(\mathbf{r})\to X_0(\mathbf{r})$, given by
  \begin{equation*}
    \sum_{I\in \mathcal{D}} a_Ih_I
    \mapsto \sum_{\substack{I\in \mathcal{D}\\ \inf I = 0}} a_Ih_I,
  \end{equation*}
  is a well-defined contraction.  Put
  \begin{align*}
    E
    &= P(X_0(\mathbf{r})) = [h_I : I\in \mathcal{D},\ \inf I = 0],\\
    E_0
    &= [h_I : I\in \mathcal{D},\ \inf I = 0,\ \sup I \le \tfrac{1}{2}].
  \end{align*}
  Then $E$ is a $1$-complemented subspace of $X_0(\mathbf{r})$, and hence, $X_0(\mathbf{r})$ is
  $3$-isomorphic to $E\oplus F$ for some $F$.  We know from \Cref{lem:shift-operator} that $E$ is
  $2$-isomorphic to its hyperplane~$E_0$.  Moreover, it is clear that $X(\mathbf{r})$ is
  $3$-isomorphic to $X_0(\mathbf{r})\oplus \mathbb{R}$ and $E$ is $3$-isomorphic to
  $E_0\oplus\mathbb{R}$. Thus, we obtain
  \begin{equation}\label{eq:10}
    X(\mathbf{r})
    \overset{3}{\sim} X_0(\mathbf{r})\oplus \mathbb{R}
    \overset{3}{\sim} E\oplus F\oplus \mathbb{R}
    \overset{2}{\sim} E_0\oplus F\oplus\mathbb{R}
    \overset{3}{\sim} E\oplus F
    \overset{3}{\sim} X_0(\mathbf{r}),
  \end{equation}
  which completes the proof.
\end{proof}

\begin{rem}\label{rem:hsp:hyperplane}
  It follows from the proof of \Cref{pro:hsp:hyperplane} that an isomorphism
  $S\colon X(\mathbf{r})\to X_0(\mathbf{r})$ is given by the continuous linear extension of
  \begin{equation*}
    Sh_I =
    \begin{cases}
      h_{[0,1)}, & I = \varnothing,\\
      h_{I^+}, & I\in \mathcal{D},\ \inf I = 0,\\
      h_I, & I \in \mathcal{D},\ \inf I \ne 0.
    \end{cases}
  \end{equation*}
  In fact, a more detailed analysis of~\eqref{eq:10} shows that we always have $\|S\|\le 9$
  and $\|S^{-1}\|\le 18$.
\end{rem}

%%% Local Variables:
%%% mode: latex
%%% TeX-master: "main"
%%% End:

%auto-ignore

\section{Faithful Haar systems}
\label{sec:faithf-haar-systems}

We will now discuss \emph{faithful Haar systems}, a term which was coined in~\cite{MR4430957}.  A
faithful Haar system is a system of functions on $[0,1)$ which are blocks of the Haar system and
share many structural properties with the original Haar system. These and more generalized systems
were used extensively throughout the last decades.  In particular, we would like to highlight the
classical works of Gamlen-Gaudet~\cite{MR0328575}, Enflo-Maurey~\cite{MR0397386},
Alspach-Enflo-Odell~\cite{MR0500076} and Maurey~\cite{MR0586594}.  In order to ensure that the
orthogonal projection onto such a generalized system is bounded on $\mathrm{BMO}$,
P.~W.~Jones~\cite{MR0796906} found conditions which are nowadays referred to as \emph{Jones'
  compatibility conditions} (see also~\cite[p.~105]{MR2157745}).  For variants of Jones' conditions,
see e.g.~\cite{MR1283008,MR0955660,MR3819715}.

We will now introduce \emph{$(\varkappa_{I})_{I\in \mathcal{D}}$-faithful Haar systems}, which,
loosely speaking, allow for small gaps (in contrast to faithful Haar systems).

\begin{dfn}\label{dfn:almost-faithful}
  Let $\mathcal{B}_I$ be a finite subcollection of $\mathcal{D}$ for each $I\in\mathcal{D}$, and let
  $(\varepsilon_K)_{K\in \mathcal{D}}\in\{\pm 1\}^{\mathcal{D}}$ be a sequence of signs. Put
  $\tilde h_I = \sum_{K\in\mathcal{B}_I} \varepsilon_K h_K$, $I\in \mathcal{D}$. Moreover, let
  $(\varkappa_I)_{I\in \mathcal{D}}$ be a sequence of positive numbers with $0 < \varkappa_I\le 1$
  for all $I\in \mathcal{D}$. We say that $(\tilde h_I)_{I\in \mathcal{D}}$ is a
  \emph{$(\varkappa_I)_{I\in \mathcal{D}}$-faithful Haar system} if the following conditions are
  satisfied:
  \begin{enumerate}[(i)]
    \item\label{enu:dfn:almost-faithful:i} Each collection $\mathcal{B}_I$, $I\in \mathcal{D}$,
          consists of pairwise disjoint dyadic intervals, and we have
          $\mathcal{B}_I\cap \mathcal{B}_J = \emptyset$ for all $I\ne J\in \mathcal{D}$.
    \item\label{enu:dfn:almost-faithful:ii} For every $I\in\mathcal{D}$, we have
          $\mathcal{B}_{I^\pm}^* \subset \{\tilde h_I = \pm 1\}$ and
          $|\mathcal{B}_{I^{\pm }}^*| \geq \varkappa_I\cdot \frac{1}{2}|\mathcal{B}_I^{*}|$.
  \end{enumerate}
  If $\mathcal{B}_{[0,1)}^{*} = [0,1)$ and $\varkappa_I = 1$ for all $I\in \mathcal{D}$, then we
  simply say that $(\tilde h_I)_{I\in \mathcal{D}}$ is \emph{faithful}, and in this case, we will
  usually denote the system by $(\hat{h}_I)_{I\in \mathcal{D}}$.  If a
  $(\varkappa_I)_{I\in \mathcal{D}}$-faithful Haar system~$(\tilde{h}_I)_{I\in \mathcal{D}}$
  additionally satisfies $\mathcal{B}_I\subset \mathcal{D}_{n_I}$, $I\in \mathcal{D}$, for a
  strictly increasing sequence $(n_I)_{I\in \mathcal{D}}$ of non-negative integers, then we say that
  it is \emph{relative to the frequencies $(n_I)_{I\in \mathcal{D}}$}.
\end{dfn}

\begin{rem}\label{rem:almost-faithful:1}\label{rem:porous}
  We will summarize elementary yet important properties of faithful Haar systems
  $(\hat{h}_I)_{I\in \mathcal{D}}$.  Our first observation is that
  \begin{equation*}
    |\mathcal{B}_I^*| = |I|
    \quad \text{and}\quad
    \mathcal{B}_{I^\pm}^* = \{ \hat{h}_I = \pm 1\},
    \qquad I\in\mathcal{D}.
  \end{equation*}

  Any faithful Haar system $(\hat{h}_I)_{I\in \mathcal{D}}$ and the standard Haar system
  $(h_I)_{I\in \mathcal{D}}$ are distributionally equivalent, i.e., if $(a_I)_{I\in \mathcal{D}}$ is a sequence of scalars
  with $a_I\ne 0$ for at most finitely many $I\in \mathcal{D}$, then the functions
  $\sum_{I\in \mathcal{D}}a_I \hat{h}_I$ and $\sum_{I\in \mathcal{D}} a_Ih_I$ have the same distribution.  Moreover, for each
  $n\in \mathbb{N}_0$, the sets $\mathcal{B}_I^{*}$, $I\in \mathcal{D}_n$, form a partition of
  $[0,1)$, and we have the following equation relating the local and global properties of the system
  $(\hat{h}_I)_{I\in \mathcal{D}}$:
  \begin{equation}\label{eq:12}
    \frac{|K\cap \mathcal{B}_J^{*}|}{|\mathcal{B}_J^{*}|}
    = \frac{|K|}{|I|},
    \qquad K\in \mathcal{B}_I,\ J\subset I\in \mathcal{D}.
  \end{equation}
  A general $(\varkappa_I)_{I\in \mathcal{D}}$-faithful Haar system may violate
  equation~\eqref{eq:12}.  In some versions of Jones' compatibility conditions, this equation is
  replaced by an inequality.
\end{rem}

Next, we introduce some convenient notation for collections of dyadic intervals.

\begin{ntn}
  Let $\mathcal{A}\subset \mathcal{D}$.
  \begin{enumerate}[(i)]
    \item We set
          \begin{equation*}
            \mathcal{G}_0(\mathcal{A})
            = \{ I\in \mathcal{A} : I \text{ is maximal with respect to inclusion} \}.
          \end{equation*}
    \item For $n\in \mathbb{N}$, we recursively define the collections
          \begin{equation*}
            \mathcal{G}_n(\mathcal{A})
            = \mathcal{G}_0\Bigl( \mathcal{A}\setminus \bigcup_{k=0}^{n-1}\mathcal{G}_k(\mathcal{A}) \Bigr).
          \end{equation*}
    \item We say that \emph{$\mathcal{A}$ has finite generations} if $\mathcal{G}_n(\mathcal{A})$ is
          finite for every $n\in \mathbb{N}_0$.
    \item We put $\limsup(\mathcal{A}) = \bigcap_{n=0}^{\infty}\mathcal{G}_n^{*}(\mathcal{A})$,
          where $\mathcal{G}_n^{*}(\mathcal{A}) := \mathcal{G}_n(\mathcal{A})^{*}$ for all
          $n\in \mathbb{N}_0$.
  \end{enumerate}
\end{ntn}
Note that for every $n\in \mathbb{N}_0$, the elements of $\mathcal{G}_n(\mathcal{A})$ are pairwise disjoint.  Moreover, note that
if $n\ge 1$ and $I\in \mathcal{G}_n(\mathcal{A})$, then there exists a unique dyadic interval
$J\in \mathcal{G}_{n-1}(\mathcal{A})$ such that $I\subset J^+$ or $I\subset J^-$.  Hence, we have
$\mathcal{G}_n^{*}(\mathcal{A})\subset \mathcal{G}_{n-1}^{*}(\mathcal{A})$ for all $n\ge 1$. \Cref{fig:collections-Gn} shows the collections
$\mathcal{G}_0(\mathcal{A}),\mathcal{G}_1(\mathcal{A})$ and $\mathcal{G}_2(\mathcal{A})$ for a specific choice of $\mathcal{A}\subset \mathcal{D}$.

\begin{figure}[H]
  \centering \includegraphics{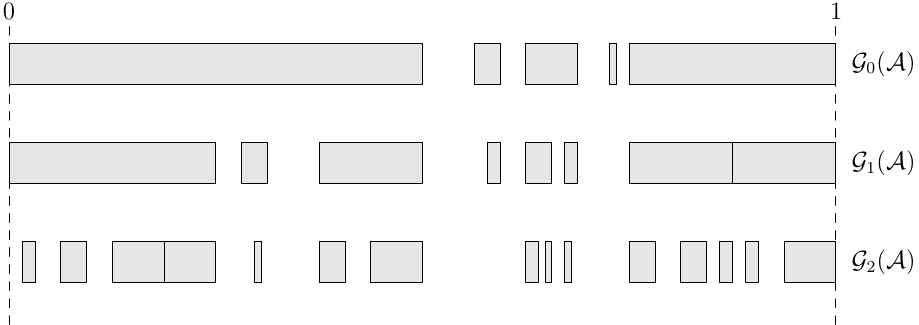}
  \caption{The collections $\mathcal{G}_n(\mathcal{A})$}
  \label{fig:collections-Gn}
\end{figure}

\begin{rem}\label{rem:faithful-hs-from-generations-and-signs}
  Suppose that $\hat{\mathcal{A}}\subset \mathcal{D}$ has finite generations and that
  $\mathcal{G}_n^{*}(\hat{\mathcal{A}}) = [0,1)$ for all $n\in \mathbb{N}_0$. Moreover, let
  $(\varepsilon_K)_{K\in \mathcal{D}}\in \{ \pm 1 \}^{\mathcal{D}}$ be a sequence of signs. Then we
  can construct a faithful Haar system $(\hat{h}_I)_{I\in \mathcal{D}}$ by putting
  \begin{equation*}
    \hat{h}_I = \sum_{K\in \hat{\mathcal{B}}_I} \varepsilon_Kh_K,
  \end{equation*}
  where $\hat{\mathcal{B}}_{[0,1)} = \mathcal{G}_0(\hat{\mathcal{A}})$ and
  \begin{equation*}
    \hat{\mathcal{B}}_{I^{\pm }}
    = \bigl\{ K\in \mathcal{G}_{n+1}(\hat{\mathcal{A}}) : K\subset \{ \hat{h}_I = \pm 1 \} \bigr\},
    \qquad I\in \mathcal{D}_n,\ n\in \mathbb{N}_0.
  \end{equation*}
\end{rem}

Next, we show that every $(\varkappa_I)_{I\in \mathcal{D}}$-faithful Haar system can be extended to
a faithful Haar system by adding additional Haar functions which ``fill the gaps''. This is
illustrated in \Cref{fig:faithful-and-almost-faithful}. Moreover, we prove that there exists a Haar
multiplier with norm~$1$ which maps the new system to the original one (see
\Cref{lem:almost-faithful:2}, below).
\begin{figure}[H]
  \centering \includegraphics[width=\textwidth]{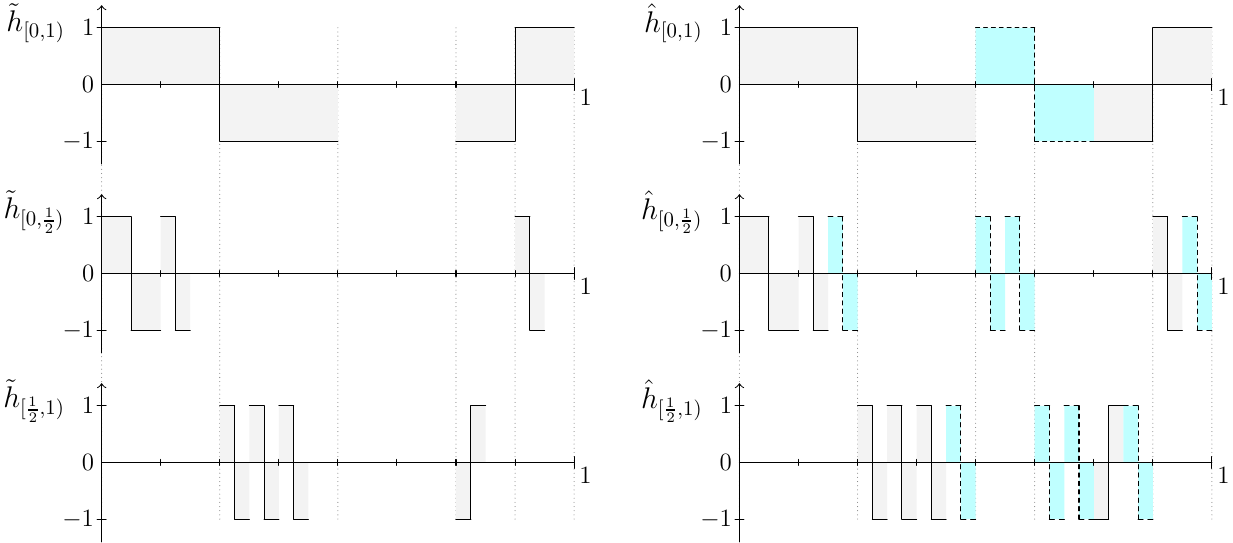}
  \caption{The first three functions of a $(\varkappa_I)_{I\in \mathcal{D}}$-faithful Haar
    system~$(\tilde{h}_I)_{I\in \mathcal{D}}$ which is extended to a faithful Haar
    system~$(\hat{h}_I)_{I\in \mathcal{D}}$ by adding the dashed Haar functions with the light blue
    shading}
  \label{fig:faithful-and-almost-faithful}
\end{figure}

\begin{lem}\label{lem:extend-finite-generations}
  Let $Y\in \mathcal{HH}_0(\delta)$ and suppose that $\mathcal{A}\subset \mathcal{D}$ has finite
  generations. Then there exists another collection $\hat{\mathcal{A}}\subset \mathcal{D}$ which has
  finite generations such that $\mathcal{G}_n^{*}(\hat{\mathcal{A}}) = [0,1)$ and
  $\mathcal{G}_n(\mathcal{A})\subset \mathcal{G}_n(\hat{\mathcal{A}})$ for all
  $n\in \mathbb{N}_0$. Moreover, there exists a bounded Haar multiplier $R\colon Y\to Y$ with
  $\|R\|\le 1$ such that for every $n\in \mathbb{N}_0$ and $K\in \mathcal{G}_n(\hat{\mathcal{A}})$,
  we have
  \begin{equation}\label{eq:29}
    Rh_K =
    \begin{cases}
      h_K, & K\in \mathcal{G}_n(\mathcal{A}),\\
      0, & K\in \mathcal{G}_n(\hat{\mathcal{A}})\setminus \mathcal{G}_n(\mathcal{A}).
    \end{cases}
  \end{equation}
\end{lem}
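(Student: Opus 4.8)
The plan is to build $\hat{\mathcal{A}}$ by enlarging each generation $\mathcal{G}_n(\mathcal{A})$ so that the union of the $n$th generation becomes all of $[0,1)$, and then to realize $R$ as a composition of the norm-$1$ Haar multipliers furnished by \Cref{lem:haar-multiplier-cond}. First I would proceed by recursion on the generation index. Set $\mathcal{G}_0(\hat{\mathcal{A}}) = \mathcal{G}_0(\mathcal{A}) \cup \mathcal{C}_0$, where $\mathcal{C}_0$ is the (finite, since $\mathcal{A}$ has finite generations) collection of maximal dyadic intervals contained in $[0,1) \setminus \mathcal{G}_0^{*}(\mathcal{A})$; this makes $\mathcal{G}_0^{*}(\hat{\mathcal{A}}) = [0,1)$ while keeping $\mathcal{G}_0(\mathcal{A}) \subset \mathcal{G}_0(\hat{\mathcal{A}})$. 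Given $\mathcal{G}_k(\hat{\mathcal{A}})$ for $k \le n$, I would define $\mathcal{G}_{n+1}(\hat{\mathcal{A}})$ as $\mathcal{G}_{n+1}(\mathcal{A})$ together with, for every $J \in \mathcal{G}_n(\hat{\mathcal{A}})$ and every choice of sign $\pm$, the maximal dyadic subintervals of $J^{\pm}$ that are not covered by $\mathcal{G}_{n+1}(\mathcal{A})$. One checks that with $\hat{\mathcal{A}} := \bigcup_n \mathcal{G}_n(\hat{\mathcal{A}})$, the collections $\mathcal{G}_n(\hat{\mathcal{A}})$ are exactly the generations of $\hat{\mathcal{A}}$ in the sense of the preceding notation, that $\hat{\mathcal{A}}$ has finite generations, that $\mathcal{G}_n^{*}(\hat{\mathcal{A}}) = [0,1)$, and that $\mathcal{G}_n(\mathcal{A}) \subset \mathcal{G}_n(\hat{\mathcal{A}})$ for all $n$; these are all routine induction arguments using that each $J \in \mathcal{G}_n(\hat{\mathcal{A}})$ has its children-region $J^{+} \cup J^{-}$ exactly tiled by the level-$(n{+}1)$ intervals below it.

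Next I would construct the Haar multiplier. Let $\hat{\mathcal{A}}^{\flat} := \bigcup_n \mathcal{G}_n(\hat{\mathcal{A}}) \setminus \mathcal{G}_n(\mathcal{A})$ be the set of ``new'' intervals, and consider the $\{0,1\}$-sequence $(d_I)_{I \in \mathcal{D}}$ defined by $d_I = 0$ if $I$ lies inside (or equals) some $K \in \hat{\mathcal{A}}^{\flat}$ in the sense that $I \subset K$ with $K$ of the appropriate generation below $I$'s generation — more precisely, $d_I = 0$ exactly when $I$ belongs to, or is strictly contained in some member of, $\hat{\mathcal{A}}^{\flat}$; and $d_I = 1$ otherwise. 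The key point is that once $d_I = 0$, every dyadic descendant of $I$ also gets $d = 0$, because being contained in a fixed $K \in \hat{\mathcal{A}}^{\flat}$ is inherited downward; hence the hereditary hypothesis of \Cref{lem:haar-multiplier-cond} is met, and the associated operator $R$ with $Rh_I = d_I h_I$ is a contraction on $Y$. It remains to verify \eqref{eq:29}: if $K \in \mathcal{G}_n(\mathcal{A})$, then $K \notin \hat{\mathcal{A}}^{\flat}$ and, crucially, $K$ is not strictly contained in any member of $\hat{\mathcal{A}}^{\flat}$ either (a member of $\hat{\mathcal{A}}^{\flat}$ of generation $m$ sits inside a generation-$(m{-}1)$ interval of $\hat{\mathcal{A}}$, and the generation structure prevents a genuine $\mathcal{A}$-interval from being nested inside a ``filler'' interval that was only introduced to cover the complement); so $d_K = 1$ and $Rh_K = h_K$. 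If instead $K \in \mathcal{G}_n(\hat{\mathcal{A}}) \setminus \mathcal{G}_n(\mathcal{A}) \subset \hat{\mathcal{A}}^{\flat}$, then $d_K = 0$ by definition and $Rh_K = 0$.

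I expect the main obstacle to be the bookkeeping in the last verification: one must be careful that the ``filler'' intervals $\mathcal{C}_n$ added at level $n$ never swallow a genuine $\mathcal{A}$-interval of a later generation, for otherwise setting $d = 0$ on the filler would wrongly kill a Haar function we are required to preserve. The way to handle this cleanly is to prove, by induction on $n$, the invariant that for every $K \in \mathcal{G}_n(\hat{\mathcal{A}}) \setminus \mathcal{G}_n(\mathcal{A})$ and every $J \in \mathcal{A}$ with $J \subsetneq K$, the interval $J$ has already been ``released'' — i.e. there is no generation of $\hat{\mathcal{A}}$ containing such a $J$ below $K$ — which follows from the fact that the filler intervals at level $n$ are chosen as maximal dyadic intervals avoiding $\mathcal{G}_n(\mathcal{A})^{*}$ at that stage, and maximality forces any $\mathcal{A}$-interval nested inside them to belong to a generation $\ge n{+}1$ yet simultaneously to have an ancestor outside $\hat{\mathcal{A}}$, which is impossible since all ancestors of any $\mathcal{A}$-interval up to level $n$ lie in $\hat{\mathcal{A}}$. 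Everything else — finiteness of generations, $\mathcal{G}_n^{*}(\hat{\mathcal{A}}) = [0,1)$, the nesting $\mathcal{G}_n(\mathcal{A}) \subset \mathcal{G}_n(\hat{\mathcal{A}})$, and $\|R\| \le 1$ — is then immediate from the construction and \Cref{lem:haar-multiplier-cond}.
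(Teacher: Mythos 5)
Your proposal is correct and follows essentially the same route as the paper: fill the gaps in each generation so that $\mathcal{G}_n^{*}(\hat{\mathcal{A}}) = [0,1)$, and obtain $R$ as a hereditary $\{0,1\}$-valued Haar multiplier via \Cref{lem:haar-multiplier-cond}. The only (harmless) differences are that the paper places all filler intervals of a given generation at one sufficiently fine level $\mathcal{D}_{n_k}$ rather than taking maximal complementary intervals, and defines the multiplier by $\rho_I = 1$ iff $I$ contains some element of $\mathcal{A}$, which makes the key invariant (no $\mathcal{A}$-interval sits inside a filler, since fillers at stage $m$ are disjoint from $\mathcal{G}_m^{*}(\mathcal{A})\supset \mathcal{G}_k^{*}(\mathcal{A})$ for $k\ge m$) slightly more transparent than your closing argument.
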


\begin{proof}
  Since $\mathcal{G}_0^{*}(\mathcal{A})$ is a finite union of dyadic intervals, there exists
  $n_0\in \mathbb{N}_0$ such that the complement $[0,1)\setminus \mathcal{G}_0^{*}(\mathcal{A})$ is
  a disjoint union of finitely many intervals from $\mathcal{D}_{n_0}$. By adding these intervals to
  $\mathcal{G}_0(\mathcal{A})$, we obtain a finite collection
  $\hat{\mathcal{G}}_0\subset \mathcal{D}$ with $\hat{\mathcal{G}}_0^{*} = [0,1)$. Next, since
  $\mathcal{G}_1^{*}(\mathcal{A})$ is a finite union of dyadic intervals, we can find
  $n_1\in \mathbb{N}_0$ such that $[0,1)\setminus \mathcal{G}_1^{*}(\mathcal{A})$ is a disjoint
  union of finitely many intervals in $\mathcal{D}_{n_1}$.  By adding these intervals to
  $\mathcal{G}_1(\mathcal{A})$, we obtain a collection $\hat{\mathcal{G}}_1$ with
  $\hat{\mathcal{G}}_1^{*} = [0,1)$, and by choosing $n_1$ sufficiently large, we can ensure that
  for each $K\in \hat{\mathcal{G}}_1$, there exists $L\in \hat{\mathcal{G}}_0$ such that
  $K\subset L^+$ or $L^-$.  By continuing this process, we obtain a sequence of collections
  $(\hat{\mathcal{G}}_n)_{n\in \mathbb{N}_0}$. Then the collection
  $\hat{\mathcal{A}} := \bigcup_{n=0}^{\infty}\hat{\mathcal{G}}_n$ has the desired properties since
  $\mathcal{G}_n(\hat{\mathcal{A}}) = \hat{\mathcal{G}}_n$ for all $n\in \mathbb{N}_0$.

  Now we define the sequence $(\rho_I)_{I\in \mathcal{D}}$ as
  \begin{equation*}
    \rho_I =
    \begin{cases}
      1, & \text{if } I\supset K \text{ for some } K\in \mathcal{A},\\
      0, & \text{else.}
    \end{cases}
  \end{equation*}
  Observe that if $\rho_I = 1$ for some $I\in \mathcal{D}$, then it follows that $\rho_J = 1$ for
  all $J\supset I$.  Hence, this sequence satisfies the conditions of
  \Cref{lem:haar-multiplier-cond}.  The corresponding Haar multiplier $R\colon Y\to Y$ defined by
  $Rh_I = \rho_Ih_I$, $I\in \mathcal{D}$, satisfies $\|R\|\le 1$ and $Rh_K = h_K$ for all
  $K\in \mathcal{A}$.  Moreover, if
  $L\in \mathcal{G}_n(\hat{\mathcal{A}})\setminus \mathcal{G}_n(\mathcal{A})$ for some
  $n\in \mathbb{N}_0$, then we have $Rh_L = 0$ because on the one hand, we cannot have $L\supset K$
  for any $K\in \mathcal{G}_m(\mathcal{A})$ with $m < n$, and on the other hand,
  $K\in \bigcup_{m=n}^{\infty}\mathcal{G}_m(\mathcal{A})$ implies that
  $K\subset \mathcal{G}_n^{*}(\mathcal{A})$, and so $K$ is disjoint from $L$.
\end{proof}

\begin{lem}\label{lem:almost-faithful:2}
  Let $Y\in \mathcal{HH}_0(\delta)$ and suppose that $(\tilde{h}_I)_{I\in \mathcal{D}}$ is a
  $(\varkappa_I)_{I\in \mathcal{D}}$-faithful Haar system for some sequence
  $(\varkappa_I)_{I\in \mathcal{D}}$ of scalars in $(0,1]$.  Then there exists a faithful Haar
  system $(\hat{h}_I)_{I\in \mathcal{D}}$ and a Haar multiplier $R\colon Y\to Y$ with
  $\|R\|\le 1$ such that $R\hat{h}_I = \tilde{h}_I$ for all $I\in \mathcal{D}$.
\end{lem}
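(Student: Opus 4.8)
The plan is to realize $(\tilde h_I)_{I\in\mathcal D}$ as the image, under a norm-one Haar multiplier, of a genuine faithful Haar system obtained by "completing the generations'' of the collection of all supporting intervals, and then to invoke \Cref{lem:extend-finite-generations} for the multiplier. So set $\mathcal A:=\bigcup_{I\in\mathcal D}\mathcal B_I$. The crucial combinatorial step is the identity
\begin{equation*}
  \mathcal G_n(\mathcal A)=\bigcup_{I\in\mathcal D_n}\mathcal B_I,\qquad n\in\mathbb N_0,
\end{equation*}
which I would prove by induction on $n$. For the base case: if $K\in\mathcal B_{I}$ with $I\ne[0,1)$, then $K\subset\mathcal B_I^{*}\subset\mathcal B_{\pi(I)}^{*}\subset\cdots\subset\{\tilde h_J=\pm1\}$ for $J=$ the level-$0$ ancestor $[0,1)$ ... more precisely, one checks that $K$ lies in $\{\tilde h_{\pi(I)}=\pm1\}$, which is a disjoint union of \emph{proper} dyadic halves of the intervals of $\mathcal B_{\pi(I)}$; a dyadic interval contained in such a union is either strictly inside one of those halves or equals one of them, and in both cases $K\subsetneq L$ for some $L\in\mathcal B_{\pi(I)}\subset\mathcal A$. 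Hence the maximal elements of $\mathcal A$ are exactly those of $\mathcal B_{[0,1)}$. Since $\mathcal A\setminus\bigcup_{k<n}\mathcal G_k(\mathcal A)=\bigcup_{m\ge n}\bigcup_{I\in\mathcal D_m}\mathcal B_I$ splits, thanks to $\mathcal B_{J^{\pm}}^{*}\subset\{\tilde h_J=\pm1\}$, as a \emph{disjoint} union over $I_0\in\mathcal D_n$ of subsystems of exactly the same shape rooted at $\mathcal B_{I_0}$, the general case follows by running the base-case argument inside each subsystem. In particular each $\mathcal G_n(\mathcal A)$ is finite, so $\mathcal A$ has finite generations, and $\mathcal G_n^{*}(\mathcal A)=\bigcup_{I\in\mathcal D_n}\mathcal B_I^{*}$ is decreasing in $n$.

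Next I would apply \Cref{lem:extend-finite-generations} to $\mathcal A$, obtaining $\hat{\mathcal A}\subset\mathcal D$ with finite generations, $\mathcal G_n^{*}(\hat{\mathcal A})=[0,1)$ and $\mathcal G_n(\mathcal A)\subset\mathcal G_n(\hat{\mathcal A})$ for all $n$, together with a Haar multiplier $R\colon Y\to Y$ with $\|R\|\le 1$, $Rh_K=h_K$ for $K\in\mathcal G_n(\mathcal A)$ and $Rh_K=0$ for $K\in\mathcal G_n(\hat{\mathcal A})\setminus\mathcal G_n(\mathcal A)$. Using the sign sequence $(\varepsilon_K)_{K\in\mathcal D}$ already fixed in the hypothesis, I build the faithful Haar system $(\hat h_I)_{I\in\mathcal D}$ from $\hat{\mathcal A}$ exactly as in \Cref{rem:faithful-hs-from-generations-and-signs}, with supporting collections $\hat{\mathcal B}_I\subset\mathcal G_n(\hat{\mathcal A})$ for $I\in\mathcal D_n$; note the signs on $\mathcal B_I$ are automatically inherited.

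The core verification is then $\mathcal B_I\subset\hat{\mathcal B}_I$ for all $I\in\mathcal D$, proved by induction on the level of $I$, the case $I=[0,1)$ being immediate since $\hat{\mathcal B}_{[0,1)}=\mathcal G_0(\hat{\mathcal A})\supset\mathcal G_0(\mathcal A)=\mathcal B_{[0,1)}$. Assume $\mathcal B_J\subset\hat{\mathcal B}_J$ for all $J$ of level $\le n$ and fix $I\in\mathcal D_n$. Since $(\hat h_I)_{I\in\mathcal D}$ is faithful, the $\hat{\mathcal B}_J$ are pairwise disjoint; combined with the induction hypothesis this gives $\hat{\mathcal B}_I\cap\mathcal G_n(\mathcal A)=\hat{\mathcal B}_I\cap\bigcup_{J\in\mathcal D_n}\mathcal B_J=\mathcal B_I$, so $\hat{\mathcal B}_I\setminus\mathcal B_I\subset\mathcal G_n(\hat{\mathcal A})\setminus\mathcal G_n(\mathcal A)$. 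As distinct elements of a single generation are disjoint, every interval of $\hat{\mathcal B}_I\setminus\mathcal B_I$ is disjoint from $\mathcal G_n^{*}(\mathcal A)$, which contains $\mathcal B_I^{*}$ and hence $\{\tilde h_I=+1\}\cup\{\tilde h_I=-1\}$. Therefore $\hat h_I-\tilde h_I=\sum_{K\in\hat{\mathcal B}_I\setminus\mathcal B_I}\varepsilon_Kh_K$ vanishes on $\{\tilde h_I=\pm1\}$, so $\{\hat h_I=\pm1\}\supset\{\tilde h_I=\pm1\}\supset\mathcal B_{I^{\pm}}^{*}$; since moreover $\mathcal B_{I^{\pm}}\subset\mathcal G_{n+1}(\mathcal A)\subset\mathcal G_{n+1}(\hat{\mathcal A})$, the defining formula for $\hat{\mathcal B}_{I^{\pm}}$ in \Cref{rem:faithful-hs-from-generations-and-signs} yields $\mathcal B_{I^{\pm}}\subset\hat{\mathcal B}_{I^{\pm}}$, completing the induction.

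Finally, for $I\in\mathcal D_n$ we have $\hat{\mathcal B}_I\cap\mathcal G_n(\mathcal A)=\mathcal B_I$, hence
\begin{equation*}
  R\hat h_I=\sum_{K\in\hat{\mathcal B}_I}\varepsilon_K Rh_K
  =\sum_{K\in\mathcal B_I}\varepsilon_K h_K
  =\tilde h_I,
\end{equation*}
using $Rh_K=h_K$ for $K\in\mathcal B_I\subset\mathcal G_n(\mathcal A)$ and $Rh_K=0$ for $K\in\hat{\mathcal B}_I\setminus\mathcal B_I\subset\mathcal G_n(\hat{\mathcal A})\setminus\mathcal G_n(\mathcal A)$. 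This gives the lemma. I expect the main obstacle to be the combinatorial identity $\mathcal G_n(\mathcal A)=\bigcup_{I\in\mathcal D_n}\mathcal B_I$: getting the "strictly contained in a parent interval'' argument exactly right (in particular handling the case where an interval of $\mathcal B_{I^{\pm}}$ coincides with a half of an interval of $\mathcal B_I$) and then propagating it cleanly through the disjoint subtrees; everything afterwards is bookkeeping with disjointness of generations and of supporting collections.
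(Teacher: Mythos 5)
Your proof is correct and follows essentially the same route as the paper's: form $\mathcal{A}=\bigcup_{I}\mathcal{B}_I$, complete it to $\hat{\mathcal{A}}$ via \Cref{lem:extend-finite-generations}, build $(\hat h_I)_{I}$ from \Cref{rem:faithful-hs-from-generations-and-signs} with the given signs, and read off $R\hat h_I=\tilde h_I$ from~\eqref{eq:29}. The only difference is that you prove in detail the combinatorial identities $\mathcal{G}_n(\mathcal{A})=\bigcup_{I\in\mathcal{D}_n}\mathcal{B}_I$ and $\hat{\mathcal{B}}_I\cap\mathcal{G}_n(\mathcal{A})=\mathcal{B}_I$, which the paper asserts as clear; your verifications of these are sound.
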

\begin{proof}
  Write $\tilde{h}_I = \sum_{K\in \mathcal{B}_I} \varepsilon_Kh_K$, where
  $\mathcal{B}_I\subset \mathcal{D}$, $I\in \mathcal{D}$, and
  $(\varepsilon_K)_{K\in \mathcal{D}}\in \{ \pm 1 \}^{\mathcal{D}}$.  Then the collection
  $\mathcal{A} := \bigcup_{I\in \mathcal{D}}\mathcal{B}_I$ has finite generations.  By
  \Cref{lem:extend-finite-generations}, there exists another collection
  $\hat{\mathcal{A}}\subset \mathcal{D}$ with finite generations such that
  $\mathcal{G}_n^{*}(\hat{\mathcal{A}}) = [0,1)$ and
  $\mathcal{G}_n(\mathcal{A})\subset \mathcal{G}_n(\hat{\mathcal{A}})$ for all
  $n\in \mathbb{N}_0$. Moreover, there exists a bounded Haar multiplier $R\colon Y\to Y$ with
  $\|R\|\le 1$ such that equation~\eqref{eq:29} is satisfied for all $n\in \mathbb{N}_0$ and
  $K\in \mathcal{G}_n(\hat{\mathcal{A}})$.  Now let the faithful Haar system
  $(\hat{h}_I)_{I\in \mathcal{D}}$ and the associated collections
  $(\hat{\mathcal{B}}_I)_{I\in \mathcal{D}}$ be defined as in
  \Cref{rem:faithful-hs-from-generations-and-signs} using the collection $\hat{\mathcal{A}}$ and the
  signs $(\varepsilon_K)_{K\in \mathcal{D}}$.  Clearly, we have
  $\hat{\mathcal{B}}_I\cap \mathcal{G}_n(\mathcal{A}) = \mathcal{B}_I$ for all $I\in \mathcal{D}_n$,
  $n\in \mathbb{N}_0$. Thus, it follows from equation~\eqref{eq:29} that $R\hat{h}_I = \tilde{h}_I$
  for all $I\in \mathcal{D}$.
\end{proof}

%%% Local Variables:
%%% mode: latex
%%% TeX-master: "main"
%%% End:
 %auto-ignore

\section{Asymptotically curved Banach spaces}
\label{sec:curved}

In this section, we discuss asymptotically curved Banach spaces and uniformly asymptotically curved
sequences of Banach spaces (see \Cref{dfn:asympt-curved}).  We will need the following additional
concepts.

\begin{dfn}\label{dfn:upper-s-estimate}
  Let $(x_j)_{j=1}^{\infty}$ denote a sequence in a Banach space $E$ and let $1 < \tau < \infty$.
  We say that $(x_j)_{j=1}^{\infty}$ satisfies an \emph{upper $\tau$-estimate (in $E$) (with
    constant $C > 0$)} if
  \begin{equation*}
    \Bigl\|\sum_{j=1}^n x_j\Bigr\|_E
    \leq C \Bigl(\sum_{j=1}^n \|x_j\|_E^\tau\Bigr)^{1/\tau},
    \qquad n\in \mathbb{N}.
  \end{equation*}
  We say that $(x_j)_{j=1}^{\infty}$ satisfies an \emph{upper $\infty$-estimate (in $E$) (with
    constant $C > 0$)} if
  \begin{equation*}
    \Bigl\|\sum_{j=1}^n x_j\Bigr\|_E
    \leq C \max_{1\leq j\leq n} \|x_j\|_E,
    \qquad n\in \mathbb{N}.
  \end{equation*}
\end{dfn}

Next, we recall the notion of (Rademacher) type for a Banach space (see, e.g.,
\cite[Definition~1.e.12]{MR540367}).

\begin{dfn}\label{dfn:rademacher-type}
  Let $E$ be a Banach space, and let $1 < \tau \le 2$. We say that \emph{$E$ is of (Rademacher) type
    $\tau$} if there exists a constant $C > 0$ such that
  for every finite sequence of vectors $(x_j)_{j=1}^n$ in $E$, we have
  \begin{equation*}
    \int_0^1 \Bigl\| \sum_{j=1}^n r_j(t)x_j \Bigr\|_E\, \mathrm{d}t
    \le C \Bigl( \sum_{j=1}^n \|x_j\|_E^{\tau} \Bigr)^{1/\tau}.
  \end{equation*}
  If this holds, we say that $E$ is of (Rademacher) type $\tau$ \emph{with constant $C$}.
\end{dfn}

The proof of the following lemma is both elementary and straightforward, and therefore omitted.
\begin{lem}\label{lem:curved-sufficient}
  Let $(e_j)_{j=1}^{\infty}$ denote a Schauder basis of a Banach space $E$ and let $C > 0$.  Then
  the following statements are true:
  \begin{enumerate}[(i)]
  \item\label{enu:pro:curved-sufficient:ii} Suppose that $(e_j)_{j=1}^{\infty}$ is $C$-unconditional
    and $E$ has Rademacher type $\tau$ with constant~$C$ for some $1 < \tau \leq 2$, then every
    block basis $(x_j)_{j=1}^{\infty}$ of $(e_j)_{j=1}^{\infty}$ satisfies an upper
    $\tau$-estimate with constant $C^2$.
  \item\label{enu:pro:curved-sufficient:i} Suppose that every bounded block basis
    $(x_j)_{j=1}^{\infty}$ of $(e_j)_{j=1}^{\infty}$ satisfies an upper $\tau$-estimate for some
    $1 < \tau \leq \infty$, then $E$ is asymptotically curved (with respect to
    $(e_j)_{j=1}^{\infty}$).
  \end{enumerate}
\end{lem}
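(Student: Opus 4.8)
The plan is to handle the two assertions separately; both are short computations straight from the definitions, so I would not organize the argument into numbered steps.

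For the first assertion, I would fix a block basis $(x_j)_{j=1}^{\infty}$ of $(e_j)_{j=1}^{\infty}$ and use the standard observation that, since the $x_j$ have pairwise disjoint supports with respect to $(e_j)_{j=1}^{\infty}$, attaching signs to the blocks $x_j$ is the same as attaching signs to the individual basis vectors occurring in their expansions. Hence $C$-unconditionality applies at the level of the $x_j$, and in particular, for every $n\in\mathbb{N}$ and every sign sequence $(\varepsilon_j)_{j=1}^n\in\{\pm1\}^n$, the inequality $\|\sum_{j=1}^n x_j\|_E\le C\|\sum_{j=1}^n\varepsilon_j x_j\|_E$ holds (writing $x_j=\varepsilon_j(\varepsilon_j x_j)$). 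Averaging over all sign choices and then invoking the Rademacher type-$\tau$ estimate with constant $C$, I would obtain
\[
  \Bigl\|\sum_{j=1}^n x_j\Bigr\|_E
  \le C\int_0^1\Bigl\|\sum_{j=1}^n r_j(t)x_j\Bigr\|_E\,\dif t
  \le C^2\Bigl(\sum_{j=1}^n\|x_j\|_E^{\tau}\Bigr)^{1/\tau},
\]
which is exactly the asserted upper $\tau$-estimate with constant $C^2$.

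For the second assertion, I would take an arbitrary bounded block basis $(x_j)_{j=1}^{\infty}$ of $(e_j)_{j=1}^{\infty}$, say with $M:=\sup_{j}\|x_j\|_E<\infty$, and let $C>0$ and $1<\tau\le\infty$ be such that $(x_j)_{j=1}^{\infty}$ satisfies an upper $\tau$-estimate with constant $C$. If $\tau<\infty$, then for every $n\in\mathbb{N}$,
\[
  \frac1n\Bigl\|\sum_{j=1}^n x_j\Bigr\|_E
  \le \frac{C}{n}\Bigl(\sum_{j=1}^n\|x_j\|_E^{\tau}\Bigr)^{1/\tau}
  \le C M\,n^{1/\tau-1},
\]
and the right-hand side tends to $0$ as $n\to\infty$ because $1/\tau-1<0$. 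If $\tau=\infty$, then $\frac1n\|\sum_{j=1}^n x_j\|_E\le \frac{C}{n}\max_{1\le j\le n}\|x_j\|_E\le CM/n\to0$. Either way, by \Cref{dfn:asympt-curved}, $E$ is asymptotically curved with respect to $(e_j)_{j=1}^{\infty}$.

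I do not expect any genuine obstacle here: the lemma is purely a matter of unwinding the definitions of unconditionality, Rademacher type, upper $\tau$-estimate and asymptotic curvedness. The only point that requires a moment of care is getting the direction of the $C$-unconditionality inequality right in the first assertion — one uses it to dominate the unsigned sum by the signed sums before averaging, rather than the other way around.
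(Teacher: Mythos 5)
Your proof is correct and is precisely the ``elementary and straightforward'' argument the paper has in mind (the paper omits the proof of this lemma entirely): unconditionality to dominate the unsigned sum by each signed sum, averaging over signs, then the type-$\tau$ inequality for~(i); and the trivial estimate $n^{1/\tau-1}\to 0$ (resp.\ $1/n\to 0$) for~(ii). No gaps.
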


The following uniform version of \Cref{lem:curved-sufficient}~\eqref{enu:pro:curved-sufficient:i} is
taken from \cite{MR4299595}.
\begin{lem}\label{lem:uniformly-curved-sufficient}
  For each $k\in \mathbb{N}$, let $(e_{k,j})_{j=1}^{\infty}$ denote a Schauder basis of a Banach
  space $E_k$. Moreover, let $1 < \tau \leq \infty$ and $C > 0$, and suppose that for each
  $k\in \mathbb{N}$, every bounded block basis of $(e_{k,j})_{j=1}^{\infty}$ satisfies an upper
  $\tau$-estimate in $E_k$ with constant $C$.  Then $(E_k)_{k=1}^{\infty}$ is uniformly
  asymptotically curved with respect to the array $(e_{k,j})_{k,j=1}^{\infty}$.
\end{lem}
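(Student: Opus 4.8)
The plan is to unwind both definitions and apply the hypothesis term by term; no clever idea is needed. First I would fix a bounded array $(x_{k,j})_{k,j=1}^{\infty}$ with the property that $(x_{k,j})_{j=1}^{\infty}$ is a block basis of $(e_{k,j})_{j=1}^{\infty}$ for every $k\in\mathbb{N}$, and set $M = \sup_{k,j}\|x_{k,j}\|_{E_k} < \infty$. For each fixed $k$, the row $(x_{k,j})_{j=1}^{\infty}$ is then a \emph{bounded} block basis of $(e_{k,j})_{j=1}^{\infty}$, so by hypothesis it satisfies an upper $\tau$-estimate in $E_k$ with constant $C$ (the same $C$ for every $k$).

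Next I would split into the two cases in \Cref{dfn:upper-s-estimate}. If $\tau < \infty$, applying the upper $\tau$-estimate to the first $n$ vectors of the $k$th row gives $\|\sum_{j=1}^n x_{k,j}\|_{E_k} \le C\bigl(\sum_{j=1}^n \|x_{k,j}\|_{E_k}^{\tau}\bigr)^{1/\tau} \le C M n^{1/\tau}$, hence $\tfrac{1}{n}\|\sum_{j=1}^n x_{k,j}\|_{E_k} \le C M\, n^{1/\tau - 1}$ for all $k,n$. Since $1/\tau - 1 < 0$ and the right-hand side does not depend on $k$, taking the supremum over $k$ and letting $n\to\infty$ gives $\lim_{n\to\infty}\sup_k \tfrac1n\|\sum_{j=1}^n x_{k,j}\|_{E_k}=0$. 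The case $\tau = \infty$ is even shorter: the upper $\infty$-estimate gives $\|\sum_{j=1}^n x_{k,j}\|_{E_k} \le C\max_{1\le j\le n}\|x_{k,j}\|_{E_k} \le C M$, so $\tfrac{1}{n}\|\sum_{j=1}^n x_{k,j}\|_{E_k}\le CM/n\to 0$, again uniformly in $k$. This is exactly the conclusion of \Cref{dfn:asympt-curved}.

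I expect no real obstacle here; the only point worth emphasizing in the write-up is that the uniformity over $k$ in the conclusion is free precisely because the constant $C$ in the hypothesis is independent of $k$ — this is what distinguishes the statement from the non-uniform \Cref{lem:curved-sufficient}~\eqref{enu:pro:curved-sufficient:i}, whose quantitative form is exactly the displayed bounds above applied row by row.
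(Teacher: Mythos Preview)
Your proposal is correct. The paper does not actually prove this lemma: it simply states that the result ``is taken from \cite{MR4299595}'' and gives no argument, so there is no proof in the paper to compare against. Your direct verification --- applying the uniform upper $\tau$-estimate row by row to get the $k$-independent bound $CMn^{1/\tau-1}$ (or $CM/n$ when $\tau=\infty$) --- is exactly the expected elementary argument, and your remark that the uniformity in $k$ comes for free from the uniformity of $C$ is precisely the point.
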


Another way to obtain a uniformly asymptotically curved sequence of Banach spaces is by repeatedly
taking the same asymptotically curved space, thus forming a constant sequence. This is proved in the
following lemma.

\begin{lem}\label{lem:curved-implies-uniformly-curved}
  Let $E$ be a Banach space with a Schauder basis $(e_j)_{j=1}^{\infty}$ and suppose that $E$ is
  asymptotically curved with respect to $(e_j)_{j=1}^{\infty}$. Put $e_{k,j} = e_j$ for all
  $k\in \mathbb{N}$. Then $(E,E,\dots)$ is uniformly asymptotically curved with respect to
  $(e_{k,j})_{k,j=1}^{\infty}$.
\end{lem}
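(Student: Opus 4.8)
The plan is to argue by contradiction, reducing a failure of uniform asymptotic curvedness for the constant sequence $(E,E,\dots)$ to a failure of asymptotic curvedness of the single space~$E$. So suppose $(E,E,\dots)$ is not uniformly asymptotically curved with respect to the array $e_{k,j}=e_j$. By \Cref{dfn:asympt-curved} there is a bounded array $(x_{k,j})_{k,j=1}^{\infty}$, say with $\|x_{k,j}\|_E\le M$ for all $k,j$, such that each $(x_{k,j})_{j=1}^{\infty}$ is a block basis of $(e_j)_{j=1}^{\infty}$, and there are $\varepsilon>0$, a strictly increasing sequence $(n_l)_{l=1}^{\infty}$ of integers and indices $(k_l)_{l=1}^{\infty}$ with $\bigl\|\sum_{j=1}^{n_l}x_{k_l,j}\bigr\|_E\ge\varepsilon\,n_l$ for every~$l$ (the extraction of this data from the negation of the $\sup_k$-condition is routine, up to harmlessly shrinking~$\varepsilon$). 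Write $x^{(l)}_j:=x_{k_l,j}$. The goal is to splice together finite pieces of the block bases $(x^{(l)}_j)_j$ into a single bounded block basis $(z_m)_{m=1}^{\infty}$ of $(e_j)_j$ whose partial-sum averages do not tend to~$0$, contradicting the assumed asymptotic curvedness of $E$ with respect to $(e_j)_j$.

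The construction is a gliding-hump procedure in stages. At the beginning of stage~$i$ we will have already fixed finitely many of the $z_m$, all of which (as elements of $\spn\{e_j\}$) are supported in $\{1,\dots,M_{i-1}\}$; write $N_{i-1}$ for the number fixed so far and $S_{i-1}$ for their sum, with $M_0=N_0=0$ and $S_0=0$. Choose $l=l_i$ so large that $n_{l_i}$ dominates all accumulated quantities, e.g.\ $M\,M_{i-1}\le\tfrac{\varepsilon}{4}n_{l_i}$, $M\,N_{i-1}\le\tfrac{\varepsilon}{4}n_{l_i}$, and $N_{i-1}\le n_{l_i}$. Since the blocks $x^{(l_i)}_1,x^{(l_i)}_2,\dots$ are successive we have $\min\supp x^{(l_i)}_j\ge j$, so the initial segment of those blocks meeting $\{1,\dots,M_{i-1}\}$ has length at most $M_{i-1}$; let $p$ be minimal with $\min\supp x^{(l_i)}_p>M_{i-1}$ (so $p-1\le M_{i-1}$), discard $x^{(l_i)}_1,\dots,x^{(l_i)}_{p-1}$, and append $x^{(l_i)}_p,\dots,x^{(l_i)}_{n_{l_i}}$ as the next $z_m$'s. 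These appended vectors are successive blocks, supported in $\{M_{i-1}+1,\dots,M_i\}$ with $M_i:=\max\supp x^{(l_i)}_{n_{l_i}}$, so $(z_m)$ remains a genuine block basis of $(e_j)_j$ with $\|z_m\|_E\le M$ throughout.

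It then remains to bound norms. Discarding at most $M_{i-1}$ blocks of norm $\le M$ costs at most $M\,M_{i-1}\le\tfrac{\varepsilon}{4}n_{l_i}$, so $\bigl\|\sum_{j=p}^{n_{l_i}}x^{(l_i)}_j\bigr\|_E\ge\varepsilon\,n_{l_i}-\tfrac{\varepsilon}{4}n_{l_i}=\tfrac{3\varepsilon}{4}n_{l_i}$. Since $\|S_{i-1}\|_E\le M\,N_{i-1}\le\tfrac{\varepsilon}{4}n_{l_i}$, the partial sum $S_i$ at the end of stage~$i$ satisfies $\|S_i\|_E\ge\tfrac{3\varepsilon}{4}n_{l_i}-\tfrac{\varepsilon}{4}n_{l_i}=\tfrac{\varepsilon}{2}n_{l_i}$, while the total number of blocks used is $N_i\le N_{i-1}+n_{l_i}\le 2n_{l_i}$. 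Hence $\tfrac1{N_i}\bigl\|\sum_{m=1}^{N_i}z_m\bigr\|_E\ge\tfrac{\varepsilon}{4}$ for every~$i$, so $\tfrac1n\bigl\|\sum_{m=1}^{n}z_m\bigr\|_E\not\to0$. This contradicts \Cref{dfn:asympt-curved} applied to the bounded block basis $(z_m)_{m=1}^{\infty}$ in the asymptotically curved space~$E$, and the lemma follows.

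The only genuine obstacle is that the finite blocks $(x^{(l)}_j)_{j=1}^{n_l}$ for different $l$ occupy overlapping initial segments of $(e_j)_j$ and so cannot simply be concatenated into one block basis; the fix is to throw away the offending initial blocks at each stage and to let $n_{l_i}$ grow fast enough that neither this truncation nor the accumulated norm $\|S_{i-1}\|_E$ erodes the lower bound $\varepsilon\,n_{l_i}$ by more than a fixed fraction. Everything else is bookkeeping of constants. (If preferred, the same argument can be packaged as showing that asymptotic curvedness of $(e_j)_j$ already forces the \emph{uniform} rate $\sup\bigl\{\tfrac1n\|\sum_{j=1}^n x_j\|_E:\ (x_j)\text{ a block basis of }(e_j)_j,\ \|x_j\|_E\le 1\bigr\}\to0$, from which \Cref{lem:curved-implies-uniformly-curved} is immediate.)
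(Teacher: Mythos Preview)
Your proof is correct and follows essentially the same approach as the paper's: both argue by contradiction, splice finite segments of the block bases $(x_{k_l,j})_j$ into a single bounded block basis of $(e_j)_j$ by discarding the initial overlapping blocks at each stage, and choose $n_{l_i}$ large enough relative to the accumulated support bound and the accumulated norm so that the partial-sum averages stay bounded below. The only differences are bookkeeping: the paper discards exactly $l_{i-1}=\max\supp x_{k_{i-1},n_{i-1}}$ blocks at stage~$i$ (rather than the minimal number $p-1\le M_{i-1}$ that you discard) and ends with the lower bound $\varepsilon/2$ rather than your $\varepsilon/4$.
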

\begin{proof}
  Let $(x_{k,j})_{k,j=1}^{\infty}$ be an array such that for every $k\in \mathbb{N}$,
  $(x_{k,j})_{j=1}^{\infty}$ is a block basis of $(e_j)_{j=1}^{\infty}$, and such that for some
  $C > 0$, we have
  \begin{equation*}
    \|x_{k,j}\|_E \le C, \qquad k,j\in \mathbb{N}.
  \end{equation*}
  We have to show that
  \begin{equation*}
    \lim_{n\to \infty} \sup_{k\in \mathbb{N}} \frac{1}{n}\Bigl\| \sum_{j=1}^n x_{k,j} \Bigr\|_E = 0.
  \end{equation*}
  Assume for a contradiction that there exist $\varepsilon > 0$ and sequences
  $(n_i)_{i=1}^{\infty}$ and $(k_i)_{i=1}^{\infty}$ of natural numbers such that
  $(n_i)_{i=1}^{\infty}$ is strictly increasing and
  \begin{equation}\label{eq:assume-for-contradiction}
    \frac{1}{n_i} \Bigl\| \sum_{j=1}^{n_i} x_{k_i,j} \Bigr\|_E \ge \varepsilon,
    \qquad i\in \mathbb{N}.
  \end{equation}
  Put $l_i = \max\operatorname{supp} x_{k_i, n_i}$ for all $i\in \mathbb{N}$. By passing to a
  subsequence, we may assume that $n_i\ge (1+4C/\varepsilon)l_{i-1}$ for all $i\ge 2$.  We will now
  construct a bounded block basis $(y_j)_{j=1}^{\infty}$ of $(e_j)_{j=1}^{\infty}$ and a strictly
  increasing sequence $(N_i)_{i=1}^{\infty}$ of natural numbers such that
  \begin{equation}\label{eq:block-basis-goal}
    \frac{1}{N_i} \Bigl\| \sum_{j=1}^{N_i} y_j \Bigr\|_E \ge \frac{\varepsilon}{2},
    \qquad i\in \mathbb{N},
  \end{equation}
  thus contradicting the hypothesis that $E$ is asymptotically curved with respect to
  $(e_j)_{j=1}^{\infty}$.

  Put $N_1 = n_1$ and
  \begin{equation*}
    y_1 = x_{k_1, 1}, \dots, y_{n_1} = x_{k_1,n_1}.
  \end{equation*}
  By~\eqref{eq:assume-for-contradiction}, inequality~\eqref{eq:block-basis-goal} holds for
  $i=1$. Now let $i\ge 2$ and assume that we have already chosen $N_1,\dots,N_{i-1}$ and picked
  $y_1,\dots,y_{N_{i-1}}\in \{ x_{k,j} : k,j\in \mathbb{N} \}$ such that $(y_1,\dots,y_{N_{i-1}})$
  is a finite block basis of $(e_j)_{j=1}^{\infty}$ and such that
  $y_{N_{i-1}} = x_{k_{i-1},n_{i-1}}$. Consider the vectors $x_{k_i,1},\dots,x_{k_i,n_i}$. If we
  skip the first $l_{i-1}$ of these vectors, then the supports of the remaining vectors are clearly
  subsets of $\{ l_{i-1}+1, l_{i-1}+2,\dots \}$. Thus, if we define
  $N_i = N_{i-1} + n_i - l_{i-1} > N_{i-1}$ and
  \begin{equation*}
    y_{N_{i-1}+1} = x_{k_i, l_{i-1}+1}, \dots, y_{N_i} = x_{k_i, n_i},
  \end{equation*}
  then $(y_1,\dots,y_{N_i})$ is a finite block basis of $(e_j)_{j=1}^{\infty}$.  Observe that
  $N_{i-1}\le \max \operatorname{supp}y_{N_{i-1}} = l_{i-1}$ and hence $N_i\le n_i$. Using
  these observations, exploiting that $\|y_j\|_E\le C$ for all $j$, and
  using~\eqref{eq:assume-for-contradiction}, we obtain
  \begin{align*}
    \Bigl\| \sum_{j=1}^{N_i} y_j \Bigr\|_E
    &\ge \Bigl\| \sum_{j=N_{i-1}+1}^{N_i} y_j \Bigr\|_E - CN_{i-1}
      = \Bigl\| \sum_{j=l_{i-1}+1}^{n_i} x_{k_i,j} \Bigr\|_E - CN_{i-1}\\
    &\ge \Bigl\| \sum_{j=1}^{n_i} x_{k_i,j} \Bigr\|_E - Cl_{i-1} - CN_{i-1}
      \ge \varepsilon n_i - 2\cdot Cl_{i-1}\\
    &\ge \frac{\varepsilon}{2}n_i + \frac{\varepsilon}{2}\cdot \frac{4C}{\varepsilon}l_{i-1} - 2Cl_{i-1}
      \ge \frac{\varepsilon}{2}N_i,
  \end{align*}
  which proves~\eqref{eq:block-basis-goal}.
\end{proof}

Next, we provide some conditions under which Haar system Hardy spaces and sequences of such spaces
are (uniformly) asymptotically curved.  Clearly, it does not make a difference whether we consider
Haar system Hardy spaces spaces $X(\mathbf{r})\in \mathcal{HH}(\delta)$ equipped with the Haar basis
$(h_I)_{I\in \mathcal{D}^+}$ or their closed subspaces $X_0(\mathbf{r})$ equipped with the Haar
basis $(h_I)_{I\in \mathcal{D}}$.

\begin{pro}\label{pro:hshs-curved-examples}
  Let $1\le p<\infty$, let $\mathbf{r}\in \mathcal{R}$, and put $X = L^p$ if $1\le p < \infty$ and
  $X = [h_I]_{I\in \mathcal{D}^+}\subset L^{\infty}$ if $p = \infty$. Then the space $X(\mathbf{r})$
  is asymptotically curved with respect to the Haar system $(h_I)_{I\in \mathcal{D}^+}$ if and only
  if
  \begin{equation*}
    1 < p < \infty
    \qquad\text{or}\qquad
    \text{$p = \infty$ and $\mathbf{r}$ is independent}.
  \end{equation*}
\end{pro}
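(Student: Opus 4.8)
The plan is to prove the equivalence by distinguishing the cases $1<p<\infty$, $p=1$, and $p=\infty$, and, where needed, the two possibilities for $\mathbf r$ (independent or constant). For the positive cases the tool will be \Cref{lem:curved-sufficient}, which reduces asymptotic curvedness to a uniform upper $\tau$-estimate for block bases with some $\tau\in(1,\infty]$; for the negative cases it suffices to exhibit one bounded block basis of $(h_I)_{I\in\mathcal D^+}$ whose Cesàro averages do not tend to~$0$.

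\emph{Positive direction.} Assume first $1<p<\infty$. If $\mathbf r$ is constant then $X(\mathbf r)=X=L^p$; if $\mathbf r$ is independent then by \Cref{rem:classical-examples} the identity map is an isomorphism of $X(\mathbf r)$ onto $L^p$. Either way the Haar system is fixed, and since both the boundedness of a block basis and the vanishing of its Cesàro averages are unaffected by passing to an equivalent norm, it is enough to see that $L^p$ is asymptotically curved with respect to $(h_I)_{I\in\mathcal D^+}$; this follows from \Cref{lem:curved-sufficient}, because the Haar system is unconditional in $L^p$ and $L^p$ has Rademacher type $\min(p,2)>1$, so every block basis satisfies an upper $\min(p,2)$-estimate. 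Now assume $p=\infty$ and $\mathbf r$ independent, and let $(x_j)_{j=1}^\infty$ be a bounded block basis, $x_j=\sum_{I\in S_j}a_Ih_I$ with $\max S_j<\min S_{j+1}$ in the order~$\iota$. Applying the norm equivalence~\eqref{eq:khintchine-norm-square-function}, then the identity $\sum_{j\le n}\sum_{I\in S_j}a_I^2h_I^2=\sum_{I\in\bigcup_{j\le n}S_j}a_I^2h_I^2$ (valid since the $S_j$ are pairwise disjoint), and finally the triangle inequality for $\|\cdot\|_{L^\infty}$ on these nonnegative step functions, one obtains $\|\sum_{j\le n}x_j\|_{X(\mathbf r)}^2\lesssim\sum_{j\le n}\|x_j\|_{X(\mathbf r)}^2$; this is a uniform upper $2$-estimate, and \Cref{lem:curved-sufficient} gives asymptotic curvedness.

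\emph{Negative direction.} Assume $p=1$, for either choice of $\mathbf r$. Choose pairwise disjoint dyadic intervals $I_j$ with $|I_j|=2^{-j}$ and $\iota(I_1)<\iota(I_2)<\cdots$ (e.g.\ $I_j=[1-2^{1-j},1-2^{-j})$). Since $\operatorname{supp}(2^jh_{I_j})=\{\iota(I_j)\}$, the sequence $(2^jh_{I_j})_{j=1}^\infty$ is a block basis of $(h_I)_{I\in\mathcal D^+}$. The $I_j$ are disjoint and $h_{I_j}^2=\chi_{I_j}$, so \eqref{eq:khintchine-norm-square-function} (for $\mathbf r$ independent) and the plain identity $\|\cdot\|_{X(\mathbf r)}=\|\cdot\|_{L^1}$ (for $\mathbf r$ constant) both give $\bigl\|\sum_{j\le n}c_jh_{I_j}\bigr\|_{X(\mathbf r)}\sim\sum_{j\le n}|c_j|\,2^{-j}$ for all scalars; hence $\|2^jh_{I_j}\|_{X(\mathbf r)}\sim 1$ while $\bigl\|\sum_{j\le n}2^jh_{I_j}\bigr\|_{X(\mathbf r)}\sim n$, so the Cesàro averages do not vanish. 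Assume now $p=\infty$ and $\mathbf r$ constant, so $X(\mathbf r)=X=[h_I]_{I\in\mathcal D^+}$ with $\|\cdot\|_{X(\mathbf r)}=\|\cdot\|_{L^\infty}$ on~$H$. The Rademacher sequence $r_n=\sum_{I\in\mathcal D_n}h_I$ is a block basis, because $\operatorname{supp}r_n=\mathcal D_n$ corresponds under $\iota$ to the integer block $\{2^n,\dots,2^{n+1}-1\}$, and $\|r_n\|_{X(\mathbf r)}=1$; but on $[0,2^{-N})$ we have $r_n\equiv 1$ for $0\le n<N$, so $\bigl\|\sum_{n<N}r_n\bigr\|_{X(\mathbf r)}\ge N$ and again the Cesàro averages do not vanish. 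Since $1<p<\infty$ was handled for both $\mathbf r$, and $p=1$ and $p=\infty$ each for both $\mathbf r$, this covers all cases.

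\emph{Main obstacle.} The only step needing real care is the choice of witnessing block bases in the negative direction: the obvious candidate, the Rademacher sequence, does not work for $p=1$, since $\|\sum_{n<N}r_n\|_{L^1}\sim\sqrt N$ by Khintchine, so one must instead use vectors that accumulate rather than cancel---disjointly supported normalized Haar functions---and then verify that, although they are disjointly supported \emph{as functions}, a suitable enumeration makes them successive in the basis order~$\iota$. A minor additional subtlety is that asymptotic curvedness is a property of the pair (space, basis), so the reduction of the case $1<p<\infty$ to $L^p$ must be carried out by an isomorphism that literally fixes the Haar system---the identity on $H$---rather than an abstract one.
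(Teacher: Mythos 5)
Your proof is correct and follows essentially the same route as the paper: for $1<p<\infty$ reduce to $L^p$ via the identity isomorphism and apply \Cref{lem:curved-sufficient} using unconditionality plus type $\min(p,2)$; for $p=\infty$ with $\mathbf r$ independent derive an upper $2$-estimate from the square-function norm; for $p=1$ use a bounded, disjointly supported block basis whose Cesàro sums grow linearly; and for $p=\infty$ with $\mathbf r$ constant note that the Rademacher block basis has $\|\sum_{n<N}r_n\|_{L^\infty}\ge N$. The only cosmetic differences are that you spell out the explicit witnessing block bases (the shrinking intervals $I_j=[1-2^{1-j},1-2^{-j})$, and the pointwise lower bound at $[0,2^{-N})$) where the paper only sketches them, and you invoke \eqref{eq:khintchine-norm-square-function} for $p=1$ where the paper's one-line observation that disjointly supported combinations satisfy $\|x\|_{X(\mathbf r)}=\|x\|_X$ would give the same conclusion more directly.
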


\begin{proof}
  Considering a sequence of disjointly supported functions with norm~$1$ which are blocks of the
  Haar system shows that for $X_1 = L^1$ and $\mathbf{r}\in \mathcal{R}$, the space
  $X_1(\mathbf{r})\in \mathcal{H}\mathcal{H}(\delta)$ is not asymptotically curved with respect to
  the Haar basis.  Moreover, since any independent sequence of Rademacher functions in $L^{\infty}$
  is equivalent to the unit vector basis of $\ell^1$, we find that
  $X_2 = [h_I]_{I\in \mathcal{D}^+}\subset L^{\infty}$ is also not asymptotically curved.

  However, if $\mathbf{r}$ is an independent sequence, then $X_2(\mathbf{r})$ is in fact
  asymptotically curved with respect to the Haar basis according to
  \Cref{lem:curved-sufficient}~\eqref{enu:pro:curved-sufficient:i}, as every bounded block basis of
  $(h_I)_{I\in \mathcal{D}^+}$ satisfies an upper $2$-estimate (cf.~\cite[Lemma~4.2]{MR3819715}). To
  see this, let $(x_j)_{j=1}^{\infty}$ denote a bounded block basis of $(h_I)_{I\in \mathcal{D}^+}$
  and observe that for all $n\in \mathbb{N}$
  \begin{align*}
    \Bigl\| \sum_{j=1}^n x_j \Bigr\|_{X_2(\mathbf{r})}
    &= \sup_t \int_{0}^{1}
      \Bigl| \sum_{j=1}^n \sum_{K\in \mathcal{D}^+} r_K(u) \frac{\langle h_K, x_j \rangle}{|K|} h_K(t) \Bigr|
      \dif u\\
    &\leq \sup_t \biggl(
      \sum_{j=1}^n \sum_{K\in \mathcal{D}^+} \Bigl(\frac{\langle h_K, x_j \rangle}{|K|} h_K(t) \Bigr)^{2}
      \biggl)^{1/2}\\
    &\leq \biggl(
      \sum_{j=1}^n \sup_t \sum_{K\in \mathcal{D}^+} \Bigl(\frac{\langle h_K, x_j \rangle}{|K|} h_K(t) \Bigr)^{2}
      \biggr)^{1/2}.
  \end{align*}
  We conclude this argument by Khintchine's inequality, which yields
  \begin{equation*}
    \sup_t \bigg( \sum_{K\in \mathcal{D}^+} \Bigl(\frac{\langle h_K, x_I \rangle}{|K|} h_K(t) \Bigr)^{2} \biggr)^{1/2}
    \le C\|x_I\|_{X_2(\mathbf{r})}
  \end{equation*}
  for some absolute constant $C > 0$.

  Finally, let $1 < p < \infty$, $\mathbf{r}\in \mathcal{R}$, and put $X_3 = L^p$.  Recall that by
  \Cref{rem:classical-examples}, the identity operator is an isomorphism between $X_3(\mathbf{r})$
  and $X_3$.  Since $L^p$ has Rademacher type $\min(2,p)$, we record that by
  \Cref{lem:curved-sufficient}, $X_3(\mathbf{r})$ is asymptotically curved as well.
\end{proof}

\begin{rem}\label{rem:hshs-curved-general}
  If $X(\mathbf{r})\in \mathcal{HH}(\delta)$ is an arbitrary Haar system Hardy space, then one can use
  \Cref{lem:curved-sufficient} to check if $X(\mathbf{r})$ is asymptotically curved, either by
  verifying the condition in~\eqref{enu:pro:curved-sufficient:i} directly or by
  applying~\eqref{enu:pro:curved-sufficient:ii} if it is known that $X(\mathbf{r})$ has Rademacher
  type $\tau > 1$ and that the Haar system is an unconditional basis of $X(\mathbf{r})$. Recall that
  the Haar basis is always $1$-unconditional in~$X(\mathbf{r})$ if~$\mathbf{r}$ is an independent
  sequence of Rademacher functions. Moreover, according to~\cite[Theorem~2.c.6]{MR540367}, the Haar
  system is unconditional in a separable r.i.~function space $X$ on $[0,1)$ if and only if $X$ has
  non-trivial Boyd indices.
\end{rem}

\begin{rem}\label{rem:hshs-uniformly-curved-general}
  Finally, consider a sequence of Haar system Hardy spaces $(Z_k)_{k=1}^{\infty}$, and for each
  $k\in \mathbb{N}$, let $(h_{k,I})_{I\in \mathcal{D}^+}$ denote the Haar basis of $Z_k$.  Suppose
  that either of the following conditions is satisfied:
  \begin{itemize}
    \item There exist $1<\tau\le 2$ and a uniform
          constant $C > 0$ such that for every $k$, the Haar system is $C$-unconditional in $Z_k$
          and the space $Z_k$ has Rademacher type~$\tau$ with constant~$C$.
    \item We have $Z_k = Z_1$ for all $k\in \mathbb{N}$,
          and $Z_1$ is asymptotically curved with respect to the Haar system.
  \end{itemize}
  Then $(Z_k)_{k=1}^{\infty}$ is uniformly asymptotically curved with respect to
  $(h_{k,I})_{k\in \mathbb{N}, I\in \mathcal{D}^+}$.  This follows from \Cref{lem:curved-sufficient}
  together with \Cref{lem:uniformly-curved-sufficient} in the first case and from
  \Cref{lem:curved-implies-uniformly-curved} in the second case.
\end{rem}

%%% Local Variables:
%%% mode: latex
%%% TeX-master: "main"
%%% End:
 %auto-ignore

\section{Embeddings and projections on Haar system Hardy spaces}
\label{sec:embedd-proj-haar}

In this section, we will define the fundamental operators $A$ and $B$ associated with a
$(\varkappa_I)_{I\in \mathcal{D}}$-faithful Haar system, and we will prove that they are bounded if
the numbers $\varkappa_I$ are sufficiently small. In doing so, we will lay the foundation for
proving our main results.

\begin{pro}\label{pro:A-B-bounded}
  Let $Y\in \mathcal{HH}_0(\delta)$. Let $(\hat{h}_I)_{I\in \mathcal{D}}$ be a faithful Haar system
  and define the operators $\hat{A}, \hat{B}\colon Y\to Y$ by
  \begin{equation}\label{eq:B-A-def}
    \hat{B}x
    = \sum_{I\in \mathcal{D}} \frac{\langle h_I, x \rangle}{|I|}\hat{h}_I
    \qquad \text{and}\qquad
    \hat{A}x
    = \sum_{I\in \mathcal{D}} \frac{\langle \hat{h}_I, x \rangle}{|I|} h_I.
  \end{equation}
  Then we have $\hat{A}\hat{B} = I_Y$ and $\|\hat{A}\| = \|\hat{B}\| = 1$.
\end{pro}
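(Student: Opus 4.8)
The plan is to exploit the distributional equivalence between a faithful Haar system and the standard Haar system, together with the rearrangement-invariance axiom \eqref{dfn:HS-1d:1} and \Cref{lem:conditional-expectation}. First I would verify the algebraic identity $\hat A\hat B = I_Y$ on the dense span $H_0$: for $x = \sum_J a_Jh_J$ we have $\hat Bx = \sum_J a_J\hat h_J$, and then $\langle \hat h_I,\hat Bx\rangle = \sum_J a_J\langle \hat h_I,\hat h_J\rangle$. Using \Cref{rem:porous} — namely that the $\mathcal B_K$ are pairwise disjoint and $\mathcal B_{I^\pm}^* = \{\hat h_I = \pm1\}$, so $|\mathcal B_I^*| = |I|$ — one checks that the $(\hat h_I)_{I\in\mathcal D}$ are pairwise orthogonal with $\langle \hat h_I,\hat h_I\rangle = |\mathcal B_I^*| = |I|$. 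Hence $\langle \hat h_I,\hat Bx\rangle = a_I|I|$, so $\hat A\hat Bx = \sum_I a_Ih_I = x$. Since $\|\hat A\|,\|\hat B\|\ge 1$ automatically follows from $\hat A\hat B = I_Y$, it remains only to prove the upper bounds $\|\hat B\|\le1$ and $\|\hat A\|\le1$.

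For $\|\hat B\|\le1$: given $x = \sum_{I\in\mathcal D} a_Ih_I\in H_0$ (finitely supported, say with $\mathcal B_I\subset\mathcal D_{\le N}$ for all $I$ in the support), the functions $\sum_I a_Ih_I$ and $\sum_I a_I\hat h_I$ have the same distribution by \Cref{rem:porous}. Therefore $\|\sum_I a_Ih_I\|_X = \|\sum_I a_I\hat h_I\|_X$, and since $x$ is a finite linear combination, the same identity holds with $\|\cdot\|_{X(\mathbf r)}$ in place of $\|\cdot\|_X$ — here one must be slightly careful and argue via the defining formula of $\|\cdot\|_{X(\mathbf r)}$: for fixed $u$, the functions $s\mapsto \sum_I r_I(u)a_Ih_I(s)$ and $s\mapsto\sum_I r_I(u)a_I\hat h_I(s)$ are equimeasurable uniformly in $u$ (the rearrangement realizing the distributional equivalence does not depend on the signs $r_I(u)$), so the two outer $\|\cdot\|_X$-norms agree. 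This gives $\|\hat Bx\|_{X(\mathbf r)} = \|x\|_{X(\mathbf r)}$, hence $\|\hat B\| = 1$ after passing to the closure.

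For $\|\hat A\|\le1$: the natural approach is to realize $\hat A$ as a composition of a conditional-expectation-type contraction with the inverse rearrangement. Given $y = \sum_{K\in\mathcal D} b_Kh_K\in H_0$, note that $\hat Ay$ depends only on the coefficients $\langle\hat h_I,y\rangle/|I|$. Using orthogonality of the $(\hat h_I)$ and the fact that $\hat h_I = \sum_{K\in\mathcal B_I}\varepsilon_Kh_K$, one computes $\langle\hat h_I,y\rangle = \sum_{K\in\mathcal B_I}\varepsilon_K b_K|K|$. I would interpret $y\mapsto \sum_I \bigl(\langle\hat h_I,y\rangle/|I|\bigr)\hat h_I$ as first applying the signed Haar multiplier $h_K\mapsto\varepsilon_Kh_K$ (an isometry, by axiom \eqref{dfn:HS-1d:1} and the sign-invariance of the $\|\cdot\|_{X(\mathbf r)}$ norm), then applying a genuine conditional expectation $\mathbb E^{\mathcal F}$ onto a $\sigma$-algebra adapted to the partition $\{\mathcal B_I^*\}$ (a contraction by \Cref{lem:conditional-expectation}), and finally applying the inverse of the distributional rearrangement used above (an isometry on $\|\cdot\|_{X(\mathbf r)}$). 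Composing, $\|\hat A\| \le 1$; combined with $\hat A\hat B = I_Y$ this forces $\|\hat A\| = 1$.

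The main obstacle is the $\hat A$ bound: unlike $\hat B$, the operator $\hat A$ is not simply "relabel the Haar coefficients by a measure-preserving rearrangement", because it projects the full function $y$ — which may have Haar support outside $\bigcup_I\mathcal B_I$ — onto the span of $(\hat h_I)$. Making the decomposition $\hat A = (\text{rearrangement})^{-1}\circ\mathbb E^{\mathcal F}\circ(\text{sign multiplier})$ precise, and in particular identifying the correct $\sigma$-algebra $\mathcal F$ so that $\mathbb E^{\mathcal F}$ produces exactly the coefficients $\langle\hat h_I,y\rangle/|I|$ on the blocks $\mathcal B_I^*$ (this uses the refinement structure $\mathcal B_{I^\pm}^*\subset\{\hat h_I = \pm1\}$ and equation~\eqref{eq:12}), is where the real work lies; everything else is bookkeeping with equimeasurability.
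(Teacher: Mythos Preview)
Your overall strategy---equimeasurability for $\hat B$, a conditional-expectation argument for $\hat A$---matches the paper's, but both halves contain real gaps.

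For $\|\hat B\|$: the quantity $\bigl\|s\mapsto\int_0^1\bigl|\sum_I r_I(u)a_I\hat h_I(s)\bigr|\,du\bigr\|_X$ that your equimeasurability argument produces is \emph{not} the definition of $\|\hat Bx\|_{X(\mathbf r)}$. The norm $\|\cdot\|_{X(\mathbf r)}$ attaches a Rademacher $r_K$ to each $h_K$ in the Haar expansion of its argument; since $\hat Bx=\sum_I\sum_{K\in\mathcal B_I}a_I\varepsilon_Kh_K$, the correct expression carries $r_K(u)$, not $r_I(u)$. You proved $\|x\|_{X(\mathbf r)}$ equals your intermediate quantity, but not that the intermediate quantity equals $\|\hat Bx\|_{X(\mathbf r)}$. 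The paper bridges this with a swap argument: for fixed $s$ and each $I$, at most one $K\in\mathcal B_I$ has $h_K(s)\ne0$, and these $K$'s are distinct across $I$ (the $\mathcal B_I$ are disjoint), so the family $(r_K)$ actually appearing is, in distribution over $u$, interchangeable with $(r_I)$. This swap is the substantive step when $\mathbf r$ is independent; you skipped it.

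For $\|\hat A\|$: the sign multiplier $h_K\mapsto\varepsilon_Kh_K$ is \emph{not} an isometry on a general $Y\in\mathcal{HH}_0(\delta)$. When $\mathbf r$ is constant, $Y=X_0$ and the Haar basis need not be unconditional (take $X=L^1$); for instance $|h_{[0,1)}+h_{[0,1/2)}+h_{[0,1/4)}|$ and $|h_{[0,1)}-h_{[0,1/2)}+h_{[0,1/4)}|$ have different distributions, so axiom~\eqref{dfn:HS-1d:1} does not apply and their $L^1$-norms differ ($1$ versus $5/4$). Your decomposition therefore fails at its first step. The paper avoids any sign multiplier: it shows directly that $\mathbb E^{\mathcal F}h_K=\varepsilon_K\tfrac{|K|}{|I|}\hat h_I$ for $K\in\mathcal B_I$ (and $0$ for $K\notin\bigcup_{I\in\mathcal D_{<N}}\mathcal B_I$), so that $\hat B\hat Ax=\mathbb E^{\mathcal F}x$ with the signs $\varepsilon_K$ emerging from the expectation computation itself. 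The bound $\|\mathbb E^{\mathcal F}x\|_{X(\mathbf r)}\le\|x\|_{X(\mathbf r)}$ then combines \Cref{lem:conditional-expectation} with the same Rademacher swap as above (and Jensen inside the $u$-integral), and one concludes via $\|\hat Ax\|_{X(\mathbf r)}=\|\hat B\hat Ax\|_{X(\mathbf r)}$ since $\hat B$ is an isometry.
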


Before proving \Cref{pro:A-B-bounded}, we state the following direct consequence.
\begin{cor}\label{cor:norm-in-dual-space}
  Let $\mathcal{B}\subset \mathcal{D}$ be a finite collection of pairwise disjoint dyadic intervals,
  and let $(\varepsilon_K)_{K\in \mathcal{D}}\in \{ \pm 1 \}^{\mathcal{D}}$ be a
  sequence of signs. Then we have
  \begin{equation*}
    \Bigl\| \sum_{K\in \mathcal{B}} \varepsilon_Kh_K \Bigr\|_{Y^{*}}\le 1.
  \end{equation*}
\end{cor}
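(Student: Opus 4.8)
The plan is to deduce the estimate directly from \Cref{pro:A-B-bounded} by recognizing the functional $\sum_{K\in\mathcal{B}}\varepsilon_K h_K$ as (up to normalization) one of the Haar functions $\hat{h}_I$ of a suitably chosen faithful Haar system. First I would observe that the collection $\mathcal{B}$, being finite and consisting of pairwise disjoint dyadic intervals, can be completed: there is a level $N$ large enough that every $K\in\mathcal{B}$ lies in $\mathcal{D}_{\le N}$, and then we may assume after refining that in fact $\mathcal{B}\subset\mathcal{D}_N$ (replacing each $K$ by the dyadic intervals of $\mathcal{D}_N$ it contains, with the inherited signs, does not change the function $\sum_{K\in\mathcal{B}}\varepsilon_K h_K$ because $h_K = \sum_{L\in\mathcal{D}_N,\,L\subset K}(|L|/|K|)\cdot(\text{sign pattern})$—more precisely one uses that on each half of $K$ the function $h_K$ is constant, so $\varepsilon_K h_K = \sum_{L} \varepsilon_L h_L$ for an appropriate choice of signs $\varepsilon_L$ on the relevant $L\in\mathcal{D}_N$). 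Having done this, $\mathcal{B}$ together with the signs is exactly the data $\hat{\mathcal{B}}_{[0,1)}$ determining the first function $\hat{h}_{[0,1)}$ of a faithful Haar system, provided $\mathcal{B}^{*}=[0,1)$.

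The remaining point is therefore to handle the case $\mathcal{B}^{*}\subsetneq[0,1)$ and, more importantly, to actually build a full faithful Haar system $(\hat{h}_I)_{I\in\mathcal{D}}$ whose first member is the given function. For this I would invoke \Cref{rem:faithful-hs-from-generations-and-signs}: take the collection $\hat{\mathcal{A}}$ to consist of $\mathcal{B}$ (assumed now a subset of some $\mathcal{D}_N$ with $\mathcal{B}^{*}=[0,1)$—if $\mathcal{B}^{*}$ is a proper subset, first add to $\mathcal{B}$ the remaining intervals of $\mathcal{D}_N$ with, say, sign $+1$; this only adds terms and we will discard them at the end) as generation $\mathcal{G}_0$, and then for $n\ge 1$ let $\mathcal{G}_n(\hat{\mathcal{A}})$ be all of $\mathcal{D}_{M_n}$ for a rapidly increasing sequence $M_0=N<M_1<M_2<\cdots$, so that $\mathcal{G}_n^{*}(\hat{\mathcal{A}})=[0,1)$ for every $n$ and $\hat{\mathcal{A}}$ has finite generations. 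Equipping $\hat{\mathcal{A}}$ with arbitrary signs extending $(\varepsilon_K)_{K\in\mathcal{B}}$ and applying \Cref{rem:faithful-hs-from-generations-and-signs} produces a faithful Haar system $(\hat{h}_I)_{I\in\mathcal{D}}$ with $\hat{h}_{[0,1)}=\sum_{K\in\hat{\mathcal{B}}_{[0,1)}}\varepsilon_K h_K$.

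Now by \Cref{pro:A-B-bounded}, the operator $\hat{A}\colon Y\to Y$ has $\|\hat{A}\|=1$, and by definition $\hat{A}x = \sum_{I\in\mathcal{D}}\frac{\langle\hat{h}_I,x\rangle}{|I|}h_I$; in particular, testing against $h_{[0,1)}$ shows that the functional $x\mapsto \langle\hat{h}_{[0,1)},x\rangle$ equals $|[0,1)|\cdot\langle h_{[0,1)}^{*},\hat{A}x\rangle$ up to the biorthogonal pairing, so $\|\hat{h}_{[0,1)}\|_{Y^{*}} = |[0,1)|\,\|h_{[0,1)}\|_{Y^{*}}\,\|\hat A\| \le 1$ using $\|h_{[0,1)}\|_{Y}\|h_{[0,1)}\|_{Y^{*}}=|[0,1)|=1$ from \Cref{lem:product-of-norms} and the fact that $h_{[0,1)}$ has $Y$-norm $1$. (Alternatively and more cleanly: for $x\in H_0$ with $\|x\|_Y\le 1$, $|\langle\hat{h}_{[0,1)},x\rangle| = |\langle h_{[0,1)}, \hat{A}x\rangle|\cdot|[0,1)|^{-1}\cdot|[0,1)| \le \|\hat{A}x\|_{Y}\cdot\|h_{[0,1)}\|_{Y^{*}} \le 1$, since pairing the Haar function $h_{[0,1)}$ against any $y\in Y$ recovers $\langle h_{[0,1)},y\rangle$ and $\|h_{[0,1)}\|_{Y^{*}}=1$.) Finally, since $\hat{h}_{[0,1)}$ may contain extra terms from the padding, one restricts: the sub-sum $\sum_{K\in\mathcal{B}}\varepsilon_K h_K$ is obtained from $\hat{h}_{[0,1)}$ by the norm-one Haar multiplier of \Cref{lem:haar-multiplier-cond} that kills the padded intervals (they are maximal, so the hypothesis is vacuous), hence its $Y^{*}$-norm is also $\le 1$. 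I expect the only mildly delicate step to be the bookkeeping that reduces a general finite disjoint collection to a single level $\mathcal{D}_N$ and that the padding can be removed by a contractive Haar multiplier; everything else is a direct citation of \Cref{pro:A-B-bounded}.
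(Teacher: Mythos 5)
Your overall strategy is the one the paper itself suggests (realize the functional as the first element $\hat{h}_{[0,1)}$ of a faithful Haar system, use $\|\hat{A}\|=1$ from \Cref{pro:A-B-bounded} to get $\|\hat{h}_{[0,1)}\|_{Y^*}\le \|h_{[0,1)}\|_{Y^*}=1$, and strip off the padding with the contractive Haar multiplier of \Cref{lem:haar-multiplier-cond}, cf.\ \Cref{lem:almost-faithful:2}). However, your first reduction step is false as stated. You claim that $\varepsilon_K h_K=\sum_{L\in\mathcal{D}_N,\,L\subset K}\varepsilon_L h_L$ for a suitable choice of signs $\varepsilon_L$. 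This cannot hold for any choice of signs: the left-hand side is identically $+\varepsilon_K$ on $K^+$, whereas the right-hand side equals $+\varepsilon_L$ on $L^+$ and $-\varepsilon_L$ on $L^-$ for each $L\subset K^+$, so it oscillates and has mean zero over $K^+$ while $\varepsilon_K h_K$ has mean $\varepsilon_K$ there. (Equivalently, $\chi_{K^+}$ is not a linear combination of Haar functions from a single level inside $K$.) The intermediate formula $h_K=\sum_L(|L|/|K|)\cdot(\text{sign pattern})$ is likewise not an identity of functions.

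Fortunately this step is dispensable: \Cref{dfn:almost-faithful} does not require the blocks $\mathcal{B}_I$ to lie in a single level $\mathcal{D}_N$ (only the ``relative to frequencies'' variant does). Since $\mathcal{B}$ is a finite collection of pairwise disjoint dyadic intervals, $[0,1)\setminus\mathcal{B}^*$ is a finite union of dyadic intervals, so you may pad $\mathcal{B}$ directly (with arbitrary signs on the padding) to obtain $\hat{\mathcal{B}}_{[0,1)}$ with $\hat{\mathcal{B}}_{[0,1)}^*=[0,1)$, and then continue exactly as in \Cref{rem:faithful-hs-from-generations-and-signs} with $\mathcal{G}_n(\hat{\mathcal{A}})=\mathcal{D}_{M_n}$ for suitably large $M_n$, $n\ge 1$. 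The remainder of your argument is sound once phrased carefully: $\langle\hat{h}_{[0,1)},x\rangle=\langle h_{[0,1)},\hat{A}x\rangle$ gives $\|\hat{h}_{[0,1)}\|_{Y^*}\le\|h_{[0,1)}\|_{Y^*}\|\hat{A}\|\le 1$ (your displayed ``$=$'' should be ``$\le$''), and the multiplier with $d_I=1$ exactly when $I$ contains or is contained in some $K\in\mathcal{B}$ satisfies the hypothesis of \Cref{lem:haar-multiplier-cond}, vanishes on the padding, and its adjoint carries $\hat{h}_{[0,1)}$ to $\sum_{K\in\mathcal{B}}\varepsilon_K h_K$, completing the estimate.
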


\begin{proof}[Proof of \Cref{cor:norm-in-dual-space}]
  The proof follows either by an elementary direct computation or, alternatively, by exploiting the
  estimate for $\|\hat{A}\|$ in \Cref{pro:A-B-bounded} and using \Cref{lem:haar-multiplier-cond} (see
  also \Cref{lem:almost-faithful:2}).
\end{proof}

\begin{proof}[Proof of \Cref{pro:A-B-bounded}]
  Write $Y = X_0(\mathbf{r})$ for suitable $X\in \mathcal{H}(\delta)$ and
  $\mathbf{r}\in \mathcal{R}$. Suppose that our faithful Haar system
  $(\hat{h}_I)_{I\in \mathcal{D}}$ is given by
  \begin{equation*}
    \hat{h}_I
    = \sum_{K\in \mathcal{B}_I} \varepsilon_K h_K,
    \qquad I\in \mathcal{D},
  \end{equation*}
  where $\varepsilon = (\varepsilon_K)_{K\in \mathcal{D}}$ is a sequence of signs and
  $\mathcal{B}_I$ is a finite subset of $\mathcal{D}$ for each $I\in \mathcal{D}$.
  In order to prove
  $\|\hat{B}\| = 1$, we fix $x = \sum_{I\in \mathcal{D}}a_Ih_I\in H_0$, where $a_I\ne 0$ for at most finitely many
  $I\in \mathcal{D}$.  Then we have
  \begin{equation*}
    \hat{B}x
    = \sum_{I\in \mathcal{D}} a_I \hat{h}_I
    = \sum_{I\in \mathcal{D}} \sum_{K\in \mathcal{B}_I} a_I\varepsilon_K h_K
  \end{equation*}
  and hence
  \begin{equation}\label{eq:1}
    \|\hat{B}x\|_{X(\mathbf{r})}
    = \Bigl\| s\mapsto \int_0^1 \Bigl| \sum_{I\in \mathcal{D}}\sum_{K\in \mathcal{B}_I} r_K(u)a_I\varepsilon_K h_K(s) \Bigr|\, \mathrm{d}u \Bigr\|_X.
  \end{equation}
  Now for fixed $s \in [0,1)$, note that by the faithfulness of $(\hat{h}_I)_{I\in \mathcal{D}}$, for every
  $I\in \mathcal{D}$ there exists at most one interval $K\in \mathcal{B}_I$ with $s\in h_K$.  Thus, we may replace
  $r_K(u)$ by $r_I(u)$ in~\eqref{eq:1}, obtaining
  \begin{align*}
    \|\hat{B}x\|_{X(\mathbf{r})}
    &= \Bigl\| s\mapsto \int_0^1 \Bigl| \sum_{I\in \mathcal{D}} r_I(u)a_I \sum_{K\in \mathcal{B}_I} \varepsilon_Kh_K(s) \Bigr|\, \mathrm{d}u \Bigr\|_X\\
    &= \Bigl\| s\mapsto \int_0^1 \Bigl| \sum_{I\in \mathcal{D}} r_I(u)a_I \hat{h}_I(s) \Bigr|\, \mathrm{d}u \Bigr\|_X\\
    &= \Bigl\| s\mapsto \int_0^1 \Bigl| \sum_{I\in \mathcal{D}} r_I(u)a_I h_I(s) \Bigr|\, \mathrm{d}u \Bigr\|_X
      = \|x\|_{X(\mathbf{r})}.
  \end{align*}

  Next, we prove that $\|\hat{A}\| = 1$.  Let $x\in H_0$ be defined as above and let
  $N\in \mathbb{N}$ be sufficiently large such that for all
  $K\in \bigcup_{I\in \mathcal{D}\setminus \mathcal{D}_{<N}}\mathcal{B}_I$, we have $a_K = 0$
  (hence, we have $\langle \hat{h}_I, x \rangle = 0$ for all
  $I\in \mathcal{D}\setminus \mathcal{D}_{<N}$).  Let $\mathcal{F}$ denote the $\sigma$-algebra
  generated by the sets $\mathcal{B}_I^{*}$, $I\in \mathcal{D}_N$.  We will show that for every
  $K\in \mathcal{D}$, we have
  \begin{equation}\label{eq:2}
    \mathbb{E}^{\mathcal{F}} h_K =
    \begin{cases}
      0, & K\in \mathcal{D}\setminus \bigcup_{I\in \mathcal{D}_{<N}}\mathcal{B}_I,\\
      \varepsilon_K\frac{|K|}{|I|}\hat{h}_I, & K\in \mathcal{B}_I,\ I\in \mathcal{D}_{<N}.
    \end{cases}
  \end{equation}
  For every $I\in \mathcal{D}_N$, we have the following expansion, which is completely analogous to
  its well-known counterpart for the standard Haar system:
  \begin{equation*}
    \chi_{\mathcal{B}_I^{*}}
    = 2^{-N}\chi_{[0,1)} + \sum_{\substack{J\in \mathcal{D}\\ J \supsetneq I}} \frac{2^{-N}}{|J|}h_J(I)\hat{h}_J,
  \end{equation*}
  where $h_J(I)$ is the value that $h_J$ assumes on $I$.  Thus, for all
  $K\in \mathcal{D}\setminus \bigcup_{J\in \mathcal{D}_{<N}}\mathcal{B}_J$, we have
  $\mathbb{E}^{\mathcal{F}}h_K = 0$.  On the other hand, if $I\in \mathcal{D}_N$ and
  $K\in \mathcal{B}_{J_0}$ for some $J_0\in \mathcal{D}_{<N}$ with $J_0\supsetneq I$, then we have
  \begin{equation*}
    \langle \chi_{\mathcal{B}_I^{*}}, h_K \rangle
    = \sum_{\substack{J\in \mathcal{D}\\ J\supsetneq I}} \frac{|I|}{|J|}h_J(I) \langle \hat{h}_J, h_K \rangle
    = \frac{|I|}{|J_0|} h_{J_0}(I) \varepsilon_K |K|.
  \end{equation*}
  This implies that
  \begin{equation*}
    \mathbb{E}^{\mathcal{F}} h_K = \sum_{\substack{I\in \mathcal{D}_N\\ I\subset J_0}} \frac{1}{|I|} \langle \chi_{\mathcal{B}_I^{*}}, h_K \rangle \chi_{\mathcal{B}_I^{*}}
    = \varepsilon_K \frac{|K|}{|J_0|}  \sum_{\substack{I\in \mathcal{D}_N\\ I\subset J_0}} h_{J_0}(I)\chi_{\mathcal{B}_{I}^{*}}
    = \varepsilon_K \frac{|K|}{|J_0|} \hat{h}_{J_0},
  \end{equation*}
  which completes the proof of~\eqref{eq:2}.

  Now observe that
  \begin{equation*}
    \mathbb{E}^{\mathcal{F}}x = \sum_{I\in \mathcal{D}_{<N}} \sum_{K\in \mathcal{B}_I} a_K\varepsilon_K \frac{|K|}{|I|}\hat{h}_I
    = \sum_{I\in \mathcal{D}_{<N}} \frac{\langle \hat{h}_I, x \rangle}{|I|} \hat{h}_I
    = \hat{B}\hat{A}x.
  \end{equation*}
  We define $\mathcal{C} = \mathcal{D}\setminus \bigcup_{I\in \mathcal{D}_{<N}}\mathcal{B}_I$ and
  split the function $x$ into two parts accordingly:
  \begin{align*}
    \|x\|_{X(\mathbf{r})}
    &= \Bigl\| s\mapsto \int_0^1 \Bigl| \sum_{I\in \mathcal{D}_{<N}} \sum_{K\in \mathcal{B}_I} r_K(u)a_Kh_K(s)
      + \sum_{K\in \mathcal{C}} r_K(u)a_Kh_K(s)\Bigr|\, \mathrm{d}u \Bigr\|_X\\
    &= \Bigl\| s\mapsto \int_0^1\int_0^1 \Bigl| \sum_{I\in \mathcal{D}_{<N}} r_I(u)\sum_{K\in \mathcal{B}_I} a_Kh_K(s)
      + \sum_{K\in \mathcal{C}} r_K(v)a_Kh_K(s)\Bigr|\, \mathrm{d}u\, \mathrm{d}v \Bigr\|_X,
  \end{align*}
  where we again replaced $r_K(u)$ by $r_I(u)$ in the first sum.  Now we use
  \Cref{lem:conditional-expectation}, Jensen's inequality for conditional expectations and
  \Cref{pro:HS-1d}~\eqref{pro:HS-1d:v} to obtain
  \begin{align*}
    \|x\|_{X(\mathbf{r})}
    &\ge \Bigl\| s\mapsto \int_0^1\int_0^1 \Bigl| \sum_{I\in \mathcal{D}_{<N}} r_I(u)\sum_{K\in \mathcal{B}_I} a_K (\mathbb{E}^{\mathcal{F}}h_K)(s)
      + \sum_{K\in \mathcal{C}} r_K(v)a_K(\mathbb{E}^{\mathcal{F}}h_K)(s)\Bigr|\, \mathrm{d}u\, \mathrm{d}v \Bigr\|_X.
  \end{align*}
  According to~\eqref{eq:2}, we have
  $\mathbb{E}^{\mathcal{F}}h_K = \varepsilon_K\frac{|K|}{|I|}\hat{h}_I$ in the first sum and
  $\mathbb{E}^{\mathcal{F}}h_K = 0$ in the second sum. Hence, it follows that
  \begin{align*}
    \|x\|_{X(\mathbf{r})}
    &\ge \Bigl\| s\mapsto \int_0^1 \Bigl| \sum_{I\in \mathcal{D}_{<N}}r_I(u) \Bigl( \sum_{K\in \mathcal{B}_I} \varepsilon_Ka_K \frac{|K|}{|I|} \Bigr) \hat{h}_I(s) \Bigr|\, \mathrm{d}u \Bigr\|_X\\
    &= \Bigl\| s\mapsto \int_0^1 \Bigl| \sum_{I\in \mathcal{D}_{<N}} r_I(u) \frac{\langle \hat{h}_I,x \rangle}{|I|} \sum_{K\in \mathcal{B}_I} \varepsilon_Kh_K(s) \Bigr|\, \mathrm{d}u \Bigr\|_X.
  \end{align*}
  Now we swap back $r_I(u)$ with $r_K(u)$ and obtain
  \begin{align*}
    \|x\|_{X(\mathbf{r})}
    \ge \Bigl\| s\mapsto \int_0^1 \Bigl| \sum_{I\in \mathcal{D}_{<N}} \frac{\langle \hat{h}_I, x \rangle}{|I|} \sum_{K\in \mathcal{B}_I} r_K(u)\varepsilon_K h_K(s) \Bigr|\, \mathrm{d}u \Bigr\|
    = \|\hat{B}\hat{A}x\|_{X(\mathbf{r})} = \|\hat{A}x\|_{X(\mathbf{r})}.
  \end{align*}

  By extending both operators continuously from $H_0$ to $Y$, we obtain
  $\hat{A},\hat{B}\colon Y\to Y$ as defined in~\eqref{eq:B-A-def}, where both series converge in
  norm.
\end{proof}

\begin{thm}\label{thm:projection-1d}
  Let $Y\in \mathcal{HH}_0(\delta)$. Suppose that $(\tilde{h}_I)_{I\in \mathcal{D}}$ is a $(\varkappa_I)_{I\in \mathcal{D}}$-faithful
  Haar system for some family $(\kappa_I)_{I\in \mathcal{D}}$ of scalars in $(0,1]$ that satisfies
  \begin{equation*}
    \sigma := \sum_{I\in \mathcal{D}} (1 - \varkappa_I) < 1.
  \end{equation*}
  Define the operators $A,B\colon Y\to Y$ by
  \begin{equation*}
    B x
    = \sum_{I\in \mathcal{D}} \frac{\langle h_I, x\rangle}{\|h_I\|_2^2} \tilde{h}_I
    \qquad\text{and}\qquad
    A x
    = \sum_{I\in \mathcal{D}} \frac{\langle \tilde{h}_I, x\rangle}{\|\tilde{h}_I\|_2^2} h_I,\qquad x\in Y.
  \end{equation*}
  Then we have $AB = I_Y$, and the operators $A$ and $B$ satisfy
  \begin{equation*}%\label{eq:thm:projection-1d:estimates}
    \begin{aligned}
      \|B\| = 1
      \qquad \text{and}\qquad
      \|A\| \leq \frac{1}{\mu}\cdot \frac{1 + 3\sigma}{1 - \sigma},
    \end{aligned}
  \end{equation*}
  where $\mu = |\{ \tilde{h}_{[0,1)} \ne 0 \}|$.
\end{thm}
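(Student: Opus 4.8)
The plan is to reduce the whole statement to \Cref{pro:A-B-bounded} by ``filling the gaps'' of the $(\varkappa_I)_{I\in\mathcal D}$-faithful system.

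\emph{Orthogonality and $AB=I_Y$.} Since the standard Haar functions are pairwise orthogonal in $L^2$ and the collections $\mathcal B_I$, $I\in\mathcal D$, are pairwise disjoint, the system $(\tilde h_I)_{I\in\mathcal D}$ is orthogonal, with $\langle\tilde h_I,\tilde h_J\rangle=|\mathcal B_I^{*}|\,\delta_{IJ}$; in particular $\|\tilde h_I\|_2^{2}=|\mathcal B_I^{*}|$, while $\|h_I\|_2^{2}=|I|$. Substituting the definitions of $A$ and $B$ into one another, every off-diagonal term drops out and $ABx=x$ for all $x\in H_0$, so $AB=I_Y$ by density.

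\emph{Reduction to the faithful case.} Apply \Cref{lem:almost-faithful:2} to obtain a faithful Haar system $(\hat h_I)_{I\in\mathcal D}$ and a Haar multiplier $R\colon Y\to Y$ with $\|R\|\le1$ and $R\hat h_I=\tilde h_I$ for all $I$; in particular $\mathcal B_I\subset\hat{\mathcal B}_I$, so $|\mathcal B_I^{*}|\le|I|$. Let $\hat A,\hat B\colon Y\to Y$ be the operators attached to $(\hat h_I)$ in \Cref{pro:A-B-bounded}, so that $\hat A\hat B=I_Y$ and $\|\hat A\|=\|\hat B\|=1$. From $Bx=\sum_{I}\tfrac{\langle h_I,x\rangle}{|I|}\tilde h_I=R\sum_{I}\tfrac{\langle h_I,x\rangle}{|I|}\hat h_I=R\hat Bx$ we get $B=R\hat B$, hence $\|B\|\le\|R\|\,\|\hat B\|\le1$; the reverse inequality is elementary, so $\|B\|=1$. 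For $A$, the key observation is that Haar multipliers are self-adjoint, so $\langle\tilde h_I,x\rangle=\langle R\hat h_I,x\rangle=\langle\hat h_I,Rx\rangle$ and therefore
\[
  Ax=\sum_{I\in\mathcal D}\frac{\langle\hat h_I,Rx\rangle}{|\mathcal B_I^{*}|}\,h_I
  =D\Bigl(\sum_{I\in\mathcal D}\frac{\langle\hat h_I,Rx\rangle}{|I|}\,h_I\Bigr)=D\hat A Rx,
\]
where $D$ is the Haar multiplier $Dh_I=\tfrac{|I|}{|\mathcal B_I^{*}|}h_I$. Thus $\|A\|\le\|D\|\,\|\hat A\|\,\|R\|\le\|D\|$, and by \Cref{lem:haar-multiplier} it remains to bound $\vvvert\bigl(|I|/|\mathcal B_I^{*}|\bigr)_{I\in\mathcal D}\vvvert$.

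\emph{The combinatorial estimate.} Put $m_I:=|\mathcal B_I^{*}|/|I|$, so $m_{[0,1)}=\mu$. Each $\tilde h_{\pi(I)}$ is a finite linear combination of Haar functions and hence has mean zero, so $|\{\tilde h_{\pi(I)}=+1\}|=|\{\tilde h_{\pi(I)}=-1\}|=\tfrac12|\mathcal B_{\pi(I)}^{*}|$; combining $\mathcal B_I^{*}\subset\{\tilde h_{\pi(I)}=\pm1\}$ with the lower bound in \Cref{dfn:almost-faithful}~\eqref{enu:dfn:almost-faithful:ii} yields $\varkappa_{\pi(I)}m_{\pi(I)}\le m_I\le m_{\pi(I)}$. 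Iterating along the branch down to $I$ gives $m_I\ge\mu\prod_{K\supseteq I}\varkappa_K$, and in particular $\varkappa_{\pi(I)}m_{\pi(I)}\ge\mu\prod_{K\supseteq\pi(I)}\varkappa_K\ge\mu(1-\sigma)$, since $\prod_{K}\varkappa_K\ge1-\sum_K(1-\varkappa_K)=1-\sigma$. Consequently, for $I\ne[0,1)$,
\[
  \Bigl|\tfrac{1}{m_I}-\tfrac{1}{m_{\pi(I)}}\Bigr|
  =\frac{m_{\pi(I)}-m_I}{m_Im_{\pi(I)}}
  \le\frac{(1-\varkappa_{\pi(I)})\,m_{\pi(I)}}{\varkappa_{\pi(I)}m_{\pi(I)}^{2}}
  \le\frac{1-\varkappa_{\pi(I)}}{\mu\prod_{K\supseteq\pi(I)}\varkappa_K}.
\]
Summing over $I\ne[0,1)$ (each $J$ occurring as $\pi(I)$ exactly for $I=J^{+}$ and $I=J^{-}$) gives $\sum_{I\ne[0,1)}|1/m_I-1/m_{\pi(I)}|\le\frac{2}{\mu(1-\sigma)}\sum_{J}(1-\varkappa_J)=\frac{2\sigma}{\mu(1-\sigma)}$, and therefore
\[
  \|A\|\le\vvvert\bigl(|I|/|\mathcal B_I^{*}|\bigr)_{I\in\mathcal D}\vvvert
  \le\frac{1}{\mu}+\frac{4\sigma}{\mu(1-\sigma)}
  =\frac{1}{\mu}\cdot\frac{1+3\sigma}{1-\sigma},
\]
as required.

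The one delicate point is the final estimate: one must use the bound $\varkappa_{\pi(I)}m_{\pi(I)}\ge\mu\prod_{K\supseteq\pi(I)}\varkappa_K$ and then dominate the whole product by $1-\sigma$ in a single step. Estimating the factor $1/\varkappa_{\pi(I)}$ and the factor $1/m_{\pi(I)}$ separately is too wasteful and only yields $\tfrac1\mu\bigl(\tfrac{1+\sigma}{1-\sigma}\bigr)^{2}$, which is larger than the claimed constant when $\sigma>\tfrac12$. Everything else is routine bookkeeping.
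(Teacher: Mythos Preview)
Your proof is correct and follows essentially the same approach as the paper: both factor $B=R\hat B$ and $A=M\hat A R$ (your $D$ is the paper's $M$) via \Cref{lem:almost-faithful:2} and \Cref{pro:A-B-bounded}, and then bound the Haar multiplier $M$ using \Cref{lem:haar-multiplier}. The only cosmetic difference is that the paper works with $m_I=|I|/|\mathcal B_I^{*}|$ directly and uses the global bound $|\mathcal B_I^{*}|\ge(1-\sigma)\mu|I|$ in one step, whereas you take $m_I=|\mathcal B_I^{*}|/|I|$ and iterate the product $\prod_{K\supseteq\pi(I)}\varkappa_K$ before invoking $\prod\varkappa_K\ge1-\sigma$; the resulting variation estimate $|m_I-m_{\pi(I)}|\le(1-\varkappa_{\pi(I)})/(\mu(1-\sigma))$ is identical.
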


\begin{proof}%{Proof of \Cref{thm:projection-1d}}
  Using \Cref{lem:almost-faithful:2} and \Cref{lem:haar-multiplier}, we can find a faithful Haar
  system $(\hat{h}_I)_{I\in \mathcal{D}}$ and a Haar multiplier
  $R\colon Y\to Y$ with $\|R\|\leq 1$ such that
  $\tilde{h}_I = R \hat{h}_{I}$, $I\in \mathcal{D}$.  We know from \Cref{pro:A-B-bounded} that the
  operators $\hat{B},\hat{A}\colon Y\to Y$ given by
  \begin{equation*}
    \hat{B}x
    = \sum_{I\in \mathcal{D}} \frac{\langle h_I, x \rangle}{|I|} \hat{h}_I
    \qquad \text{and} \qquad
    \hat{A}x
    = \sum_{I\in \mathcal{D}} \frac{\langle \hat{h}_I, x \rangle}{|I|}h_I
  \end{equation*}
  satisfy $\hat{A}\hat{B} = I_Y$ and $\|\hat{A}\| = \|\hat{B}\| = 1$.  For
  $I\in \mathcal{D}$, let $\mathcal{B}_I\subset \mathcal{D}$ denote the Haar support of
  $\tilde{h}_I$, and define $M\colon Y\to Y$ as the linear extension of
  $Mh_I = m_Ih_I$, where $m_I = |I|/|\mathcal{B}_I^{*}|$, $I\in \mathcal{D}$.  We will show that the
  operator $M$ is bounded.  To this end, let $n\in \mathbb{N}_0$ and $I\in \mathcal{D}$ and observe
  that repeatedly exploiting \Cref{dfn:almost-faithful}~\eqref{enu:dfn:almost-faithful:ii} yields
  \begin{equation*}
    |\mathcal{B}_I^{*}|
    \ge \Bigl( \prod_{J\in \mathcal{D} : J\supsetneq I} \varkappa_J \Bigr)\frac{|\mathcal{B}_{[0,1)}^{*}|}{2^n}
    \ge \Bigl( 1 - \sum_{J\in \mathcal{D}} (1 - \varkappa_J) \Bigr) \frac{|\mathcal{B}_{[0,1)}^{*}|}{2^n}
    = (1-\sigma)\mu|I|.
  \end{equation*}
  Moreover, we clearly have $|\mathcal{B}_I^{*}| \le \mu |I|$ for all $I\in \mathcal{D}$.  Hence,
  \begin{equation}\label{eq:I-BI}
    1 - \sigma
    \le \frac{|\mathcal{B}_I^{*}|}{\mu |I|}
    \le 1,
    \qquad I\in \mathcal{D}.
  \end{equation}
  Note that $m_{[0,1)} = 1/\mu$.  Now let $I\in \mathcal{D}\setminus \{ [0,1) \}$ and consider
  \begin{equation*}
    |m_I - m_{\pi(I)}|
    = \biggl| \frac{|I|}{|\mathcal{B}_I^{*}|} - \frac{2|I|}{|\mathcal{B}_{\pi(I)}^{*}|} \biggr|.
  \end{equation*}
  Using the inequalities $|\mathcal{B}_I^{*}|\le \frac{1}{2}|\mathcal{B}_{\pi(I)}^{*}|$ and
  $|\mathcal{B}_{\pi(I)}^{*}|\le \frac{2}{\varkappa_{\pi(I)}}|\mathcal{B}_I^{*}|$, we obtain
  \begin{equation*}
    0
    \le \frac{|I|}{|\mathcal{B}_I^{*}|} - \frac{2|I|}{|\mathcal{B}_{\pi(I)}^{*}|}
    \le (1 - \varkappa_{\pi(I)}) \frac{|I|}{|\mathcal{B}_I^{*}|}.
  \end{equation*}
  Together with~\eqref{eq:I-BI}, this yields
  \begin{equation*}
    |m_I - m_{\pi(I)}|
    \le \frac{1 - \varkappa_{\pi(I)}}{\mu(1 - \sigma)},
    \qquad I\in \mathcal{D}\setminus \{ [0,1) \}.
  \end{equation*}
  Thus, by \Cref{lem:haar-multiplier}, we have
  \begin{align*}
    \|M\|
    &\le \frac{1}{\mu}\Bigl( 1 + \frac{2}{1 - \sigma} \sum_{I\in \mathcal{D}\setminus \{ [0,1) \}} (1 - \varkappa_{\pi(I)}) \Bigr)\\
    &= \frac{1}{\mu}\Bigl( 1 + \frac{4\sigma}{1 - \sigma} \Bigr)
      = \frac{1}{\mu}\cdot \frac{1 + 3\sigma}{1 - \sigma}
      < \infty.
  \end{align*}
  Finally, observe that $B = R\hat{B}$ and $A = M\hat{A}R$ because we have for all $x\in H$:
  \begin{align*}
    M\hat{A}Rx
    &= M\Bigl( \sum_{I\in \mathcal{D}} \frac{\langle \hat{h}_I, Rx \rangle}{|I|}h_I \Bigr)
      = \sum_{I\in \mathcal{D}}\frac{\langle R\hat{h}_I, x \rangle}{|\mathcal{B}_I^{*}|}h_I
      = \sum_{I\in \mathcal{D}}\frac{\langle \tilde{h}_I, x \rangle}{|\mathcal{B}_I^{*}|}h_I
      = Ax.
  \end{align*}
  Thus, we conclude that $\|B\|\le \|R\|\|\hat{B}\|\le 1$ and
  $\|A\|\le \|M\|\|\hat{A}\|\|R\|\le \frac{1}{\mu}\cdot \frac{1 + 3\sigma}{1 - \sigma}$, as claimed.
\end{proof}

%%% Local Variables:
%%% mode: latex
%%% TeX-master: "main"
%%% End:
 %auto-ignore

\section{Stabilization of Haar multipliers}
\label{sec:stabilization}

The key ingredient for proving our main results is the observation that any bounded Haar multiplier
on a Haar system Hardy space~$Y$ can be reduced to a \emph{stable} Haar multiplier, i.e., a Haar
multiplier whose entries have very small variation (see \Cref{pro:stabilization}).  Such a stable
Haar multiplier is in turn close to $cI_Y$ for some scalar value~$c$.  The stabilization is achieved
by utilizing randomized faithful Haar systems.

Let $Y\in \mathcal{HH}_0(\delta)$, and let $D\colon Y\to Y$ denote a bounded Haar multiplier. Then
its entries $(d_I)_{I\in \mathcal{D}}$ are defined as
\begin{equation*}
  d_I = \frac{\langle h_I, Dh_I \rangle}{|I|},\qquad I\in \mathcal{D}.
\end{equation*}

In the following, we denote the product measure on $\{\pm 1\}^{\mathcal{D}}$ of the normalized
uniform measure on $\{\pm 1\}$ by~$\mathbb{P}$.  Moreover, for fixed
$\varepsilon = (\varepsilon_J)_{J\in \mathcal{D}}\in\{\pm 1\}^{\mathcal{D}}$ and
$n\in \mathbb{N}_0$, we define
\begin{equation*}
  \mathbb{P}_n^\varepsilon(\cdot) = \mathbb{P}(\cdot\mid \{(\theta_J)_{J\in \mathcal{D}} : \theta_J = \varepsilon_J,\ J\in \mathcal{D}\setminus \mathcal{D}_n\})
\end{equation*}
and denote the corresponding conditional expectation and variance by $\mathbb{E}_n^{\varepsilon}$
and $\mathbb{V}_n^{\varepsilon}$.

\begin{dfn}
Let $n\in \mathbb{N}_0$, let $\Gamma$ be a subset of $[0,1)$, and let
$\varepsilon = (\varepsilon_K)_{K\in \mathcal{D}}\in \{ \pm 1 \}^{\mathcal{D}}$ be a sequence of signs. Then we define
\begin{equation*}
  r_n^{\Gamma}
  = \sum_{\substack{K\in \mathcal{D}_n\\K\subset \Gamma}} h_K
  \qquad \text{and} \qquad
  r_n^{\Gamma}(\varepsilon)
  = \sum_{\substack{K\in \mathcal{D}_n\\K\subset \Gamma}} \varepsilon_K h_K.
\end{equation*}
\end{dfn}

\begin{rem}\label{rem:randomized-faithful-haar-system}
  Let $(n_I)_{I\in \mathcal{D}}$ be a fixed strictly increasing sequence of non-negative
  integers. Then, given any sequence of signs $\theta\in \{ \pm 1 \}^{\mathcal{D}}$, we can
  construct a faithful Haar system $(\hat{h}_I(\theta))_{I\in \mathcal{D}}$ relative to the
  frequencies $(n_I)_{I\in \mathcal{D}}$ by putting
  \begin{equation*}
    \hat{h}_I(\theta)
    = r_{n_I}^{\Gamma_I(\theta)}(\theta)
    = \sum_{\substack{K\in \mathcal{D}_{n_I}\\K\subset \Gamma_I(\theta)}}\theta_K h_K,
    \qquad I\in \mathcal{D},
  \end{equation*}
  where the sets $\Gamma_I(\theta)$ are defined as
  \begin{equation*}
    \Gamma_{[0,1)}(\theta) = [0,1)
    \qquad\text{and}\qquad
    \Gamma_{I^{\pm}}(\theta) = \{\hat{h}_I(\theta) = \pm 1\},
    \quad I\in \mathcal{D}.
  \end{equation*}
  Note that for every $I\in \mathcal{D}$, the set $\Gamma_I(\theta)$ only depends on the signs
  $(\theta_K : K\in \mathcal{D}_{<n_I})$, whereas $\hat{h}_I(\theta)$ also depends on
  $(\theta_K : K\in \mathcal{D}_{n_I})$.
\end{rem}

The next result is our main probabilistic lemma, which will be essential for proving our
stabilization result. We use the technique from \cite[Lemma~5.3]{MR4430957}---for convenience, we
provide a detailed proof.

\begin{lem}\label{lem:probabilistic}
  Let $Y\in \mathcal{HH}_0(\delta)$. Given a bounded Haar multiplier $D\colon Y\to Y$ and a strictly
  increasing sequence of non-negative integers $(n_I)_{I\in \mathcal{D}}$, we define the random
  variables
  \begin{equation*}
    X_I(\theta)
    = \langle r_{n_I}^{\Gamma_I(\theta)}, Dr_{n_I}^{\Gamma_I(\theta)} \rangle,
    \qquad I\in \mathcal{D},\ \theta\in\{\pm 1\}^{\mathcal{D}},
  \end{equation*}
  where the sets $\Gamma_I(\theta)$ are defined as in \Cref{rem:randomized-faithful-haar-system}
  with respect to $(n_I)_{I\in \mathcal{D}}$.  Then for every
  $\varepsilon\in \{\pm 1\}^{\mathcal{D}}$ and $I\in \mathcal{D}$, we have
  \begin{equation*}
    \mathbb{E}_{n_I}^{\varepsilon} X_{I^{\pm}}
    = \frac{1}{2} \langle r_{n_{I^{\pm}}}^{\Gamma_I(\varepsilon)}, Dr_{n_{I^{\pm}}}^{\Gamma_I(\varepsilon)} \rangle
    \qquad\text{and}\qquad
    \mathbb{V}_{n_I}^{\varepsilon} X_{I^{\pm}}
    \leq \frac{1}{4}2^{-n_I}\|D\|^2 |I|.
  \end{equation*}
\end{lem}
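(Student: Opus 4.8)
The plan is to fix $\varepsilon\in\{\pm1\}^{\mathcal D}$ and $I\in\mathcal D$ and compute the conditional expectation and variance of $X_{I^\pm}$ directly from the definition of $r_{n_{I^\pm}}^{\Gamma_{I^\pm}(\theta)}$, using the crucial observation recorded in \Cref{rem:randomized-faithful-haar-system}: the set $\Gamma_{I^\pm}(\theta)$ depends only on the signs $(\theta_K:K\in\mathcal D_{<n_{I^\pm}})$, and since $n_I<n_{I^\pm}$, in particular it depends on $(\theta_K:K\in\mathcal D_{n_I})$ only through $\hat h_I(\theta)$, while the full vector $r_{n_{I^\pm}}^{\Gamma_{I^\pm}(\theta)}=\sum_{K\in\mathcal D_{n_{I^\pm}},\,K\subset\Gamma_{I^\pm}(\theta)}\theta_K h_K$ carries the ``fresh'' signs $(\theta_K:K\in\mathcal D_{n_{I^\pm}})$, which under $\mathbb P_{n_I}^\varepsilon$ are independent fair $\pm1$ variables (they lie in $\mathcal D_{n_{I^\pm}}$, not $\mathcal D_{n_I}$, and are not frozen by conditioning on coordinates outside $\mathcal D_{n_I}$). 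Under $\mathbb P_{n_I}^\varepsilon$ the coordinate $\theta_K$ for $K\in\mathcal D_{n_I}$ is the single remaining random layer that can flip $\hat h_I$; conditioning further on these, one sees that $\hat h_I(\theta)=\pm1$ holds on $\Gamma_{I^\pm}(\varepsilon)$ (independently of the $\mathcal D_{n_I}$-signs, as $\Gamma_{I^\pm}$ only sees $\mathcal D_{<n_I}$ when $n_I$ is the relevant frequency of $I$ — here one must be slightly careful: $\Gamma_{I^\pm}(\theta)=\{\hat h_I(\theta)=\pm1\}$ \emph{does} depend on $\theta_K$, $K\in\mathcal D_{n_I}$; but these are outside the conditioning). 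So the cleanest route is: first condition on \emph{all} of $\mathcal D_{\le n_I}$ (i.e.\ also fix the $\mathcal D_{n_I}$-layer, temporarily), compute, then average over that layer.

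For the expectation: write $X_{I^\pm}(\theta)=\langle r_{n_{I^\pm}}^{\Gamma}, D r_{n_{I^\pm}}^{\Gamma}\rangle$ with $\Gamma=\Gamma_{I^\pm}(\theta)$, and expand $r_{n_{I^\pm}}^{\Gamma}=\sum_{K\subset\Gamma,\,K\in\mathcal D_{n_{I^\pm}}}\theta_K h_K$. Then
\[
X_{I^\pm}(\theta)=\sum_{K,L\subset\Gamma}\theta_K\theta_L\langle h_K, Dh_L\rangle.
\]
Conditioning on the signs in $\mathcal D_{\le n_I}$, the set $\Gamma$ becomes deterministic, and the $\theta_K$ with $K\in\mathcal D_{n_{I^\pm}}$ remain i.i.d.\ fair; taking expectation over those kills all off-diagonal terms $K\ne L$ and replaces $\theta_K^2$ by $1$, giving $\sum_{K\subset\Gamma}\langle h_K,Dh_K\rangle=\langle r_{n_{I^\pm}}^{\Gamma}, D r_{n_{I^\pm}}^{\Gamma}\rangle$ where now the vector is the deterministic $r_{n_{I^\pm}}^{\Gamma}$. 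Finally averaging over the $\mathcal D_{n_I}$-layer: as the layer toggles $\hat h_I$ between $+1$ and $-1$ on various subintervals, the set $\Gamma_{I^+}(\theta)$ ranges over subsets of $\Gamma_I(\varepsilon)$, and by symmetry each dyadic $K\in\mathcal D_{n_{I^\pm}}$ with $K\subset\Gamma_I(\varepsilon)$ lies in $\Gamma_{I^+}$ with probability $1/2$; hence the average of $\sum_{K\subset\Gamma_{I^+}(\theta)}\langle h_K,Dh_K\rangle$ over the $\mathcal D_{n_I}$-layer equals $\tfrac12\sum_{K\subset\Gamma_I(\varepsilon),\,K\in\mathcal D_{n_{I^+}}}\langle h_K,Dh_K\rangle=\tfrac12\langle r_{n_{I^+}}^{\Gamma_I(\varepsilon)}, D r_{n_{I^+}}^{\Gamma_I(\varepsilon)}\rangle$, which is the claimed formula (and identically for $I^-$).

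For the variance, bound $\mathbb V_{n_I}^\varepsilon X_{I^\pm}\le \mathbb E_{n_I}^\varepsilon[X_{I^\pm}^2]-(\mathbb E_{n_I}^\varepsilon X_{I^\pm})^2$, but more usefully estimate directly: again condition on $\mathcal D_{\le n_I}$ so $\Gamma$ is fixed, write $X_{I^\pm}-\sum_{K\subset\Gamma}\langle h_K,Dh_K\rangle=\sum_{K\ne L\subset\Gamma}\theta_K\theta_L\langle h_K,Dh_L\rangle$, a sum of mean-zero terms with orthogonal (uncorrelated) summands over the fair i.i.d.\ signs, so its conditional second moment is $\sum_{K\ne L\subset\Gamma}\langle h_K,Dh_L\rangle^2\le\sum_{K,L\subset\Gamma}\langle h_K,Dh_L\rangle^2$. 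Now $\langle h_K,Dh_L\rangle=|K|\langle (D^*h_K/|K|), h_L\rangle$ and $D r_{n_{I^\pm}}^{\Gamma}=\sum_{K\subset\Gamma}\theta_K Dh_K$; since the $h_L$, $L\in\mathcal D_{n_{I^\pm}}$, $L\subset\Gamma$, are orthogonal with $\|h_L\|_2^2=2^{-n_{I^\pm}}$, Parseval gives $\sum_{L\subset\Gamma}\langle Dh_K,h_L\rangle^2=2^{-n_{I^\pm}}\|P_\Gamma Dh_K\|_2^2\le 2^{-n_{I^\pm}}\|Dh_K\|_2^2$, and then summing over $K$ and using $\|Dh_K\|_2\le\|D\|\,\|h_K\|_2$ (the operator norm on $Y$ dominates the $L^2$-norm on this finite-dimensional span? — this is the one point that needs care: $\|D\|$ here is the $Y$-operator norm, and $L^2\subset$ or $\supset Y$ depending on the space) yields $\sum_{K,L\subset\Gamma}\langle h_K,Dh_L\rangle^2\le 2^{-n_{I^\pm}}\|D\|^2\sum_{K\subset\Gamma}\|h_K\|_2^2=2^{-n_{I^\pm}}\|D\|^2\,|\Gamma|$. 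Since $|\Gamma|\le|\Gamma_I(\varepsilon)|\le\tfrac12|I|$ (from the faithfulness, \Cref{rem:porous}) and $2^{-n_{I^\pm}}\le 2^{-n_I-1}$, this already beats the target; one then averages over the $\mathcal D_{n_I}$-layer (variance is concave under mixing in the sense that total variance $\ge$ average of conditional variances, so this step needs the split $\mathbb V=\mathbb E[\mathbb V]+\mathbb V[\mathbb E]$ and a separate small bound on $\mathbb V[\mathbb E]$, or simply a direct $L^2$-bound on $X_{I^\pm}-\mathbb E_{n_I}^\varepsilon X_{I^\pm}$ without the intermediate conditioning).

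\textbf{Main obstacle.} The genuinely delicate point is handling the extra $\mathcal D_{n_I}$-randomization layer correctly for the variance: $X_{I^\pm}$ is \emph{not} a quadratic form in independent signs once we only condition outside $\mathcal D_{n_I}$, because $\Gamma_{I^\pm}(\theta)$ itself depends on the $\mathcal D_{n_I}$-signs. The clean fix is the tower property — first condition on $\mathcal D_{\le n_I}$ to get a deterministic-$\Gamma$ quadratic form (easy variance bound as above), then use $\mathbb V_{n_I}^\varepsilon = \mathbb E[\mathbb V_{\le n_I}] + \mathbb V_{n_I}^\varepsilon[\mathbb E_{\le n_I}]$; the first term is controlled by the deterministic-$\Gamma$ computation, and the second term, the variance of $\tfrac12\sum_{K\subset\Gamma_{I^\pm}(\theta)}\langle h_K,Dh_K\rangle$ over the $\mathcal D_{n_I}$-layer, is a sum of independent mean-centered terms (one per $K\in\mathcal D_{n_{I^\pm}}$, $K\subset\Gamma_I(\varepsilon)$, each equal to $\tfrac12\langle h_K,Dh_K\rangle$ times a centered Bernoulli) whose variance is $\tfrac14\sum_K\tfrac14\langle h_K,Dh_K\rangle^2\le\tfrac1{16}\,2^{-n_{I^\pm}}\|D\|^2|\Gamma_I(\varepsilon)|$, again of the right order; summing the two contributions and absorbing constants gives the stated bound $\tfrac14\,2^{-n_I}\|D\|^2|I|$. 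I would double-check the constant bookkeeping ($n_{I^\pm}\ge n_I+1$, $|\Gamma_I(\varepsilon)|\le|I|/2$) to confirm the clean $\tfrac14$ factor comes out, since that is exactly what later stabilization estimates will consume.
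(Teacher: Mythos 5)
The central gap is that you never invoke the hypothesis that $D$ is a \emph{Haar multiplier}.  For a Haar multiplier, $\langle h_K,Dh_L\rangle=0$ whenever $K\ne L$, so the quadratic form $\sum_{K,L\subset\Gamma}\theta_K\theta_L\langle h_K,Dh_L\rangle$ collapses \emph{identically} (not merely in expectation) to $\sum_{K\subset\Gamma}d_K|K|$ with $d_K=\langle h_K,Dh_K\rangle/|K|$.  In particular $X_{I^\pm}(\theta)$ does not depend on the level-$n_{I^\pm}$ signs at all; it is a deterministic function of $\Gamma_{I^\pm}(\theta)$, hence of $(\theta_K:K\in\mathcal D_{\le n_I})$.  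Your route of ``conditioning on $\mathcal D_{\le n_I}$ and averaging the fresh level-$n_{I^\pm}$ randomness'' is built on a misreading of $\mathbb P_{n_I}^{\varepsilon}$: the measure conditions on $\theta_J=\varepsilon_J$ for \emph{all} $J\in\mathcal D\setminus\mathcal D_{n_I}$, so the level-$n_{I^\pm}$ signs are frozen, not i.i.d.\ fair.  Your expectation computation happens to land on the correct formula only because of the Haar-multiplier collapse you did not use; the argument as written rests on a false premise about what is random.

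For the variance the error is fatal.  The conditional variance of $X_{I^\pm}$ given $\mathcal D_{\le n_I}$ under $\mathbb P_{n_I}^{\varepsilon}$ is simply zero (everything is already fixed), so the Parseval/``first term'' computation is computing the variance of a constant; and in any case the step $\|Dh_K\|_2\le\|D\|\|h_K\|_2$ needs the $L^2$-operator norm of $D$, which the $Y$-operator norm does not control (you flag this yourself).  More seriously, your ``second term'' treats the indicators $\mathbf 1[K\subset\Gamma_{I^+}(\theta)]$, $K\in\mathcal D_{n_{I^+}}$, as independent centred Bernoullis.  They are not: for the unique $J\in\mathcal D_{n_I}$ containing $K$, every $K\subset J^+$ carries the same indicator $\mathbf 1[\theta_J=+1]$ and every $K\subset J^-$ carries the opposite one.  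The correct decomposition groups by $J\subset\Gamma_I(\varepsilon)$, giving independent two-valued summands with per-$J$ variance $\tfrac14\bigl(\sum_{K\subset J^+}d_K|K|-\sum_{K\subset J^-}d_K|K|\bigr)^2$.  Your quantity $\tfrac14\sum_K\tfrac14\langle h_K,Dh_K\rangle^2$ is in general much \emph{smaller} than this (take $d_K=\|D\|$ on $J^+$ and $d_K=-\|D\|$ on $J^-$: the true per-$J$ contribution is $\tfrac14\|D\|^2|J|^2$, yours is $\tfrac{1}{16}\|D\|^2\,2^{-n_{I^\pm}}|J|$), so it is not an upper bound on $\mathbb V_{n_I}^{\varepsilon}X_{I^\pm}$ at all.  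The paper's proof first uses the Haar-multiplier identity to reduce $X_{I^\pm}$ to a sum over $J\in\mathcal D_{n_I}$, then reads off both the conditional mean and variance directly from the independence of $(\theta_J)_{J\in\mathcal D_{n_I}}$, bounding each $J$-term crudely by $\|D\|^2|J|^2$ via $|d_K|\le\|D\|$ (\Cref{lem:product-of-norms}).
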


\begin{proof}
  Let $(d_K)_{K\in \mathcal{D}}$ denote the entries of the Haar multiplier $D$, and fix
  $I\in \mathcal{D}$ and $\theta\in \{ \pm 1 \}^{\mathcal{D}}$. Then we have
  \begin{equation*}
    X_{I^{\pm}}(\theta)
    = \sum_{\substack{K\in \mathcal{D}_{n_{I^{\pm}}}\\ K\subset \Gamma_{I^{\pm }}(\theta)}}
    \sum_{\substack{L\in \mathcal{D}_{n_{I^{\pm}}}\\ L\subset \Gamma_{I^{\pm }}(\theta)}}
    \theta_K\theta_L\langle h_K, Dh_L \rangle
    = \sum_{\substack{K\in \mathcal{D}_{I^{\pm}}\\ K\subset \Gamma_{I^{\pm }}(\theta)}} d_K|K|
    = \sum_{\substack{J\in \mathcal{D}_{n_I}\\ J\subset \Gamma_I(\theta)}}
    \sum_{\substack{K\in \mathcal{D}_{n_{I^{\pm}}}\\ K\subset J^{\pm \theta_J}}} d_K|K|.
  \end{equation*}
  Taking the conditional expectation yields
  \begin{equation*}
    \mathbb{E}_{n_I}^{\varepsilon} X_{I^{\pm}}
    = \sum_{\substack{J \in \mathcal{D}_{n_I}\\ J\subset \Gamma_I(\varepsilon)}} \mathbb{E}_{n_I}^{\varepsilon}
    \Bigl( \sum_{\substack{K\in \mathcal{D}_{n_{I^{\pm}}}\\ K\subset J^{\pm \theta_J}}} d_K|K| \Bigr)
    = \frac{1}{2} \sum_{\substack{K\in \mathcal{D}_{n_{I^{\pm}}}\\ K\subset \Gamma_I(\varepsilon)}} d_K|K|
    = \frac{1}{2} \langle r_{n_{I^{\pm}}}^{\Gamma_I(\varepsilon)}, Dr_{n_{I^{\pm}}}^{\Gamma_I(\varepsilon)} \rangle,
  \end{equation*}
  which proves the first identity.

  Next, we calculate the variance. Exploiting the independence of
  $(\theta_J : J\in \mathcal{D}_{n_I},\, J\subset \Gamma_I(\varepsilon))$, we obtain
  \begin{equation*}
    \mathbb{V}_{n_I}^{\varepsilon} X_{I^{\pm}}
    = \sum_{\substack{J\in \mathcal{D}_{n_I}\\ J\subset \Gamma_I(\varepsilon)}} \mathbb{V}_{n_I}^{\varepsilon} \Bigl(
    \sum_{\substack{K\in \mathcal{D}_{n_{I^{\pm}}}\\ K\subset J^{\pm \theta_J}}} d_K|K|
    \Bigr)
    = \sum_{\substack{J\in \mathcal{D}_{n_I}\\ J\subset \Gamma_I(\varepsilon)}}\frac{1}{4} \Bigl(
    \sum_{\substack{K\in \mathcal{D}_{n_{I^{\pm}}}\\ K\subset J^+}} d_K|K| - \sum_{\substack{K\in \mathcal{D}_{n_{I^{\pm}}}\\ K\subset J^-}} d_K|K|
    \Bigr)^2.
  \end{equation*}
  By \Cref{lem:product-of-norms}, we
  have
  \begin{equation*}
    |d_K||K|
    = \langle h_K, Dh_K \rangle
    \le \|h_K\|_{Y^{*}}\|h_K\|_Y\|D\|
    = \|D\| |K|
  \end{equation*}
  for all $K\in \mathcal{D}$. Hence, we obtain
  \begin{equation*}
    \mathbb{V}_{n_I}^{\varepsilon} X_{I^{\pm}}
    \le \sum_{\substack{J\in \mathcal{D}_{n_I}\\ J\subset \Gamma_I(\varepsilon)}} \frac{1}{4}
    \Bigl( \sum_{\substack{K\in \mathcal{D}_{n_{I^{\pm}}}\\ K\subset J}} \|D\||K| \Bigr)^2\\
    \leq \frac{1}{4}\sum_{\substack{J\in \mathcal{D}_{n_I}\\ J\subset \Gamma_I(\varepsilon)}} |J|^2 \|D\|^2
    = \frac{1}{4} 2^{-n_I}\|D\|^2 |\Gamma_I(\varepsilon)|.\qedhere
  \end{equation*}
\end{proof}

\begin{pro}\label{pro:stabilization}
  Let $Y\in \mathcal{HH}_0(\delta)$, let $D\colon Y\to Y$ be a bounded Haar multiplier with entries
  $(d_I)_{I\in \mathcal{D}}$, and suppose that $c\in \Lambda(D)$. Given a sequence of positive real
  numbers $(\eta_I)_{I\in \mathcal{D}}$, there exists another bounded Haar multiplier
  $D^{\mathrm{stab}}\colon Y\to Y$ such that $D^{\mathrm{stab}}$ projectionally factors through $D$
  with constant~$1$ and error~$0$ and such that its entries
  $(d_I^{\mathrm{stab}})_{I\in \mathcal{D}}$ satisfy
  \begin{equation*}
    |d_{[0,1)}^{\mathrm{stab}} - c|
    \le \eta_{[0,1)}
    \qquad \text{and} \qquad
    |d_{I^{\pm}}^{\mathrm{stab}} - d_I^{\mathrm{stab}}|
    \le \eta_I, \qquad I\in \mathcal{D}.
  \end{equation*}
  Moreover, if we additionally assume that there exists a $\delta > 0$ such that $d_I\ge \delta$ for
  all $I\in \mathcal{D}$, then we also have $d_I^{\mathrm{stab}} \ge \delta$ for all
  $I\in \mathcal{D}$.
\end{pro}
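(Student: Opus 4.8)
The plan is to obtain $D^{\mathrm{stab}}$ as $\hat AD\hat B$, where $(\hat A,\hat B)$ is the pair of \Cref{pro:A-B-bounded} associated with a \emph{randomized} faithful Haar system $(\hat h_I(\theta))_{I\in\mathcal D}$ relative to frequencies $(n_I)_{I\in\mathcal D}$ (as in \Cref{rem:randomized-faithful-haar-system}), for a carefully chosen sign sequence $\theta=\theta^*$. The signs will be taken \emph{balanced} at each level $n_I$ inside $\Gamma_I(\theta)$---this is exactly what makes $|\{\hat h_I=+1\}|=|\{\hat h_I=-1\}|$, so that $(\hat h_I(\theta))_I$ is a genuine faithful Haar system---and then \Cref{pro:A-B-bounded} yields $\hat A\hat B=I_Y$ and $\|\hat A\|=\|\hat B\|=1$, so that $D^{\mathrm{stab}}:=\hat AD\hat B$ projectionally factors through $D$ with constant~$1$ and error~$0$. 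Expanding $\hat AD\hat B h_J$ and using $\langle h_K,h_L\rangle=\delta_{KL}|K|$, the pairwise disjointness of the supports $\mathcal B_I$, and $|\mathcal B_I^*|=|I|$, one sees that $D^{\mathrm{stab}}$ is the Haar multiplier with entries
\[
  d_I^{\mathrm{stab}}
  = \frac1{|I|}\sum_{K\in\mathcal B_I}d_K|K|
  = \frac{\langle r_{n_I}^{\Gamma_I(\theta)},\,Dr_{n_I}^{\Gamma_I(\theta)}\rangle}{|I|},
  \qquad I\in\mathcal D.
\]
Thus each $d_I^{\mathrm{stab}}$ is a weighted average of $(d_K)_{K\in\mathcal B_I}$; in particular $d_I^{\mathrm{stab}}\ge\delta$ for all $I$ whenever $d_I\ge\delta$ for all $I$, which settles the last assertion at once. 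It remains to choose $(n_I)_I$ and $\theta^*$ so that $|d_{[0,1)}^{\mathrm{stab}}-c|\le\eta_{[0,1)}$ and $|d_{I^\pm}^{\mathrm{stab}}-d_I^{\mathrm{stab}}|\le\eta_I$ for all $I$.

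Write $\psi_\Gamma(m)=|\Gamma|^{-1}\sum_{K\in\mathcal D_m,\,K\subset\Gamma}d_K|K|$ for a finite union of dyadic intervals $\Gamma$ and an integer $m$ at least the level of $\Gamma$, so that $d_I^{\mathrm{stab}}=\psi_{\Gamma_I(\theta)}(n_I)$ and $|\psi_\Gamma(m)|\le\|D\|$ by \Cref{lem:product-of-norms}. The main obstacle is that for fixed $\Gamma$ the bounded sequence $(\psi_\Gamma(m))_m$ need not converge (e.g., $d_K=(-1)^{\mathrm{lev}(K)}$), so one cannot simply take $n_{I^\pm}$ large to force $\psi_{\Gamma_I}(n_{I^\pm})\approx\psi_{\Gamma_I}(n_I)$. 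I would get around this by a diagonal argument: there are only countably many finite unions of dyadic intervals $\Gamma$, and for each the sequence $(\psi_\Gamma(m))_m$ has a convergent subsequence, so there is a single infinite set $S\subset\mathbb N$ along which $\psi_\Gamma(m)\to v_\Gamma$ for \emph{every} such $\Gamma$; starting from a subsequence along which $\psi_{[0,1)}(m)=\langle r_m,Dr_m\rangle\to c$, we may also assume $v_{[0,1)}=c$.

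Next I would choose the frequencies $(n_I)_I$ recursively in the order~$\iota$, all from $S$ and strictly increasing. When $I$ is reached, $\Gamma_I(\theta)$ is not yet determined (it depends on the as-yet-unfixed signs at the frequencies of the strict ancestors of $I$), but, the earlier frequencies being fixed, it runs over a \emph{finite} list $\mathcal L_I$ of finite unions of dyadic intervals. Pick $n_I\in S$ large enough that (a) $|\psi_\Gamma(m)-v_\Gamma|\le\eta_I/3$ for every $\Gamma\in\mathcal L_I$ and every $m\in S$ with $m\ge n_I$, and (b) $9\cdot2^{-n_I}\|D\|^2/(|I|\,\eta_I^2)\le\beta_I$, where $(\beta_I)$ is a fixed positive sequence with $2\sum_I\beta_I<1$; for $I=[0,1)$ also require $|\langle r_{n_{[0,1)}},Dr_{n_{[0,1)}}\rangle-c|\le\eta_{[0,1)}$. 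Since all $n_J$ with $J>_\iota I$ exceed $n_I$, (a) also controls $\psi_{\Gamma_I}(n_{I^\pm})$. With the frequencies fixed, put the random balanced signs (built sequentially down the tree) on $\{\pm1\}^{\mathcal D}$; as $d_I^{\mathrm{stab}}=\psi_{\Gamma_I(\theta)}(n_I)$ depends only on the signs below level $n_I$, \Cref{lem:probabilistic}---whose proof goes through for balanced signs, the variance bound only improving---gives, conditionally on those lower signs,
\[
  \mathbb E\bigl[d_{I^\pm}^{\mathrm{stab}}\bigr]=\psi_{\Gamma_I}(n_{I^\pm}),
  \qquad
  \mathbb V\bigl[d_{I^\pm}^{\mathrm{stab}}\bigr]\le\frac{2^{-n_I}\|D\|^2}{|I|}.
\]
By Chebyshev and~(b), $\mathbb P\bigl(|d_{I^\pm}^{\mathrm{stab}}-\psi_{\Gamma_I}(n_{I^\pm})|>\eta_I/3\bigr)\le\beta_I$; summing over all $I$ and both signs, $2\sum_I\beta_I<1$ yields a balanced $\theta^*$ outside every such event. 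For this $\theta^*$, (a) used twice gives
\[
  |d_{I^\pm}^{\mathrm{stab}}-d_I^{\mathrm{stab}}|
  \le|d_{I^\pm}^{\mathrm{stab}}-\psi_{\Gamma_I}(n_{I^\pm})|
  +|\psi_{\Gamma_I}(n_{I^\pm})-v_{\Gamma_I}|
  +|v_{\Gamma_I}-\psi_{\Gamma_I}(n_I)|
  \le\eta_I
\]
for every $I$, while $|d_{[0,1)}^{\mathrm{stab}}-c|\le\eta_{[0,1)}$ holds by the choice of $n_{[0,1)}$.

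The delicate point is this coupling of the two mechanisms: the concentration estimate of \Cref{lem:probabilistic} only becomes available once the frequencies are pinned down, yet the branch-averages $\psi_\Gamma(m)$ do not stabilize in $m$, which forces the diagonalization over \emph{all} finite dyadic collections ahead of time, the observation that $\Gamma_I(\theta)$ ranges over only finitely many sets at the moment $n_I$ is selected, and a check that the variance estimate survives the restriction to balanced sign sequences.
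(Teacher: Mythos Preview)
Your approach is essentially the same as the paper's: diagonalize along a subsequence $S$ so that all the branch averages $\psi_\Gamma(m)$ converge, pick frequencies $(n_I)$ from $S$, and use \Cref{lem:probabilistic} plus Chebyshev to find good signs. Two remarks.

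First, the ``balanced signs'' constraint is unnecessary and is based on a misreading of faithfulness. For \emph{any} $\theta\in\{\pm1\}^{\mathcal D}$, the system $(\hat h_I(\theta))_I$ of \Cref{rem:randomized-faithful-haar-system} is already faithful: since $\hat h_I(\theta)=\sum_{K\in\mathcal D_{n_I},\,K\subset\Gamma_I(\theta)}\theta_Kh_K$ and each $h_K$ satisfies $|\{h_K=\pm1\}|=|K|/2$, we get $|\{\hat h_I(\theta)=\pm1\}|=|\Gamma_I(\theta)|/2$ automatically, regardless of the signs. So you may (and should) work with the i.i.d.\ product measure on $\{\pm1\}^{\mathcal D}$ exactly as in \Cref{lem:probabilistic}, without any balancing and without needing to argue that the variance bound survives a modification.

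Second, the paper's sign selection is organised slightly differently: rather than a global union bound with $2\sum_I\beta_I<1$, it proceeds level by level, choosing $n_I$ so large that the conditional failure probability at level $n_I$ is at most $1/2$, and then fixing good signs at that level before moving on. Both arguments work; yours requires a summability condition on the $\beta_I$, while the paper's only needs each individual step to succeed with positive probability.
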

\begin{proof}
  In the following, we will first select a strictly increasing sequence of non-negative integers
  $(n_I)_{I\in \mathcal{D}}$ and then construct a faithful Haar system
  $(\hat{h}_I)_{I\in \mathcal{D}}$ relative to the frequencies $(n_I)_{I\in \mathcal{D}}$ by
  choosing a sequence of signs $\varepsilon\in \{ \pm 1 \}^{\mathcal{D}}$ and putting
  $\hat{h}_I = \hat{h}_I(\varepsilon)$, $I\in \mathcal{D}$, as defined in
  \Cref{rem:randomized-faithful-haar-system}.

  Before beginning the construction, we make the following observation. Given a subset
  $\mathcal{A}\subset \mathcal{D}$, let $\mathbb{F}(\mathcal{A})$ denote the countable set of all
  finite unions of intervals in $\mathcal{A}$. For any $\Gamma\in \mathbb{F}(\mathcal{D})$, the
  sequence $(\langle r_n^{\Gamma}, Dr_n^{\Gamma} \rangle)_{n\in \mathbb{N}}$ is bounded because
  $\|r_n^{\Gamma}\|_Y\le 1$ and, by \Cref{cor:norm-in-dual-space},
  $\|r_n^{\Gamma}\|_{Y^{*}}\le 1$. Thus, by Cantor's diagonalization argument, we can find an
  infinite subset $\mathcal{N}\subset \mathbb{N}$ such that for each
  $\Gamma\in \mathbb{F}(\mathcal{D})$, the sequence
  $(\langle r_n^{\Gamma}, Dr_n^{\Gamma} \rangle)_{n\in \mathcal{N}}$ converges to some real number
  $\alpha_{\Gamma}$, and since $c\in \Lambda(D)$, we can ensure that $\alpha_{[0,1)} = c$.

  We now begin our inductive construction. First, we can find $n_{[0,1)}\in \mathcal{N}$ such that
  \begin{equation}\label{eq:ineq-norm-squared-eta-0}
    |\langle r_n, D r_n\rangle - c|
    < \eta_{[0,1)}
    \quad\text{and}\quad
    2^{-n}\|D\|^2
    < \eta_{[0,1)}^2,
    \qquad n\in \mathcal{N},\ n\geq n_{[0,1)}.
  \end{equation}
  This completes the initial step.

  Now let $I\in \mathcal{D}\setminus \{ [0,1) \}$ and suppose that we have already constructed the
  numbers $n_J$, $J < I$, such that $(n_J)_{J < I}$ is strictly increasing. By the definition of
  $\mathcal{N}$, we can find $n_I\in \mathcal{N}$ such that $n_I> n_J$ for all $J < I$ and
  \begin{equation}\label{eq:rnGamma-DrnGamma-alpha}
    |\langle r_n^{\Gamma}, Dr_n^{\Gamma} \rangle - \alpha_{\Gamma}|
    < \eta_I,
    \qquad \Gamma \in \mathbb{F}(\mathcal{D}_{n_{\pi(I)} + 1}),\ n\in \mathcal{N},\ n\geq n_I
  \end{equation}
  as well as
  \begin{equation}\label{eq:ineq-norm-squared-eta-k}
    2^{-n}\|D\|^2
    < \eta_I^2,
    \qquad n\geq n_I.
  \end{equation}

  Next, we construct the sequence of signs $\varepsilon = (\varepsilon_K)_{K\in \mathcal{D}}$. Let
  $I\in \mathcal{D}$ and suppose that we have already chosen the signs
  $(\varepsilon_K : K\in \mathcal{D}_{<n_I})$ (or no signs yet, if $I = [0,1)$). Now let
  $\varepsilon^I\in \{ \pm 1 \}^{\mathcal{D}}$ be any sequence of signs such that
  $\varepsilon^I_K = \varepsilon_K$ for all $K\in \mathcal{D}_{<n_I}$ and consider
  the random variables
  \begin{equation*}
    X_{I^{\pm }}(\theta)
    = \langle r_{n_{I^{\pm}}}^{\Gamma_{I^{\pm }}(\theta)}, Dr_{n_{I^{\pm}}}^{\Gamma_{I^{\pm }}(\theta)} \rangle,
    \qquad \theta\in\{\pm 1\}^{\mathcal{D}}.
  \end{equation*}
  Using \Cref{lem:probabilistic} together with~\eqref{eq:ineq-norm-squared-eta-0}
  and~\eqref{eq:ineq-norm-squared-eta-k} yields
  \begin{equation*}
    \mathbb{E}_{n_I}^{\varepsilon^I} X_{I^{\pm}}
    = \frac{1}{2} \langle r_{n_{I^{\pm}}}^{\Gamma_I(\varepsilon^I)}, Dr_{n_{I^{\pm}}}^{\Gamma_I(\varepsilon^I)} \rangle
    \qquad\text{and}\qquad
    \mathbb{V}_{n_I}^{\varepsilon^I} X_{I^{\pm}}
    < \frac{1}{4}\eta_I^2 |I|.
  \end{equation*}
  By Chebyshev's inequality, we conclude that
  \begin{equation*}
    \mathbb{P}_{n_I}^{\varepsilon^I}(
    |X_{I^+} - \mathbb{E}_{n_I}^{\varepsilon^I} X_{I^+}| \geq \eta_I
    \text{ or }
    |X_{I^-} - \mathbb{E}_{n_I}^{\varepsilon^I} X_{I^-}| \geq \eta_I
    )
  \end{equation*}
  is bounded from above by
  \begin{equation*}
    \frac{1}{\eta_I^2}\mathbb{V}_{n_I}^{\varepsilon^I} X_{I^+}
    + \frac{1}{\eta_I^2}\mathbb{V}_{n_I}^{\varepsilon^I} X_{I^-}
    \le \frac{1}{\eta_I^2}\cdot 2\cdot \frac{1}{4}\eta_I^2 |I|
    \le \frac{1}{2}
    < 1.
  \end{equation*}
  Thus, we can choose signs $(\varepsilon_K : K\in \mathcal{D}_{n_I})$ such that for any
  $\theta\in \{ \pm 1 \}^{\mathcal{D}}$ with $\theta_K = \varepsilon_K$, $K\in \mathcal{D}_{\le n_I}$, we have
  \begin{equation}\label{eq:rnkGamma-difference-one-half-predecessor}
    \Bigl|
    \langle r_{n_{I^{\pm}}}^{\Gamma_{I^{\pm}}(\theta)}, Dr_{n_{I^{\pm}}}^{\Gamma_{I^{\pm}}(\theta)} \rangle
    - \frac{1}{2}\langle r_{n_{I^{\pm}}}^{\Gamma_I(\theta)}, Dr_{n_{I^{\pm}}}^{\Gamma_I(\theta)} \rangle
    \Bigr|
    < \eta_I.
  \end{equation}
  The signs $(\varepsilon_K : K\in \mathcal{D}_{<n_J}\setminus \mathcal{D}_{\le n_I})$, where
  $J = \iota^{-1}(\iota(I) + 1)$, can be chosen arbitrarily: We put $\varepsilon_K = 1$ for
  $K\in \mathcal{D}_{<n_J}\setminus \mathcal{D}_{\le n_I}$. This concludes the construction of the
  signs.

  Now we proceed to analyze the properties of our construction. For $I\in \mathcal{D}$, we write
  $\Gamma_I = \Gamma_I(\varepsilon)$, where $\varepsilon$ denotes the sequence of signs that we have
  just chosen. Observe that we have $n_I, n_{I^{\pm}}\in \mathcal{N}$ and $n_I < n_{I^{\pm}}$
  for all $I\in \mathcal{D}$, and moreover, $\Gamma_I\in \mathbb{F}(\mathcal{D}_{n_{\pi(I)} + 1})$
  for all $I\in \mathcal{D}\setminus \{ [0,1) \}$. Combining~\eqref{eq:ineq-norm-squared-eta-0}
  with~\eqref{eq:rnGamma-DrnGamma-alpha} yields
  \begin{equation*}
    | \langle r_{n_{I^{\pm}}}^{\Gamma_I}, Dr_{n_{I^{\pm}}}^{\Gamma_I} \rangle - \langle r_{n_I}^{\Gamma_I}, Dr_{n_I}^{\Gamma_I} \rangle |
    \le |
    \langle r_{n_{I^{\pm}}}^{\Gamma_I}, Dr_{n_{I^{\pm}}}^{\Gamma_I} \rangle
    - \alpha_{\Gamma_I}| + |\alpha_{\Gamma_I} - \langle r_{n_I}^{\Gamma_I}, Dr_{n_I}^{\Gamma_I} \rangle |
    < 2\eta_I
  \end{equation*}
  for all $I\in \mathcal{D}$. Together with~\eqref{eq:rnkGamma-difference-one-half-predecessor},
  applied to $\theta = \varepsilon$, this implies that
  \begin{equation*}
    \Bigl|
    \langle r_{n_{I^{\pm}}}^{\Gamma_{I^{\pm }}}, Dr_{n_{I^{\pm}}}^{\Gamma_{I^{\pm }}} \rangle
    - \frac{1}{2}\langle r_{n_I}^{\Gamma_I}, Dr_{n_I}^{\Gamma_I} \rangle
    \Bigr|
    < 2\eta_I,
    \qquad I\in \mathcal{D}.
  \end{equation*}
  Now observe that
  \begin{align*}
    \langle r_{n_I}^{\Gamma_I}(\varepsilon), Dr_{n_I}^{\Gamma_I}(\varepsilon) \rangle
    &= \sum_{\substack{K\in \mathcal{D}_{n_I}\\ K\subset \Gamma_I}} \langle h_K, Dh_K \rangle
    = \langle r_{n_I}^{\Gamma_I}, Dr_{n_I}^{\Gamma_I} \rangle,
    \qquad I\in \mathcal{D}.
  \end{align*}
  Thus, the faithful Haar system $(\hat{h}_I)_{I\in \mathcal{D}}$ defined by
  $\hat{h}_I = \hat{h}_I(\varepsilon) = r_{n_I}^{\Gamma_I}(\varepsilon)$, $I\in \mathcal{D}$,
  satisfies
  \begin{align}
    |\langle \hat{h}_{[0,1)}, D \hat{h}_{[0,1)} \rangle - c|
    &< \eta_{[0,1)},\label{eq:27}\\
    \Bigl| \langle \hat{h}_{I^{\pm}}, D \hat{h}_{I^{\pm}} \rangle - \frac{1}{2}\langle \hat{h}_I, D \hat{h}_I \rangle \Bigr|
    &< 2\eta_I,
      \qquad I\in \mathcal{D}.\label{eq:28}
  \end{align}

  Now we use the operators $\hat{A},\hat{B}\colon Y\to Y$, defined as in \Cref{pro:A-B-bounded} with
  respect to our newly constructed faithful Haar system $(\hat{h}_I)_{I\in \mathcal{D}}$, and we
  define $D^{\mathrm{stab}} = \hat{A}D\hat{B}$. We have $\hat{A}\hat{B} = I_Y$ and
  $\|\hat{A}\| = \|\hat{B}\| = 1$. Since $D$ is diagonal with respect to $(h_I)_{I\in \mathcal{D}}$
  and the functions $(\hat{h}_I)_{I\in \mathcal{D}}$ have pairwise disjoint Haar supports, it
  follows that $\langle \hat{h}_I, D \hat{h}_J \rangle = 0$ for all $I\ne J$. Thus, we have
  \begin{equation*}
    D^{\mathrm{stab}}h_I
    = \hat{A}D\hat{B}h_I
    = \sum_{J\in \mathcal{D}} \frac{\langle \hat{h}_J, D \hat{h}_I \rangle}{|J|} h_J
    = \frac{\langle \hat{h}_I, D \hat{h}_I \rangle}{|I|} h_I,
    \quad I\in \mathcal{D},
  \end{equation*}
  and so $D^{\mathrm{stab}}$ is also diagonal with respect to
  $(h_I)_{I\in \mathcal{D}}$. By~\eqref{eq:27} and~\eqref{eq:28}, the entries
  $d_I^{\mathrm{stab}} = \langle \hat{h}_I, D\hat{h}_I \rangle/|I|$, $I\in \mathcal{D}$, satisfy
  \begin{equation*}
    |d_{[0,1)}^{\mathrm{stab}} - c| < \eta_{[0,1)}
    \qquad\text{and}\qquad
    |d_{I^{\pm }}^{\mathrm{stab}} - d_I^{\mathrm{stab}}| < 4\eta_I/|I|, \quad I\in \mathcal{D}.
  \end{equation*}
  The desired conclusion is obtained by replacing $\eta_I$ by $\eta_I|I|/4$ for each
  $I\in \mathcal{D}$ before the construction begins.

  The additional statement follows since the inequalities $\langle h_K, Dh_K \rangle\ge \delta |K|$,
  $K\in \mathcal{D}$, imply that
  \begin{equation*}
    \langle \hat{h}_I, D \hat{h}_I \rangle
    = \sum_{\substack{K\in \mathcal{D}_{n_I}\\ K\subset \Gamma_I}} \langle h_K, Dh_K \rangle
    \ge \delta \sum_{\substack{K\in \mathcal{D}_{n_I}\\ K\subset \Gamma_I}} |K|
    = \delta |I|,\qquad I\in \mathcal{D}.\qedhere
  \end{equation*}
\end{proof}

%%% Local Variables:
%%% TeX-master: "main"
%%% End:

%auto-ignore

This stabilization result enables us to prove the main result \Cref{thm:main-result:3}.

\begin{myproof}[Proof of \Cref{thm:main-result:3}]
  \begin{proofstep-noskip}[Proof of~\eqref{thm:main-result:3:i}]
    Fix $\eta > 0$ and let $(\eta_I)_{I\in \mathcal{D}}$ be a sequence of positive real
    numbers with
    \begin{equation*}
      \sum_{I\in \mathcal{D}} \eta_I \le \frac{\eta}{8}.
    \end{equation*}
    By applying \Cref{pro:stabilization} to $D$ and $(\eta_I)_{I\in \mathcal{D}}$, we obtain a
    bounded Haar multiplier $D^{\mathrm{stab}}\colon Y\to Y$ and operators
    $\hat{A}, \hat{B}\colon Y\to Y$ such that $\hat{A}\hat{B} = I_Y$, $\|\hat{A}\|\|\hat{B}\|\le 1$
    and $D^{\mathrm{stab}} = \hat{A}D\hat{B}$, and such that the entries
    $(d_I^{\mathrm{stab}})_{I\in \mathcal{D}}$ of $D^{\mathrm{stab}}$ satisfy
    \begin{equation}\label{eq:16}
      |d_{[0,1)}^{\mathrm{stab}} - c| \le \frac{\eta}{2}
      \qquad \text{and}\qquad
      |d_I^{\mathrm{stab}} - d_{\pi(I)}^{\mathrm{stab}}| \le \eta_{\pi(I)},\quad I\in \mathcal{D}\setminus \{ [0,1) \}.
    \end{equation}
    We will show that $\|cI_Y - D^{\mathrm{stab}}\| \le \eta$.  Note that
    $d_{[0,1)}^{\mathrm{stab}}I_Y - D^{\mathrm{stab}}$ is a Haar multiplier with entries
    $(d_{[0,1)}^{\mathrm{stab}} - d_I^{\mathrm{stab}})_{I\in \mathcal{D}}$, which satisfy
    \begin{equation*}
      \vvvert (d_{[0,1)}^{\mathrm{stab}} - d_I^{\mathrm{stab}})_{I\in \mathcal{D}}\vvvert
      \le 2 \sum_{I\in \mathcal{D}\setminus\{[0,1)\}} | d_I^{\mathrm{stab}} - d_{\pi(I)}^{\mathrm{stab}}|
      \le 4 \sum_{I\in \mathcal{D}} \eta_I
      \le \frac{\eta}{2}.
    \end{equation*}
    Thus, by \Cref{lem:haar-multiplier}, we have
    $\|d_{[0,1)}^{\mathrm{stab}}I_Y - D^{\mathrm{stab}}\| \le \eta/2$, and hence, using~\eqref{eq:16},
    \begin{equation*}
      \|c I_Y - \hat{A}D\hat{B}\| \le \eta.
    \end{equation*}
  \end{proofstep-noskip}

  \begin{proofstep-noskip}[Proof of~\eqref{thm:main-result:3:i}--\eqref{thm:main-result:3:iii}]
    The remaining statements follow immediately from \eqref{thm:main-result:3:i} combined with
    \Cref{rem:factors}, \Cref{rem:factors:iso} and \Cref{rem:characteristic}.\qedhere
  \end{proofstep-noskip}
\end{myproof}

%%% Local Variables:
%%% TeX-master: "main"
%%% End:
 %auto-ignore

\section{Strategically supporting systems and strategically reproducible bases}
\label{sec:stratsupp-stratrep}

In this section, we provide an overview of the framework of strategically supporting systems and
strategically reproducible Schauder bases introduced in~\cite{MR4477014,MR4145794}. We explain how
this framework can be used to diagonalize operators and to reduce operators with large diagonal to
operators with large \emph{positive} or large \emph{negative} diagonal. Moreover, we extend the
definitions so that they can be utilized in the context of the primary factorization property. We
need the following definitions.

\begin{dfn}
Suppose that $(e_j)_{j=1}^\infty$ is a Schauder basis for a Banach space $E$ and let
          $(e_j^*)_{j=1}^\infty$ denote the biorthogonal functionals.  Then we say that
          $(e_j^{*})_{j=1}^{\infty}$ is a \emph{weak* Schauder basis}.  In this case, for any
          $e^{*}\in E^{*}$, we have the following unique expansion:
          \begin{equation*}
            e^{*} = \text{w*-}\sum_{j=1}^\infty  \langle e^{*}, e_j \rangle e_j^{*},
          \end{equation*}
          where the above series converges in the weak* topology.  From now on, we will always
          indicate weak* convergence as above.
\end{dfn}

\begin{dfn} Let $E$ and $F$ be Banach spaces. Let $(x_j)_{j=1}^{\infty}$, $(y_j)_{j=1}^{\infty}$,
  $(x_j^{*})_{j=1}^{\infty}$ and $(y_j^{*})_{j=1}^{\infty}$ be sequences in $E$, $F$, $E^{*}$ and
  $F^{*}$, respectively, and let $C > 0$.
  \begin{itemize}
    \item We say that $(x_j)_{j=1}^{\infty}$ and $(y_j)_{j=1}^{\infty}$ are \emph{impartially
          $C$-equivalent} if for any sequence of scalars $(a_j)_{j=1}^{\infty}$ with $a_j\ne 0$ for
          at most finitely many~$j$, we have
          \begin{equation*}
            \frac{1}{\sqrt{C}} \Bigl\| \sum_{j=1}^{\infty}a_jy_j \Bigr\|_F
            \le \Bigl\| \sum_{j=1}^{\infty}a_jx_j \Bigr\|_E
            \le \sqrt{C}\Bigl\| \sum_{j=1}^{\infty} a_jy_j \Bigr\|_F.
          \end{equation*}
    \item We say that $(x_j)_{j=1}^{\infty}$ is \emph{$C$-norm-dominated} by $(y_j)_{j=1}^{\infty}$
          if whenever $\sum_{j=1}^{\infty} a_j y_j$ converges in norm for a scalar sequence
          $(a_j)_{j=1}^{\infty}$, then $\sum_{j=1}^{\infty} a_j x_j$ converges in norm and
          \begin{equation*}
            \Bigl\|\sum_{j=1}^{\infty} a_j x_j\Bigr\|_E
            \leq C \Bigl\|\sum_{j=1}^{\infty} a_j y_j\Bigr\|_F.
          \end{equation*}
    \item We say that $(x_j^{*})_{j=1}^{\infty}$ is \emph{$C$-weak*-dominated} by
          $(y_j^{*})_{j=1}^{\infty}$ if whenever $\text{w*-}\sum_{j=1}^{\infty} a_j y_j^{*}$
          converges weak* for a scalar sequence $(a_j)_{j=1}^{\infty}$, then
          $\text{w*-}\sum_{j=1}^{\infty} a_j x_j^{*}$ converges weak* and
          \begin{equation*}
            \Bigl\|\text{w*-}\sum_{j=1}^\infty a_j x_j^{*}\Bigr\|_{E^{*}}
            \le C \Bigl\|\text{w*-}\sum_{j=1}^\infty a_j y_j^{*}\Bigr\|_{F^{*}}.
          \end{equation*}
  \end{itemize}
\end{dfn}

In~\cite{MR4477014}, T.~Kania and the first named author introduced strategically supporting systems
in dual pairs of Banach spaces.  We will now provide the definition for the special case of a Banach
space $E$ and its dual $E^{*}$ on the one hand, while also expanding the concept to accommodate
\emph{projectional factors} on the other hand.
\begin{dfn}\label{dfn:strat-supp}
  Let $(e_j)_{j=1}^\infty$ denote a Schauder basis for the Banach space $E$ and let
  $(e_j^{*})_{j=1}^\infty$ denote the biorthogonal functionals.  We say that
  \emph{$((e_j,e_j^{*}))_{j=1}^\infty$ is $C$-strategically supporting (in $E\times E^{*}$)} if for
  all $\eta > 0$ and all partitions $N_1,N_2$ of $\mathbb{N}$ there exists $i\in\{1,2\}$ and
  \begin{align*}
    \exists&\ \text{finite}\ E_1\subset N_i\;\exists(\lambda_j^1)_j, (\mu_j^1)_j\in\mathbb{R}^{E_1}\;
             \forall (\varepsilon_j^1)_j\in\{\pm 1\}^{E_1}\\
    \exists&\ \text{finite}\   E_2\subset N_i\;\exists(\lambda_j^2)_j, (\mu_j^2)_j\in\mathbb{R}^{E_2}\;
             \forall (\varepsilon_j^2)_j\in\{\pm 1\}^{E_2}\\
           &\vdots\\
    \exists&\ \text{finite}\  E_k\subset N_i\;\exists(\lambda_j^k)_j, (\mu_j^k)_j\in\mathbb{R}^{E_k}\;
             \forall (\varepsilon_j^k)_j\in\{\pm 1\}^{E_k}\\
           &\vdots
  \end{align*}
  such that if we define
  \begin{equation}\label{eq:6}
    x_k
    = \sum_{j\in E_k}\varepsilon_j^k\lambda_j^ke_j
    \quad\text{and}\quad
    x_k^{*}
    = \sum_{j\in E_k}\varepsilon_j^k\mu_j^ke_j^{*},
    \qquad k\in\mathbb{N},
  \end{equation}
  we have that
  \begin{enumerate}[(i)]
    \item\label{enu:dfn:strat-supp:i} $(x_k)_{k=1}^\infty$ is $\sqrt{C+\eta}$-norm-dominated by
          $(e_k)_{k=1}^\infty$;
    \item\label{enu:dfn:strat-supp:ii} $(x_k^{*})_{k=1}^\infty$ is $\sqrt{C+\eta}$-weak*-dominated
          by
          $(e_k^{*})_{k=1}^\infty$,\\
          and $\sum_{k=1}^{\infty} \langle x_k^{*}, x \rangle e_k$ converges in norm for all
          $x\in E$;
    \item\label{enu:dfn:strat-supp:iii}
          $1 \leq \langle x_k^{*}, x_k \rangle = \sum_{j\in E_k} \lambda_j^k \mu_j^k \leq 1 + \eta$,
          $k\in\mathbb{N}$;
    \item\label{enu:dfn:strat-supp:iv} $\lambda_j^k\mu_j^k\geq 0$, $k\in\mathbb{N}$, $j\in E^k$.
  \end{enumerate}
  If additionally $(x_k^{*})_{k=1}^{\infty}$ is biorthogonal to $(x_k)_{k=1}^{\infty}$, then we say
  that $((e_j,e_j^{*}))_{j=1}^\infty$ is $C$-strategically supporting (in $E\times E^{*}$)
  \emph{with projectional factors.}
\end{dfn}

\begin{rem}
  Note that \Cref{dfn:strat-supp} contains a phrase with infinitely many quantifiers. For a formal
  definition of this kind of notation, we refer to~\cite[Section~2]{MR1926866}. Moreover, this
  condition can be understood to mean that one of two player has a winning strategy in an
  \emph{infinite game}.  In fact, we will define \emph{strategically reproducible} Schauder bases in
  terms of such an infinite two-player game in \Cref{dfn:stratrep}, which should be interpreted in
  the same manner (see~\cite{MR1926866,MR0054922,MR0403976}).
\end{rem}

The following proposition states that if a system consisting of a Schauder basis and its
biorthogonal functionals is strategically supporting, then any bounded linear operator with large
diagonal can be reduced to an operator with large positive or large negative diagonal.

\begin{pro}\label{pro:positive-reduction}
  Let $(e_j)_{j=1}^\infty$ denote a Schauder basis for a Banach space $E$, and let
  $(e_j^{*})_{j=1}^\infty$ denote the biorthogonal functionals.  Suppose that
  $((e_j,e_j^{*}))_{j=1}^\infty$ is $C$-strategically supporting in $E\times E^{*}$ (with
  projectional factors) and let $T\colon E\to E$ denote a bounded linear operator which has
  $\delta$-large diagonal with respect to $(e_j)_{j=1}^{\infty}$.  Then the following statements are true:
  \begin{itemize}
    \item For every $\gamma > 0$, there exists a bounded linear operator $S\colon E\to E$ with
          either $\delta$-large positive or $\delta$-large negative diagonal with respect to
          $(e_j)_{j=1}^{\infty}$ such that $S$ (projectionally) factors through $T$ with
          constant~$C + \gamma$ and error~$0$.
    \item If $T$ is diagonal with respect to $(e_j)_{j=1}^{\infty}$, then in the case of
          projectional factors, $S$ can also be chosen to be diagonal with respect to
          $(e_j)_{j=1}^{\infty}$.
  \end{itemize}
\end{pro}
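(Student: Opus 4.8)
The plan is to split $\mathbb N$ according to the sign of the diagonal of $T$, feed that partition into the strategically supporting property, and then choose the free signs in~\eqref{eq:6} so that the resulting block system diagonalises $T$ with a definite sign.

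Fix $\gamma>0$ and put $\eta=\gamma$. Since $T$ has $\delta$-large diagonal, the sets $N_1=\{\,j\in\mathbb N:\langle e_j^{*},Te_j\rangle\ge\delta\,\}$ and $N_2=\{\,j\in\mathbb N:\langle e_j^{*},Te_j\rangle\le-\delta\,\}$ form a partition of $\mathbb N$. Applying $C$-strategic supporting to this partition and to $\eta$ yields $i\in\{1,2\}$ together with a winning strategy for Player~I in the game of \Cref{dfn:strat-supp}, all of whose moves satisfy $E_k\subset N_i$. Now I would let Player~II respond adaptively: at step $k$, Player~I has committed to $E_k\subset N_i$ and to $(\lambda_j^k)_{j\in E_k},(\mu_j^k)_{j\in E_k}$, which by~\eqref{enu:dfn:strat-supp:iii} and~\eqref{enu:dfn:strat-supp:iv} obey $\lambda_j^k\mu_j^k\ge0$ and $\sum_{j\in E_k}\lambda_j^k\mu_j^k\ge1$. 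Averaging $\langle x_k^{*},Tx_k\rangle$ over independent random signs $(\varepsilon_j^k)_{j\in E_k}$ and using $\mathbb E[\varepsilon_a^k\varepsilon_b^k]=\delta_{ab}$ gives
\[
  \mathbb E\,\langle x_k^{*},Tx_k\rangle=\sum_{j\in E_k}\lambda_j^k\mu_j^k\,\langle e_j^{*},Te_j\rangle,
\]
which is $\ge\delta\sum_{j\in E_k}\lambda_j^k\mu_j^k\ge\delta$ if $i=1$ and $\le-\delta$ if $i=2$. Hence Player~II may pick signs $(\varepsilon_j^k)_{j\in E_k}$ with $\langle x_k^{*},Tx_k\rangle\ge\delta$ (resp.\ $\le-\delta$). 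Carrying this out for all $k$ along the play dictated by Player~I's winning strategy produces $(x_k)_{k=1}^\infty$ and $(x_k^{*})_{k=1}^\infty$ as in~\eqref{eq:6} satisfying~\eqref{enu:dfn:strat-supp:i}--\eqref{enu:dfn:strat-supp:iv} and, in addition, $\langle x_k^{*},Tx_k\rangle\ge\delta$ for all $k$ (resp.\ $\le-\delta$ for all $k$).

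Define $B,A\colon E\to E$ by $Bx=\sum_k\langle e_k^{*},x\rangle x_k$ and $Ax=\sum_k\langle x_k^{*},x\rangle e_k$. The first series converges with $\|B\|\le\sqrt{C+\eta}$ by the norm-domination in~\eqref{enu:dfn:strat-supp:i}; the second converges by~\eqref{enu:dfn:strat-supp:ii}, and testing $Ax$ against an arbitrary $g^{*}\in E^{*}$, expanding $g^{*}=\text{w*-}\sum_k\langle g^{*},e_k\rangle e_k^{*}$ and comparing with $\text{w*-}\sum_k\langle g^{*},e_k\rangle x_k^{*}$, the weak*-domination in~\eqref{enu:dfn:strat-supp:ii} gives $\|A\|\le\sqrt{C+\eta}$. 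Put $S=ATB$. Then $\|A\|\|B\|\le C+\eta\le C+\gamma$, so $S$ factors through $T$ with constant $C+\gamma$ and error $0$; and since $\langle e_k^{*},Ay\rangle=\langle x_k^{*},y\rangle$ for all $y\in E$, we get $\langle e_k^{*},Se_k\rangle=\langle x_k^{*},Tx_k\rangle$, so $S$ has $\delta$-large positive (resp.\ negative) diagonal. This proves the first bullet.

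In the projectional-factors case $(x_k^{*})$ is biorthogonal to $(x_k)$ for every choice of signs, so $ABe_l=Ax_l=\sum_k\langle x_k^{*},x_l\rangle e_k=e_l$, i.e.\ $AB=I_E$, and thus $S$ projectionally factors through $T$. If moreover $T$ is diagonal, expand, for $k\ne l$,
\[
  0=\langle x_k^{*},x_l\rangle=\sum_{j\in E_k\cap E_l}\varepsilon_j^k\varepsilon_j^l\mu_j^k\lambda_j^l ,
\]
and letting the signs vary forces $\mu_j^k\lambda_j^l=0$ for every $j\in E_k\cap E_l$; consequently $\langle x_k^{*},Tx_l\rangle=\sum_{j\in E_k\cap E_l}\varepsilon_j^k\varepsilon_j^l\mu_j^k\lambda_j^l\langle e_j^{*},Te_j\rangle=0$ for $k\ne l$, so $S$ is diagonal with respect to $(e_j)_{j=1}^\infty$. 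The main obstacle is the correct handling of the infinite game in \Cref{dfn:strat-supp}: one must note that Player~I's winning strategy pins down only the data $(E_k,\lambda^k,\mu^k)$, subject to~\eqref{enu:dfn:strat-supp:i}--\eqref{enu:dfn:strat-supp:iv} holding for \emph{all} sign responses, so that Player~II remains free to respond with the signs supplied by the averaging argument while the resulting play still satisfies~\eqref{enu:dfn:strat-supp:i}--\eqref{enu:dfn:strat-supp:iv}; the norm bounds and the diagonal computation are then routine.
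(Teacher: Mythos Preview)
Your argument is correct and follows essentially the same route as the paper: partition $\mathbb{N}$ by the sign of the diagonal, use the strategically supporting property, average over random signs to secure $\langle x_k^{*},Tx_k\rangle\ge\delta$ (or $\le-\delta$), and set $S=ATB$ with the canonical operators $A,B$. Your justification of the second bullet is in fact more detailed than the paper's: you correctly exploit that biorthogonality must hold for \emph{every} sign response at the later of the two steps $k,l$ (so that the relevant $E_k,E_l,\lambda^{\cdot},\mu^{\cdot}$ stay fixed while one family of signs varies), which forces $\mu_j^k\lambda_j^l=0$ on $E_k\cap E_l$ and hence $\langle x_k^{*},Tx_l\rangle=0$ when $T$ is diagonal; the paper simply asserts this conclusion.
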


\begin{proof}
  Suppose that for some $\delta > 0$ the bounded linear operator $T\colon Y\to Y$ has $\delta$-large
  diagonal with respect to $(e_j)_{j=1}^{\infty}$, i.e.,
  $|\langle e_j^{*}, Te_j \rangle| \ge \delta$ for all $j\in \mathbb{N}$, and let $\gamma > 0$.
  Define $N_1 = \{ j \in \mathbb{N} : \langle e_j^{*}, Te_j \rangle \ge \delta\}$, and
  $N_2 = \{ j \in \mathbb{N} : \langle e_j^{*}, Te_j \rangle \leq -\delta\}$. Then, since $T$ has
  $\delta$-large diagonal, $N_1$, $N_2$ is a partition of $\mathbb{N}$.

  Exploiting that $((e_j,e_j^{*}))_{j=1}^\infty$ is $C$-strategically supporting, we first obtain an
  index $i\in \{1,2\}$, determining that our systems $(x_k)_{k=1}^{\infty}$,
  $(x_k^{*})_{k=1}^{\infty}$ will be supported in $N_i$.  From now on, we will assume that $i=1$ and
  note that the case $i=2$ is obtained by replacing~$T$ with~$-T$.  Next, we obtain a finite set
  $E_1\subset N_i$ and $(\lambda_j^1)_j, (\mu_j^1)_j\in\mathbb{R}^{E_1}$ and we are now free to pick
  the signs $(\varepsilon_j^1)_j\in\{\pm 1\}^{E_1}$.  To this end, let $(\theta_j^1 : j\in E_1)$ be
  an independent family of random variables taking the values $\pm 1$, each with probability $1/2$,
  and let $\mathbb{E}$ denote the expectation.  Then, since $E_1\subset N_1$,
  \Cref{dfn:strat-supp}~\eqref{enu:dfn:strat-supp:iii} and~\eqref{enu:dfn:strat-supp:iv} yield
  \begin{equation*}
    \mathbb{E} \Bigl\langle \sum_{j\in E_1} \theta_j^1 \mu_j^1 e_j^{*}, T\Bigl( \sum_{j\in E_1} \theta_j^1 \lambda_j^1 e_j \Bigr) \Bigr\rangle
    = \sum_{j\in E_1} \mu_j^1\lambda_j^1 \langle e_j^{*}, T e_j \rangle
    \geq \delta \sum_{j\in E_1} \mu_j^1\lambda_j^1
    \geq \delta,
  \end{equation*}
  and hence, we can find signs $(\varepsilon_j^1)_j\in\{\pm 1\}^{E_1}$ such that
  \begin{equation*}
    x_1
    = \sum_{j\in E_1}\varepsilon_j^1\lambda_j^1e_j
    \quad\text{and}\quad
    x_1^{*}
    = \sum_{j\in E_1}\varepsilon_j^1\mu_j^1e_j^{*}
    \qquad\text{satisfy}\qquad
    \langle x_1^{*}, x_1 \rangle
    \geq \delta.
  \end{equation*}
  Proceeding in this manner, we obtain sequences $(x_k)_{k=1}^{\infty}$ and
  $(x_k^{*})_{k=1}^{\infty}$, defined according to~\eqref{eq:6}, which satisfy
  \Cref{dfn:strat-supp}~\eqref{enu:dfn:strat-supp:i}--\eqref{enu:dfn:strat-supp:iv} as well as the
  additional condition
  \begin{equation}\label{eq:24}
    \langle x_k^{*}, T x_k \rangle
    \geq \delta,
    \qquad k\in \mathbb{N}.
  \end{equation}
  If we assume that $((e_j,e_j^{*}))_{j=1}^\infty$ is $C$-strategically supporting in
  $E\times E^{*}$ \emph{with projectional factors}, then we get additionally that
  $(x_k^{*})_{k=1}^{\infty}$ is biorthogonal to $(x_k)_{k=1}^{\infty}$.

  We will now define the canonical operators $A,B\colon E\to E$ given by
  \begin{equation}\label{eq:26}
    Bx
    = \sum_{k=1}^{\infty} \langle e_k^{*}, x \rangle x_k
    \qquad\text{and}\qquad
    Ax
    = \sum_{k=1}^{\infty} \langle x_k^{*}, x \rangle e_k,
    \qquad x\in E,
  \end{equation}
  and note that they are well-defined since the above series converge in norm.  Exploiting that
  $(e_k^{*})_{k=1}^{\infty}$ is a weak* Schauder basis, together with~\eqref{enu:dfn:strat-supp:ii},
  we have for all $x^{*}\in E^{*}$ and $x\in E$:
  \begin{align*}
    |\langle x^{*}, Ax \rangle|
    &= \Bigl|\sum_{k=1}^{\infty} \langle x_k^{*}, x \rangle \langle x^{*}, e_k \rangle\Bigr|
      = \Bigl|\Bigl\langle \text{w*-}\sum_{k=1}^{\infty} \langle x^{*}, e_k \rangle x_k^{*}, x \Bigr\rangle\Bigr|\\
    &\leq \Bigl\|\text{w*-}\sum_{k=1}^{\infty} \langle x^{*}, e_k \rangle x_k^{*}\Bigr\|_{E^{*}} \|x\|_E
      \leq \sqrt{C+\gamma} \|x^{*}\|_{E^{*}} \|x\|_E.
  \end{align*}
  The above estimate together with~\eqref{enu:dfn:strat-supp:i} yields
  \begin{equation}\label{eq:25}
    \|A\|, \|B\|
    \leq \sqrt{C+\gamma}.
  \end{equation}

  We now define the bounded linear operator $S\colon E\to E$ by putting $S = ATB$.  Firstly, note
  that by~\eqref{eq:25}, $S$ factors through $T$ with constant $C+\gamma$ and error $0$.  Secondly,
  using~\eqref{eq:26} and~\eqref{eq:24}, we also obtain that $S$ has $\delta$-large positive
  diagonal with respect to $(e_j)_{j=1}^{\infty}$:
  \begin{equation}\label{eq:11}
    \langle e_j^{*}, S e_j \rangle
    = \langle e_j^{*}, ATx_j \rangle
    = \langle x_j^{*}, Tx_j \rangle
    \geq \delta,
    \qquad j\in \mathbb{N}.
  \end{equation}
  If we assume that $((e_j,e_j^{*}))_{j=1}^\infty$ is $C$-strategically supporting in
  $E\times E^{*}$ \emph{with projectional factors}, then $(x_k)_{k=1}^{\infty}$ and
  $(x_k^{*})_{k=1}^{\infty}$ are biorthogonal. Hence, we obtain that $AB = I_E$, i.e., $S$
  projectionally factors through $T$ with constant $C+\gamma$ and error $0$. Moreover, in this case,
  if $T$ is diagonal with respect to $(e_j)_{j=1}^{\infty}$, then~\eqref{eq:11} implies that $S$ is
  also diagonal with respect to $(e_j)_{j=1}^{\infty}$.
\end{proof}

The notion of a \emph{strategically reproducible} Schauder basis was introduced
in~\cite{MR4145794}. Before presenting the definition, we establish the following terminology.

\begin{dfn}
  Let $E$ be a Banach space.  By $\operatorname{cof}(E)$, we denote the set of cofinite-dimensional
  closed subspaces of $E$, while $\operatorname{cof}_{w^{*}}(E^{*})$ denotes the set of
  cofinite-dimensional weak* closed subspaces of $E^{*}$.
\end{dfn}
\begin{rem}\label{rem:cof}
  In fact, we will always use the following characterization of $\operatorname{cof}(E)$ and
  $\operatorname{cof}_{w^{*}}(E^{*})$: A subspace $W\subset E$ is in $\operatorname{cof}(E)$ if and
  only if there are there are $N\in \mathbb{N}_0$ and $x_1^{*},\dots,x_N^{*}\in E^{*}$ such that
  \begin{equation*}
    W
    = \{ x_1^{*}, \dots, x_N^{*} \}_{\perp}.
  \end{equation*}
  Similarly, a subspace $G\subset E^{*}$ is in $\operatorname{cof}_{w^{*}}(E^{*})$ if and only if
  there are $M\in \mathbb{N}_0$ and $x_1,\dots, x_M\in E$ such that
  \begin{equation*}
    G = \{ x_1,\dots,x_M \}^{\perp}.
  \end{equation*}
  Details can be found in \cite[Lemma~5.1.7]{speckhofer:2022}.
\end{rem}

\begin{dfn}\label{dfn:stratrep}
  Let $E$ be a Banach space with a normalized Schauder basis $(e_j)_{j=1}^{\infty}$ and associated
  biorthogonal functionals $(e_j^{*})_{j=1}^{\infty}$, and fix positive constants $C\ge 1$ and
  $\eta > 0$.

  Consider the following two-player game between player~(I) and player~(II):

  Before the first turn player~(I) is allowed to choose a partition of $\mathbb{N} = N_1\cup N_2$.
  For $k\in \mathbb{N}$, turn~$k$ is played out in three steps.

  \begin{enumerate}[Step 1:]
    \item Player~(I) chooses $\eta_k > 0$, $W_k\in \operatorname{cof}(E)$, and
          $G_k\in \operatorname{cof}_{w^{*}}(E^{*})$.
    \item Player~(II) chooses $i_k\in \{ 1,2 \}$, a finite subset $E_k$ of $N_{i_k}$ and sequences
          of non-negative real numbers $(\lambda_j^k)_{j\in E_k}$, $(\mu_j^k)_{j\in E_k}$ satisfying
          \begin{equation*}
            1 - \eta < \sum_{j\in E_k} \lambda_j^k\mu_j^k < 1 + \eta.
          \end{equation*}
    \item Player~(I) chooses $(\varepsilon_j^k)_{j\in E_k}$ in $\{ \pm 1 \}^{E_k}$.
  \end{enumerate}

  We say that player~(II) has a winning strategy in the game
  $\operatorname{Rep}_{(E, (e_j))}(C, \eta)$ if he can force the following properties on the result:

  For all $k\in \mathbb{N}$, we set
  \begin{equation}\label{eq:18}
    x_k = \sum_{j\in E_k}\varepsilon_j^k\lambda_j^ke_j
    \qquad \text{and} \qquad
    x_k^{*} = \sum_{j\in E_k}\varepsilon_j^k\mu_j^ke_j^{*}
  \end{equation}
  and demand:
  \begin{enumerate}[(i)]
    \item\label{enu:dfn:stratrep:i} the sequences $(x_k)_{k=1}^{\infty}$ and $(e_k)_{k=1}^{\infty}$
          are impartially $(C + \eta)$-equivalent,
    \item\label{enu:dfn:stratrep:ii} the sequences $(x_k^{*})_{k=1}^{\infty}$ and
          $(e_k^{*})_{k=1}^{\infty}$ are impartially $(C + \eta)$-equivalent,
    \item\label{enu:dfn:stratrep:iii} for all $k\in \mathbb{N}$, we have
          $\operatorname{dist}(x_k, W_k) < \eta_k$, and
    \item\label{enu:dfn:stratrep:iv} for all $k\in \mathbb{N}$, we have
          $\operatorname{dist}(x_k^{*}, G_k) < \eta_k$.
  \end{enumerate}
  We say that $(e_j)_{j=1}^{\infty}$ is \emph{$C$-strategically reproducible in $E$} if for every
  $\eta > 0$, player~(II) has a winning strategy in the game
  $\operatorname{Rep}_{(E, (e_j))}(C, \eta)$.  If player~(II) can additionally guarantee that
  $(x_k^{*})_{k=1}^{\infty}$ is biorthogonal to $(x_k)_{k=1}^{\infty}$, then we say that
  $(e_j)_{j=1}^{\infty}$ is $C$-strategically reproducible in $E$ \emph{with projectional factors}.
\end{dfn}

Before proceeding, we make the following observations.
\begin{rem}\label{rem:stratrep}
  Let $(e_j)_{j=1}^{\infty}$ denote a Schauder basis of a Banach space $E$ with basis constant
  $\lambda\geq 1$.  In the proof of \cite[Theorem~3.12]{MR4145794}, it is noted that if
  $(e_j)_{j=1}^{\infty}$ is $C$-strategically reproducible in $E$, then player~(I) can always force
  the following outcome on the winning strategy of player~(II): For any $\eta > 0$, in addition
  to~\eqref{enu:dfn:stratrep:i}--\eqref{enu:dfn:stratrep:iv} in \Cref{dfn:stratrep}, the sequence
  $(x_k^{*})_{k=1}^{\infty}$ is summably close to some block sequence
  $(\tilde{x}_k^{*})_{k=1}^{\infty}$ of $(e_j^{*})_{j=1}^{\infty}$, i.e.,
  \begin{equation}\label{eq:15}
    \sum_{k=1}^{\infty} \|x_k^{*} - \tilde{x}_k^{*}\|_{X^{*}}
    < \infty.
  \end{equation}
  In this case, the operator $A\colon E\to E$ given by
  \begin{equation*}
    Ax
    = \sum_{k=1}^{\infty}  \langle x_k^{*}, x \rangle e_k,
    \qquad x\in E
  \end{equation*}
  is well-defined and satisfies
  \begin{equation*}
    \|A\| \leq \lambda\sqrt{C + \eta}.
  \end{equation*}
  For more details, we refer to Lemma~5.2.1 and the proof of Theorem~5.2.3
  in~\cite{speckhofer:2022}.

  On the other hand, observe that even without~\eqref{eq:15}, condition \eqref{enu:dfn:stratrep:i}
  implies that the operator $B\colon E\to E$ given by
  \begin{equation*}
    Bx
    = \sum_{k=1}^{\infty} \langle e_k^{*}, x \rangle x_k,
    \qquad x\in E
  \end{equation*}
  is well-defined and satisfies
  \begin{equation*}
    \|B\| \leq \sqrt{C + \eta}.
  \end{equation*}

  Finally, note that if $(x_k)_{k=1}^{\infty}$ and $(x_k^{*})_{k=1}^{\infty}$ were constructed
  according to a winning strategy with projectional factors, i.e., $(x_k^{*})_{k=1}^{\infty}$ is
  biorthogonal to $(x_k)_{k=1}^{\infty}$, then we additionally have $AB = I_E$.
\end{rem}

The next proposition states that if a Schauder basis of a Banach space is strategically
reproducible, then any bounded linear operator can be diagonalized with respect to the given basis,
and moreover, it is possible to preserve a large (positive) diagonal when diagonalizing. This result
is is implicitly contained in the proof of~\cite[Theorem~3.12]{MR4145794}. Nonetheless, we state
this diagonalzation step as a separate result since we will need it in the proof of
\Cref{thm:stratrep-primary-fact-prop}.

\begin{pro}\label{pro:stratrep-diag}
  Let $E$ be a Banach space with a normalized Schauder basis $(e_j)_{j=1}^{\infty}$ with basis
  constant $\lambda$.  Let $C\geq 1$ and suppose that $(e_j)_{j=1}^{\infty}$ is $C$-strategically
  reproducible in $E$ (with projectional factors).  Moreover, let $T\colon E\to E$ denote a bounded
  linear operator and let $\eta > 0$.  Then the following assertions are true:
  \begin{enumerate}[(i)]
    \item\label{pro:stratrep-diag:i} There exists a bounded linear operator $D\colon E\to E$ which
          is diagonal with respect to $(e_j)_{j=1}^{\infty}$ such that $D$ (projectionally) factors
          through $T$ with constant $\lambda(C + \eta)$ and error $\eta$.
    \item\label{pro:stratrep-diag:ii} If we additionally assume that $T$ has $\delta$-large
          (positive) diagonal with respect to $(e_j)_{j=1}^{\infty}$ for some $\delta > 0$, then the
          above diagonal operator $D$ can be chosen so that $D$ also has $\delta$-large (positive)
          diagonal with respect to $(e_j)_{j=1}^{\infty}$.
  \end{enumerate}
\end{pro}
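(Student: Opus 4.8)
The plan is to follow the standard "diagonalization via a strategically reproducible basis" scheme, as laid out in the proof of \cite[Theorem~3.12]{MR4145794}, extracting just the diagonalization step. First I would fix $\eta>0$ and apply the winning strategy of player~(II) in the game $\operatorname{Rep}_{(E,(e_j))}(C,\eta')$ for a suitable auxiliary $\eta'>0$ (to be chosen at the end so that all accumulated errors total at most $\eta$). The key point is that player~(I) gets to choose the cofinite-dimensional subspaces $W_k\in\operatorname{cof}(E)$, $G_k\in\operatorname{cof}_{w^*}(E^*)$ and the tolerances $\eta_k$ at each turn, and—crucially—also the signs $(\varepsilon_j^k)_{j\in E_k}$ afterwards. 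I would have player~(I) play so that, in addition to \eqref{enu:dfn:stratrep:i}--\eqref{enu:dfn:stratrep:iv}, the sequence $(x_k^*)_{k=1}^\infty$ is summably close to a block sequence $(\tilde x_k^*)_{k=1}^\infty$ of $(e_j^*)_{j=1}^\infty$ as in \eqref{eq:15} of \Cref{rem:stratrep}; this is exactly the extra outcome recalled there, and it guarantees that the operator $A\colon E\to E$, $Ax=\sum_k\langle x_k^*,x\rangle e_k$, is well-defined with $\|A\|\le\lambda\sqrt{C+\eta'}$, while $B\colon E\to E$, $Bx=\sum_k\langle e_k^*,x\rangle x_k$, is well-defined with $\|B\|\le\sqrt{C+\eta'}$; moreover, in the projectional-factors case, $AB=I_E$.

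Next I would set $D=ATB$ and check that, after a small perturbation, $D$ is diagonal. The heuristic is that $\langle e_k^*,Be_j\rangle=\delta_{kj}$ (since $x_j$ has $e_j^*$-coefficient $\lambda_j^j\mu_j^j$-normalized to sum to roughly $1$—here one uses \eqref{enu:dfn:stratrep:i}/\eqref{enu:dfn:stratrep:ii} and the biorthogonality or near-biorthogonality built into the game), so that the diagonal entries of $ATB$ essentially reproduce $\langle x_k^*,Tx_k\rangle$ while the off-diagonal entries are controlled by the distances $\operatorname{dist}(x_k,W_k)<\eta_k$ and $\operatorname{dist}(x_k^*,G_k)<\eta_k$ through a telescoping/summation argument: player~(I) chooses $W_k$ to annihilate the functionals $e_1^*,\dots,e_{k-1}^*$ composed with $T$ (and similarly $G_k$ to annihilate $T^*$ applied to $e_1,\dots,e_{k-1}$), so that the matrix of $ATB$ is, up to a perturbation of norm $<\eta$ summed over $\eta_k$, upper- and lower-triangular, hence diagonal. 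Let $D_0$ denote the genuinely diagonal operator obtained by deleting these small off-diagonal terms; then $\|ATB-D_0\|\le\eta$ and $D_0$ is diagonal with respect to $(e_j)_{j=1}^\infty$. Since $ATB$ factors through $T$ with constant $\|A\|\|B\|\le\lambda(C+\eta')\le\lambda(C+\eta)$ and error~$0$ (and projectionally when $AB=I_E$), \Cref{rem:factors} gives that $D_0$ factors (projectionally) through $T$ with constant $\lambda(C+\eta)$ and error $\eta$, which is~\eqref{pro:stratrep-diag:i}.

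For~\eqref{pro:stratrep-diag:ii}, suppose $T$ has $\delta$-large (positive) diagonal. Before invoking the game, I would first apply \Cref{pro:positive-reduction} (using that strategic reproducibility with projectional factors implies the relevant strategically supporting structure, or more directly by having player~(II) choose the signs as in that proof) to reduce to the case where $T$ has $\delta$-large \emph{positive} diagonal; alternatively, in the "positive" case this reduction is vacuous. Then, when player~(II) selects the signs $(\varepsilon_j^k)_{j\in E_k}$—which in the game is player~(I)'s move, so player~(I), who is "us," is free to optimize—I would use a first-moment/averaging argument over independent random signs exactly as in the proof of \Cref{pro:positive-reduction}: since $E_k\subset N_{i_k}$ with all diagonal entries of one sign on $N_{i_k}$, the expectation $\mathbb{E}\langle\sum_j\theta_j^k\mu_j^ke_j^*,T(\sum_j\theta_j^k\lambda_j^ke_j)\rangle=\sum_{j\in E_k}\lambda_j^k\mu_j^k\langle e_j^*,Te_j\rangle\ge\delta\sum_{j\in E_k}\lambda_j^k\mu_j^k\ge\delta(1-\eta')$, so there is a choice of signs with $\langle x_k^*,Tx_k\rangle\ge\delta(1-\eta')$ for every $k$. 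Consequently $\langle e_k^*,ATBe_k\rangle=\langle x_k^*,Tx_k\rangle\ge\delta(1-\eta')$ up to the same $\eta_k$-perturbations, so after shrinking $\eta'$ and the $\eta_k$ the diagonal operator $D_0$ inherits a $\delta$-large positive diagonal; in the general-diagonal case one instead gets $\delta$-large diagonal of a fixed sign. The main obstacle is bookkeeping: one must interleave player~(I)'s choices of $W_k,G_k,\eta_k$ with the triangularization so that the total off-diagonal perturbation is $<\eta$ \emph{and} the diagonal entries stay $\ge\delta$ after perturbation—this forces $\eta_k$ to be chosen small relative to $\|T\|$, $\lambda$, $C$ and $\delta$, but since player~(I) moves first each turn this is always possible, and the rest is routine.
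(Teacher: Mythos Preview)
Your approach is essentially the paper's: both extract the diagonalization step from the proof of \cite[Theorem~3.12]{MR4145794}, with the paper noting that for~(i) one simply takes $N_1=\mathbb{N}$, $N_2=\emptyset$ and constant signs $\varepsilon_j^k=1$, then concludes via \cite[Lemma~3.14]{MR4145794} and \Cref{rem:stratrep}.

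Two small corrections. First, your detour through \Cref{pro:positive-reduction} in~(ii) is both unnecessary and not directly available: that proposition requires the \emph{strategically supporting} hypothesis, which is not the same as (and not an obvious consequence of) strategic reproducibility. The paper handles~(ii) entirely inside the game, exactly as in \cite[Theorem~3.12]{MR4145794}: player~(I) sets $N_1=\{j:\langle e_j^*,Te_j\rangle\ge\delta\}$, $N_2=\{j:\langle e_j^*,Te_j\rangle\le-\delta\}$ and uses the averaging-over-signs argument you describe, obtaining a diagonal operator with $(1-\eta)\delta$-large (positive) diagonal; the exact~$\delta$ is then recovered by rescaling~$D$ by the factor $(1-\eta)^{-1}$, which is cleaner than ``shrinking~$\eta'$''. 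Second, a minor slip in your triangularization: the off-diagonal entries of $ATB$ are $\langle e_k^*,ATBe_j\rangle=\langle x_k^*,Tx_j\rangle$, so $W_k$ must annihilate $T^*x_1^*,\dots,T^*x_{k-1}^*$ and $G_k$ must annihilate $Tx_1,\dots,Tx_{k-1}$, not $T^*e_1^*,\dots,T^*e_{k-1}^*$.
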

\begin{proof}
  \Cref{pro:stratrep-diag}~\eqref{pro:stratrep-diag:i} follows from the proof of
  \cite[Theorem~3.12]{MR4145794} together with the discussion in \Cref{rem:stratrep}.  More
  precisely, at the beginning of the proof of \cite[Theorem~3.12]{MR4145794}, we replace
  player~(I)'s choice of $N_1$ and $N_2$ by, say, $N_1 = \mathbb{N}$ and $N_2 = \emptyset$, since
  in~\eqref{pro:stratrep-diag:i} we do not assume that $T$ has large diagonal. Moreover, in the last
  step of the $n$th turn, instead of using a probabilistic argument to pick the signs
  $(\varepsilon^{(n)}_i)_{i\in E_n}$, player~(I) simply chooses $\varepsilon^{(n)}_i = 1$ for all
  $i\in E_n$. Then we follow the proof of \cite[Theorem~3.12]{MR4145794} until the conditions of
  \cite[Lemma~3.14]{MR4145794} are checked, and we conclude by \cite[Lemma~3.14]{MR4145794} and
  \Cref{rem:stratrep}.

  In order to prove \Cref{pro:stratrep-diag}~\eqref{pro:stratrep-diag:ii}, we repeat the argument
  from \cite[Theorem~3.12]{MR4145794}, and thereby we directly obtain a diagonal operator $D$ which
  has $(1-\eta)\delta$-large (positive) diagonal.  By rescaling with the factor $(1-\eta)^{-1}$, we
  obtain the result as claimed in~\eqref{pro:stratrep-diag:ii}.
\end{proof}

\begin{rem}\label{rem:stratrep-lp-sum}
  Fix $\lambda, C_r \ge 1$ and suppose that for every $k\in \mathbb{N}$, $E_k$ is a Banach space
  with a normalized Schauder basis $(e_{k,j})_{j=1}^{\infty}$ which is $C_r$-strategically
  reproducible in $E_k$ and whose basis constant is bounded by
  $\lambda$. Then by \cite[Proposition~7.5]{MR4145794}, the sequence $(e_{k,j})_{j=1}^{\infty}$,
  enumerated as $(\tilde{e}_m)_{m=1}^{\infty}$ according to \Cref{rem:enumeration-of-basis}, is a
  $C_r$-strategically reproducible Schauder basis of $\ell^p((E_k)_{k=1}^{\infty})$ whose basis
  constant is bounded by $\lambda$.  By inspecting the proof of \cite[Proposition~7.5]{MR4145794},
  it is clear that $(\tilde{e}_m)_{m=1}^{\infty}$ is even $C_r$-strategically reproducible
  \emph{with projectional factors} if the same is true for all bases $(e_{k,j})_{j=1}^{\infty}$,
  $k\in \mathbb{N}$. Thus, \Cref{pro:stratrep-diag} can be used to diagonalize any bounded linear
  operator $T\colon \ell^p((E_k)_{k=1}^{\infty})\to \ell^p((E_k)_{k=1}^{\infty})$.

  For $p = \infty$, however, we cannot use \Cref{pro:stratrep-diag} to diagonalize an operator on
  $\ell^{\infty}((E_k)_{k=1}^{\infty})$. Instead, we have to extract an analogous result from the
  proof of \cite[Theorem~3.9]{MR4299595}. First, we give the definition of a diagonal operator on
  $\ell^{\infty}((E_k)_{k=1}^{\infty})$.
\end{rem}

\begin{dfn}
  For each $k\in \mathbb{N}$, let $E_k$ be a Banach space with a Schauder basis
  $(e_{k,j})_{j=1}^{\infty}$. Denote $Z = \ell^{\infty}((E_k)_{k=1}^{\infty})$. A bounded linear
  operator $D\colon Z\to Z$ is called \emph{diagonal (with respect to $(e_{k,j})_{k,j=1}^{\infty}$)}
  if it is of the form $D(x_k)_{k=1}^{\infty} = (D_kx_k)_{k=1}^{\infty}$, $(x_k)_{k=1}^{\infty}\in Z$,
  where for each $k\in \mathbb{N}$, $D_k\colon E_k\to E_k$ is a bounded diagonal operator with
  respect to $(e_{k,j})_{j=1}^{\infty}$.
\end{dfn}

\begin{pro}\label{pro:stratrep-diag-ell-infty}
  For each $k\in \mathbb{N}$, let $E_k$ be a Banach space with a normalized Schauder basis
  $(e_{k,j})_{j=1}^{\infty}$, and denote $Z = \ell^{\infty}((E_k)_{k=1}^{\infty})$. Assume that
  $(E_k)_{k=1}^{\infty}$ is uniformly asymptotically curved with respect to
  $(e_{k,j})_{j,k=1}^{\infty}$ and that there are $C_r,\lambda\ge 1$ such that
  \begin{itemize}
    \item for every $k\in \mathbb{N}$, the basis constant of $(e_{k,j})_{j=1}^{\infty}$ in $E_k$ is
          at most $\lambda$;
    \item for every $k\in \mathbb{N}$, the basis $(e_{k,j})_{j=1}^{\infty}$ is $C_r$-strategically
          reproducible in $E_k$ (with projectional factors).
  \end{itemize}
  Then for every bounded linear operator $T\colon Z\to Z$ and every $\eta > 0$, there exists a
  bounded linear operator $D\colon Z\to Z$ that is diagonal with respect to
  $(e_{k,j})_{k,j=1}^{\infty}$ such that $D$ (projectionally) factors through $T$ with constant
  $\lambda(C_r + \eta)$ and error $\eta$.
\end{pro}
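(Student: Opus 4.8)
The plan is to adapt the proof of \cite[Theorem~3.9]{MR4299595}, extracting from it just the diagonalization step, in the spirit of \Cref{pro:stratrep-diag}. Since $Z = \ell^{\infty}((E_k)_{k=1}^{\infty})$ carries no Schauder basis (cf.\ \Cref{rem:enumeration-of-basis} and \Cref{rem:stratrep-lp-sum}), I would look for the operators $A,B,D$ in the coordinatewise form $A = \bigoplus_k A_k$, $B = \bigoplus_k B_k$, $D = \bigoplus_k D_k$, where for each $k$ the operators $A_k,B_k\colon E_k\to E_k$ are the canonical operators attached to vectors $(x^k_m)_{m=1}^{\infty}$ and functionals $(x^{k,*}_m)_{m=1}^{\infty}$ produced by a winning strategy of player~(II) in the game $\operatorname{Rep}_{(E_k,(e_{k,j}))}(C_r,\eta')$ for a suitably small $\eta'$, namely
\begin{equation*}
  A_kx = \sum_{m=1}^{\infty} \langle x^{k,*}_m, x\rangle e_{k,m},
  \qquad
  B_kx = \sum_{m=1}^{\infty} \langle e_{k,m}^*, x\rangle x^k_m.
\end{equation*}
Playing all these games as player~(I), with the sign choices in Step~3 always equal to $+1$ (exactly as in the proof of \Cref{pro:stratrep-diag}), I would interleave the countably many games by running them along a single enumeration of $\mathbb{N}\times\mathbb{N}$, so that the pair $(x^k_m,x^{k,*}_m)$ is produced at a well-defined turn $i=i(k,m)$, after only finitely many other pairs. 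By \Cref{rem:stratrep}, forcing the extra condition \eqref{eq:15} in each coordinate gives $\|B_k\|\le \sqrt{C_r+\eta'}$ and $\|A_k\|\le \lambda\sqrt{C_r+\eta'}$, so that $\|B\| = \sup_k\|B_k\|\le \sqrt{C_r+\eta'}$ and $\|A\| = \sup_k\|A_k\|\le \lambda\sqrt{C_r+\eta'}$, whence $\|A\|\,\|B\|\le \lambda(C_r+\eta)$ for $\eta'$ small; in the projectional case, biorthogonality of $(x^{k,*}_m)_m$ and $(x^k_m)_m$ in each coordinate yields $A_kB_k = I_{E_k}$ and hence $AB = I_Z$.

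Next I would arrange the game so that $ATB$ is close to a diagonal operator. Writing $T_{pq} := P_pTJ_q\colon E_q\to E_p$ for the coordinate inclusions $J_q\colon E_q\to Z$ and projections $P_p\colon Z\to E_p$, the off-diagonal entries are killed, as in \cite[Theorem~3.12]{MR4145794}, by having later choices annihilate earlier images: at turn $i=i(k,m)$, player~(I) picks $W_i\in\operatorname{cof}(E_k)$ inside the annihilator of the finite set $\{T_{k_{i'},k}^{*}x^{k_{i'},*}_{m_{i'}} : i'<i\}$ and $G_i\in\operatorname{cof}_{w^{*}}(E_k^{*})$ inside the annihilator of $\{T_{k,k_{i'}}x^{k_{i'}}_{m_{i'}} : i'<i\}$, with tolerances $\eta_i$ chosen summable and tiny relative to $\eta$, $\lambda$ and the basis constants. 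This makes $|\langle x^{k,*}_m, T_{kl}x^l_{m'}\rangle|$ summably small over all $(l,m')\ne(k,m)$ in a way that controls the resulting coefficient sums \emph{uniformly in $k$}. Setting $D_k$ to be the diagonal operator on $E_k$ with entries $d_{k,m} = \langle x^{k,*}_m, T_{kk}x^k_m\rangle$ and $D = \bigoplus_k D_k$, one then expects $\sup_k\|A_kT_{kk}B_k - D_k\|_{E_k\to E_k} < \eta/2$ and, for the cross terms, $\sup_k\sup_{\|(x_l)_l\|_Z\le 1}\bigl\|\sum_{l\ne k}A_kT_{kl}B_lx_l\bigr\|_{E_k} < \eta/2$, hence $\|D - ATB\|\le \eta$. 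These are the routine but bookkeeping-heavy estimates: the main work is choosing the $\eta_i$ and the geometric weights in the tolerances so that all the double sums indexed by $(l,m')$ and $m$ sum to something controlled by $\eta$.

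The main obstacle is exactly the point where $\ell^{\infty}$-sums differ from $\ell^p$-sums: a bounded operator on $Z$ does \emph{not} decompose as a matrix $(T_{kl})_{k,l}$ acting by $T(x_l)_l = (\sum_l T_{kl}x_l)_k$, since $\sum_l J_lx_l$ need not converge in $Z$. Consequently the cross-term estimate above cannot be obtained merely from the finitely-many-at-a-time annihilation trick; one must control the contribution of the \emph{infinite tail} of coordinates $l$, uniformly in $k$. This is precisely where the hypothesis that $(E_k)_{k=1}^{\infty}$ is uniformly asymptotically curved with respect to $(e_{k,j})_{k,j=1}^{\infty}$ enters, exactly as in the proof of \cite[Theorem~3.9]{MR4299595}: for each $k$ the blocks $(x^k_m)_m$ form a bounded block basis of $(e_{k,j})_j$, and when the tail of $T$ is expanded against them the relevant vectors are, up to small error, bounded sums of many such blocks, whose norms divided by their length tend to $0$ uniformly in $k$ by \Cref{dfn:asympt-curved}. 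Making player~(I)'s choices of $W_i$, $G_i$ and $\eta_i$ also encode this curvedness-based truncation, so that it meshes cleanly with the interleaved game, is the delicate part; here I would follow \cite[Theorem~3.9]{MR4299595} closely rather than reprove it from scratch.
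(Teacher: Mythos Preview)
Your proposal is correct and follows essentially the same route as the paper: both extract the diagonalization step from the proof of \cite[Theorem~3.9]{MR4299595} by having player~(I) pick the trivial partition and constant signs $+1$, obtaining coordinatewise operators $A=\bigoplus_kA_k$, $B=\bigoplus_kB_k$, $D=\bigoplus_kD_k$ with $\|A\|\le\lambda\sqrt{C_r+\eta}$, $\|B\|\le\sqrt{C_r+\eta}$ and (in the projectional case) $A_kB_k=I_{E_k}$. You have correctly identified the one nontrivial point---that the tail of coordinates cannot be controlled by finite annihilation alone and requires uniform asymptotic curvedness---and, like the paper, you defer to \cite{MR4299595} for the details rather than reproving them.
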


\begin{proof}
  Similarly to \Cref{pro:stratrep-diag}, this result is contained in the proof of
  \cite[Theorem~3.9]{MR4299595}. At the beginning we again replace the choice of $N_1$ and $N_2$ by
  $N_1 = \mathbb{N}$ and $N_2 = \emptyset$, and in \emph{Turn~$n$, Step~3}, player~(I) chooses the
  signs $\varepsilon_i^{(n)} = 1$, $i\in E_n$. In the end, from
  \cite[Proposition~4.5~(b)]{MR4299595}, we obtain a bounded linear operator $D\colon Z\to Z$ that
  is diagonal with respect to $(e_{k,j})_{k,j=1}^{\infty}$ and bounded linear operators
  $A,B\colon Z\to Z$ such that
  \begin{equation*}
    \|D - ATB\| \le 2\lambda \sqrt{C_r + \eta}(3 + \|T\|)\eta
  \end{equation*}
  and
  \begin{equation*}
    \|A\|\le \lambda\sqrt{C_r + \eta}, \qquad \|B\|\le \sqrt{C_r + \eta}
  \end{equation*}
  (see \cite[Remark~4.4]{MR4299595}). Moreover, for all $(z_k)_{k=1}^{\infty}\in Z$, we have
  $A(z_k)_{k=1}^{\infty} = (A_kz_k)_{k=1}^{\infty}$ and
  $B(z_k)_{k=1}^{\infty} = (B_kz_k)_{k=1}^{\infty}$, where for each $k\in \mathbb{N}$,
  $A_k,B_k\colon E_k\to E_k$ are bounded linear operators of the same form as $A,B\colon E\to E$ in
  \Cref{rem:stratrep}. In particular, if all bases are $C_r$-strategically reproducible \emph{with
    projectional factors}, then we have $A_kB_k = I_{E_k}$ for all $k\in \mathbb{N}$ and thus
  $AB = I_Z$.
\end{proof}

%%% Local Variables:
%%% mode: latex
%%% TeX-master: "main"
%%% End:
 %auto-ignore

\section{Strategic properties of the Haar system}

Next, we are going to prove that for every Haar system Hardy space $Y\in \mathcal{HH}_0(\delta)$, the
system $((h_I/\|h_I\|_Y, h_I^{*}))_{I\in \mathcal{D}}$ is $2$-strategically supporting in
$Y\times Y^{*}$, and under the assumption that $(r_n)_{n=0}^{\infty}$ is weakly null, the
normalized Haar basis of $Y$ is $2$-strategically reproducible. The following lemma will be used in
the proofs of both properties.

\begin{lem}\label{lem:impartially-equivalent}
  Let $Y\in \mathcal{HH}_0(\delta)$ and let $\eta > 0$.  Then there exist $\sigma_0, \rho_0 > 0$ such that for all
  $0<\sigma\le \sigma_0$ and $0<\rho\le \rho_0$, the following holds:

  Let $(\tilde{h}_I)_{I\in \mathcal{D}}$ be a $(\varkappa_I)_{I\in \mathcal{D}}$-faithful Haar system for some family
  $(\varkappa_I)_{I\in \mathcal{D}}$ of real numbers in $(0,1]$ that satisfies
  \begin{equation*}
    \sum_{I\in \mathcal{D}} (1 - \varkappa_I)
    \le \sigma.
  \end{equation*}
  Moreover, for every $I\in \mathcal{D}$, let $\mathcal{B}_I\subset \mathcal{D}$ denote the Haar support of
  $\tilde{h}_I$, and suppose that $|\mathcal{B}_{[0,1)}^{*}| \ge \frac{1}{2} - \rho$.  Put
  \begin{equation}\label{eq:17}
    x_I
    = \sqrt{2}\frac{\tilde{h}_I}{\|h_I\|_Y}
    \qquad \text{and}\qquad
    x_I^{*}
    = \frac{1}{\sqrt{2}}\frac{|I|}{|\mathcal{B}_I^{*}|}\cdot \frac{\tilde{h}_I}{\|h_I\|_{Y^{*}}}
  \end{equation}
  for every $I\in \mathcal{D}$.  Then the following assertions are true:
  \begin{enumerate}[(i)]
    \item\label{enu:lem:impartially-equivalent:i} The sequence $(x_I)_{I\in \mathcal{D}}$ is
    impartially $(2 + \eta)$-equivalent to $(h_I/\|h_I\|_Y)_{I\in \mathcal{D}}$ in $Y$;\\
          in particular, $(x_I)_{I\in \mathcal{D}}$ is $\sqrt{2+\eta}$-norm-dominated by
          $(h_I/\|h_I\|_Y)_{I\in \mathcal{D}}$.
    \item\label{enu:lem:impartially-equivalent:ii} The sequence $(x_I^{*})_{I\in \mathcal{D}}$ is
          impartially $(2 + \eta)$-equivalent to $(h_I/\|h_I\|_{Y^{*}})_{I\in \mathcal{D}}$ in
          $Y^{*}$.
    \item\label{enu:lem:impartially-equivalent:iii} The sequence $(x_I^{*})_{I\in \mathcal{D}}$ is
    $\sqrt{2 + \eta}$-weak*-dominated by $(h_I/\|h_I\|_{Y^{*}})_{I\in \mathcal{D}}$,\\
          and for every $x\in Y$, the series
          $\sum_{I\in \mathcal{D}} \langle x_I^{*}, x \rangle h_I/\|h_I\|_Y$ converges in norm.
  \end{enumerate}
\end{lem}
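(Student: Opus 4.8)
The plan is to derive all three assertions from the factorization $\tilde h_I = R\hat h_I$ provided by \Cref{lem:almost-faithful:2}, together with the isometric properties of the operators $\hat A,\hat B$ from \Cref{pro:A-B-bounded} and the bound on the multiplier $M$ (sending $h_I$ to $(|I|/|\mathcal B_I^*|)h_I$) established inside the proof of \Cref{thm:projection-1d}. First I would record the key normalizations: since $\tilde h_I$ is a signed sum of disjoint Haar functions supported on $\mathcal B_I^*$, we have $\|\tilde h_I\|_Y = \|\chi_{\mathcal B_I^*}\|_X$ and, via \Cref{lem:product-of-norms} and \Cref{cor:norm-in-dual-space}, $\|\tilde h_I\|_{Y^*} = |\mathcal B_I^*|/\|\chi_{\mathcal B_I^*}\|_X$, so that $\|\tilde h_I\|_Y\|\tilde h_I\|_{Y^*} = |\mathcal B_I^*|$. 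Combined with the elementary estimate $1-\sigma \le |\mathcal B_I^*|/(\mu|I|)\le 1$ from~\eqref{eq:I-BI} (here $\mu = |\mathcal B_{[0,1)}^*|$, and the hypothesis $|\mathcal B_{[0,1)}^*|\ge \tfrac12-\rho$ forces $\mu$ close to $\tfrac12 = |I|$-ratio... actually $\mu\in[\tfrac12-\rho,\tfrac12]$ relative to $[0,1)$), these give $\|\tilde h_I\|_Y/\|h_I\|_Y$ and the analogous dual ratio close to $\sqrt{\tfrac12}$-type constants, which is exactly where the $\sqrt 2$ factors in~\eqref{eq:17} come from. I would choose $\sigma_0,\rho_0$ small enough that all the resulting multiplicative errors stay below the tolerance dictated by $\eta$.

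For~\eqref{enu:lem:impartially-equivalent:i}: write $x_I = \sqrt 2\,\tilde h_I/\|h_I\|_Y = \sqrt2\,(R\hat h_I)/\|h_I\|_Y$. The map $h_I\mapsto \hat h_I$ is, up to the normalization $|I|$, the operator $\hat B$ of \Cref{pro:A-B-bounded}, which together with $\hat A$ (satisfying $\hat A\hat B=I_Y$, $\|\hat A\|=\|\hat B\|=1$) witnesses that $(\hat h_I/\|h_I\|_Y)_I$ is impartially $1$-equivalent to $(h_I/\|h_I\|_Y)_I$. Since $R$ is a norm-$1$ Haar multiplier that fixes each $h_K$ in the Haar support of every $\tilde h_I$, the inverse map $\tilde h_I\mapsto \hat h_I$ is realized by another norm-$1$ Haar multiplier (cf.\ the multiplier in \Cref{lem:almost-faithful:2}), giving two-sided norm control and hence impartial equivalence of $(\tilde h_I/\|h_I\|_Y)_I$ with $(\hat h_I/\|h_I\|_Y)_I$. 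The scalar $\sqrt 2$ and the smallness of $\sigma_0$ absorb the discrepancy between $\|h_I\|_Y$ and $\|\tilde h_I\|_Y$, yielding the $(2+\eta)$-constant; norm-domination is then immediate. For~\eqref{enu:lem:impartially-equivalent:ii}: the biorthogonal functionals of the faithful system $(\hat h_I)_I$ are $\hat h_I/|I|$, and by \Cref{pro:A-B-bounded} applied in the dual (or directly from $\hat A^*,\hat B^*$) the sequence $(\hat h_I/\|h_I\|_{Y^*})_I$ is impartially $1$-equivalent to $(h_I/\|h_I\|_{Y^*})_I$. The factor $\tfrac1{\sqrt2}\,|I|/|\mathcal B_I^*|$ in the definition of $x_I^*$ is precisely $\tfrac1{\sqrt2}$ times the multiplier $M$ from \Cref{thm:projection-1d}, whose norm — and the norm of its inverse — is controlled by $(1+3\sigma)/(1-\sigma)$, close to $1$ for $\sigma\le\sigma_0$; rescaling by $\|h_I\|_{Y^*}$ versus $\|\tilde h_I\|_{Y^*}$ and using $\|\tilde h_I\|_{Y^*}=|\mathcal B_I^*|/\|\chi_{\mathcal B_I^*}\|_X$ completes the estimate.

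For~\eqref{enu:lem:impartially-equivalent:iii}: weak*-domination of $(x_I^*)_I$ by $(h_I/\|h_I\|_{Y^*})_I$ follows by dualizing the norm-domination already obtained — the operator $x\mapsto \sum_I \langle x_I^*,x\rangle\, h_I/\|h_I\|_Y$ is, after unwinding definitions, $\sqrt2\,R^* M \hat A^*$-type composition, bounded by $\sqrt{2+\eta}$, which both gives the weak*-domination bound and shows the series converges in norm for every $x\in Y$ (since it is the image of $x$ under a genuine bounded operator into $Y$, using that $(h_I/\|h_I\|_Y)_I$ is a Schauder basis of $Y$ by \Cref{pro:HS-1d}). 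The main obstacle I anticipate is bookkeeping: keeping the chain of norm ratios — $\|h_I\|_Y$ vs $\|\tilde h_I\|_Y$ vs $|\mathcal B_I^*|$ vs $\mu|I|$ — transparent enough that the choice of $\sigma_0,\rho_0$ and the emergence of the constants $\sqrt 2$ and $2+\eta$ are clearly justified, rather than any single hard estimate. Every analytic ingredient (the isometries $\hat A,\hat B$, the multiplier bound for $M$, the product formula $\|\tilde h_I\|_Y\|\tilde h_I\|_{Y^*}=|\mathcal B_I^*|$) is already available from the results cited above.
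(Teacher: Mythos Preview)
Your overall approach is the paper's: both arguments rest on the operators $A,B$ of \Cref{thm:projection-1d}, which factor as $B=R\hat B$ and $A=M\hat A R$. The paper, however, works directly with $A$ and $B$ rather than with their constituents, recording the four identities
\[
Bh_I=\tilde h_I,\quad A\tilde h_I=h_I,\quad B^*\tilde h_I=\tfrac{|\mathcal B_I^*|}{|I|}h_I,\quad A^*h_I=\tfrac{|I|}{|\mathcal B_I^*|}\tilde h_I,
\]
and then (i)--(iii) drop out of $\|B\|=1$, $\|A\|\le\sqrt{2(2+\eta)}$ (for $\sigma,\rho$ small), and the weak*-to-weak* continuity of~$A^*$. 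This bypasses all the norm-ratio bookkeeping you flag as the main obstacle; the formulas $\|\tilde h_I\|_Y=\|\chi_{\mathcal B_I^*}\|_X$ etc.\ are never needed.

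There is one genuine error in your plan for the lower bound in~(i). You assert that ``the inverse map $\tilde h_I\mapsto \hat h_I$ is realized by another norm-$1$ Haar multiplier''. This is false: $\hat h_I$ has strictly larger Haar support than $\tilde h_I$ (the whole point of \Cref{lem:almost-faithful:2} is to \emph{add} Haar functions to fill the gaps), so no Haar multiplier --- an operator that acts coordinatewise on Haar coefficients --- can send $\tilde h_I$ to $\hat h_I$. The lower bound does not come from inverting $R$; it comes from $A$. Since $R$ fixes $\tilde h_I$ and $\langle \hat h_J,\tilde h_I\rangle=\delta_{IJ}|\mathcal B_I^*|$ (the Haar supports $\hat{\mathcal B}_J$ are pairwise disjoint and $\mathcal B_I\subset\hat{\mathcal B}_I$), one has $A\tilde h_I=M\hat A R\,\tilde h_I=h_I$, whence $\bigl\|\sum_I a_I h_I/\|h_I\|_Y\bigr\|=\tfrac{1}{\sqrt2}\bigl\|A\bigl(\sum_I a_I x_I\bigr)\bigr\|\le\tfrac{\|A\|}{\sqrt2}\bigl\|\sum_I a_I x_I\bigr\|$. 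Two minor points: your parenthetical $\mu\le\tfrac12$ is unjustified (the lemma imposes no upper bound on $|\mathcal B_{[0,1)}^*|$), though this is harmless since larger $\mu$ only shrinks $\|A\|$; and in~(iii) the operator $x\mapsto\sum_I\langle x_I^*,x\rangle h_I/\|h_I\|_Y$ is $\tfrac{1}{\sqrt2}A=\tfrac{1}{\sqrt2}M\hat A R$, not a $\sqrt2\,R^*M\hat A^*$-type composition.
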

\begin{proof}
  If $\sigma < 1$, then by \Cref{thm:projection-1d}, the operators $A,B\colon Y\to Y$ defined by
  \begin{equation*}
    B x
    = \sum_{I\in \mathcal{D}} \frac{\langle h_I, x\rangle}{\|h_I\|_2^2} \tilde{h}_I
    \qquad\text{and}\qquad
    A x
    = \sum_{I\in \mathcal{D}} \frac{\langle \tilde{h}_I, x\rangle}{\|\tilde{h}_I\|_2^2} h_I
  \end{equation*}
  satisfy
  \begin{equation*}
    \begin{aligned}
      \|B\| = 1
      \qquad \text{and}\qquad
      \|A\| \leq \frac{1}{\mu}\cdot \frac{1 + 3\sigma}{1 - \sigma},
    \end{aligned}
  \end{equation*}
  where $\mu = |\mathcal{B}_{[0,1)}^{*}| \ge \frac{1}{2} - \rho$.  In particular, by choosing
  $\sigma_0$ and $\rho_0$ small enough, we can ensure that
  \begin{equation}\label{eq:19}
    \|B\|
    = 1
    \qquad\text{and}\qquad
    \|A\|
    \le \sqrt{2(2+\eta)}.
  \end{equation}
  Moreover, we know that $AB = I_Y$ and hence $B^{*}A^{*} = I_{Y^{*}}$, as well as
  \begin{equation}\label{eq:20}
    B h_I
    = \tilde{h}_I,
    \quad
    A \tilde{h}_I
    = h_I
    \qquad\text{and}\qquad
    B^{*} \tilde{h}_I
    = \frac{|\mathcal{B}_I^{*}|}{|I|} h_I,
    \quad
    A^{*} h_I
    = \frac{|I|}{|\mathcal{B}_I^{*}|} \tilde{h}_I
  \end{equation}
  for all $I\in \mathcal{D}$. Combining our estimates~\eqref{eq:19} with the identities
  in~\eqref{eq:17} and~\eqref{eq:20} yields~\eqref{enu:lem:impartially-equivalent:i}
  and~\eqref{enu:lem:impartially-equivalent:ii}.

  Next, if $(a_I)_{I\in \mathcal{D}}$ is a scalar sequence and $x^{*}\in Y^{*}$ is such that
  \begin{equation}\label{eq:21}
    x^{*}
    = \text{w*-}\sum_{I\in \mathcal{D}} a_I \frac{h_I}{\|h_I\|_{Y^{*}}},
  \end{equation}
  then, since $A^{*}\colon Y^{*}\to Y^{*}$ is weak*-to-weak* continuous, it follows that
  \begin{equation}\label{eq:22}
    A^{*}x^{*}
    = \sqrt{2}\cdot \text{w*-}\sum_{I\in \mathcal{D}} a_I x_I^{*}
  \end{equation}
  is weak* convergent, and thus, combining~\eqref{eq:21} and~\eqref{eq:22} with~\eqref{eq:19} yields
  \begin{equation*}
    \sqrt{2}\cdot \Bigl\|\text{w*-}\sum_{I\in \mathcal{D}} a_I x_I^{*}\Bigr\|_{Y^{*}}
    \le \|A^{*}\|\|x^{*}\|_{Y^{*}}
    = \|A\| \Bigl\|\text{w*-}\sum_{I\in \mathcal{D}} a_I \frac{h_I}{\|h_I\|_{Y^{*}}}\Bigr\|_{Y^{*}}.
  \end{equation*}

  Finally, for every $x\in Y$, using~\eqref{eq:17} and~\eqref{eq:20}, we see that the series
  \begin{equation*}
    \sum_{I\in \mathcal{D}} \langle x_I^{*}, x \rangle \frac{h_I}{\|h_I\|_Y}
    = \frac{1}{\sqrt{2}}\sum_{I\in \mathcal{D}}\Bigl\langle \frac{h_I}{\|h_I\|_{Y^{*}}}, Ax \Bigr\rangle \frac{h_I}{\|h_I\|_Y}
    = \frac{1}{\sqrt{2}}Ax
  \end{equation*}
  converges in norm.
\end{proof}

In the following, we will always identify the index $k\in \mathbb{N}$ from \Cref{dfn:strat-supp}
with a dyadic interval $I$ via the bijection $\iota|_{\mathcal{D}}\colon \mathcal{D}\to \mathbb{N}$.  Thus, the
partition $\mathbb{N} = N_1\cup N_2$ will correspond to a partition
$\mathcal{D} = \mathcal{A}_1\cup \mathcal{A}_2$.  Moreover, instead of finite subsets $E_k$ of $N_1$
or $N_2$, we will construct finite subsets $\mathcal{B}_I$ of $\mathcal{A}_1$ or $\mathcal{A}_2$,
and the real numbers $\lambda_j^k$, $\mu_j^k$ and signs $\varepsilon_j^k$, $j\in E_k$, will be
denoted as $\lambda^I_L$, $\mu^I_L$ and $\varepsilon^I_L$, $L\in \mathcal{B}_I$.
\begin{thm}\label{thm:strat-supp}
  Let $(h_I/\|h_I\|_{Y})_{I\in \mathcal{D}}$ denote the normalized Haar basis of
  $Y\in \mathcal{HH}_0(\delta)$, and let $(h_I^{*})_{I\in\mathcal{D}}$ denote the biorthogonal
  functionals.  Then the system $((h_I/\|h_I\|_Y,h_I^{*}))_{I\in \mathcal{D}}$ is $2$-strategically
  supporting in $Y\times Y^{*}$ with projectional factors.
\end{thm}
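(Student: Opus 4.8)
The plan is to verify \Cref{dfn:strat-supp} directly, using \Cref{lem:impartially-equivalent} to supply the required norm and weak* domination estimates with constant $2$, and using a randomized $(\varkappa_I)_{I\in\mathcal D}$-faithful Haar system supported inside one half of the given partition to arrange the biorthogonality (for projectional factors) and the normalization $\langle x_I^{*}, x_I\rangle = 1$. Fix $\eta>0$ and obtain $\sigma_0,\rho_0>0$ from \Cref{lem:impartially-equivalent}. Given a partition $\mathcal D = \mathcal A_1\cup\mathcal A_2$ chosen by player~(I), the first move is to decide the index $i\in\{1,2\}$: since $\mathcal A_1$ and $\mathcal A_2$ partition $\mathcal D$, at least one of them, say $\mathcal A_i$, must contain (the images under $\iota$ of) arbitrarily large ``full'' finite subtrees — more precisely, for every $n$, $\mathcal A_i$ contains a collection of dyadic intervals whose union has measure at least $\frac12-\rho_0$ and which supports a faithful-type structure down to some level. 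This is the standard pigeonhole step: one of the two colour classes is ``large'' in the sense needed to build a faithful Haar system inside it.

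Having chosen $i$, I would construct the vectors $x_I$, $x_I^{*}$ inductively in $I\in\mathcal D$ (in the order $\iota$), as blocks of the Haar system supported in $\mathcal A_i$. At step $I$, given the signs $(\varepsilon_L)_{L}$ already fixed on the Haar supports of earlier vectors, player~(II) commits to a finite collection $\mathcal B_I\subset\mathcal A_i$ of pairwise disjoint dyadic intervals contained in the appropriate half $\{\tilde h_{\pi(I)} = \pm 1\}$ of the parent, with $|\mathcal B_{I^{\pm}}^{*}|\ge\varkappa_I\cdot\frac12|\mathcal B_I^{*}|$, arranging $\sum_{I}(1-\varkappa_I)\le\sigma\le\sigma_0$ and $|\mathcal B_{[0,1)}^{*}|\ge\frac12-\rho_0$; then player~(II) sets $\lambda^I_L$ and $\mu^I_L$ to the coefficients dictated by~\eqref{eq:17}, namely $\lambda^I_L = \sqrt2/\|h_I\|_Y$ and $\mu^I_L = \frac{1}{\sqrt2}\frac{|I|}{|\mathcal B_I^{*}|}\cdot\frac{\varepsilon_L}{\|h_I\|_{Y^{*}}}\cdot(\text{sign bookkeeping})$, chosen so that $\sum_{L\in\mathcal B_I}\lambda^I_L\mu^I_L = 1$ exactly and $\lambda^I_L\mu^I_L\ge 0$, giving \eqref{enu:dfn:strat-supp:iii} and \eqref{enu:dfn:strat-supp:iv}. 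Then player~(I) reveals the signs $(\varepsilon_L)_{L\in\mathcal B_I}$; these determine $\tilde h_I = \sum_{L\in\mathcal B_I}\varepsilon_L h_L$ and hence the sets $\{\tilde h_I = \pm1\}$ into which the next generation must fit, so the induction can continue. The whole process produces a genuine $(\varkappa_I)_{I\in\mathcal D}$-faithful Haar system $(\tilde h_I)_{I\in\mathcal D}$ relative to suitable frequencies, with $x_I = \sqrt2\,\tilde h_I/\|h_I\|_Y$ and $x_I^{*} = \frac1{\sqrt2}\frac{|I|}{|\mathcal B_I^{*}|}\tilde h_I/\|h_I\|_{Y^{*}}$ exactly as in~\eqref{eq:17}.

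With this system in hand, conclusions \eqref{enu:dfn:strat-supp:i} and \eqref{enu:dfn:strat-supp:ii} are immediate from \Cref{lem:impartially-equivalent}~\eqref{enu:lem:impartially-equivalent:i} and~\eqref{enu:lem:impartially-equivalent:iii}: $(x_I)$ is $\sqrt{2+\eta}$-norm-dominated by $(h_I/\|h_I\|_Y)$, $(x_I^{*})$ is $\sqrt{2+\eta}$-weak*-dominated by $(h_I/\|h_I\|_{Y^{*}})$, and $\sum_I\langle x_I^{*},x\rangle h_I/\|h_I\|_Y = \frac1{\sqrt2}Ax$ converges in norm for every $x\in Y$. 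For the projectional-factors addendum, I must check that $(x_I^{*})$ is biorthogonal to $(x_I)$: by \Cref{thm:projection-1d} the operators $A,B$ associated with the faithful system satisfy $AB = I_Y$, and $B h_I = \tilde h_I$, $A\tilde h_I = h_I$; the explicit normalization in~\eqref{eq:17} gives $\langle x_I^{*}, x_J\rangle = \langle \tilde h_I, \tilde h_J\rangle \cdot\frac{|I|}{|\mathcal B_I^{*}|}\cdot\frac{1}{\|h_I\|_Y\|h_I\|_{Y^{*}}}$, which vanishes for $I\ne J$ because distinct $\tilde h_I$ have disjoint Haar supports, and equals $\frac{|\mathcal B_I^{*}|}{|I|}\cdot\frac{|I|}{|\mathcal B_I^{*}|}\cdot\frac{|I|}{\|h_I\|_Y\|h_I\|_{Y^{*}}} = 1$ for $I=J$ by \Cref{lem:product-of-norms}.

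The main obstacle is the pigeonhole/combinatorial step of choosing the index $i$ and then, at every node of the dyadic tree, finding enough room inside $\mathcal A_i$ to fit a faithful-type subtree while keeping $\sum_I(1-\varkappa_I)$ below the threshold $\sigma_0$. Abstractly this is the content of ``$\mathcal A_i$ is large'' — one colour class supports arbitrarily deep almost-faithful trees — but it has to interact correctly with the order in which player~(I) discloses the signs: the signs on $\mathcal B_I$ are only revealed \emph{after} player~(II) commits to $\mathcal B_I$ and the coefficients, yet the placement of the next generation $\mathcal B_{I^{\pm}}$ depends on those signs through the sets $\{\tilde h_I = \pm1\}$. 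Making this precise is exactly the strategic content; the norm estimates are then a black-box application of \Cref{lem:impartially-equivalent}, and the biorthogonality is the bookkeeping above. Everything else is routine.
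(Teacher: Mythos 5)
Your strategy is the paper's: pick one colour class of the partition by a measure pigeonhole, build a $(\varkappa_I)_{I\in\mathcal D}$-faithful Haar system inside it with the coefficients of~\eqref{eq:17}, and read off conditions \eqref{enu:dfn:strat-supp:i} and \eqref{enu:dfn:strat-supp:ii} of \Cref{dfn:strat-supp} from \Cref{lem:impartially-equivalent}. The normalization $\sum_{L}\lambda^I_L\mu^I_L=1$ via \Cref{lem:product-of-norms}, the biorthogonality from the disjointness of the Haar supports of the $\tilde h_I$, and the observation that the quantifier order lets $\mathcal B_{I^\pm}$ depend on the signs already revealed on $\mathcal B_I$ all match the paper (modulo minor bookkeeping: the coefficients in \Cref{dfn:strat-supp} are taken with respect to the \emph{normalized} basis, so $\lambda^I_L$ and $\mu^I_L$ must carry the factors $\|h_L\|_Y/\|h_I\|_Y$ and $\|h_L\|_{Y^{*}}/\|h_I\|_{Y^{*}}$, and the signs must be kept out of $\mu^I_L$ so that \eqref{enu:dfn:strat-supp:iv} holds).

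The one genuine gap is the step you flag yourself, and your formulation of it is not quite the right one. Knowing that ``for every $n$, $\mathcal A_i$ contains a collection of intervals whose union has measure at least $\tfrac12-\rho_0$'' is insufficient: the generations $\mathcal G_n(\mathcal A_i)$ could each be large while drifting, so that the ratios $|\mathcal B_{I^\pm}^{*}|/|\mathcal B_I^{*}|$ degrade and $\sum_I(1-\varkappa_I)$ cannot be kept below $\sigma_0$. What is actually needed is that the set $S=\limsup(\mathcal A_i)=\bigcap_n\mathcal G_n^{*}(\mathcal A_i)$ of points lying in \emph{infinitely many} generations has measure at least $\tfrac12$; this holds because for each $s\in[0,1)$ the chain $\{I\in\mathcal D: s\in I\}$ is split between $\mathcal A_1$ and $\mathcal A_2$, so one of the two pieces is infinite and $s\in\limsup(\mathcal A_1)\cup\limsup(\mathcal A_2)$. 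One then passes to a subcollection $\mathcal A\subset\mathcal A_i$ with finite generations and $|\limsup(\mathcal A)|\ge\tfrac12-\rho$ (\cite[Lemma~4.4]{MR4145794}) and, at each node $I$, chooses the frequency $n_I$ so large that $|\mathcal B_I^{*}\cap S|>\bigl(1-(1-\varkappa_I)/2\bigr)|\mathcal B_I^{*}|$; the nestedness identity $\mathcal B_{I^{\pm}}^{*}\cap S=\{\tilde h_I=\pm1\}\cap S$ then yields $|\mathcal B_{I^{\pm}}^{*}|\ge\varkappa_I\,|\mathcal B_I^{*}|/2$ (\cite[Lemma~4.5]{MR4145794}), which is exactly the $(\varkappa_I)_{I\in\mathcal D}$-faithfulness you need. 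Since every other ingredient of your argument is in place, supplying this limsup argument would complete the proof.
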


\begin{proof}
  Let $\eta > 0$ and let $\mathcal{A}_1$, $\mathcal{A}_2$ be a partition of $\mathcal{D}$.  Thus,
  $\limsup(\mathcal{A}_1)\cup \limsup(\mathcal{A}_2) = [0,1)$, and hence, we have either
  \begin{equation*}
    |\limsup(\mathcal{A}_1)|
    \ge \frac{1}{2}
    \qquad\text{or}\qquad
    |\limsup(\mathcal{A}_2)|
    \ge \frac{1}{2}.
  \end{equation*}
  If the former inequality is true, we define $i=1$, and we put $i=2$ otherwise.  In any case, we
  obtain
  \begin{equation*}
    |\limsup(\mathcal{A}_i)|
    \ge \frac{1}{2}.
  \end{equation*}
  By~\cite[Lemma~4.4]{MR4145794}, we can find a subset $\mathcal{A}\subset \mathcal{A}_i$ such that
  \begin{itemize}
    \item $\mathcal{G}_n(\mathcal{A})$ is finite and
          $\mathcal{G}_n(\mathcal{A})\subset \mathcal{G}_n(\mathcal{A}_i)$ for all
          $n\in \mathbb{N}_0$,
    \item $|\limsup(\mathcal{A})| \ge \frac{1}{2} - \rho$,
  \end{itemize}
  where $\rho > 0$ is a small number, to be determined later.  We will write
  $S = \limsup(\mathcal{A})$.  Moreover, fix $0 < \sigma < 1$ to be determined later, and let
  $(\varkappa_I)_{I\in \mathcal{D}}$ be a sequence of real numbers in $(0,1)$ such that
  \begin{equation*}
    \sum_{I\in \mathcal{D}} (1 - \varkappa_I) \le \sigma.
  \end{equation*}
  Define $\varkappa_I' = 1 - \varkappa_I$ for all $I\in \mathcal{D}$.

  Now let $I\in \mathcal{D}$ and assume that we have already picked strictly increasing frequencies
  $(n_J)_{J < I}$ and chosen collections $\mathcal{B}_J\subset \mathcal{G}_{n_J}(\mathcal{A})$ and
  positive real numbers
  $(\lambda_L^J)_{L\in \mathcal{B}_J}, (\mu_L^J)_{L\in \mathcal{B}_J}\in \mathbb{R}^{\mathcal{B}_J}$
  for all $J < I$.  Moreover, suppose that the sequences of signs
  $(\varepsilon_L^J)_{L\in \mathcal{B}_J}\in\{\pm 1\}^{\mathcal{B}_{J}}$, $J < I$, are given, thus
  determining the newly constructed systems $(x_J)_{J < I}$ and $(x_J^{*})_{J < I}$ in~\eqref{eq:6},
  i.e.,
  \begin{equation*}
    x_J
    = \sum_{L\in \mathcal{B}_J} \varepsilon^J_L \lambda^J_L \frac{h_L}{\|h_L\|_{Y}}
    \qquad\text{and}\qquad
    x_J^{*}
    = \sum_{L\in \mathcal{B}_J} \varepsilon^J_L \mu^J_L h_L^{*}.
  \end{equation*}
  In addition, we define the auxiliary $L^{\infty}$-normalized Haar system $(\tilde{h}_J)_{J < I}$
  by putting
  \begin{equation*}
    \tilde{h}_J
    = \sum_{L\in \mathcal{B}_J} \varepsilon^J_L h_L,\qquad J\in \mathcal{D},\ J < I.
  \end{equation*}

  We now describe the construction of $\mathcal{B}_I$ and
  $(\lambda_L^I)_{L\in \mathcal{B}_I}, (\mu_L^I)_{L\in \mathcal{B}_I}\in \mathbb{R}^{\mathcal{B}_I}$,
  which in turn determine $x_I$ and $x_I^{*}$.  Firstly, we pick $\mathcal{B}_I\subset \mathcal{A}$
  as follows:
  \begin{align*}
    \mathcal{B}_I
    &= \mathcal{G}_{n_I}(\mathcal{A}),
    && \text{if } I = [0,1),\\
    \mathcal{B}_I
    &= \bigl\{ L\in \mathcal{G}_{n_I}(\mathcal{A}) : L\subset \{ \tilde{h}_{\pi(I)} = \pm 1 \} \bigr\},
    && \text{if } I = J^{\pm } \text{ for some } J\in \mathcal{D},
  \end{align*}
  where $n_I\in \mathbb{N}_0$ is chosen sufficiently large such that $n_I > n_J$ for all $J < I$ and
  such that
  \begin{equation}\label{eq:faithful-inequality-kappa}
    |\mathcal{B}_I^{*}\cap S| > (1 - \varkappa_I'/2) |\mathcal{B}_I^{*}|.
  \end{equation}
  Secondly, we define the non-negative real numbers
  \begin{equation*}
    \lambda^I_L
    = \sqrt{2}\cdot \frac{\|h_L\|_{Y}}{\|h_I\|_{Y}}
    \qquad\text{and}\qquad
    \mu^I_L
    = \frac{1}{\sqrt{2}} \frac{|I|}{|\mathcal{B}_I^{*}|}\cdot
    \frac{\|h_L\|_{Y^{*}}}{\|h_I\|_{Y^{*}}},
    \qquad L\in \mathcal{B}_I.
  \end{equation*}
  Now we fix $(\varepsilon^I_L)_{L\in \mathcal{B}_I}\in \{\pm 1\}^{\mathcal{B}_I}$ and observe that
  \begin{equation*}
    x_I
    = \sum_{L\in \mathcal{B}_I} \varepsilon^I_L \lambda^I_L \frac{h_L}{\|h_L\|_{Y}}
    \qquad\text{and}\qquad
    x_I^{*}
    = \sum_{L\in \mathcal{B}_I} \varepsilon^I_L \mu^I_L h_L^{*}
  \end{equation*}
  is in accordance with~\eqref{eq:6}. Recall that by \Cref{lem:product-of-norms}, we have
  $\|h_K\|_{Y}\|h_K\|_{Y^{*}} = |K|$, $K\in \mathcal{D}$, and thus, we obtain
  \begin{equation*}
    \langle x_I^{*}, x_I \rangle
    = \sum_{L\in \mathcal{B}_I} \lambda^I_L \mu^I_L
    = 1,
  \end{equation*}
  which shows condition~\eqref{enu:dfn:strat-supp:iii} in \Cref{dfn:strat-supp}.

  We record the following identities relating $\tilde{h}_I$ with $x_I$ and $x_I^{*}$:
  \begin{equation*}
    x_I
    = \frac{\sqrt{2}}{\|h_I\|_{Y}} \tilde{h}_I
    \qquad\text{and}\qquad
    x_I^{*}
    = \frac{1}{\sqrt{2}}\frac{|I|}{|\mathcal{B}_I^{*}|}\cdot
    \frac{\tilde{h}_I}{\|h_I\|_{Y^{*}}}.
  \end{equation*}

  We will now discuss essential properties of our construction.  First, let
  $I\in \mathcal{D}\setminus \{ [0,1) \}$. Exploiting the nestedness of the dyadic intervals, one
  can show that $\mathcal{B}_I^{*}\cap S = \{ \tilde{h}_{\pi(I)} = \pm 1 \}\cap S$, and thus,
  by~\eqref{eq:faithful-inequality-kappa} and \cite[Lemma~4.5~(i)]{MR4145794}, we have
  \begin{equation*}
    |\mathcal{B}_I^{*}|
    \ge |\mathcal{B}_I^{*}\cap S|
    = |\{ \tilde{h}_{\pi(I)} = \pm 1 \}\cap S|
    \ge (1 - \varkappa_{\pi(I)}')\frac{|\mathcal{B}_{\pi(I)}^{*}|}{2}
    = \varkappa_{\pi(I)} \frac{|\mathcal{B}_{\pi(I)}^{*}|}{2}.
  \end{equation*}
  Moreover, by the definition of $\mathcal{B}_{I^{\pm }}$, we have
  $\operatorname{supp}(\tilde{h}_{I^{\pm }})\subset \{ \tilde{h}_I = \pm 1 \}$ for all
  $I\in \mathcal{D}$. Thus, it follows that $(\tilde{h}_I)_{I\in \mathcal{D}}$ is a
  $(\varkappa_I)_{I\in \mathcal{D}}$-faithful Haar system. If $\rho$ and $\sigma$ are chosen
  sufficiently small, then by \Cref{lem:impartially-equivalent}, $(x_I)_{I\in \mathcal{D}}$ is
  $\sqrt{2 + \eta}$-norm-dominated by $(h_I/\|h_I\|_Y)_{I\in \mathcal{D}}$ and
  $(x_I^{*})_{I\in \mathcal{D}}$ is $\sqrt{2 + \eta}$-weak*-dominated by
  $(h_I^{*})_{I\in \mathcal{D}}$, and moreover,
  $\sum_{I\in \mathcal{D}} \langle x_I^{*}, x \rangle h_I/\|h_I\|_Y$ converges in norm for all
  $x\in Y$. Finally, $(x_I^{*})_{I\in \mathcal{D}}$ is clearly biorthogonal to
  $(x_I)_{I\in \mathcal{D}}$.
\end{proof}

\begin{lem}\label{lem:rademacher-with-signs-weakly-null}
  Let $Y\in \mathcal{HH}_0(\delta)$ and assume that the sequence of Rademacher functions
  $(r_n)_{n=0}^{\infty}$ is weakly null in $Y$.  Moreover, suppose that
  $\mathcal{A}\subset \mathcal{D}$ has finite generations. For every sequence of signs
  $\varepsilon = (\varepsilon_K)_{K\in \mathcal{D}}\in \{ \pm 1 \}^{\mathcal{D}}$ and
  every $n\in \mathbb{N}_0$, put
  \begin{equation*}
    \tilde{r}_n(\varepsilon) = \sum_{K\in \mathcal{G}_n(\mathcal{A})} \varepsilon_Kh_K.
  \end{equation*}
  Then for every $y\in Y$ and $y^{*}\in Y^{*}$, we have
  \begin{equation}\label{eq:30}
    \lim_{n\to \infty} \sup_{\varepsilon\in \{ \pm 1 \}^{\mathcal{D}}} |\langle \tilde{r}_n(\varepsilon), y \rangle| = 0
    \qquad \text{and}\qquad
    \lim_{n\to \infty} \sup_{\varepsilon\in \{ \pm 1 \}^{\mathcal{D}}} |\langle y^{*}, \tilde{r}_n(\varepsilon) \rangle| = 0.
  \end{equation}
\end{lem}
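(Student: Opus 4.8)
The plan is to first eliminate the supremum over signs, and then treat the two limits in~\eqref{eq:30} by rather different arguments: the first will be an elementary $L^1$ fact, while the weak nullity of $(r_n)_n$ enters only in the second.

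\emph{Step 1 (removing the supremum).} Since $\tilde r_n(\varepsilon)=\sum_{K\in\mathcal{G}_n(\mathcal{A})}\varepsilon_Kh_K$ is a finite linear combination of Haar functions and the duality pairing is bilinear, for all $y\in Y$, $y^{*}\in Y^{*}$ and $n\in\mathbb{N}_0$ one has
\[
  \sup_{\varepsilon\in\{\pm1\}^{\mathcal{D}}}|\langle\tilde r_n(\varepsilon),y\rangle|
  =\sum_{K\in\mathcal{G}_n(\mathcal{A})}|\langle h_K,y\rangle|,
  \qquad
  \sup_{\varepsilon\in\{\pm1\}^{\mathcal{D}}}|\langle y^{*},\tilde r_n(\varepsilon)\rangle|
  =\sum_{K\in\mathcal{G}_n(\mathcal{A})}|\langle y^{*},h_K\rangle|,
\]
the suprema being attained by letting $\varepsilon_K$ be the sign of the corresponding pairing. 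Thus I would reduce~\eqref{eq:30} to proving $\sum_{K\in\mathcal{G}_n(\mathcal{A})}|\langle h_K,y\rangle|\to0$ and $\sum_{K\in\mathcal{G}_n(\mathcal{A})}|\langle y^{*},h_K\rangle|\to0$ as $n\to\infty$.

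\emph{Step 2 (the first limit).} Here I would use that $Y\hookrightarrow L^1$, that $\langle h_K,y\rangle=\int_0^1 h_Ky$, and that the intervals of $\mathcal{G}_n(\mathcal{A})$ are pairwise disjoint, so the functions $(h_K)_{K\in\mathcal{G}_n(\mathcal{A})}$ are disjointly supported and hence
\[
  \sum_{K\in\mathcal{G}_n(\mathcal{A})}|\langle h_K,y\rangle|
  =\Bigl\|\sum_{K\in\mathcal{G}_n(\mathcal{A})}\tfrac{\langle h_K,y\rangle}{|K|}h_K\Bigr\|_{L^1}
  =:\|P_ny\|_{L^1}
  \le\sum_{K\in\mathcal{G}_n(\mathcal{A})}\int_K|y|
  \le\|y\|_{L^1}.
\]
So each $P_n\colon L^1\to L^1$ is a contraction (it is a finite-rank operator since $\mathcal{A}$ has finite generations), and for $y$ in the dense subspace $H$ only finitely many pairings $\langle h_K,y\rangle$ are nonzero while each $K\in\mathcal{A}$ lies in a single generation $\mathcal{G}_{m(K)}(\mathcal{A})$; hence $P_ny=0$ for all $n$ past a finite bound. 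A routine density and uniform-boundedness argument then yields $P_ny\to0$ in $L^1$ for every $y\in L^1$, in particular for $y\in Y$. (This step uses neither the weak nullity hypothesis nor rearrangement invariance.)

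\emph{Step 3 (the second limit) — the main obstacle.} I would argue by contradiction: if $\sum_{K\in\mathcal{G}_n(\mathcal{A})}|\langle y^{*},h_K\rangle|\not\to0$, fix $\delta>0$ and a strictly increasing sequence $(n_j)$ with $\sum_{K\in\mathcal{G}_{n_j}(\mathcal{A})}|\langle y^{*},h_K\rangle|\ge\delta$ for all $j$. Using \Cref{lem:extend-finite-generations}, enlarge $\mathcal{A}$ to $\hat{\mathcal{A}}$ with $\mathcal{G}_m^{*}(\hat{\mathcal{A}})=[0,1)$ and $\mathcal{G}_m(\mathcal{A})\subseteq\mathcal{G}_m(\hat{\mathcal{A}})$, obtaining a Haar multiplier $R\colon Y\to Y$, $\|R\|\le1$, with $Rh_K=h_K$ for $K\in\mathcal{G}_m(\mathcal{A})$ and $Rh_K=0$ for $K\in\mathcal{G}_m(\hat{\mathcal{A}})\setminus\mathcal{G}_m(\mathcal{A})$. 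The key point is that one can choose \emph{one} sign sequence $\varepsilon\in\{\pm1\}^{\mathcal{D}}$ with $\varepsilon_K\langle y^{*},h_K\rangle=|\langle y^{*},h_K\rangle|$ for \emph{every} $K$ — the ``optimal signs'' on the different generations $\mathcal{G}_{n_j}(\mathcal{A})$ do not conflict because these collections are pairwise disjoint. Building the faithful Haar system $(\hat h_I)_{I\in\mathcal{D}}$ from $\hat{\mathcal{A}}$ and $\varepsilon$ as in \Cref{rem:faithful-hs-from-generations-and-signs}, and taking $\hat B\colon Y\to Y$ with $\hat Bh_I=\hat h_I$, $\|\hat B\|\le1$, from \Cref{pro:A-B-bounded}, one obtains (since $(\hat{\mathcal{B}}_I)_{I\in\mathcal{D}_m}$ partitions $\mathcal{G}_m(\hat{\mathcal{A}})$, an easy induction using $\mathcal{G}_m^{*}(\hat{\mathcal{A}})=[0,1)$)
\[
  R\hat B r_{n_j}
  =R\sum_{I\in\mathcal{D}_{n_j}}\hat h_I
  =R\sum_{K\in\mathcal{G}_{n_j}(\hat{\mathcal{A}})}\varepsilon_Kh_K
  =\sum_{K\in\mathcal{G}_{n_j}(\mathcal{A})}\varepsilon_Kh_K,
\]
so that $\langle y^{*},R\hat B r_{n_j}\rangle=\sum_{K\in\mathcal{G}_{n_j}(\mathcal{A})}|\langle y^{*},h_K\rangle|\ge\delta$. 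On the other hand $\langle y^{*},R\hat B r_{n_j}\rangle=\langle(R\hat B)^{*}y^{*},r_{n_j}\rangle\to0$, because $(R\hat B)^{*}y^{*}$ is a fixed element of $Y^{*}$ and $(r_n)_n$ is weakly null in $Y$; this contradiction completes the proof. The only place I expect to require real care is exactly this amalgamation of the sign sequences into a single $\varepsilon$: it is what allows one pair of operators $R,\hat B$ to serve for all $j$ and thereby reduces the lemma to testing the weakly null sequence $(r_n)_n$ against the single fixed functional $(R\hat B)^{*}y^{*}$.
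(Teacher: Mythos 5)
Your proof is correct and, for the substantive part, takes essentially the same route as the paper: both rely on \Cref{lem:extend-finite-generations} to extend $\mathcal{A}$ to $\hat{\mathcal{A}}$ with a norm-one Haar multiplier $R$, on \Cref{rem:faithful-hs-from-generations-and-signs} to build a faithful Haar system from $\hat{\mathcal{A}}$ and a single sign sequence $\varepsilon$, on $\|\hat{B}\|\le 1$ from \Cref{pro:A-B-bounded} to write $\tilde r_n(\varepsilon)=R\hat{B}r_n$, and on the weak nullity of $(r_n)_n$ against the fixed functional $(R\hat{B})^{*}y^{*}$; the paper phrases this directly by choosing for each $n$ the sign sequence attaining the supremum (which is one global $\varepsilon$, since the generations $\mathcal{G}_n(\mathcal{A})$ are pairwise disjoint), whereas you package the same observation as a contradiction — a cosmetic difference.

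The one genuinely different (and slightly more elementary) ingredient is your treatment of the first limit. The paper approximates $y$ by some $z\in H_0$ in the $Y$-norm and uses $\|\tilde r_n(\varepsilon)\|_{Y^{*}}\le 1$ from \Cref{cor:norm-in-dual-space}; you instead reformulate the supremum as $\sum_{K\in\mathcal{G}_n(\mathcal{A})}|\langle h_K,y\rangle|$, note that this equals $\|P_ny\|_{L^1}$ for a rank-finite contraction $P_n\colon L^1\to L^1$ (using only the embedding $Y\hookrightarrow L^1$ and disjointness of the blocks), and conclude by density. Both are fine; your version avoids the dual-norm bound entirely. One small notational slip: the dense subspace of $Y$ is $H_0$ rather than $H$, though this is harmless in your argument since you work in $L^1$, where $H$ is indeed dense.
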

\begin{proof}
  Let $\eta > 0$. Since $H_0$ is dense in $Y$, we can find $z\in H_0$ such that
  $\|y - z\|_Y\le \eta$. Then for all sufficiently large $n\in \mathbb{N}_0$ and all
  $\varepsilon\in \{ \pm 1 \}^{\mathcal{D}}$, we have
  $\langle \tilde{r}_n(\varepsilon), z \rangle = 0$ and hence
  \begin{equation*}
    |\langle \tilde{r}_n(\varepsilon), y \rangle| = |\langle \tilde{r}_n(\varepsilon), y - z \rangle|\le \eta
  \end{equation*}
  because $\|\tilde{r}_n(\varepsilon)\|_{Y^{*}}\le 1$ by \Cref{cor:norm-in-dual-space}.

  In order to prove the second equality in~\eqref{eq:30}, we first fix $y^{*}\in Y^{*}$ and
  $\varepsilon\in \{ \pm 1 \}^{\mathcal{D}}$.  By \Cref{lem:extend-finite-generations},
  there exists another collection $\hat{\mathcal{A}}\subset \mathcal{D}$ with
  finite generations such that $\mathcal{G}_n^{*}(\hat{\mathcal{A}}) = [0,1)$ and
  $\mathcal{G}_n(\mathcal{A})\subset \mathcal{G}_n(\hat{\mathcal{A}})$ for all
  $n\in \mathbb{N}_0$, and there exists a bounded Haar multiplier $R\colon Y\to Y$ with
  $\|R\|\le 1$ such that for every $n\in \mathbb{N}_0$ and $K\in \mathcal{G}_n(\hat{\mathcal{A}})$,
  we have
  \begin{equation*}
    Rh_K =
    \begin{cases}
      h_K, & K\in \mathcal{G}_n(\mathcal{A}),\\
      0, & K\in \mathcal{G}_n(\hat{\mathcal{A}})\setminus \mathcal{G}_n(\mathcal{A}).
    \end{cases}
  \end{equation*}

  Now let $(\hat{h}_I)_{I\in \mathcal{D}}$ be the faithful Haar system defined as in
  \Cref{rem:faithful-hs-from-generations-and-signs} with respect to the collection
  $\hat{\mathcal{A}}$ and the sequence of signs $\varepsilon$.  Since
  $(\hat{h}_I)_{I\in \mathcal{D}}$ is faithful, we know from \Cref{pro:A-B-bounded} that the
  operator $\hat{B}\colon Y\to Y$ defined as the linear extension of $\hat{B}h_I = \hat{h}_I$,
  $I\in \mathcal{D}$, is bounded.  Now we can write
  \begin{equation*}
    \tilde{r}_n(\varepsilon)
    = R\Bigl( \sum_{K\in \mathcal{G}_n(\hat{\mathcal{A}})} \varepsilon_Kh_K \Bigr)
    = R\Bigl( \sum_{I\in \mathcal{D}_n} \hat{h}_I \Bigr)
    = R \hat{B} r_n,
    \qquad n\in \mathbb{N}_0,
  \end{equation*}
  and since $(r_n)_{n=0}^{\infty}$ is weakly null in $Y$, it follows that
  \begin{equation}\label{eq:32}
    \lim_{n\to \infty}|\langle y^{*}, \tilde{r}_n(\varepsilon) \rangle| = 0.
  \end{equation}
  Clearly, there exists a sequence $\varepsilon\in \{ \pm 1 \}^{\mathcal{D}}$ that satisfies
  $|\langle y^{*}, \tilde{r}_n(\varepsilon) \rangle| = \sup_{\theta\in \{ \pm 1 \}^{\mathcal{D}}}|\langle y^{*}, \tilde{r}_n(\theta) \rangle|$
  for all $n\in \mathbb{N}_0$. Hence, by applying~\eqref{eq:32} to this sequence, we obtain the
  second equality in~\eqref{eq:30}.
\end{proof}

\begin{thm}\label{thm:stratrep}
  Let $Y\in \mathcal{HH}_0(\delta)$ and suppose that the sequence of Rademacher functions
  $(r_n)_{n=0}^{\infty}$ is weakly null in $Y$.  Then the normalized Haar basis
  $(h_I/\|h_I\|_Y)_{I\in \mathcal{D}}$ of $Y$ is $2$-strategically reproducible in $Y$ with
  projectional factors.
\end{thm}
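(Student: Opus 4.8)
The plan is to run the construction from the proof of \Cref{thm:strat-supp} almost verbatim, adding only the ingredients needed to meet the two features of \Cref{dfn:stratrep} that go beyond \Cref{dfn:strat-supp}. Fix $\eta>0$. Player~(I) first reveals a partition $\mathcal{D}=\mathcal{A}_1\cup\mathcal{A}_2$ (identifying $\mathbb{N}$ with $\mathcal{D}$ via $\iota$); as in \Cref{thm:strat-supp} we pass to the heavy side $\mathcal{A}_i$, extract a collection $\mathcal{A}\subset\mathcal{A}_i$ with finite generations and $|\limsup(\mathcal{A})|\ge\tfrac12-\rho$, and fix $(\varkappa_I)_{I\in\mathcal{D}}$ with $\sum_I(1-\varkappa_I)\le\sigma$, where $\sigma,\rho>0$ are chosen small enough that \Cref{lem:impartially-equivalent} applies with the given $\eta$. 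This already yields conditions \Cref{dfn:stratrep}~\eqref{enu:dfn:stratrep:i} and~\eqref{enu:dfn:stratrep:ii}, since \Cref{lem:impartially-equivalent}~\eqref{enu:lem:impartially-equivalent:i}--\eqref{enu:lem:impartially-equivalent:ii} give precisely the impartial $(2+\eta)$-equivalence of the blocks $(x_I)_{I\in\mathcal{D}}$ and $(x_I^{*})_{I\in\mathcal{D}}$ (defined by~\eqref{eq:17}) to the normalized Haar basis $(h_I/\|h_I\|_Y)_{I\in\mathcal{D}}$ and to its biorthogonal functionals $(h_I/\|h_I\|_{Y^{*}})_{I\in\mathcal{D}}$. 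As in \Cref{thm:strat-supp}, $\langle x_I^{*},x_I\rangle=\sum_{L\in\mathcal{B}_I}\lambda_L^I\mu_L^I=1$ by \Cref{lem:product-of-norms}, and $(x_I^{*})_{I\in\mathcal{D}}$ is biorthogonal to $(x_I)_{I\in\mathcal{D}}$ because the Haar supports $\mathcal{B}_I$ are pairwise disjoint; this will give the projectional-factors conclusion.

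The only genuinely new point is the pair of distance conditions \eqref{enu:dfn:stratrep:iii} and~\eqref{enu:dfn:stratrep:iv}. At turn~$I$, player~(I) reveals $\eta_I>0$, $W_I\in\operatorname{cof}(Y)$ and $G_I\in\operatorname{cof}_{w^{*}}(Y^{*})$ (possibly depending on the whole history); by \Cref{rem:cof} we may write $W_I=\{y_1^{*},\dots,y_N^{*}\}_{\perp}$ and $G_I=\{z_1,\dots,z_M\}^{\perp}$, and after discarding linear dependencies we fix $u_1,\dots,u_N\in Y$ and $w_1,\dots,w_M\in Y^{*}$ biorthogonal to $(y_j^{*})$ and $(z_j)$ respectively, so that $\operatorname{dist}(x,W_I)\le\sum_j\|u_j\|\,|\langle y_j^{*},x\rangle|$ for $x\in Y$ and $\operatorname{dist}(x^{*},G_I)\le\sum_j\|w_j\|\,|\langle x^{*},z_j\rangle|$ for $x^{*}\in Y^{*}$. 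Player~(II) now chooses $\mathcal{B}_I\subset\mathcal{G}_{n_I}(\mathcal{A})$ and the coefficients $\lambda_L^I,\mu_L^I$ exactly as in \Cref{thm:strat-supp}, but picks the frequency $n_I$ even larger, so that in addition to~\eqref{eq:faithful-inequality-kappa} one has
\begin{align*}
  \sup_{\theta\in\{\pm1\}^{\mathcal{D}}}\bigl|\langle y_j^{*},\tilde r_{n_I}(\theta)\rangle\bigr|&<\beta_I,\qquad 1\le j\le N,\\
  \sup_{\theta\in\{\pm1\}^{\mathcal{D}}}\bigl|\langle\tilde r_{n_I}(\theta),z_j\rangle\bigr|&<\beta_I,\qquad 1\le j\le M,
\end{align*}
where $\tilde r_n(\theta)=\sum_{K\in\mathcal{G}_n(\mathcal{A})}\theta_Kh_K$ is the system from \Cref{lem:rademacher-with-signs-weakly-null}, and $\beta_I>0$ is a threshold depending only on $\eta_I$, on $\|h_I\|_Y,\|h_I\|_{Y^{*}}$, on $\sigma,\rho$, and on $\sum_j\|u_j\|,\sum_j\|w_j\|$ — but \emph{not} on $n_I$. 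Such a choice of $n_I$ exists by \Cref{lem:rademacher-with-signs-weakly-null}, since $\mathcal{A}$ has finite generations and $(r_n)_{n=0}^{\infty}$ is weakly null in $Y$; it is compatible with the original construction, which only required $n_I$ to be sufficiently large.

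It remains to check \eqref{enu:dfn:stratrep:iii} and~\eqref{enu:dfn:stratrep:iv}. For any signs $(\varepsilon_L^I)_{L\in\mathcal{B}_I}$ later supplied by player~(I), write $\tilde h_I=\sum_{L\in\mathcal{B}_I}\varepsilon_L^Ih_L=\tfrac12\bigl(\tilde r_{n_I}(\theta')+\tilde r_{n_I}(\theta'')\bigr)$, where $\theta'$ extends $(\varepsilon_L^I)_L$ arbitrarily to $\{\pm1\}^{\mathcal{D}}$ and $\theta''$ agrees with $\theta'$ on $\mathcal{B}_I$ and is opposite on $\mathcal{G}_{n_I}(\mathcal{A})\setminus\mathcal{B}_I$; hence $|\langle y_j^{*},\tilde h_I\rangle|\le\sup_{\theta}|\langle y_j^{*},\tilde r_{n_I}(\theta)\rangle|<\beta_I$, and likewise $|\langle\tilde h_I,z_j\rangle|<\beta_I$. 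Since $x_I=\sqrt2\,\tilde h_I/\|h_I\|_Y$ and $x_I^{*}=c_I\,\tilde h_I$ in $Y^{*}$ with $|c_I|=\tfrac{1}{\sqrt2}\tfrac{|I|}{|\mathcal{B}_I^{*}|}\tfrac{1}{\|h_I\|_{Y^{*}}}\le\tfrac{1}{\sqrt2\,(1/2-\rho)(1-\sigma)\,\|h_I\|_{Y^{*}}}$ by~\eqref{eq:I-BI}, choosing $\beta_I$ small enough against these fixed constants gives $\operatorname{dist}(x_I,W_I)\le\sum_j\|u_j\|\,|\langle y_j^{*},x_I\rangle|<\eta_I$ and $\operatorname{dist}(x_I^{*},G_I)\le\sum_j\|w_j\|\,|\langle x_I^{*},z_j\rangle|<\eta_I$. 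Thus player~(II) has a winning strategy in $\operatorname{Rep}_{(Y,(h_I/\|h_I\|_Y))}(2,\eta)$ for every $\eta>0$, with projectional factors, which is the assertion. The main obstacle — and the only place where weak nullity of $(r_n)_{n=0}^{\infty}$ is used — is exactly this step: arranging that the blocks $x_I$ and block functionals $x_I^{*}$ are almost annihilated by the arbitrary cofinite-dimensional subspaces chosen by player~(I), uniformly over the signs that player~(I) picks afterwards; everything else is an unchanged adaptation of the proof of \Cref{thm:strat-supp}.
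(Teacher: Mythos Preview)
Your proposal is correct and follows essentially the same approach as the paper: reuse the construction from \Cref{thm:strat-supp}, choose the frequency $n_I$ additionally large so that \Cref{lem:rademacher-with-signs-weakly-null} forces the block $\tilde h_I$ (and hence $x_I$, $x_I^{*}$) to be nearly annihilated by the finitely many functionals and vectors determining $W_I$ and $G_I$, and conclude via \Cref{lem:impartially-equivalent}. The only noteworthy difference is that you apply \Cref{lem:rademacher-with-signs-weakly-null} to the full collection $\mathcal{A}$ and recover $\tilde h_I$ from $\tilde r_{n_I}(\cdot)$ by your sign-cancellation identity $\tilde h_I=\tfrac12(\tilde r_{n_I}(\theta')+\tilde r_{n_I}(\theta''))$, whereas the paper instead restricts to the sub-collection $\tilde{\mathcal{A}}=\bigcup_n\{L\in\mathcal{G}_{n_{\pi(I)}+1+n}(\mathcal{A}):L\subset\Gamma\}$ and applies the lemma there; your variant is slightly cleaner but the content is the same.
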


\begin{proof}
  Fix $\eta > 0$.  In the following, we will describe a winning strategy for player~(II) in the game
  $\operatorname{Rep}_{(Y, (h_I / \|h_I\|_{Y}))}(2, \eta)$.  Before the game starts, player~(I)
  chooses a partition $\mathcal{D} = \mathcal{A}_1 \cup \mathcal{A}_2$.  Thus,
  $\limsup(\mathcal{A}_1)\cup \limsup(\mathcal{A}_2) = [0,1)$, and hence, we have
  \begin{equation*}
    |\limsup(\mathcal{A}_1)|\ge \frac{1}{2}\qquad \text{or} \qquad |\limsup(\mathcal{A}_2)|\ge \frac{1}{2}.
  \end{equation*}
  We may assume without loss of generality that $|\limsup(\mathcal{A}_1)| \ge \frac{1}{2}$. By
  \cite[Lemma~4.4]{MR4145794}, we can find a subset $\mathcal{A}\subset \mathcal{A}_1$ such that
  \begin{itemize}
    \item $\mathcal{G}_n(\mathcal{A})$ is finite and
          $\mathcal{G}_n(\mathcal{A})\subset \mathcal{G}_n(\mathcal{A}_1)$ for all
          $n\in \mathbb{N}_0$,
    \item $|\limsup(\mathcal{A})| \ge \frac{1}{2} - \rho$,
  \end{itemize}
  where $\rho > 0$ is a small number, to be determined later.  We will write
  $S = \limsup(\mathcal{A})$.  Moreover, fix $0 < \sigma < 1$ to be determined later, and let
  $(\varkappa_I)_{I\in \mathcal{D}}$ be a family of real numbers in $(0,1)$ such that
  \begin{equation*}
    \sum_{I\in \mathcal{D}} (1 - \varkappa_I)
    \le \sigma.
  \end{equation*}
  Define $\varkappa_I' = 1 - \varkappa_I$ for all $I\in \mathcal{D}$.

  Now consider turn $I\in \mathcal{D}$. Let $(n_J)_{J < I}$ denote a strictly increasing sequence of
  natural numbers and suppose that for each $J < I$, player~(II) has already chosen a finite set
  $\mathcal{B}_J\subset \mathcal{G}_{n_J}(\mathcal{A})$ and non-negative real numbers
  $(\lambda^J_L)_{L\in \mathcal{B}_J}$, $(\mu^J_L)_{L\in \mathcal{B}_J}$. Moreover, suppose that
  player~(I) has chosen sequences of signs
  $(\varepsilon^J_L)_{L\in \mathcal{B}_J}\in \{ \pm 1 \}^{\mathcal{B}_J}$, $J < I$, thus determining
  the systems $(x_J)_{J < I}$ and $(x_J^{*})_{J < I}$ defined in~\eqref{eq:18}, i.e.,
  \begin{equation*}
    x_J
    = \sum_{L\in \mathcal{B}_J} \varepsilon^J_L \lambda^J_L \frac{h_L}{\|h_L\|_{Y}}
    \qquad\text{and}\qquad
    x_J^{*}
    = \sum_{L\in \mathcal{B}_J} \varepsilon^J_L \mu^J_L \frac{h_L}{\|h_L\|_{Y^{*}}}.
  \end{equation*}
  In addition, we define the auxiliary $L^{\infty}$-normalized Haar system $(\tilde{h}_J)_{J < I}$
  by putting
  \begin{equation*}
    \tilde{h}_J = \sum_{L\in \mathcal{B}_J} \varepsilon^J_L h_L,\qquad J\in \mathcal{D},\ J < I.
  \end{equation*}

  At the beginning of turn $I$, player~(I) chooses $\eta_I > 0$ and spaces
  $W_I\in \operatorname{cof}(Y)$ and $G_I\in \operatorname{cof}_{w^{*}}(Y^{*})$.  Then player~(II)
  may choose a subset $\mathcal{B}_I$ of either $\mathcal{A}_1$ or $\mathcal{A}_2$ and finite
  sequences of non-negative real numbers
  $(\lambda^I_L)_{L\in \mathcal{B}_I}, (\mu^I_L)_{L\in \mathcal{B}_I}\in \mathbb{R}^{\mathcal{B}_I}$.
  In this winning strategy, player~(II) picks $\mathcal{B}_I\subset \mathcal{A}$ as follows:
  \begin{align*}
    \mathcal{B}_I
    &= \mathcal{G}_{n_I}(\mathcal{A}),
    && \text{if } I = [0,1),\\
    \mathcal{B}_I
    &= \bigl\{ L\in \mathcal{G}_{n_I}(\mathcal{A}) : L\subset \{ \tilde{h}_{\pi(I)} = \pm 1 \} \bigr\},
    && \text{if } I = J^{\pm } \text{ for some } J\in \mathcal{D},
  \end{align*}
  where $n_I\in \mathbb{N}_0$ is chosen sufficiently large such that $n_I > n_J$ for all $J < I$ and
  such that the following two conditions are satisfied:
  \begin{enumerate}[(i)]
    \item\label{it:faithful-inequality-kappa}
          $|\mathcal{B}_I^{*}\cap S| > (1 - \varkappa_I'/2) |\mathcal{B}_I^{*}|$.
    \item\label{it:dist-cofinite-dimensional-small} For every
          $\varepsilon\in \{ \pm 1 \}^{\mathcal{D}}$, the following two inequalities hold for
          $h = \sum_{L\in \mathcal{B}_I} \varepsilon_Lh_L$:
          \begin{equation}\label{eq:dist-subspaces-small}
            \operatorname{dist}_{Y}(h, W_I)
            \le \tilde{\eta}_I
            \qquad \text{and}\qquad
            \operatorname{dist}_{Y^{*}}(h, G_I)
            \le \tilde{\eta}_I,
          \end{equation}
          where
          \begin{equation*}
            \tilde{\eta}_{[0,1)}
            = \eta_{[0,1)}\cdot \frac{1}{\sqrt{2}} |S|
            \qquad\text{and}\qquad
            \tilde{\eta}_I
            = \eta_I\cdot \frac{|I|}{\sqrt{2}}\min \biggl(
            1, \varkappa_{\pi(I)} \frac{|\mathcal{B}_{\pi(I)}^{*}|}{2|I|}
            \biggr),
          \end{equation*}
          for all $I\in \mathcal{D}\setminus \{ [0,1) \}$.
  \end{enumerate}
  We have to show that such a number $n_I$ exists.  According to \cite[Lemma~4.5~(ii)]{MR4145794},
  condition~\eqref{it:faithful-inequality-kappa} is satisfied for all sufficiently large $n_I$.  In
  order to prove that~\eqref{it:dist-cofinite-dimensional-small} is also satisfied for all large
  enough $n_I$, first recall that by \Cref{rem:cof}, there are $N,M\in \mathbb{N}_0$ and
  $y_1^{*},\dots,y_N^{*}\in Y^{*}$ as well as $y_1,\dots,y_M\in Y$ such that
  \begin{equation*}
    W_I
    = \{ y_1^{*}, \dots, y_N^{*} \}_{\perp} \qquad \text{and} \qquad G_I = \{ y_1,\dots,y_M \}^{\perp}.
  \end{equation*}
  Given $h\in H_0$, the inequalities~\eqref{eq:dist-subspaces-small} are certainly satisfied if
  $|\langle y_j^{*}, h \rangle|$ is sufficiently small for all $j\in \{1,\dots,N\}$ and
  $|\langle h, y_j \rangle|$ is sufficiently small for all $j\in \{1,\dots,M\}$. Thus, it suffices
  to prove the following assertion: Put $\Gamma = \{ \tilde{h}_{\pi(I)} = \pm 1 \}$ if $I = J^{\pm}$
  for some $J\in \mathcal{D}$ (and $\Gamma = [0,1)$ if $I = [0,1)$) and, in addition, for
  $n \in \mathbb{N}_0$, put
  $\tilde{\mathcal{G}}_n = \{ L\in \mathcal{G}_{n_{\pi(I)} + 1 + n}(\mathcal{A}) : L\subset \Gamma \}$
  and
  \begin{equation*}
    \tilde{r}_n(\varepsilon)
    = \sum_{K\in \tilde{\mathcal{G}}_n} \varepsilon_Kh_K,
    \qquad \varepsilon\in \{ \pm 1 \}^{\mathcal{D}}.
  \end{equation*}
  Then for every $y^{*}\in Y^{*}$ and $y\in Y$, we claim that
  \begin{equation*}
    \lim_{n\to \infty} \sup_{\varepsilon\in \{ \pm 1 \}^{\mathcal{D}}} |\langle y^{*}, \tilde{r}_n(\varepsilon) \rangle|
    = 0
    \qquad \text{and}\qquad
    \lim_{n\to \infty} \sup_{\varepsilon\in \{ \pm 1 \}^{\mathcal{D}}} |\langle \tilde{r}_n(\varepsilon), y \rangle|
    = 0.
  \end{equation*}
  But this follows by applying \Cref{lem:rademacher-with-signs-weakly-null} to
  $\tilde{\mathcal{A}} := \bigcup_{n=0}^{\infty}\tilde{\mathcal{G}}_n$ since $(r_n)_{n=0}^{\infty}$
  is weakly null in $Y$.  Thus,~\eqref{it:faithful-inequality-kappa}
  and~\eqref{it:dist-cofinite-dimensional-small} are satisfied if $n_I$ is chosen sufficiently
  large.

  After the set $\mathcal{B}_I$ is selected, player~(II) chooses the non-negative real numbers
  \begin{equation*}
    \lambda^I_L
    = \sqrt{2}\cdot \frac{\|h_L\|_{Y}}{\|h_I\|_{Y}}
    \qquad \text{and}\qquad
    \mu^I_L
    = \frac{1}{\sqrt{2}} \frac{|I|}{|\mathcal{B}_I^{*}|}
    \cdot \frac{\|h_L\|_{Y^{*}}}{\|h_I\|_{Y^{*}}},
    \qquad L\in \mathcal{B}_I.
  \end{equation*}
  Recall that by \Cref{lem:product-of-norms}, we have $\|h_K\|_{Y}\|h_K\|_{Y^{*}} = |K|$ for all
  $K\in \mathcal{D}$, so we obtain
  \begin{equation*}
    \sum_{L\in \mathcal{B}_I} \lambda^I_L \mu^I_L
    = 1.
  \end{equation*}
  Next, player~(I) chooses signs
  $(\varepsilon^{I}_L)_{L\in \mathcal{B}_I}\in \{ \pm 1 \}^{\mathcal{B}_I}$. Observe that
  \begin{equation*}
    x_I
    = \sum_{L\in \mathcal{B}_I} \varepsilon^I_L \lambda^I_L \frac{h_L}{\|h_L\|_{Y}}
    \qquad\text{and}\qquad
    x_I^{*}
    = \sum_{L\in \mathcal{B}_I} \varepsilon^I_L \mu^I_L h_L^{*}
  \end{equation*}
  is in accordance with~\eqref{eq:18}. Then the next turn begins.

  Now we assume that the game is completed, and we record the following identities relating
  $\tilde{h}_I$ to $x_I$ and $x_I^{*}$:
  \begin{equation*}
    x_I
    = \frac{\sqrt{2}}{\|h_I\|_{Y}} \tilde{h}_I
    \qquad\text{and}\qquad
    x_I^{*}
    = \frac{1}{\sqrt{2}}\frac{|I|}{|\mathcal{B}_I^{*}|}\cdot
    \frac{\tilde{h}_I}{\|h_I\|_{Y^{*}}},\qquad I\in \mathcal{D}.
  \end{equation*}
  Like in the proof of \Cref{thm:strat-supp}, it follows from the above
  condition~\eqref{it:faithful-inequality-kappa} that
  \begin{equation}\label{eq:measure-of-BIstar}
    |\mathcal{B}_I^{*}|
    \ge \varkappa_{\pi(I)}\frac{|\mathcal{B}_{\pi(I)}^{*}|}{2},
    \qquad I\in \mathcal{D}\setminus \{ [0,1) \},
  \end{equation}
  and since $\operatorname{supp}(\tilde{h}_{I^{\pm }})\subset \{ \tilde{h}_I = \pm 1 \}$ for all
  $I\in \mathcal{D}$, this implies that $(\tilde{h}_I)_{I\in \mathcal{D}}$ is a
  $(\varkappa_I)_{I\in \mathcal{D}}$-faithful Haar system.  If $\rho$ and $\sigma$ are chosen
  sufficiently small, then by \Cref{lem:impartially-equivalent}, $(x_I)_{I\in \mathcal{D}}$ is
  impartially $(2 + \eta)$-equivalent to $(h_I/\|h_I\|_Y)_{I\in \mathcal{D}}$ in $Y$ and
  $(x_I^{*})_{I\in \mathcal{D}}$ is impartially $(2 + \eta)$-equivalent to
  $(h_I/\|h_I\|_{Y^{*}})_{I\in \mathcal{D}} = (h_I^{*})_{I\in \mathcal{D}}$ in $Y^{*}$.
  Furthermore, the inequalities~\eqref{eq:dist-subspaces-small}, \Cref{lem:product-of-norms} and
  \Cref{pro:HS-1d}~\eqref{pro:HS-1d:i} imply that $\operatorname{dist}_{Y}(x_I, W_I)\le \eta_I$ for
  all $I\in \mathcal{D}$, and together with $|S|\le |\mathcal{B}_{[0,1)}^{*}|$
  and~\eqref{eq:measure-of-BIstar}, we also have
  $\operatorname{dist}_{Y^{*}}(x_I^{*}, G_I) \le \eta_I$ for all $I\in \mathcal{D}$.  Finally, it is
  obvious that $(x_I^{*})_{I\in \mathcal{D}}$ is biorthogonal to $(x_I)_{I\in \mathcal{D}}$.
\end{proof}

%%% Local Variables:
%%% mode: latex
%%% TeX-master: "main"
%%% End:

%auto-ignore

\section{Proofs of \Cref{thm:main-result:A} and \Cref{thm:main-result:B}}
\label{sec:proofs}

Our proof of~\Cref{thm:main-result:A} is based on the following result, which adds to the framework
of strategically reproducible bases developed in~\cite{MR4145794}.  More precisely, the result
transfers \cite[Theorem~3.12 and Theorem~7.6]{MR4145794} to the setting of the primary factorization
property: It allows us to reduce the primary factorization property to the primary \emph{diagonal}
factorization property with respect to a Schauder bases which is strategically reproducible with
projectional factors.

\begin{thm}\label{thm:stratrep-primary-fact-prop}
  Let $E$ be a Banach space with a normalized Schauder basis $(e_j)_{j=1}^{\infty}$ with basis
  constant $\lambda\ge 1$. Let $C_r, C \ge 1$ and suppose that $(e_j)_{j=1}^{\infty}$ is
  $C_r$-strategically reproducible in $E$ with projectional factors and that $E$ has the $C$-primary
  diagonal factorization property with respect to $(e_j)_{j=1}^{\infty}$.  Let $Z$ be one of the
  following spaces:
  \begin{enumerate}[(i)]
    \item\label{thm:stratrep-primary-fact-prop:i} $Z = E$
    \item\label{thm:stratrep-primary-fact-prop:ii} $Z = \ell^p(E)$ for some $1\le p<\infty$
    \item\label{thm:stratrep-primary-fact-prop:iii} $Z = \ell^{\infty}(E)$ if $E$ is asymptotically
          curved with respect to $(e_j)_{j=1}^{\infty}$.
  \end{enumerate}
  Then $Z$ has the $\lambda C_rC$-primary factorization property, and hence, $\mathcal{M}_Z$ is the
  unique maximal ideal of $\mathcal{B}(Z)$. In particular, the spaces
  in~\eqref{thm:stratrep-primary-fact-prop:ii} and~\eqref{thm:stratrep-primary-fact-prop:iii} are
  primary.
\end{thm}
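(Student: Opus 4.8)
The plan is to reduce the primary factorization property of $Z$ to the $C$-primary \emph{diagonal} factorization property of $E$, applied one block at a time. Fix a bounded operator $T\colon Z\to Z$ and a small $\eta>0$. In case~\eqref{thm:stratrep-primary-fact-prop:i}, \Cref{pro:stratrep-diag}~\eqref{pro:stratrep-diag:i} produces a diagonal operator $D\colon Z\to Z$ (with respect to $(e_j)_{j=1}^\infty$) that projectionally factors through $T$ with constant $\lambda(C_r+\eta)$ and error $\eta$. In case~\eqref{thm:stratrep-primary-fact-prop:ii}, by \Cref{rem:stratrep-lp-sum} the enumerated basis $(\tilde e_m)_{m=1}^\infty$ of $Z=\ell^p(E)$ is normalized, $C_r$-strategically reproducible with projectional factors, and has basis constant at most $\lambda$, so \Cref{pro:stratrep-diag}~\eqref{pro:stratrep-diag:i} again yields such a diagonal $D$. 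In case~\eqref{thm:stratrep-primary-fact-prop:iii}, \Cref{lem:curved-implies-uniformly-curved} shows that the constant sequence $(E,E,\dots)$ is uniformly asymptotically curved with respect to the array $(e_{k,j})_{k,j=1}^\infty$ (with $e_{k,j}=e_j$), so \Cref{pro:stratrep-diag-ell-infty} provides a diagonal $D\colon Z\to Z$ with the same factorization properties.

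Next I would unwind the structure of $D$. In cases~\eqref{thm:stratrep-primary-fact-prop:ii} and~\eqref{thm:stratrep-primary-fact-prop:iii}, by expanding $D\tilde e_m$ in the basis (respectively, by the very definition of a diagonal operator on $\ell^\infty(E)$), $D$ is block-diagonal: $D(x_k)_k=(D_k x_k)_k$, where each $D_k\colon E\to E$ is a bounded diagonal operator with respect to $(e_j)_{j=1}^\infty$ and $\sup_k\|D_k\|\le\|D\|$; in case~\eqref{thm:stratrep-primary-fact-prop:i} one sets $D_1=D$. For each $k$, the $C$-primary diagonal factorization property of $E$ yields that $I_E$ factors, with constant $C^+$ and error $0$, either through $D_k$ or through $I_E-D_k$. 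Let $S_1$ be the set of $k$ for which $I_E$ factors through $D_k$ with constant $C^+$, and put $S_2=\mathbb{N}\setminus S_1$; then every $k\in S_2$ forces $I_E$ to factor through $I_E-D_k$ with constant $C^+$, and at least one of $S_1,S_2$ is infinite.

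Assume $S_1$ is infinite; the case of infinite $S_2$ is symmetric and produces a factorization of $I_Z$ through $I_Z-T$ instead of $T$. Fix $\gamma>0$. For each $k\in S_1$ choose $A_k,B_k\in\mathcal{B}(E)$ with $A_kD_kB_k=I_E$ and $\|A_k\|\,\|B_k\|\le C+\gamma$; after rescaling $A_k\mapsto t_kA_k$ and $B_k\mapsto t_k^{-1}B_k$ with $t_k=(\|B_k\|/\|A_k\|)^{1/2}$ we may assume $\|A_k\|,\|B_k\|\le\sqrt{C+\gamma}$. The subspace $Z_1\subset Z$ of sequences supported on $S_1$ is $1$-complemented, invariant under the block-diagonal operator $D$, and isometrically isomorphic to $Z$; moreover $A=(A_k)_{k\in S_1}$ and $B=(B_k)_{k\in S_1}$ act on $Z_1$ with norms at most $\sqrt{C+\gamma}$ and satisfy $A(D|_{Z_1})B=I_{Z_1}$. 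Composing with the norm-one projection onto $Z_1$, the inclusion, and an isometric isomorphism $Z\to Z_1$ shows that $I_Z$ factors through $D$ with constant $C+\gamma$ and error $0$, hence, $\gamma$ being arbitrary, with constant $C^+$. Combining this with the factorization of $D$ through $T$ from the first paragraph by \Cref{rem:factors}, absorbing the error via \Cref{rem:factors:iso} (legitimate, for $\eta$ small, since $I_Z$ is invertible), and finally letting $\eta\to0$ and $\gamma\to0$, we conclude that $I_Z$ factors through $T$ with constant $(\lambda C_r C)^+$. In the symmetric case one first passes from ``$D$ projectionally factors through $T$'' to ``$I_Z-D$ projectionally factors through $I_Z-T$'' by \Cref{rem:factors:proj} (valid because $\lambda(C_r+\eta)\ge1$) and then runs the identical assembly on $S_2$. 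Thus $Z$ has the $\lambda C_r C$-primary factorization property.

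The remaining assertions are routine: by \cite[Proposition~5.1]{MR2586976} the primary factorization property is equivalent to $\mathcal{M}_Z$ being closed under addition, whence $\mathcal{M}_Z$ is a proper two-sided ideal of $\mathcal{B}(Z)$ containing every proper ideal, i.e.\ the unique maximal ideal of $\mathcal{B}(Z)$ (see~\cite{MR2586976}); and in cases~\eqref{thm:stratrep-primary-fact-prop:ii} and~\eqref{thm:stratrep-primary-fact-prop:iii} the space $Z$ satisfies the accordion property $Z\sim\ell^p(Z)$ for the corresponding $p\in[1,\infty]$, so $Z$ is primary by the decomposition method of Pe{\l}czy\'{n}ski. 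The one step that requires genuine care is the blockwise argument of the second and third paragraphs: the $C$-primary diagonal factorization property concerns a single operator, while a diagonal operator on $\ell^p(E)$ or $\ell^\infty(E)$ is a whole array $(D_k)_k$ along which the alternative ``through $D_k$'' versus ``through $I_E-D_k$'' may vary with $k$; partitioning $\mathbb{N}$ accordingly, retaining the infinite part, and rescaling to secure \emph{uniform} norm bounds on the factoring operators is the crux, everything else being bookkeeping governed by the composition rules of \Cref{rem:factors}, \Cref{rem:factors:iso} and \Cref{rem:factors:proj}.
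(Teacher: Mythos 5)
Your proof is correct and follows essentially the same route as the paper's: diagonalize via \Cref{pro:stratrep-diag} (or \Cref{pro:stratrep-diag-ell-infty} together with \Cref{lem:curved-implies-uniformly-curved} for $p=\infty$), decompose the resulting diagonal operator into blocks $D_k$, partition $\mathbb{N}$ according to which alternative holds for each $D_k$, retain the infinite part, rescale to get uniform bounds, reassemble on the $1$-complemented subspace isometric to $Z$, and close with \Cref{rem:factors}, \Cref{rem:factors:iso}, and \Cref{rem:factors:proj}. The only difference is presentational: the paper handles cases~\eqref{thm:stratrep-primary-fact-prop:ii} and~\eqref{thm:stratrep-primary-fact-prop:iii} by first establishing the $C$-primary diagonal factorization property of $Z$ itself (with respect to the enumerated basis) and then invoking case~\eqref{thm:stratrep-primary-fact-prop:i}, whereas you carry out the identical computation inline.
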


\begin{myproof}
  \begin{proofcase-noskip}[Case~\eqref{thm:stratrep-primary-fact-prop:i}]
    Let $T\colon E\to E$ be a bounded linear operator, and let $\eta > 0$. By
    \Cref{pro:stratrep-diag}~\eqref{pro:stratrep-diag:i}, there exists a bounded linear operator
    $D\colon E\to E$, which is diagonal with respect to $(e_j)_{j=1}^{\infty}$, such that $D$
    projectionally factors through $T$ with constant $\lambda(C_r + \eta)$ and error $\eta$.  Recall
    that by \Cref{rem:factors:proj}, we also have that $I_E - D$ projectionally factors through
    $I_E - T$ with constant $\lambda(C_r + \eta)$ and error $\eta$. Moreover, by the hypothesis, we
    know that the identity $I_E$ factors either through $D$ or through $I_E - D$ with constant
    $C + \eta$ (and error $0$). Thus, using \Cref{rem:factors} and \Cref{rem:factors:iso}, we
    conclude that for sufficiently small $\eta$, the identity $I_E$ either factors through $T$ or
    through $I_E - T$ with constant
    \begin{equation*}
      \frac{\lambda(C_r + \eta)(C + \eta)}{1 - (C + \eta)\eta},
    \end{equation*}
    and this converges to $\lambda C_rC$ as $\eta \to 0$.
  \end{proofcase-noskip}

  \begin{proofcase-noskip}[Case~\eqref{thm:stratrep-primary-fact-prop:ii}]
    For each $k\in \mathbb{N}$, let $(e_{k,j})_{j=1}^{\infty}$ be a copy of $(e_j)_{j=1}^{\infty}$
    in the $k$th component of $\ell^p(E)$. We know from~\Cref{rem:stratrep-lp-sum} that the sequence
    $(e_{k,j})_{j,k=1}^{\infty}$, enumerated as $(\tilde{e}_m)_{m=1}^{\infty}$ according to
    \Cref{rem:enumeration-of-basis}, is a Schauder basis of $\ell^p(E)$ whose basis constant is
    bounded by $\lambda$, and it is $C_r$-strategically reproducible in $\ell^p(E)$ with
    projectional factors.  Thus, by~\eqref{thm:stratrep-primary-fact-prop:i}, it suffices to show
    that $\ell^p(E)$ has the $C$-primary diagonal factorization property with respect to
    $(\tilde{e}_m)_{m=1}^{\infty}$.

    Let $D\colon \ell^p(E)\to \ell^p(E)$ be a bounded linear operator that is diagonal with respect
    to $(\tilde{e}_m)_{m=1}^{\infty}$.  Then for each $k\in \mathbb{N}$, by restricting $D$ to the
    $k$th component of $\ell^p(E)$, we obtain a bounded linear operator $D_k\colon E\to E$ that is
    diagonal with respect to $(e_j)_{j=1}^{\infty}$. By the hypothesis, the sets
    \begin{align*}
      \mathcal{N}_1 &= \{ k\in \mathbb{N} : I_E\text{ factors through } D_k \text{ with constant }C^+ \},\\
      \mathcal{N}_2 &= \{ k\in \mathbb{N} : I_E\text{ factors through } I_E - D_k \text{ with constant }C^+ \}
    \end{align*}
    satisfy $\mathcal{N}_1\cup \mathcal{N}_2 = \mathbb{N}$, so at least one of these sets is
    infinite. We assume without loss of generality that $\mathcal{N}_1$ is infinite. For each
    $k\in \mathcal{N}_1$, there exist bounded linear operators $A_k, B_k\colon E\to E$ such that
    \begin{equation*}
      I_E = A_kD_kB_k\qquad \text{and} \qquad \|A_k\|\|B_k\|\le C + \eta.
    \end{equation*}
    By rescaling, we may assume that $\|A_k\|\le \sqrt{C + \eta}$ and
    $\|B_k\|\le \sqrt{C + \eta}$. For $k\in \mathbb{N}\setminus \mathcal{N}_1$, we put
    $A_k = B_k = 0$. Now write $\mathcal{N}_1 = \{ n_1 < n_2 < n_3 < \dots \}$ and define the
    operators $Q,R\colon \ell^p(E)\to \ell^p(E)$ by
    $Q(x_k)_{k=1}^{\infty} = (x_{n_k})_{k=1}^{\infty}$ as well as
    $R(x_k)_{k=1}^{\infty} = (y_k)_{k=1}^{\infty}$ where $y_{n_k} = x_k$ for all $k\in \mathbb{N}$
    and $y_k = 0$ if $k\notin \mathcal{N}_1$. Note that $\|Q\|, \|R\|\le 1$. Finally, define
    $A,B\colon \ell^p(E)\to \ell^p(E)$ by
    \begin{equation*}
      A(x_k)_{k=1}^{\infty} = (A_kx_k)_{k=1}^{\infty}\qquad \text{and}
      \qquad B(x_k)_{k=1}^{\infty} = (B_kx_k)_{k=1}^{\infty}
    \end{equation*}
    and observe that $\|A\|,\|B\|\le \sqrt{C + \eta}$ and $I_{\ell^p(E)} = QADBR$, so
    $I_{\ell^p(E)}$ factors through $D$ with constant $C + \eta$ (and error $0$).
  \end{proofcase-noskip}
  \begin{proofcase-noskip}[Case~\eqref{thm:stratrep-primary-fact-prop:iii}]
    Let $T\colon \ell^{\infty}(E)\to \ell^{\infty}(E)$ be a bounded linear operator, and let
    $\eta > 0$.  Like in the proof for case~\eqref{thm:stratrep-primary-fact-prop:i}, we first
    diagonalize the operator $T$. Here, however, instead of using
    \Cref{pro:stratrep-diag}~\eqref{pro:stratrep-diag:i}, we apply
    \Cref{lem:curved-implies-uniformly-curved} and then \Cref{pro:stratrep-diag-ell-infty} to obtain
    a bounded linear operator $D\colon \ell^\infty(E)\to \ell^{\infty}(E)$ which is diagonal with
    respect to $(e_{k,j})_{j,k=1}^{\infty}$ such that $D$ projectionally factors through $T$ with
    constant $\lambda(C_r + \eta)$ and error $\eta$. Next, like in the proof
    of~\eqref{thm:stratrep-primary-fact-prop:ii}, we see that $\ell^{\infty}(E)$ has the $C$-primary
    diagonal factorization property with respect to $(e_{k,j})_{j,k=1}^{\infty}$. The rest of the
    proof is the same as for~\eqref{thm:stratrep-primary-fact-prop:i}.\qedhere
  \end{proofcase-noskip}
\end{myproof}

\begin{proof}[Proof of \Cref{thm:main-result:A}]
  Since the sequence $(r_n)_{n=1}^{\infty}$ is weakly null in~$Y$, we know from \Cref{thm:stratrep}
  that the normalized Haar basis $(h_I/\|h_I\|_Y)_{I\in \mathcal{D}}$ of~$Y$ is $2$-strategically
  reproducible in~$Y$ with projectional factors. Moreover,
  \Cref{thm:main-result:3}~\eqref{thm:main-result:3:iii} implies that~$Y$ has the $2$-primary
  diagonal factorization property with respect to the Haar basis. Thus, the statements in
  \Cref{thm:main-result:A} follow from \Cref{thm:stratrep-primary-fact-prop}.
\end{proof}

\begin{proof}[Proof of \Cref{thm:main-result:B}]
  First, we show that the Haar basis of~$Y$ (or, analogously, of any space~$Y_k$, $k\in \mathbb{N}$)
  has the $2/\delta$-diagonal factorization property. Let $\delta > 0$, and let $D\colon Y\to Y$ be
  a bounded Haar multiplier with $\delta$-large diagonal. Let $\gamma > 0$. Since by
  \Cref{thm:strat-supp}, the system $((h_I/\|h_I\|_Y, h_I^{*}))_{I\in \mathcal{D}}$ is strategically
  supporting in $Y\times Y^{*}$, we can apply \Cref{pro:positive-reduction}, which yields a bounded
  Haar multiplier $\tilde{D}\colon Y\to Y$ with either $\delta$-large \emph{positive} or
  $\delta$-large \emph{negative} diagonal such that~$\tilde{D}$ factors through~$D$ with constant
  $2 + \gamma$ (and error~$0$). Hence, we have $|c|\ge \delta$ for all
  $c\in \Lambda(\tilde{D})$. Now \Cref{thm:main-result:3}~\eqref{thm:main-result:3:ii} implies that
  the identity~$I_Y$ factors through $\tilde{D}$ with constant $(1/\delta)^+$ (and error~$0$). Thus,
  we have proved that~$I_Y$ factors through~$D$ with constant $(2/\delta)^+$.
  \begin{proofcase-noskip}[Case~\eqref{thm:main-result:B:i}]
    Since the sequence $(r_n)_{n=1}^{\infty}$ is weakly null in~$Y$, $k\in \mathbb{N}$, we know from
    \Cref{thm:stratrep} that the normalized Haar basis $2$-strategically reproducible in~$Y$. Hence,
    if $\delta > 0$ and $T\colon Y\to Y$ is a bounded linear operator with $\delta$-large diagonal
    with respect to the Haar basis, then by \Cref{pro:stratrep-diag}, for every $\eta > 0$, there
    exists a bounded Haar multiplier $D\colon Y\to Y$ with $\delta$-large diagonal such that $D$
    factors through $T$ with constant $2 + \eta$ and error $\eta$. Thus, the
    $4/\delta$-factorization property of the Haar basis of~$Y$ follows from $2/\delta$-diagonal
    factorization property together with \Cref{rem:factors} and
    \Cref{rem:factors:iso}. Alternatively, it also fallows from the strategic reproducibility and
    the diagonal factorization property using \cite[Theorem~3.12]{MR4145794}.
  \end{proofcase-noskip}
  \begin{proofcase-noskip}[Case~\eqref{thm:main-result:B:ii}]
    We know that the Haar basis is $2$-strategically reproducible and has the $2/\delta$-diagonal
    factorization property in each space $Y_k$, $k\in \mathbb{N}$. By \cite[Lemma~7.3]{MR4145794},
    it follows that $(h_{k,I})_{k\in \mathbb{N},I\in \mathcal{D}}$ has the $2/\delta$-diagonal
    factorization property in $\ell^p((Y_k)_{k=1}^{\infty})$ for $1\le p<\infty$. Thus, the
    $4/\delta$-factorization property follows like in Case~\eqref{thm:main-result:B:i}, again using
    \Cref{pro:stratrep-diag} for diagonalization (see \Cref{rem:stratrep-lp-sum}). Alternatively, it
    follows from \cite[Theorem~7.6]{MR4145794}.
  \end{proofcase-noskip}
  \begin{proofcase-noskip}[Case~\eqref{thm:main-result:B:iii}] In this case, the
    $4/\delta$-factorization property follows from~\cite[Theorem~3.9]{MR4299595}.\qedhere
  \end{proofcase-noskip}
\end{proof}

%%% Local Variables:
%%% mode: latex
%%% TeX-master: "main"
%%% End:

\noindent\textbf{Acknowledgments.}
The authors would like to thank Paul~F.X.~Müller and Thomas Schlumprecht for helpful discussions.
This article is part of the second author's PhD~thesis, which is being prepared at the Institute of
Analysis, Johannes Kepler University Linz.

\bibliographystyle{abbrv}%
\bibliographystyle{plain}%
\bibliography{bibliography}%

\end{document}